\newcommand{\abs}[1]{\left\lvert{#1}\right\rvert}
\newcommand{\norm}[1]{\left\|{#1}\right\|}
\DeclareMathOperator{\GL}{\rm{GL}}
\DeclareMathOperator{\inter}{\rm{int}}
\DeclareMathOperator{\bd}{\partial}
\DeclareMathOperator{\cl}{cl}
\DeclareMathOperator{\diam}{\rm{diam}}
\DeclareMathOperator{\fix}{\rm{Fix}}
\DeclareMathOperator{\cPE}{\rm{c}_{\mc{E}}}
\DeclareMathOperator{\cIB}{\rm{c}_{I}}
\DeclareMathOperator{\bdPE}{\rm{b}_{\mc{E}}}
\DeclareMathOperator{\bdIB}{\rm{b}_{I}}
\DeclareMathOperator{\iPE}{\mc{E}}
\newcommand{\mc}{\mathcal}
\newcommand{\ol}{\overline}
\renewcommand{\hat}{\widehat}
\newcommand{\til}{\widetilde}
\newcommand{\R}{\mathbb{R}}\newcommand{\N}{\mathbb{N}}
\newcommand{\Z}{\mathbb{Z}}\newcommand{\Q}{\mathbb{Q}}
\newcommand{\T}{\mathbb{T}}\newcommand{\C}{\mathbb{C}}
\newcommand{\D}{\mathbb{D}}
\newcommand{\sm}{\setminus}
\newcommand{\id}{\mathrm{Id}}
\newcommand{\deck}{\operatorname{Deck}}
\newcommand{\ie}{i.e.\ }
\newcommand{\eg}{e.g.\ }
\newtheorem{theorem}{Theorem}[section]%[theorem] %[section]
\newtheorem{corollary}[theorem]{Corollary}
\newtheorem{lemma}[theorem]{Lemma}
\newtheorem{proposition}[theorem]{Proposition}
\newtheorem{claim}{Claim}
\newtheorem{subclaim}{Claim}[claim]
\newtheorem*{theorem*}{Theorem}
\newtheorem*{claim*}{Claim}
\newtheorem{mthm}{Theorem}
\theoremstyle{definition}
\newtheorem{definition}[theorem]{Definition}
\newtheorem{example}[theorem]{Example}
\theoremstyle{remark}
\newtheorem{remark}[theorem]{Remark}
\title{Prime ends rotation numbers and periodic points}
\author{Andres Koropecki}
\address{Universidade Federal Fluminense, Instituto de Matem\'atica, Rua M\'ario Santos Braga S/N, 24020-140 Niteroi, RJ, Brasil}
\email{ak@id.uff.br}
\thanks{The first author was partially supported by CNPq-Brasil.}
\author{Patrice Le Calvez}
\address{Institut de Math\'ematiques de Jussieu, UMR 7586 CNRS, UPMC, case 247? 4 place Jussieu, 75252 Paris Cedex, France}
\email{lecalvez@math.jussieu.fr}
\author{Meysam Nassiri}
\address{School of Mathematics, Institute for Research in Fundamental Sciences (IPM), PO Box 19395-5746, Tehran, Iran}
\email{nassiri@ipm.ir}
\begin{document}

\begin{abstract}
We study the problem of existence of a periodic point in the boundary of an invariant domain for a surface homeomorphism. In the area-preserving setting, a complete classification is given in terms of rationality of Carath\'eordory's prime ends rotation number, similar to Poincar\'e's theory for circle homeomorphisms. In particular, we prove the converse of a classic result of Cartwright and Littlewood. This has a number of consequences for generic area preserving surface diffeomorphisms. For instance, we extend previous results of J. Mather on the boundary of invariant open sets for $C^r$-generic area preserving diffeomorphisms. 
Most results are proved in a general context, for homeomorphisms of arbitrary surfaces with a weak nonwandering-type hypothesis. This allows us to prove a conjecture of R. Walker about co-basin boundaries, and it also has applications in holomorphic dynamics.
\end{abstract}

\maketitle

%\newpage
%\xxx{This page is temporary}
\setcounter{tocdepth}{1}
\begin{small}
\tableofcontents
\end{small}

%\newpage
\section{Introduction}

The boundary of a simply connected domain on a surface can have a very complicated topology. A classic example of this fact is the pseudo-circle \cite{bing, handel-pseudo}, which is a hereditarily indecomposable continuum, and in particular nowhere locally connected. For this reason, there is a very sharp step between studying the dynamics of a circle homeomorphism and studying the dynamics of a homeomorphism on the boundary of an invariant simply connected domain.

On the circle, Poincar\'e introduced the notion of rotation number, an invariant for the dynamics, and established the following celebrated result:

\begin{quotation}
\hspace{-.5cm} \emph{There is a periodic point if and only if the rotation number is rational.}
\end{quotation}

In order to study the dynamics of a surface homeomorphism $f$ on the boundary of a simply connected domain $U$, it is natural to try to define a similar invariant and replicate Poincar\'e's results. This can be done by means of Carath\'eodory's \emph{prime ends} compactification \cite{caratheodory, caratheodory2, mather-caratheodory} which is obtained by adding a circle to $U$ with an appropriate topology so that the resulting space $\hat{U}=U\sqcup \mathbb{S}^1$ is homeomorphic to the closed unit disk $\ol{\mathbb{D}}$. The homeomorphism $f$ extends to a homeomorphism $\hat{f}$ of $\hat{U}$, therefore inducing a homeomorphism $\hat{f}|_{\mathbb{S}^1}$ on the circle of prime ends. One may in this way associate to $f$ and $U$ a \emph{prime ends rotation number} $\rho(f, U)$ as the Poincar\'e rotation number of $\hat{f}|_{\mathbb{S}^1}$.

It is already noted in \cite{cartwright-littlewood} that a rational $\rho(f,U)$ does not necessarily imply that there is a periodic point in $\bd U$, and neither the converse is true. However, in the area-preserving setting, the situation is different, as is shown by Cartwright and Littlewood in \cite{cartwright-littlewood}:
\begin{quotation}
\emph{If $f$ is area-preserving, $U$ is relatively compact, and the prime ends rotation number is rational, then there is a periodic point in $\bd U$.}
\end{quotation}
Similar results, showing that the rationality of the rotation number leads to the existence of periodic points with the area-preserving hypotheses replaced by weaker conditions near the boundary of $U$ are also studied in \cite{cartwright-littlewood}. Since then, several contributions were made in the same direction, \eg \cite{alligood-yorke, barge1, barge2, barge3, JEMS}.  

However, little is known about the converse problem. Namely, if $\rho(f,U)$ is irrational, what can we say about the dynamics on $\bd U$? This question is already mentioned in \cite{cartwright-littlewood}, and simple examples show that even in the area-preserving case there can be a fixed point in $\bd U$ (see Examples \ref{exm:invers2} and \ref{exm:invers}). However, the examples are very specific and have exactly one fixed point and no other periodic points.

This  is the subject of this paper. We present a complete classification theorem in this direction, which also gives topological information on the boundary of $U$ in the case that the rotation number is irrational.
Such a classification has applications both in the context of $C^r$-generic area preserving diffeomorphisms, and in holomorphic dynamics.

\subsection{Main results}
Note that if $f$ is area-preserving on a closed surface, then $f$ is nonwandering (\ie every nonempty open set intersects one of its forward iterates). Since in general only the latter (weaker) condition is used, we state our results for nonwandering maps. 

All the examples of a nonwandering homeomorphism of a closed surface where $\rho(f,U)$ is irrational and there is a periodic point in $\bd U$ are on the sphere, and they all have the property that $\bd U$ is a non-separating continuum with exactly one fixed point and no other periodic point. Such examples exist even in the $C^\infty$ category (see Example \ref{exm:invers2}). The next theorem shows that this is the \emph{only} possibility, and provides additional topological information (see Section \ref{sec:prelim} for definitions).
 
\begin{mthm} \label{thm:intro-main-gen-NW} %coro:main-gen 
 Let $f\colon S\to S$ be a nonwandering orientation-preserving homeomorphism of a closed orientable surface, and $U$ an open $f$-invariant simply connected set whose boundary has more than one point and such that $\rho(f,U)$ is irrational. Then one of the following holds:
\begin{itemize}
\item[(i)] $\bd U$ is a contractible annular continuum without periodic points;
\item[(ii)] $S$ is a sphere, $U$ is dense in $S$, and $S\sm U$ is a non-separating continuum containing a unique fixed point and no other periodic points.
\end{itemize}
\end{mthm}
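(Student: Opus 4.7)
My plan is to work in Carath\'eodory's prime ends compactification $\hat{U}\cong \ol{\D}$, where $f$ extends to a homeomorphism $\hat{f}$ whose restriction to the boundary circle has rotation number $\rho(f,U)$, irrational by hypothesis. The first key reduction is that any periodic point $p\in\bd U$ must be \emph{inaccessible} from $U$: otherwise, the nonempty finite set of prime ends associated with accessible paths landing on the orbit of $p$ would be a $\hat{f}|_{\mathbb{S}^1}$-invariant finite set, yielding a periodic prime end and contradicting irrationality of $\rho(f,U)$. This splits the proof into two cases: (A) $\bd U$ has no periodic points at all, aiming for conclusion (i), and (B) $\bd U$ contains an inaccessible periodic point, aiming for conclusion (ii).

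For case (A), to show $\bd U$ is a contractible annular continuum, I would argue that $\bd U$ cannot carry essential topology of $S$. Using the simply-connectedness of $U$ together with the nonwandering hypothesis, I would lift to an appropriate covering of $S$ and show that an essential invariant continuum with no periodic points is incompatible with both the nonwandering dynamics and the irrational prime ends rotation number: a Poincar\'e--Birkhoff or Franks-type argument in the covering would either produce a periodic point in $\bd U$ or force rationality of $\rho(f,U)$. Once $\bd U$ is shown to be contained in a topological disk of $S$, the natural cyclic order induced on $\bd U$ by the prime ends circle identifies $\bd U$ as a continuum separating the disk into two pieces, yielding the annular structure; contractibility then follows from the inessentialness of $U$.

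Case (B) contains the principal obstacle. Assume $p\in\bd U$ is periodic and inaccessible; after replacing $f$ by a power, $p$ is fixed. The goal is to establish that $S$ is a sphere, $U$ is dense, and $p$ is the only periodic point in $\bd U$. My strategy is to apply the equivariant foliation theory of Le Calvez: a maximal isotopy from $\id$ to $f$ produces a transverse singular foliation $\mc F$ whose leaves can be analyzed globally. A hypothetical second periodic point $q\in\bd U$, not in the orbit of $p$, would yield by forcing arguments a closed $\mc F$-transverse loop with nontrivial linking with $p$, which projects to a periodic orbit for $\hat f|_{\mathbb{S}^1}$ and contradicts irrationality of $\rho(f,U)$. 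The same forcing mechanism, applied to essential loops in $S$, rules out positive genus: if $S$ had genus $\geq 1$, the nonwandering dynamics combined with the inaccessible fixed point would produce a transverse recurrence whose contribution to $\hat f|_{\mathbb{S}^1}$ again violates irrationality of $\rho$. Once $S\cong \SS$ is established, the density of $U$ and the fact that $S\sm U$ is a non-separating continuum follow from the connectedness of $\bd U$ together with $U$ being a simply connected open subset of the sphere whose boundary has more than one point.

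The main obstacle is the transfer of dynamical information from the tame irrational rotation on the prime ends circle to the potentially very wild topology of $\bd U$ in $S$: prime ends collapse large boundary pieces to single points, so producing a contradiction from a hypothetical extra periodic orbit, or from essential topology of $S$, requires a delicate use of equivariant Brouwer theory combined with forcing arguments tailored to the nonwandering setting, and it is precisely this transfer that constitutes the technical heart of the proof.
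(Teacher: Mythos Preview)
Your outline identifies the right initial observation (accessible periodic points would force rational rotation number), but from there your strategy diverges substantially from the paper's and contains real gaps.

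\textbf{The paper's architecture.} The paper does not split into your cases (A)/(B). Instead it first proves a planar result (Theorem~B): if $f\colon\R^2\to\R^2$ is $\bd$-nonwandering in $U$ with $\rho(f,U)\neq 0$, then $\bd U$ has no fixed points at all. The engine behind this is a new translation-arc lemma (Theorem~D): there is $N=N_{\alpha,g}$ and a compact $K\subset U$ such that no $N$-translation arc in $S\sm K$ can meet $\bd U$. The planar theorem is then obtained by combining this arc lemma with Jaulent's maximal unlinked sets and Brouwer theory (to produce translation arcs near a putative boundary fixed point). The general surface case (Theorem~C) is deduced by lifting to the universal cover and extending the lift to the ideal boundary circle via Nielsen theory; a fixed point on that circle feeds back into Theorem~B to show $\hat U$ is bounded and projects injectively, yielding contractibility. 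Finally, Theorem~A follows: annularity in case~(i) is not argued directly from prime ends but quoted from a separate result (the paper's Theorem~\ref{th:koro}) that any aperiodic invariant continuum for a nonwandering map on a closed surface is annular; density of $U$ in case~(ii) comes because the wandering complementary components forced by Theorem~C must be empty under the nonwandering hypothesis.

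\textbf{Gaps in your plan.} In your case (A), the claim that a Poincar\'e--Birkhoff or Franks-type argument in a cover would produce a periodic point or force rationality is not substantiated; nothing in those theorems speaks to inessentiality of an invariant continuum. Your route to annularity (``the cyclic order induced on $\bd U$ by the prime ends circle identifies $\bd U$ as a continuum separating the disk into two pieces'') is incorrect as stated: prime ends do not put a cyclic order on $\bd U$, and even once $\bd U$ sits in a disk it may have many complementary components; one needs a separate argument (in the paper, nonwandering plus the wandering-components statement, or the cited external theorem) to rule those out. In your case (B), the forcing mechanism is asserted but not explained: it is unclear why a second periodic point would produce a closed transverse loop that ``projects to a periodic orbit for $\hat f|_{\mathbb{S}^1}$'', and the link you posit between transverse foliations and the prime-ends circle is exactly the missing bridge. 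Likewise, ``transverse recurrence whose contribution to $\hat f|_{\mathbb S^1}$ violates irrationality'' is a slogan, not an argument; the paper's way of ruling out positive genus is the Nielsen-boundary-extension trick plus the planar theorem, and it is not obvious that forcing theory supplies an alternative without essentially rebuilding that machinery.

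In short: your proposal replaces the paper's concrete tool (the translation-arc lemma) with an appeal to Le~Calvez foliations and forcing, but the crucial step---transferring information from the prime-ends circle to $\bd U$---is precisely where the paper invests its technical effort, and your sketch does not supply a substitute.
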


We would like to emphasize that without any assumption of nonwandering type this result is not valid, as shown by an example of Walker \cite{walker} (see Example \ref{ex:walker}). However, since most of the arguments are local in a neighborhood of the boundary of $U$, it is possible to relax the nonwandering condition to a much weaker one. We call it the $\bd$-nonwandering condition (see Definition \ref{def:bdnw}). We will not define it here, but one should keep in mind that it is weaker than requiring that some neighborhood of the boundary of $U$ be contained in the nonwandering set of $f$ (see Proposition \ref{pro:bdnw-gen}), and in particular it holds if $f$ is area-preserving. But in addition, this condition holds in other contexts, for instance when $f$ is a holomorphic diffeomorphism with irrational rotation number. Most of our results are proved under the $\bd$-nonwandering hypothesis. 

We also mention that in case (i) of Theorem \ref{thm:intro-main-gen-NW}, the rotation number of any invariant probability measure supported in $\bd U$ (defined using an annular neighborhood) is equal to $\rho(f,U)$ (see \cite{franks-annulus, matsumoto}).

The proof of Theorem \ref{thm:intro-main-gen-NW} is based on the following 

\begin{mthm} %th:main
\label{thm:intro-main}
Let $f\colon \R^2\to \R^2$ be an orientation preserving homeomorphism and $U\subsetneq \R^2$ be an open $f$-invariant simply connected set.  If $f$ is $\bd$-nonwandering in $U$ and $\rho(f,U)\neq 0$, then there are no fixed points for $f$ on the boundary of $U$.
Moreover, if $U$ is unbounded then there are no fixed points for $f$ in the complement of $U$.
\end{mthm}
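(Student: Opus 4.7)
I argue by contradiction: suppose $p\in\bd U$ (postponing the case $p\in\R^2\sm U$ for unbounded $U$) is a fixed point of $f$. The plan is to use $p$ as a puncture, extract from the resulting annular cover an integer ``local rotation number at $p$'', and identify it with $\rho(f,U)$ modulo~$\Z$, forcing $\rho(f,U)=0$.

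Set $A:=\R^2\sm\{p\}$ and let $\pi\colon\til A\to A$ be its universal cover with deck generator $T$. Compactify $\til A$ by the end-line $\ell\cong\R$ covering the end of $A$ at $p$, on which $T$ acts by an integer translation. Since $U$ is simply connected and $p\in\bd U\sm U$, one has $\pi^{-1}(U)=\bigsqcup_{n\in\Z}T^n\til U$ with each $T^n\til U$ a disjoint homeomorphic copy of $U$. Pick the unique lift $\til f$ of $f|_A$ with $\til f(\til U)=\til U$; it commutes with $T$, and since $f(p)=p$ it extends continuously to $\ell$. The restriction $\til f|_\ell$ is a lift of $\id_{\{p\}}$ commuting with $T$, so $\til f|_\ell(x)=x+k$ for some $k\in\Z$. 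The key claim is
\[
\rho(f,U)\equiv k\pmod{\Z},
\]
which, being $0$ in $\R/\Z$, contradicts the hypothesis $\rho(f,U)\ne 0$.

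To prove the key claim I would analyse the prime-end circle of $\til U$ using cross-cuts of $U$ converging to $p$: each such cross-cut lifts to a cross-cut of $\til U$ whose endpoints approach $\ell$, cutting $\ell$ into a $T$-cyclically ordered family of arcs that corresponds (via the cross-cut characterisation of prime ends) to a cyclically ordered family of prime ends on $\bd\til U$. The action $\til f|_\ell(x)=x+k$ then translates to a $k$-shift on this family of prime ends, and Poincar\'e's rotation number theory yields $\rho(f,U)\equiv k\pmod\Z$. The $\bd$-nonwandering hypothesis enters here: otherwise Brouwer's plane translation theorem applied to $\til f$ on $\til A$ would force every orbit to wander, incompatible with the cross-cut construction and the $T$-equivariant recurrence it requires. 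For the unbounded case with $p\in\R^2\sm U$, work in $S^2$ with $\infty\in\ol U$ and puncture $S^2$ at $p$: the same lifting scheme applies since $U$ sits as a simply connected subset of $S^2\sm\{p\}$ avoiding $p$.

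The main obstacle will be executing the cross-cut argument rigorously. Prime-end impressions at points of $\ell$ can be large and degenerate (potentially the whole $\bd U$), and naive cross-cuts may fail to separate neighbourhoods of different accessible boundary directions or to behave $\til f$-equivariantly. This likely requires Le Calvez's equivariant foliation machinery applied to $\til f$ on $\til A$ (or an analogous Brouwer-theoretic tool), using $\bd$-nonwandering to obtain enough returning orbits and the planarity of $\til f$ to organise them into the required coherent cyclic structure on the prime-end circle.
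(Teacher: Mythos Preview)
Your approach has a genuine gap at the step where you assert that $\til f$ extends continuously to the end-line $\ell$ and acts there as $x\mapsto x+k$ with $k\in\Z$. The justification you give --- that ``$\til f|_\ell$ is a lift of $\id_{\{p\}}$'' --- is not meaningful: the point $p$ is not in $A$, and the boundary line $\ell$ is not the preimage of anything under the covering map. For a mere homeomorphism $f$ fixing $p$, the lift $\til f$ need not extend continuously to $\ell$ at all, and when it does (e.g.\ in the $C^1$ case via the derivative $Df_p$), the extension is $x\mapsto x+\beta$ where $\beta$ is the local rotation number of $f$ at $p$, which can be any real number. There is no mechanism forcing $\beta\in\Z$. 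Your ``key claim'' $\rho(f,U)\equiv k\pmod\Z$ thus becomes $\rho(f,U)\equiv\beta\pmod\Z$, which yields no contradiction.

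That this is not a repairable detail is shown by the paper's own examples on the sphere (Examples \ref{exm:invers2} and \ref{exm:invers}): there one has an area-preserving $f$ with an invariant disk $U$, irrational $\rho(f,U)$, and a fixed point $p\in\bd U$ whose local rotation number is that same irrational. Your lifting scheme applies verbatim on $S^2\sm\{p\}\cong\R^2$ and would, if valid, rule out those examples. The reason the theorem holds on $\R^2$ but not on $S^2$ is subtle and global; it cannot be detected by the purely local data at $p$ that your argument uses. The paper's proof is entirely different: it combines the translation-arc lemma (Theorem~\ref{thm:intro-arclemma}), which forbids long translation arcs near $\bd U$, with Jaulent's theorem on maximal unlinked fixed-point sets and Brouwer theory to manufacture such arcs near any putative fixed point, obtaining a contradiction. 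The $\bd$-nonwandering hypothesis enters through Theorem~\ref{thm:intro-arclemma}, not through any recurrence statement on the cover.
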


In fact, using the above theorem we obtain a more general version of Theorem \ref{thm:intro-main-gen-NW}, which holds for surfaces which are not necessarily compact:

\begin{mthm}\label{thm:intro-main-gen} %th:main-gen
Let $f\colon S\to S$ be an orientation preserving homeomorphism of an orientable surface $S$ of finite type, and $U\subset S$ an open $f$-invariant topological disk such that $S\sm U$ has more than one point. Assume further that $f$ is $\bd$-nonwandering in $U$ and $\rho(f,U)$ is irrational. Then exactly one of the following holds:
\begin{itemize}
\item[(i)] $\bd U$ is aperiodic, and $\ol{U}$ is compact and contractible;
\item[(ii)] $S$ is a sphere, and the only periodic point of $f$ in $\bd U$ is a unique fixed point;
\item[(iii)] $S$ is a plane, and $\bd U$ is unbounded and aperiodic.
\end{itemize}
Moreover, there is a neighborhood $W$ of $\bd U$, which can be chosen as an annulus in case (i), a disk in case (ii) and the complement of a closed disk in case (iii), such that every connected component of $S\sm \bd U$ contained in $W$ is wandering.
\end{mthm}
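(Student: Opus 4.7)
The plan is to reduce to Theorem B by producing a planar model of $(f,U)$, then carry out a topological case analysis on $S$ and $\ol U$. A central observation is that for every $n\ge 1$, the iterate $f^n$ remains $\bd$-nonwandering in $U$ and $\rho(f^n,U)=n\rho(f,U)\notin\Z$ since $\rho(f,U)$ is irrational; consequently, every consequence of Theorem B applied to $f^n$ translates into a statement about period-$n$ points of $f$ on $\bd U$.

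The reduction splits into three sub-cases. If $S=\R^2$, Theorem B applies to each $f^n$ directly. If $S$ is the sphere and some $p_\infty\in\bd U$ is a fixed point of $f$, puncturing at $p_\infty$ yields an $f$-invariant plane in which $U$ is unbounded (as $p_\infty$ is a limit point of $U$); Theorem B applied to $f$ and to each $f^n$ (all still fixing $p_\infty$) forces $p_\infty$ to be the only periodic point of $f$ on $\bd U$, and a Schoenflies-type argument using the simple connectedness of $U$ shows $S\sm U$ is a non-separating continuum, giving case (ii). For any other orientable finite-type $S$, the universal cover $\tilde S$ is homeomorphic to $\R^2$; since $U$ is simply connected it lifts homeomorphically to each component of its preimage in $\tilde S$, and we fix one such lift $\tilde U$. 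For every $n$ there is a lift $\tilde f_n$ of $f^n$ preserving $\tilde U$ with $\rho(\tilde f_n,\tilde U)$ irrational, and $\tilde f_n$ is $\bd$-nonwandering in $\tilde U$ by the local-homeomorphism property of the covering. Any period-$n$ point $x$ of $f$ on $\bd U$ lifts to some $\tilde x\in\bd\tilde U$ with $\tilde f_n(\tilde x)=\gamma(\tilde x)$ for some deck transformation $\gamma$; the map $\gamma^{-1}\tilde f_n$ is another lift of $f^n$ preserving $\tilde U$ and fixes $\tilde x$, contradicting Theorem B. Hence $\bd U$ is aperiodic outside of case (ii).

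Once aperiodicity of $\bd U$ is established, the trichotomy reduces to the topology of $\ol U$ and $S$. If $\ol U$ is compact, then aperiodicity of $\bd U$, the irrational rotation on the prime-ends circle of $\hat U\cong\ol{\D}$, and the $\bd$-nonwandering hypothesis force $\ol U$ to be contractible and $\bd U$ to be an annular continuum, placing us in case (i) with $W$ a small annular neighborhood of $\bd U$. If $\ol U$ is non-compact, then $S$ is non-compact, and an analysis of the ends of a finite-type surface---using that $U$ is simply connected and that $\bd$-nonwandering prevents $\bd U$ from being concentrated at a single puncture---forces $S\cong\R^2$ and $\bd U$ unbounded, giving case (iii) with $W$ the complement of a large closed disk. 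In each case, any connected component of $S\sm\bd U$ inside $W$ other than $U$ is wandering: nontrivial recurrence would manufacture, via the irrational prime-ends dynamics, either a periodic prime end (hence a periodic point on $\bd U$, contradicting aperiodicity) or a violation of $\bd$-nonwandering. The main obstacle is the lifting argument for $S$ with nontrivial fundamental group: one must coordinate, across different iterates $n$, the choice of lifts $\tilde f_n$ preserving the fixed disk $\tilde U$ and convert the resulting deck-equivariant picture on $\bd\tilde U$ into a genuine fixed-point contradiction; the remainder is topological bookkeeping guided by prime-ends theory.
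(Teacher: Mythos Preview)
Your lifting argument contains a genuine error. You write that if $x\in\bd U$ has period $n$, its lift $\tilde x\in\bd\tilde U$ satisfies $\tilde f_n(\tilde x)=\gamma(\tilde x)$ for some deck transformation $\gamma$, and then ``the map $\gamma^{-1}\tilde f_n$ is another lift of $f^n$ preserving $\tilde U$.'' This last assertion is false in general: $\gamma^{-1}\tilde f_n$ sends $\tilde U$ to $\gamma^{-1}(\tilde U)$, which is a \emph{different} component of $\pi^{-1}(U)$ whenever $\gamma\neq\mathrm{id}$ (the components are pairwise disjoint because $U$ is simply connected). So you cannot apply Theorem~B to $\gamma^{-1}\tilde f_n$ and $\tilde U$. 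The whole difficulty is precisely to rule out $\gamma\neq\mathrm{id}$, i.e.\ to show that distinct deck translates of $\cl_{\tilde S}\tilde U$ do not meet. Until that is done, a periodic point on $\bd U$ need not lift to a periodic point of the chosen lift $\hat f$ on $\bd\tilde U$.

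The paper fills this gap in two nontrivial steps, both absent from your outline. First, it shows $\tilde U$ is \emph{bounded} in the universal cover: one extends $\hat f$ to the circle at infinity via Nielsen theory (or the linear part, in the torus case), finds a power with a fixed point on that circle, and observes that if $\tilde U$ accumulated on the boundary circle this fixed point would lie in $\bd\tilde U$ in the extended plane, contradicting Theorem~B. Second, it shows that the filled closure $K$ of $\tilde U$ satisfies $K\cap T(K)=\emptyset$ for every nontrivial deck transformation $T$: assuming not, one collapses $T(K)$ to a point and applies Theorem~B once more to derive a contradiction. Only after these two steps does injectivity of $\pi|_{\cl\tilde U}$ force $\gamma=\mathrm{id}$ above, and aperiodicity follows. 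Your sentence ``aperiodicity \dots\ and the $\bd$-nonwandering hypothesis force $\ol U$ to be contractible'' reverses the logical order: contractibility is proved first and is the key input for aperiodicity, not a consequence of it. Finally, the wandering-neighborhood statement is not obtained from prime-ends recurrence as you suggest; the paper uses a surgery argument (modifying $f$ to fix points in two putative invariant complementary components and gluing along circles to raise the genus) and then invokes the contractibility conclusion on the new surface to reach a contradiction.
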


A main ingredient in the proofs is the following technical theorem, which has its own character.
It relates the genus of the surface, the prime ends rotation number of an invariant open disk, and the existence of a translation arc intersecting its boundary.  

An arc $\gamma$ is called $N$-translation arc for a homeomorphism $f$, if it is a simple arc connecting a point $x$ to its image $f(x)$ and the concatenation of its first $N$ iterates is also a simple arc.

\begin{mthm}%[Quantitative arc translation lemma] 
\label{thm:intro-arclemma}%th:arclemma
For any $g\in \N\cup\{0\}$ and $0\neq \alpha \in \R/\Z$, there is $N_{\alpha,g}\in \N$ such that the following holds.
Let $S$ be an orientable surface of genus $g$,  and $f\colon S\to S$ an orientation preserving homeomorphism. Suppose that $U\subset S$ is an invariant open topological disk such that $S\sm U$ has more than one point, $f$ is $\bd$-nonwandering in $U$, and $\rho(f,U)=\alpha$. Then there is a compact set $K\subset U$ such that every $N_{\alpha,g}$-translation arc in  $S\sm K$ is disjoint from $\bd U$.
\end{mthm}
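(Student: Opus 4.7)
The plan is to argue by contradiction. Assume no $N_{\alpha,g}$ works; then we obtain data $(S_n, f_n, U_n, K_n, \gamma_n)$ with $\rho(f_n, U_n)=\alpha$, exhausting compacta $K_n \nearrow U_n$, and $N_n$-translation arcs $\gamma_n \subset S_n \sm K_n$ meeting $\bd U_n$, with $N_n \to \infty$. The goal is to produce, from the $\gamma_n$, a combinatorial configuration that cannot coexist with a nonzero prime-ends rotation number on a surface of fixed genus.

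The first reduction is to a prime-ends picture. Using Carath\'eodory's theorem together with the $\bd$-nonwandering hypothesis, one arranges that each $K_n$ is chosen ``deep'' in $U_n$, so that the connected components of $U_n \sm K_n$ are crosscut neighborhoods of $\bd U_n$ with arbitrarily small associated prime-end arcs, and so that iterates of a crosscut living in such a component are controlled through the Carath\'eodory extension $\hat f_n$. Then any connected component $c$ of $\gamma_n \cap U_n$ is a crosscut of $U_n$ contained in $U_n \sm K_n$, and its iterates $c_k := f_n^k(c)$, $0 \le k < N_n$, are pairwise disjoint crosscuts of $U_n$ (as disjoint subarcs of the pairwise disjoint $f_n^k(\gamma_n)$). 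The corresponding prime-end arcs $I_k \subset \mathbb{S}^1$ are the iterates of $I_0$ under $\hat f_n|_{\mathbb{S}^1}$, which is topologically conjugate to rotation by $\alpha$.

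The core step is combinatorial. Since $\alpha \neq 0$, nontrivial rotates of a proper arc of $\mathbb{S}^1$ cannot be nested (a length-preserving map fixing an arc set-wise is the identity). On the other hand, crossing crosscut endpoints force the crosscuts to intersect. Hence pairwise disjointness of the $c_k$ forces pairwise disjointness of the $I_k$, and a Diophantine analysis of how rotates of an arc by $\alpha$ can avoid each other yields an upper bound $N_n \le N_\alpha(|I_0|)$. The genus enters because a single $\gamma_n$ may cross $\bd U_n$ many times, yielding several crosscut components and several arcs in $S_n \sm U_n$ joining them; but the number of pairwise non-homotopic simple arcs that fit disjointly in $S_n \sm U_n$ with prescribed endpoints is controlled by the Euler characteristic of $S_n \sm U_n$, hence by $g$. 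Combining these two bounds yields a uniform $N_{\alpha,g}$, contradicting $N_n \to \infty$.

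The main obstacle is making the Diophantine step above compatible with the fact that $|I_0|$ can a priori be arbitrarily small, in which case the naive length estimate $N_n |I_0| \le 1$ is too weak. The resolution must use the $\bd$-nonwandering hypothesis quantitatively: it guarantees that iterates of a crosscut close to $\bd U_n$ genuinely recur to neighborhoods of $\bd U_n$, so the prime-end arcs $I_k$ cannot all escape into arbitrarily tiny gaps between the points of the orbit $\{k\alpha\}$. Turning this qualitative recurrence into an effective estimate on the admissible pattern of the $I_k$ (equivalently, an effective lower bound for $|I_0|$ in terms of $\alpha$, once the crosscut $c$ arises from a translation arc reaching out of $K_n$) is where the real technical work lies, and it is also the place where the genus of $S$ enters substantively, since the recurrent iterates of $c$ may traverse handles of $S_n$ between successive returns to $\bd U_n$.
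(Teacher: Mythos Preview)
Your proposal has a genuine gap, and in fact you correctly identify it yourself: the length estimate $N_n|I_0|\le 1$ is useless when $|I_0|$ is small, and you offer no actual mechanism to bound $|I_0|$ from below. The $\bd$-nonwandering hypothesis does not provide such a bound; it only says that a wandering cross-section eventually leaves any fixed neighborhood of $\bd U$, which says nothing about the prime-end length of the cross-section. There is also a prior error: the prime-ends homeomorphism has rotation number $\alpha$ but is not assumed conjugate to the rigid rotation, so your ``length-preserving'' argument ruling out nested $I_k$ is not available. Disjoint cross-cuts can perfectly well have nested prime-end intervals, and this nesting is in fact the heart of the matter.

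The paper's argument proceeds along entirely different lines. One fixes $N_{\alpha,g}=(2g+1)q+2$ for a suitable $q=q(\alpha)$ and argues directly (no limiting sequence). The $\bd$-nonwandering hypothesis is used, via a ``maximal cross-cut lemma'', to extract from the translation arc a single cross-cut $\sigma$ of $U$ all of whose iterates $f^k(\sigma)$, $0\le k\le N-2$, lie on the boundary of a common simply connected region $U_0\subset U$. The choice of $q$ guarantees that the prime-end endpoints $\xi_k$ of these iterates are cyclically ordered on $\bdPE U$ in a specific interleaved pattern. One then draws $g{+}1$ pairs of disjoint arcs $a_i,b_i$ inside $U_0$ connecting appropriate iterates of $\sigma$, and closes each to a simple loop using a subarc of the translation arc $\Gamma$. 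The interleaving forces the algebraic intersection numbers $[a_i'']\wedge[b_j'']=\pm\delta_{ij}$, producing $2g+2$ independent classes in $H_1(S,\Q)$ and contradicting $\dim H_1(S,\Q)=2g$. So the genus enters not through counting arcs in $S\setminus U$, but through an intersection-form argument with loops built half in $U$ and half along the translation arc; and no estimate on $|I_0|$ is ever needed.
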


For a motivation on this result, see the introduction of Section \ref{sec:arclemma}. We only mention here that a typical situation where $N$-translation arcs appear is when one considers the stable or unstable manifold of a fixed point of saddle type. 

The prime ends compactification can be defined for more general (non-simply connected) sets. Theorems \ref{thm:intro-main-gen-NW}, \ref{thm:intro-main-gen} and \ref{thm:intro-arclemma} are generalized to this context as Corollary \ref{coro:general}, Theorem \ref{th:general}, and Theorem \ref{th:arc-general}, respectively.

\subsection{Consequences for area-preserving surface diffeomorphisms}

The above results have a number of consequences on generic properties of area-preserving surface diffeomorphisms. See Section \ref{sec:mather}.
Here, we mention two of them. 

\begin{mthm}\label{t:mather}
Let $f$ be a $C^r$ generic area preserving diffeomorphism on a closed orientable surface, $r\geq 1$. Then there are no periodic points on the boundary of any periodic complementary domain.
\end{mthm}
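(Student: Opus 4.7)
Suppose, for contradiction, that for a $C^r$-generic area preserving $f$ there is a periodic complementary domain $U$ of period $k$ with a periodic point $p\in \partial U$ of period $m$. Replacing $f$ by $g=f^{km}$ makes $U$ invariant and $p$ fixed; since the generic conditions I will use (Kupka--Smale, a generic Birkhoff normal form at every elliptic periodic point, a transverse homoclinic orbit at every hyperbolic saddle, and density of periodic orbits from the area preserving closing lemma) are countable intersections of residual conditions stable under taking a fixed iterate, I may assume $g$ satisfies all of them simultaneously.

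I then invoke Theorem~\ref{thm:intro-main}. Area preservation implies that $g$ is $\partial$-nonwandering in $U$, and the presence of the fixed point $p\in \partial U$ forces $\rho(g,U)=0$, so that $\rho(f,U)\in \frac{1}{km}\Z$ is rational. Should $\rho(f,U)$ happen to be irrational (the case relevant to the more general form of the statement), Theorem~\ref{thm:intro-main-gen-NW} applied to $g$ yields two alternatives: in case~(i), $\partial U$ is aperiodic and directly contradicts the existence of $p$; case~(ii) confines us to $\SS$ with $\partial U$ a non-separating continuum carrying exactly one fixed point and no other periodic points, which is incompatible with the density of periodic orbits in $\SS$ supplied generically by the closing lemma.

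For the rational case, Kupka--Smale says $p$ is either elliptic or a hyperbolic saddle. If $p$ were elliptic, its generic Birkhoff normal form would nest $p$ inside a family of KAM invariant circles separated by periodic resonance zones; since $U$ accumulates on $p$, at least one such resonance zone intersects $U$, producing $g$-periodic orbits inside $U$ and contradicting the fact that a complementary domain carries no periodic points. Hence $p$ must be a hyperbolic saddle. A local quadrant analysis at $p\in \partial U$ yields at least one connected branch $\Gamma\subset U\cup\{p\}$ of $W^u(p)$ entering $U$; combined with the generic transverse homoclinic orbit of $p$, the Birkhoff--Smale construction then places a hyperbolic horseshoe $\Lambda$ inside a small neighborhood of a transverse homoclinic point lying in $U$, so $\Lambda\subset U$ and $U$ contains infinitely many $g$-periodic points, a final contradiction.

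The main obstacle is in the last paragraph: one must verify that a branch of $W^u(p)$ truly enters $U$ (rather than being trapped in $\partial U$ or escaping into a different complementary component of $\ol U$), and that the resulting horseshoe indeed lies inside $U$. Both are prime-ends accessibility questions, and the $N$-translation arc lemma (Theorem~\ref{thm:intro-arclemma}) is precisely the tool one invokes to rule out the pathological configurations in which an invariant manifold branch is forced to lie along $\partial U$.
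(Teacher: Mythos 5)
There are genuine gaps, and they occur at exactly the points your plan leans on. First, both of your final contradictions rest on the claim that ``a complementary domain carries no periodic points,'' which is false: a complementary domain is merely a connected component of the complement of a continuum, and nothing prevents it from containing (even densely many) periodic orbits -- the theorem only asserts aperiodicity of its \emph{boundary}. So producing resonance-zone periodic orbits or a Birkhoff--Smale horseshoe inside $U$ yields no contradiction at all. Relatedly, your use of the closing lemma against case~(ii) of Theorem~\ref{thm:intro-main-gen-NW} does not work: in that case $S\sm U=\bd U$ has empty interior, so density of periodic points in $\SS$ is perfectly compatible with $\bd U$ containing a single fixed point (Example~\ref{exm:invers2} is a smooth area-preserving instance of case~(ii)). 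Second, the generic hypothesis ``a transverse homoclinic orbit at every hyperbolic saddle'' is only available on the sphere (Pixton) and the torus (Oliveira); for genus $\geq 2$ it is an open problem, and the paper even exhibits maps satisfying its generic conditions (a genus-$2$ flow time-one map with two saddles) having \emph{no} homoclinic intersections. Avoiding precisely this input is the point of the paper's approach. Third, your elliptic-point argument via Birkhoff normal form and KAM circles requires high smoothness, whereas the statement is for all $r\geq 1$; the paper replaces Moser stability by Zehnder's condition (G2) for exactly this reason. Finally, Theorems~\ref{thm:intro-main} and~\ref{thm:intro-main-gen-NW} apply to simply connected $U$, while a complementary domain on a higher-genus surface need not be simply connected; one must argue end-by-end on the impressions $Z(\mathfrak{p})$ of its finitely many regular ends.

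For comparison, the paper's route (Theorem~\ref{th:mather-extended} and Corollary~\ref{cor:mather-extended}) is: using (G1)--(G2) and Mather's arguments, show each end's prime-ends rotation number is irrational (the elliptic case is excluded because (G2) forces the boundary of the Zehnder disk into $Z(\mathfrak{p})\subset\bd U$, contradicting connectedness of $U$, not because $U$ lacks periodic points); then apply Corollary~\ref{coro:general}, which leaves only the alternative of a cellular $Z(\mathfrak{p})$ containing a single fixed point, necessarily a hyperbolic saddle; this is killed not by homoclinic intersections but by Theorem~\ref{th:no-saddle}, i.e.\ the translation-arc lemma (Theorem~\ref{th:arclemma}): a branch-preserving saddle in $\bd U$ has neighborhoods foliated by $N$-translation arcs for every $N$, which is incompatible with a nonzero rotation number. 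If you want to salvage your plan, this is the replacement you need for your last paragraph.
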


Recall that a {\it complementary domain} is a connected component of the complement of a continuum (\ie of a compact connected set with more than one point). The generic condition in this theorem is given explicitly in Section \ref{sec:mather}.

This theorem solves a problem that was extensively studied by J. Mather \cite{mather-area}. He proved that the (Carath\'eodory) rotation number of any \emph{topological end} of a complementary domain $U$ is irrational under certain conditions (the so-called Moser genericity) which hold generically in any $C^r$ topology if $r$ is sufficiently large (\eg $r\geq 16$).  This gave some information on the boundary of $U$, for instance implying that any periodic point in $\bd U$ is necessarily a hyperbolic saddle $p$ with its stable and unstable manifolds also contained in $\bd U$. It is known that this is not possible if the surface is the sphere or the torus, due to results of Pixton \cite{pixton} and Oliveira \cite{oliveira} which imply that $p$ would have a homoclinic intersection. However, for surfaces of higher genus, analogous results are not known (in fact, they are false under the specific generic conditions of Mather's Theorems), and therefore it is not possible to rule out the existence of the point $p\in \bd U$ by the same methods.

The use of Theorems \ref{thm:intro-main-gen-NW} and \ref{thm:intro-arclemma} provide a novel approach to the same problem, allowing for a proof that does not require the (generic) existence of homoclinic intersections and thus allowing to obtain a proof on surfaces of any genus.

Let us remark that Theorem \ref{t:mather} was mistakenly used in \cite{xia} and \cite{koro-nassiri} (see \S\ref{sec:xia} and Remark \ref{rem:erratum}), and therefore it fills a gap in the respective articles (however, the gap in \cite{koro-nassiri} can be avoided by using a different result; see \cite{kn-erratum}).

\subsection{Other consequences}

The $\bd$-nonwandering condition also holds whenever the dynamics induced in the circle of prime ends is transitive (see \S\ref{sec:bdnw}). This observation, together with Theorem \ref{thm:intro-main-gen} and a result from \cite{barge2}, leads to the following result which proves a conjecture of Walker \cite{walker}:
\begin{mthm}\label{th:main-walker} Suppose $f\colon S^2\to S^2$ is an orientation-preserving homeomorphism, and $K\subset \R^2$ is a continuum that irreducibly separates the plane into exactly two invariant components. If the dynamics induced in the prime ends from one side of $K$ is conjugate to an irrational rotation by angle $\alpha$, then the prime ends rotation number from the opposite side of $K$ is also equal to $\alpha$.
\end{mthm}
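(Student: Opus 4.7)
The plan is to apply Theorem \ref{thm:intro-main-gen} on the side of $K$ where the prime ends dynamics is prescribed, and then invoke the result from \cite{barge2} equating prime ends rotation numbers on the two sides of an aperiodic plane-separating continuum.

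Since both components of $\R^2\sm K$ are $f$-invariant, $f$ must fix $\infty$, so we may view $f$ as a homeomorphism of $S^2$; denote by $V_1, V_2$ the two $f$-invariant components of $S^2\sm K$, both $f$-invariant topological disks with $\bd V_1=\bd V_2=K$. Without loss of generality assume the prime ends dynamics from $V_1$ is conjugate to the rotation by $\alpha$, so $\rho(f,V_1)=\alpha$ is irrational and the induced circle map $\hat f|_{\mathbb{S}^1}$ is minimal, in particular transitive. By the discussion in \S\ref{sec:bdnw}, transitivity of the induced prime ends dynamics implies that $f$ is $\bd$-nonwandering in $V_1$, so all hypotheses of Theorem \ref{thm:intro-main-gen} are met for $V_1\subset S^2$.

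Applying Theorem \ref{thm:intro-main-gen} to $V_1$: case (iii) is vacuous since $S=S^2$ is not a plane. To rule out case (ii) I use the moreover clause: in that case the neighborhood $W$ of $K$ can be taken to be an open disk in $S^2$. Then $S^2\sm W$ is a closed connected subset of $S^2\sm K=V_1\sqcup V_2$ (connectedness follows, e.g., from Alexander duality applied to the open disk $W$), so it lies entirely in one of $V_1,V_2$; consequently the other component is contained in $W$. By the moreover clause that component is then wandering, which is impossible for a nonempty $f$-invariant open set. Hence case (i) holds and $K=\bd V_1$ is aperiodic (no periodic points on $K$).

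Since $K$ is an aperiodic continuum that irreducibly separates $S^2$ into two $f$-invariant disks, and $\rho(f,V_1)=\alpha$ is irrational, the result from \cite{barge2} on prime ends rotation numbers for aperiodic plane-separating continua yields $\rho(f,V_2)=\rho(f,V_1)=\alpha$, which is the desired conclusion. The main obstacle is the step excluding case (ii); the key observation is that the specific shape of the neighborhood $W$ supplied by Theorem \ref{thm:intro-main-gen} in that case (a disk in $S^2$) is topologically incompatible with $K$ separating $S^2$ into two invariant components, because one of the components is then forced to sit inside the disk neighborhood $W$ and hence to be wandering. Once case (ii) is excluded, the proof reduces cleanly to combining our general structural theorem with the classical Barge equality across aperiodic separating continua.
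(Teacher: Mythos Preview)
Your argument is correct and follows the same overall strategy as the paper: combine Theorem~\ref{thm:intro-main-gen} (applied on the side where the prime ends dynamics is a rotation, hence transitive, hence $\bd$-nonwandering by \S\ref{sec:bdnw}) with the result of \cite{barge2}.

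The only difference is the direction in which you use \cite{barge2}, and this makes the paper's version slightly shorter. You first exclude case~(ii) of Theorem~\ref{thm:intro-main-gen} via the ``moreover'' clause (correctly: the disk neighborhood $W$ would force one of the invariant components $V_i$ to be wandering), conclude that $K$ is aperiodic, and then apply \cite{barge2} in the form ``aperiodic $\Rightarrow$ equal rotation numbers.'' The paper instead argues by contradiction: if the two rotation numbers differed, the main theorem of \cite{barge2} would produce infinitely many periodic points in $K$; but Theorem~\ref{thm:intro-main-gen} allows at most one periodic point in $\bd V_1$ in either case (i) or (ii), which is already a contradiction. So the paper never needs to distinguish cases~(i) and~(ii). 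Your route gives the small bonus of actually establishing that $K$ is aperiodic, at the cost of the extra paragraph excluding case~(ii).
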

Indeed, if the rotation numbers differ, then the main theorem from \cite{barge2} implies that there are many periodic points in $K$, but since $K$ is the boundary of a $\bd$-nonwandering invariant domain with irrational rotation number, this contradicts Theorem \ref{thm:intro-main-gen}. 

Theorem \ref{th:main-walker} proves Conjecture 2 of \cite{walker}, namely that the prime ends dynamics from the two sides of a \emph{cobasin boundary} (\ie a set such as $K$) cannot correspond to two non-conjugate irrational rotations. 

Another context in which similar ideas can be applied is in the setting of holomorphic maps. Suppose $U\subset \C$ and $f$ is a homeomorphism defined in a neighborhood of $\ol{U}$ and holomorphic in $U$, leaving $U$ invariant, and such that $\rho(f,U)$ is irrational. By means of the Riemann mapping theorem and the Schwartz refleaction principle, one may show that the homeomorphism induced by $f|_U$ on the circle of prime ends is conjugate to that of an analytic diffeomorphism \cite{perez-marco}. This means that the dynamics cannot be Denjoy type, implying automatically that the $\bd$-nonwandering condition holds. Thus, one may apply our results in this setting. In particular, one may obtain purely topological proofs of some of the results from \cite{risler}.

\subsection{Extension of the theory of prime ends} 
Another result introduced in this paper, which is of independent interest, is a natural extension of the theory of prime ends. In Section \ref{sec:prime}, a precise description is given of what happens with the prime ends compactification of an open set when of the underling surface changes. Although this forces us to introduce some cumbersome notations, it is necessary to our proof, and it allows us to consider the prime ends compactification of sets which are not relatively compact (as opposed to what is usually done in the literature).

\subsection{Structure of the article}

This article is organized as follows: Section \ref{sec:prelim} introduces preliminary notations and definitions, and some general results that will be used later. In Section \ref{sec:prime} we formally define the notion of prime ends compactification for arbitrary simply connected sets, and we establish results analogous to the ones in \cite{mather-area} and \cite{mather-caratheodory} about the topology of prime ends. The proofs are given in the Appendix to avoid bloat. We also define the prime ends rotation number and establish certain invariance of this number when the underlying surface is changed. In \S\ref{sec:bdnw} we define precisely the notion of $\bd$-nonwandering, and we show that it is weaker than other conditions, for instance $f$ being nonwandering or preserving a finite area. In \S\ref{sec:CL} we prove the result of Cartwright and Littlewood stated in the beginning of the introduction, but under the $\bd$-nonwandering condition.

Section \ref{sec:arclemma} is devoted to the main technical result of the article, which is Theorem \ref{thm:intro-arclemma}. The proof is by contradiction, and the main idea is that if the theorem does not hold one can construct a large enough disjoint pairs of loops with intersection index $\pm 1$ (contradicting the finite genus). 

Section \ref{sec:converse} contains a proof of Theorem \ref{thm:intro-main}. The main idea of the proof is to combine Theorem \ref{thm:intro-arclemma} with arguments from Brouwer theory which, together with a theorem of Jaulent on maximal unlinked sets, lead to the existence of translation arcs near a fixed point under certain conditions. 

In Section \ref{sec:main-gen} we prove Theorems \ref{thm:intro-main-gen-NW} and \ref{thm:intro-main-gen}, and a version of the same theorems for arbitrary surfaces of finite genus is also stated. The proof is based on applying Theorem \ref{thm:intro-main} on a lift of $f$ to the universal covering; however additional nontrivial arguments are required to deal with the fact that $f$ is not necessarily homotopic to the identity, and to prove the contractibility of the boundary of $U$.

Section \ref{sec:general} generalizes the results from Section \ref{sec:main-gen} to an arbitrary open set $U$: assuming there is a regular end (or ideal boundary point, as defined in \ref{sec:prelim}) of $U$ which is fixed by $f$ and has irrational prime ends rotation number, one obtains information about the \emph{impression} of the given end analogous to the one given for the boundary of $U$ in the simply connected case. The main idea of the proof consists in modifying the surface to reduce the problem to one in which $U$ is simply connected. Theorem \ref{thm:intro-arclemma} is also stated in this context.

Section \ref{sec:mather} is devoted to applications of the main results of this article, proving Theorem \ref{t:mather}, which is in fact a simple corollary of the more general Theorem \ref{th:mather-extended} (see \S\ref{sec:mather-complementary}). Other applications are also mentioned.

Finally, the examples in Section \ref{sec:examples} show that the theorems fail without a $\bd$-nonwandering condition. We also exhibit examples with an irrational prime ends rotation number but exhibiting a fixed point (on the sphere), and an example showing that the number $N_{\alpha, g}$ depends on both $\alpha$ and $g$ in Theorem \ref{thm:intro-arclemma}.

\section{Preliminaries}
\label{sec:prelim}

Let $S$ be a surface (which is always assumed Hausdorff, connected and orientable). When $U\subset S$ and it is clear that $S$ is the underlying surface, we denote by $\ol{U}$ the closure of $U$ and $\bd U$ the boundary of $U$ in $S$. If $W\subset S$ and $U\subset W$, we write $\cl_W U$ and $\bd_W U$ for the closure and boundary (respectively) of $U$ in $W$ with the restricted topology.

A set is called an open (resp. closed) \emph{topological disk} if it is homeomorphic to the open (resp. closed) unit disk in $\R^2$. Similarly, an open (resp. closed) \emph{topological annulus} is a set homeomorphic to the annulus $\mathbb{S}^1\times (0,1)$ (resp. $\mathbb{S}^1\times [0,1]$). 

We consider an arc $\gamma$ to be a continuous map from an interval in $\R$ to $S$, and $\gamma$ is called a simple arc if it is injective, except possibly at its endpoints. If $\gamma$ is an arc, we abuse notation and also denote its image by $\gamma$; it should be clear from the context which one is the case. We say that $\gamma\colon [a,b]\to S$ is a \emph{loop} if its initial point and its end point coincide. We say that a loop is \emph{essential} in $S$ if it is not homotopic (with fixed endpoints) to a point, and an open set $U\subset S$ is essential in $S$ if it contains some loop which is essential in $S$.

A \emph{translation arc} (or \emph{$1$-translation arc}) is a compact simple arc in $S$ joining a non-fixed point $x$ to its image $f(x)$, and such that $f(\gamma)\cap \gamma = \{f(x)\}$ or $\{x, f(x)\}$ (where the latter case holds only when $f^2(x)=x$). More generally, an \emph{$N$-translation arc} is a translation arc such that, additionally, $f^k(\gamma)\cap \gamma=\emptyset$ for $2\leq k\leq N-1$, and $f^N(\gamma)\cap \gamma$ is either empty or $\{x\}$ (the latter holding only when $f^{N+1}(x)=x$).
This is equivalent to saying that the path $\prod_{0\leq k\leq N}f^k(\gamma)$ obtained by concatenation is still a simple arc.  
If $\gamma$ is an $N$-translation arc for every $N\geq 1$, we say that $\gamma$ is an \emph{$\infty$-translation arc}. This is equivalent to saying that one of the arcs (and so every arc) $\prod_{k<0}f^k(\gamma)$, $\prod_{k\geq 0}f^{-k}(\gamma)$, or $\prod_{k\in\Z}f^k(\gamma)$ is a simple arc.

Let us recall a classic result of Brouwer (see, for instance, \cite{fathi}). 

\begin{proposition}[Brouwer's lemma on translation arcs]\label{pro:brouwer-trans} Any translation arc of a fixed-point free orientation preserving homeomorphism of $\R^2$ is an $\infty$-translation arc. 
\end{proposition}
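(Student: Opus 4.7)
My plan is to proceed by contradiction, using the minimal failure-witness. Suppose $\gamma$ is a $1$-translation arc for $f$ but not an $\infty$-translation arc, and let $N\geq 2$ be the least integer such that $\gamma$ fails to be an $N$-translation arc. By the definition, the concatenation $\Gamma_{N-1}=\gamma\cup f(\gamma)\cup\cdots\cup f^{N-1}(\gamma)$ is a simple arc from $x$ to $f^N(x)$, while $f^N(\gamma)$ meets $\gamma$ at a point that is not the allowed endpoint. If the latter intersection is forced by $f^{N+1}(x)=x$, then $\Gamma_N=\Gamma_{N-1}\cup f^N(\gamma)$ is already a Jordan curve, invariant under $f$, so Brouwer's fixed point theorem applied to the closed disk it bounds produces a fixed point of $f$, contradicting the hypothesis. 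So henceforth assume we are in the generic case.

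First I would carry out a bookkeeping computation constraining the intersection of $f^N(\gamma)$ with $\Gamma_{N-1}$. For each $1\leq k\leq N-1$ one has $f^N(\gamma)\cap f^k(\gamma)=f^k\bigl(f^{N-k}(\gamma)\cap\gamma\bigr)$; by minimality of $N$ this is empty for $2\leq N-k\leq N-1$, and by the translation-arc hypothesis it equals $\{f^N(x)\}$ when $N-k=1$. Hence $f^N(\gamma)\cap\Gamma_{N-1}=\{f^N(x)\}\cup\bigl(f^N(\gamma)\cap\gamma\bigr)$, so any genuine new crossing lies in $\gamma$. I would then select the first-return point $y_0=f^N(\gamma)(t_0)$ with $t_0=\inf\{t>0:f^N(\gamma)(t)\in\gamma\}$, obtaining $y_0\in\gamma\setminus\{f^N(x)\}$ and an initial subarc $\sigma\subset f^N(\gamma)$ from $f^N(x)$ to $y_0$ whose interior is disjoint from $\Gamma_{N-1}$.

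Next, I would splice a Jordan curve $J$ from three pieces: $\sigma$, the subarc $\gamma[y_0,f(x)]$, and the subarc $f(\gamma)\cup\cdots\cup f^{N-1}(\gamma)$ of $\Gamma_{N-1}$ from $f(x)$ back to $f^N(x)$. The bookkeeping guarantees these three pieces meet only at their common endpoints, so $J$ is a Jordan curve and by Schoenflies bounds a closed topological disk $\ol D$. Writing $\alpha=\gamma[x,y_0]$ for the remaining piece of $\gamma$, its interior lies in exactly one component of $\R^2\setminus J$; and since $f^k(\alpha)\subset f^k(\gamma)\subset J$ for all $1\leq k\leq N-1$, a local orientation analysis at $y_0$ together with the identification of which component contains $\alpha$ should force an inclusion $f(\ol D)\subseteq\ol D$ or $f^{-1}(\ol D)\subseteq\ol D$. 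Brouwer's fixed point theorem then yields a fixed point of $f$ (or of $f^{-1}$, hence of $f$) in $\ol D$, contradicting the hypothesis.

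The main obstacle will be this last step: establishing the trapping inclusion requires a delicate local analysis at $y_0$---distinguishing the transverse from the tangential crossing of $f^N(\gamma)$ with $\gamma$ and using orientation preservation to decide on which side of $J$ the arc $\alpha$ lies---together with a global check that the iterates of $\alpha$ provide the only ``return route'' through $J$. Degenerate subcases such as $y_0\in\{x,f(x)\}$ demand minor adjustments but the overall scheme persists.
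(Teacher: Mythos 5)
There is a genuine gap, and it sits exactly at the step you defer to the end. The paper does not prove this proposition at all: it is quoted as a classical result of Brouwer, with \cite{fathi} given as a reference, and every known proof (Brouwer's original, Fathi's, Guillou's, Franks') hinges on a fixed-point \emph{index} computation along a carefully constructed simple closed curve, not on a trapping region. Your plan is to force an inclusion $f(\ol D)\subseteq \ol D$ or $f^{-1}(\ol D)\subseteq \ol D$ from a ``local orientation analysis at $y_0$,'' but no such inclusion follows from the configuration: $f(J)$ involves $f^{N+1}(\gamma)$ and the part of $f^N(\gamma)$ beyond $y_0$, about which you have no information, so $f(\ol D)$ can overlap $\ol D$ without either containment holding. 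Worse, the inclusion you are hoping to establish is precisely the kind of statement the theorem shows cannot happen for a fixed-point-free map (any forward-invariant closed disk gives a fixed point by Brouwer's fixed point theorem), and there is no elementary local argument that produces it; the actual content of Brouwer's lemma is that the index of $f$ along a curve spliced from the iterates of $\gamma$ equals $+1$, which uses orientation preservation globally, not just at the splice point.

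Your ``easy'' degenerate case has the same problem in miniature. From $f^{N+1}(x)=x$ you do get that $\Gamma_N$ is a Jordan curve, but it is not $f$-invariant: $f(\Gamma_N)$ contains $f^{N+1}(\gamma)$, which is an arc from $x$ to $f(x)$ that need not coincide with $\gamma$, so the bounded disk need not be invariant and Brouwer's fixed point theorem does not apply directly. What you actually obtain there is a periodic point, and the statement ``a fixed-point-free orientation-preserving homeomorphism of $\R^2$ has no periodic points'' is essentially equivalent to the lemma you are trying to prove and again requires the index argument (this is exactly the route Fathi takes: close up an orbit to create a periodic point, then show a periodic point forces a fixed point via an index computation). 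So the bookkeeping in your second paragraph is fine, but the proposal is missing the one idea that makes the theorem true.
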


\subsection{Continua}\label{sec:continua}

A \emph{continuum} is a compact connected set with more than one point. 
We say that the continuum $K\subset S$ is 
\begin{itemize}
\item \emph{cellular} if there is a sequence $(D_n)_{n\in \N}$ of closed topological disks such that $\inter(D_{n+1})\subset D_n$ and $K = \bigcap_{n\in \N} D_n$;
\item \emph{annular} if there is a sequence $(A_n)_{n\in \N}$ of closed topological annuli such that $A_{n+1}$ is contained in the interior of $A_n$ and separates its boundary components, and $K=\bigcap_{n\in \N} A_n$;
\item \emph{contractible in $S$} if there is a continuous map $H\colon K\times [0,1]\to S$ and $x_0\in S$ such that $H(x,0) = x$ and $H(x,1) = x_0$ for all $x\in K$.
\end{itemize}

Let us note the following equivalent properties.
\begin{itemize}
\item $K$ is contractible in $S$ if and only if it has a neighborhood $U\subset S$ homeomorphic to an open disk;
\item $K$ is cellular if and only if it is contractible and non-separating;
\item $K$ is annular if and only if it has a neighborhood $U$ homeomorphic to an open annulus such that $U\sm K$ has exactly two components, both essential in $U$. 
\end{itemize}

\subsection{Topological ends (ideal boundary points)}

If $S$ is a non-compact surface, a \emph{boundary representative} of $S$ is a sequence $P_1\supset P_2\supset\cdots$ of connected unbounded (\ie not relatively compact) open sets in $S$ such that $\bd_S P_n$ is compact for each $n$ and for any compact set $K\subset S$, there is $n_0>0$ such that $P_n\cap K=\emptyset$ if $n>n_0$ (here we denote by $\bd_S P_n$ the boundary of $P_n$ in $S$). Two boundary representatives $(P_i)_{i\in \N}$ and $(P_i')_{i\in \N}$ are said to be equivalent if for any $n>0$ there is $m>0$ such that $P_m\subset P_n'$, and vice-versa. The \emph{ideal boundary} $\mathrm{b_I}S$ of $S$ is defined as the set of all equivalence classes of boundary representatives. Denote by $\cIB S$ the space $S\cup \bdIB S$ with the topology generated by open sets of $S$ together with sets of the form $V \cup V'$, where $V$ is an open set in $S$ such that $\bd_S V$ is compact, and $V'$ denotes the set of elements of $\bdIB S$ which have some boundary representative $(P_i)_{i\in \N}$ such that $P_i\subset V$ for all $i\in \N$. We call $\cIB{S}$ the \emph{ends compactification} or \emph{ideal completion} of $S$.

Any homeomorphism $f\colon S\to S$ extends to a homeomorphism $\til{f}\colon \cIB {S}\to \cIB {S}$ such that $\til{f}|_{S} = f$. If $S$ is orientable and has finite genus, then $\cIB{S}$ is a closed orientable surface of the same genus, and $\bdIB S$ is a totally disconnected closed set. See \cite{richards} for details. 

If $U$ is an open subset of a surface $S$, and $\mathfrak{p}\in \bdIB U$ is an ideal boundary point, we define its \emph{impression} in $S$ by 
$$Z(\mathfrak{p})= \bigcap \{ \cl_S (V\cap U) : V\text{ open in } \cIB U, \, \mathfrak{p}\in V\}.$$
Note that $Z(\mathfrak{p})\subset \bd_S U$. 

Assuming $S$ has finite genus and $U\subset S$, we say that $\mathfrak{p}$ is a $\emph{regular}$ ideal boundary point of $U\subset S$ if $\mathfrak{p}$ is isolated in $\bdIB U$ and $Z(\mathfrak{p})$ has more than one point.

Given a regular end $\mathfrak{p}$ of $U$, we say that $A\subset U$ is a $\mathfrak{p}$-collar if $A\cup\{\mathfrak{p}\}\subset \cIB U$ is a closed topological disk.

\subsection{Some general results}

Let us state a result that allows us to make `surgery' arguments in several places. It can be seen as a refined version of Schoenflies' theorem. Its proof can be found in \cite[\S 2]{epstein}. We  give a simplified statement, since we consider boundaryless surfaces only.

\begin{theorem}[Epstein, \cite{epstein}]\label{th:epstein} Let $\gamma_1,\gamma_2\colon \mathbb{S}^1\to S$ be homotopic simple loops in a surface $S$ on which none of the two loops bounds a disk. Then there is an isotopy $(h_t)_{t\in [0,1]}$ from the identity on $S$ to some map $h=h_1$ such that $h\circ \gamma_1 = \gamma_2$, and the isotopy is fixed (pointwise) outside some compact set.
\end{theorem}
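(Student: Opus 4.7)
The plan is to show that $\gamma_1$ and $\gamma_2$ cobound an embedded closed annulus in $S$, and then to build the required ambient isotopy from a parametrization of a collar neighborhood of that annulus.

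First, I would place $\gamma_1$ and $\gamma_2$ in transverse position by a compactly supported ambient isotopy (which is harmless, since it composes with the final isotopy). This leaves only finitely many intersection points.

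Second, while $\gamma_1 \cap \gamma_2 \neq \emptyset$, I would reduce the number of intersections by the standard innermost-bigon argument. A \emph{bigon} is an embedded closed disk whose boundary consists of one subarc of $\gamma_1$ and one subarc of $\gamma_2$ meeting at two points of $\gamma_1 \cap \gamma_2$. The bigon criterion on surfaces guarantees that such a bigon exists whenever two transverse homotopic simple loops intersect and neither bounds a disk, as can be seen by passing to the annular covering $\til S \to S$ associated to $\langle[\gamma_1]\rangle$: both loops admit simple essential lifts $\til\gamma_1,\til\gamma_2$, and because each loop is simple, distinct translates of its lift are pairwise disjoint; the loops $\til\gamma_1$ and $\til\gamma_2$ represent the same generator of $\pi_1(\til S)$, so they cobound an annular strip $\til A \subset \til S$, and any intersection inside $\til A$ produces an innermost bigon that projects injectively to $S$. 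A compactly supported isotopy pushing $\gamma_1$ across such an innermost bigon strictly decreases $\card(\gamma_1 \cap \gamma_2)$, so after finitely many steps we may assume the loops are disjoint.

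Third, with the loops disjoint, the strip $\til A$ cobounded by $\til\gamma_1$ and $\til\gamma_2$ in $\til S$ projects injectively to $S$: any intermediate translate of $\gamma_1$ lying in $\til A$ would force $\gamma_1$ to intersect $\gamma_2$ downstairs, contradicting the just-achieved disjointness. This produces an embedded closed annulus $A \subset S$ with $\bd A = \gamma_1 \cup \gamma_2$. Fourth, I would take an open annular neighborhood $V$ of $A$, parametrize it as $\mathbb{S}^1 \times (-\epsilon, 1+\epsilon)$ with $\gamma_1 \subset \mathbb{S}^1 \times \{0\}$ and $\gamma_2 \subset \mathbb{S}^1 \times \{1\}$, and construct an explicit isotopy of $V$ fixed near $\bd V$ that slides $\mathbb{S}^1 \times \{0\}$ onto $\mathbb{S}^1 \times \{1\}$, composed with a rotation of the $\mathbb{S}^1$-factor chosen to match the two parametrizations. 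Extending by the identity on $S \sm V$ yields the desired compactly supported $(h_t)$ with $h_1 \circ \gamma_1 = \gamma_2$.

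The main obstacle is the bigon step together with the cobounding-annulus step. The non-disk-bounding hypothesis enters essentially in both: otherwise the lifts in the annular covering may fail to be essential, the strip $\til A$ need not be embedded after projection, and the homotopy between two nullhomotopic simple loops need not come from an ambient isotopy (\eg on the sphere two disjoint simple loops bound different disks, which is precisely why the sphere case is excluded from the statement).
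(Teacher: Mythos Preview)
The paper does not supply its own proof of this statement; it merely cites \cite[\S2]{epstein}. So there is nothing in the paper to compare against, and your sketch should be judged on its own. The route you take---transverse position, bigon elimination, then cobounding annulus, then slide---is the standard one and is essentially how Epstein's argument proceeds.

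Two small imprecisions are worth flagging, though neither is a genuine gap. First, your justification that the strip $\til A$ in the annular cover projects injectively is not quite right as stated: an ``intermediate translate of $\gamma_1$'' inside $\til A$ does not directly force $\gamma_1\cap\gamma_2\neq\emptyset$ downstairs, since all lifts of $\gamma_1$ are already pairwise disjoint by simplicity. The clean argument is an innermost one in the universal cover: if $p|_{\til A}$ fails to be injective, some deck translate of a lifted boundary component lies strictly inside the strip, yielding a strictly thinner cobounding strip; iterate and conclude by discreteness (or simply invoke the standard fact, proved in Epstein, that disjoint homotopic non-disk-bounding simple loops cobound an embedded annulus). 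Second, after sliding across the annulus you obtain $h\circ\gamma_1=\gamma_2\circ\phi$ for some orientation-preserving circle homeomorphism $\phi$, not necessarily a rotation; but since every such $\phi$ is isotopic to the identity, a further collar-supported ambient isotopy corrects the parametrization.
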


We also recall a useful extension theorem, a proof of which can be found in \cite[Theorem 2.1]{leroux-extension} (see also \cite{hamilton}).
\begin{theorem}[Extension Theorem]\label{th:extension} Let $W$ be a connected open subset of $\R^2$ and $H\colon W\to W'\subset \R^2$ a homeomorphism with a unique fixed point $z_0$. Then there is a homeomorphism $H'\colon \R^2\to \R^2$ which coincides with $H$ in a neighborhood of $z_0$ and such that $z_0$ is the unique fixed point of $H'$.
\end{theorem}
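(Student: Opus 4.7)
The plan is to keep $H' = H$ on a small closed topological disk around $z_0$ and to extend to a fixed-point-free homeomorphism of $\R^2$ in its complement via a Schoenflies-type surgery.

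First, by the uniqueness hypothesis and continuity of $H$, I choose a closed topological disk $D_0 \subset W$ with $z_0 \in \inter D_0$ small enough that $z_0$ remains the unique fixed point of $H$ on $\ol{D_0}$. Then $H|_{\bd D_0}\colon \bd D_0 \to \bd H(D_0)$ is a homeomorphism between two Jordan curves with $H(x)\neq x$ for every $x\in \bd D_0$, and by compactness there is $\delta>0$ with $\abs{H(x)-x}\geq \delta$ on $\bd D_0$. The task then reduces to constructing a homeomorphism $\Phi\colon \R^2\sm\inter D_0 \to \R^2\sm\inter H(D_0)$ which extends $H|_{\bd D_0}$ and satisfies $\Phi(x)\neq x$ throughout its domain: defining $H'$ to be $H$ on $D_0$ and $\Phi$ on the complement then yields a global homeomorphism of $\R^2$ whose only fixed point is $z_0$.

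To construct $\Phi$, fix a large closed disk $D_1$ with $D_0\cup H(D_0)\subset \inter D_1$, and choose a fixed-point-free homeomorphism $\rho\colon \bd D_1\to \bd D_1$ (for example, an irrational rotation). Extending $\rho$ radially outward from $\bd D_1$ produces a fixed-point-free self-homeomorphism $\Phi_{\mathrm{out}}$ of $\R^2\sm\inter D_1$. On the compact annular region $\ol{D_1}\sm\inter D_0$, use the Jordan--Schoenflies theorem for annuli to build a homeomorphism onto $\ol{D_1}\sm\inter H(D_0)$ extending $H|_{\bd D_0}$ on the inner boundary and $\rho$ on the outer boundary; glue this to $\Phi_{\mathrm{out}}$ along $\bd D_1$ to obtain $\Phi$.

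The main obstacle is ensuring that the interpolation on the compact annulus is fixed-point-free. Both prescribed boundary maps are fixed-point-free, but a generic Schoenflies interpolation may acquire interior fixed points (necessarily lying in the intersection of the two annular regions in $\R^2$). The key input is the uniform separation $\delta$ together with the flexibility in choosing the outer boundary $\rho$: selecting $\rho$ so that the lifts of the two angular displacements $H|_{\bd D_0}-\mathrm{id}$ and $\rho-\mathrm{id}$ are homotopic through nowhere-zero maps into $\R/2\pi\Z\sm\{0\}$ allows the interpolation to be realised as a continuous angular shift that is nowhere equal to the identity. Any remaining isolated fixed points are removed by a local surgery: one composes $\Phi$ with a homeomorphism supported in a small disk around each such point that translates it off itself, which does not introduce new fixed points thanks to the uniform distance of $\Phi$ from the identity. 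Once this cleanup is complete, the resulting $\Phi$ has no fixed points and the construction of $H'$ is complete.
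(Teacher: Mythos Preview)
The paper does not prove this theorem; it merely cites \cite[Theorem 2.1]{leroux-extension} and \cite{hamilton}. So there is no ``paper's own proof'' to compare against, and the question is whether your argument stands on its own.

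The outline is reasonable up to the point where you must make the annular interpolation $\Phi$ fixed-point-free, but the two devices you invoke there do not work as written.

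First, the ``angular displacement'' argument is not well posed. The map $H|_{\bd D_0}$ sends $\bd D_0$ to a \emph{different} Jordan curve $\bd H(D_0)$, so $H|_{\bd D_0}-\id$ is a map $\bd D_0\to\R^2\sm\{0\}$, not into $\R/2\pi\Z\sm\{0\}$. One can match winding numbers of $H|_{\bd D_0}-\id$ and $\rho-\id$ by choosing $\rho$, but a homotopy of the boundary displacement through nowhere-zero vectors does not by itself produce a fixed-point-free \emph{homeomorphism} between the two annuli; it only tells you that the total Lefschetz index can be arranged to vanish. Index zero does not preclude fixed points.

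Second, the ``local surgery'' cleanup is circular. You appeal to a ``uniform distance of $\Phi$ from the identity'' to argue that composing with a small compactly supported homeomorphism near a fixed point creates no new ones, but at a fixed point $x$ one has $\abs{\Phi(x)-x}=0$, so there is no such uniform lower bound in a neighborhood of $x$. Moreover, you have not shown that the fixed-point set is isolated, or even finite; a generic Schoenflies extension could have a continuum of fixed points.

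The actual content of the extension theorem is precisely this step: producing a fixed-point-free extension outside a disk. The references cited by the paper handle it by more delicate constructions (in Le Roux's approach, via Brouwer-theoretic tools and an explicit model near the boundary), not by a post-hoc perturbation. Your write-up identifies the correct obstacle but does not overcome it.
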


Finally, we state a result for future reference. By an \emph{aperiodic} set we mean one which does not contain a periodic point. 

\begin{theorem}[\cite{koro}]\label{th:koro} If $f\colon S\to S$ is a nonwandering homeomorphism of a closed orientable surface and $K\subset S$ is an aperiodic invariant continuum, then $K$ is annular.
\end{theorem}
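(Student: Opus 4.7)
The plan is to analyze the connected components of $S\sm K$ (the \emph{complementary domains}) and combine prime ends theory with Cartwright--Littlewood to rule out every topological type except the annular one.

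First, since $K$ is $f$-invariant, $f$ permutes the components of $S\sm K$. I would show that every such component $U$ is eventually periodic. Indeed, if the forward orbit $\{f^n(U)\}$ were pairwise disjoint and infinite, pick $x\in U$ with small neighborhood $V\subset U$; nonwandering forces $f^n(V)\cap V\ne\emptyset$ for some $n>0$, and together with the permutation property this returns $U$ to itself. The same kind of argument (an accumulation/pigeon-hole on the compact surface of finite type $S$) limits the number of components with ``essential'' topology in $S$ to finitely many.

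Next, for each periodic complementary component $U$ of period $p$, apply the Cartwright--Littlewood theorem to $g=f^p|_{\ol U}$ in the $\bd$-nonwandering setting (which follows from $f$ being nonwandering on the closed $S$, as will be established in \S\ref{sec:bdnw}): if $\rho(g,U)\in\Q$, then $\bd U\subset K$ would contain a periodic point of $f$, contradicting that $K$ is aperiodic. Hence $\rho(g,U)$ is irrational for \emph{every} periodic complementary component $U$.

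The third step is to use the irrationality of these rotation numbers, together with the permutation structure, to constrain the topology of $K$. I would rule out the non-annular options:
\begin{itemize}
\item If $K$ is cellular in $S$, choose a small closed disk $D\supset K$ with $f^q(D)\subset \inter D$ for some $q$; then $f^q|_D$ is a disk homeomorphism, and the Brouwer fixed point theorem combined with the prime ends rotation number on the unique unbounded complementary side forces a fixed point in $K$, contradiction.
\item If $K$ has three or more complementary domains whose ``sides'' of $K$ are essential, one can pair up invariant prime end circles on different sides. Matching the rotation dynamics on these circles via the permutation action gives periodic prime ends whose impressions must meet, yielding a periodic point in $K$.
\end{itemize}
Hence $K$ has exactly two essential sides with irrational prime ends rotation numbers. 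Gluing small invariant neighborhoods coming from the two prime ends compactifications (via a Schoenflies-type surgery, \cf Theorem \ref{th:epstein}) yields an annular neighborhood basis, proving $K$ is annular.

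The main obstacle will be Step~3 together with the possible accumulation of infinitely many small (cellular) complementary domains on $K$, as happens in pseudo-arc examples: one must verify that these small domains, although numerous, are all inessential and permuted in a way compatible with an annular tubular structure. The control comes from the finite genus of $S$, forcing all but finitely many orbits of complementary domains to be cellular, plus the observation that a cellular periodic complementary domain would, by the same Cartwright--Littlewood argument, produce a periodic point in $K$ unless its closure is disjoint from the ``outer'' structure of $K$.
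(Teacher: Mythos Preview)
The paper does not actually prove this theorem: it is quoted from \cite{koro} as a black box and used later (in Corollaries \ref{coro:main-gen} and \ref{coro:general} and in Theorem \ref{th:mather-extended}). So there is no proof in the paper to compare against; the question is simply whether your argument stands on its own.

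It does not, and the failure is concentrated in Step~3, which is where all the content lies. Two concrete problems:

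\smallskip
\noindent\textbf{(a) The cellular case is argued incorrectly.} You write ``choose a small closed disk $D\supset K$ with $f^q(D)\subset \inter D$ for some $q$''. Such a $D$ cannot exist when $f$ is nonwandering: if $f^q(D)\subset \inter D$ strictly, then $D\sm f^q(D)$ is a nonempty open set whose forward $f^q$-iterates are pairwise disjoint, so it is wandering. The correct way to exclude the cellular case is a fixed-point/index argument (Cartwright--Littlewood on the plane after passing to a disk chart, or Lefschetz), not a trapping region.

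\smallskip
\noindent\textbf{(b) The ``three or more essential sides'' paragraph is not an argument.} Phrases like ``matching the rotation dynamics on these circles via the permutation action gives periodic prime ends whose impressions must meet'' do not correspond to any mechanism that actually produces a periodic point. Distinct ends of distinct complementary domains have a priori unrelated prime-ends dynamics; nothing you have said forces a rational rotation number or an accessible periodic point anywhere. This is precisely the hard part of the theorem: controlling the global topology of $K$ (in particular excluding the possibility that $K$ carries genus, or that $S\sm K$ has a component that is neither a disk nor an annulus) requires genuine input beyond Cartwright--Littlewood on each end separately. The actual proof in \cite{koro} uses Lefschetz--Nielsen fixed-point theory and an index count over all complementary domains and all ends simultaneously; your outline has no substitute for that global step.

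\smallskip
Steps~1 and~2 are fine (and Step~2 is a legitimate use of the Cartwright--Littlewood theorem proved in \S\ref{sec:CL}), but they only tell you that every end of every complementary domain has irrational rotation number. Getting from there to ``$K$ is annular'' is the whole theorem, and your Step~3 does not do it.
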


%XXX missing: collapsing theorem (moore) and fixing the corresponding proofs.

\section{Prime ends compactifications}
\label{sec:prime}

From now on, whenever we say that $S$ is a surface this will mean that $S$ is a connected orientable surface of finite genus. 

Let us recall some facts and definitions from Carath\'eodory's prime ends theory. For more details of the classic theory, we refer the reader to \cite{mather-caratheodory} and \cite{mather-area}; however, we will need some work to generalize the theory to our context and to find a relation between prime ends compactifications on different spaces. In particular, we define prime ends for non-relatively compact sets and obtain the classic results in that context.

Let $U\subset S$ be a simply connected open subset of the surface $S$ such that $S\sm U$ has more than one point. An \emph{end-cut} of $U$ in $S$ is an arc $\gamma\colon [0,1)\to U$ such that $\lim_{t\to 1} \gamma(t)=x$ for some $x\in \bd U$ (so that $\gamma$ extends continuously to an arc $[0,1]\to \ol{U}$ with $\gamma(1)=x\in \bd U$).  A point $z\in \bd U$ is \emph{accessible} (from $U$) if it is the endpoint of some end-cut in $U$.

A \emph{cross-cut} of $U$ in $S$ is the image of a simple arc $\gamma\colon (0,1)\to U$ that extends to an arc $\ol{\gamma}\colon[0,1]\to \ol{U}$ joining two points of $\bd U$, and such that each of the two components of $U\sm \gamma$ has some boundary point in $\bd U\sm \ol{\gamma}$. 
Note that the endpoints of a cross-cut are accessible points. A \emph{cross-section} of $U$ in $S$ is any connected component of $U\sm \gamma$ for some cross-section $\gamma$ of $U$ in $S$. Note that each cross-cut corresponds to exactly two cross-sections, which are topological disks.

A \emph{chain} for $U$ in $S$ is a sequence $\mc{C} = (D_n)_{n\in \N}$ of cross-sections such that $D_i \subset D_j$ for all $i\geq j\geq 1$ and $\ol{\bd_U D_i} \cap \ol{\bd_U D_j}=\emptyset$ for all $i\neq j$.  If $D$ is any cross-section of $U$, we say that the chain $\mc{C}$ \emph{divides} $D$ if $D_i\subset D$ for all sufficiently large $i$. 
If $\mc{C}'=(D_n')_{\in \N}$ is another chain, we say that $\mc{C}$ \emph{divides} $\mc{C}'$ if for each $n>0$ there is $m$ such that $D_m \subset D_n'$. We say that $\mc{C}$ and $\mc{C}'$ are equivalent if $\mc{C}$ divides $\mc{C}'$ and $\mc{C}'$ divides $\mc{C}$. A chain $\mc{C}$ is called a \emph{prime chain} if $\mc{C}$ divides $\mc{C}'$ whenever $\mc{C}'$ is a chain that divides $\mc{C}$. An equivalence class of prime chains is called a \emph{prime end} of $U$. 

\begin{remark} Our definition of prime end is basically the same as the one in \cite{mather-caratheodory}, except that we use cross-sections instead of arbitrary topological disks. This makes no difference in practice, since one can (as Mather does) easily show that a chain of the more general type is always divided by a chain as defined here, and vice-versa. Moreover, in \cite{mather-caratheodory} it is required that $U$ be relatively compact, but this will not be necessary in our case.
\end{remark}

Denote by $\bdPE(U) = \bdPE(U,S)$ the set of all prime ends of $U$, and $\cPE(U) = \cPE(U,S) = U\cup \bdPE(U)$. For a cross-section $D$ of $U$, we say that the prime end $p\in \bdPE(U)$ \emph{divides} $D$ if some (hence any) chain representing $p$ divides $D$. Denoting by $\iPE_{U,S}(D)$ the set of all prime ends that divide $D$, we can topologize $\cPE(U)$ by defining a basis of open sets consisting of all sets of the form $D\cup \iPE_{U,S}(D)$ for some cross-section $D$ of $U$, together with all open subsets of $U$.

\begin{remark} Note that even though a cross-section is a subset of $U$, its definition depends not only on $U$ but also on the surface $S$: if $S'\subset S$ is an open set containing $U$, some cross-sections of $U$ as a subset of $S$ may cease to be cross-sections when $U$ is regarded as a subset of $S'$ (this happens when an endpoint of the corresponding cross-cut lies outside $S'$). This is why we use the cumbersome notation $\cPE(U,S)$ when there is need to emphasize what the underlying surface is. 
\end{remark}

The next theorem gives a useful relation between $\cPE(U,S)$ and $\cPE(U,S')$, while it also extends what is known in the relatively compact case to the general case. Since its proof is somewhat lengthy and not central to this article, we present its proof in the Appendix.

\begin{theorem}[Prime ends compactification]\label{th:prime-equiv} The following properties hold:
\begin{enumerate}
\item[(1)] $\bdPE(U,S)$ is homeomorphic to a circle $\mathbb{S}^1$, and $\cPE(U,S)$ is homeomorphic to the closed disk $\ol{\D}$. 
\item[(2)] Let $S_0\subset S$ be an open connected set such that $U\subsetneq S_0$. Then the inclusion $i\colon U \to \cPE(U,S_0)$ extends to a monotone surjection $i_*\colon \cPE(U,S)\to \cPE(U,S_0).$
\item[(3)] If $S\sm S_0$ is totally disconnected, then the inclusion $i\colon U\to \bdPE(U,S_0)$ extends to a homeomorphism $i_*\colon \cPE(U,S) \to \cPE(U,S_0)$.
\end{enumerate}
\end{theorem}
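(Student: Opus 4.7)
The plan is to prove the three parts in the order (3), (1), (2), since the reduction to a closed ambient surface provided by (3) is the cleanest route to (1), and part (2) requires more delicate bookkeeping that is easier once (1) is in hand.

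For (3), I will exploit the following point-set fact: if $T \subset S$ is closed and totally disconnected in a surface $S$, then every point $z\in T$ admits arbitrarily small open disk neighborhoods whose boundary circle is disjoint from $T$ (this is standard, using that $T$ has a basis of compact clopen sets in $S$). Using this, I plan to show that every cross-cut $\gamma$ of $U$ in $S$ whose closure meets $S\sm S_0$ at an endpoint $z$ can be trimmed near $z$ to a subarc that is a cross-cut of $U$ in $S_0$. Indeed, pick a small disk $W$ around $z$ with $\bd W\cap(S\sm S_0)=\emptyset$, so $\bd W\subset S_0$; the arc $\gamma$ enters $W$ for the last time through a point of $\bd W\subset S_0$, and that subarc gives a cross-cut in $S_0$ whose cross-section is contained in (and shares a defining chain with) that of $\gamma$. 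This trimming procedure turns any prime chain in $S$ into an equivalent prime chain in $S_0$, and vice versa the identity map on $U$ carries prime chains in $S_0$ into prime chains in $S$ tautologically. Verifying that the induced map is a bijection that preserves the bases of open sets then yields the homeomorphism.

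For (1), I use that $S$ has finite genus, so $\cIB S$ is a closed orientable surface of the same genus and $\cIB S\sm S=\bdIB S$ is totally disconnected (quoted from the Preliminaries). By part (3) applied to $S\subset \cIB S$, we have a homeomorphism $\cPE(U,S)\cong\cPE(U,\cIB S)$, so it suffices to prove (1) when the ambient surface $T$ is closed and $U\subsetneq T$. If $T=\SS$ this is the classical theorem of Carath\'eodory. If $T$ has positive genus, its universal cover is $\R^2$; since $U$ is simply connected it lifts homeomorphically to an open $\til U\subset \R^2$, and cross-cuts, cross-sections and chains of $U$ in $T$ lift bijectively to those of $\til U$ in $\R^2$ (a lifted cross-cut has both endpoints on $\bd\til U$ because deck transformations act freely and discontinuously away from $\til U$). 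Thus $\cPE(U,T)\cong \cPE(\til U,\R^2)$, and the classical theorem finishes the argument. I would also verify metrizability and compactness of $\cPE(U,S)$ directly, so that a continuous bijection from $\ol\D$ is automatically a homeomorphism.

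For (2), the inclusion $U\hookrightarrow U$ needs to be extended to $i_*\colon \cPE(U,S)\to\cPE(U,S_0)$. I will assign to a prime end $p$ of $(U,S)$, represented by a prime chain $(D_n)$ in $S$, the unique prime end of $(U,S_0)$ whose every cross-section contains some $D_n$; existence and uniqueness follow because $(D_n)$ is a decreasing sequence of simply connected open subsets of $U$ with $\bigcap_n \ol{\bd_U D_n}$ contained in $\bd U$, so by part (1) applied to $(U,S_0)$ the intersection $\bigcap_n \iPE_{U,S_0}(D_n^0)$ of the corresponding open sets in the disk $\cPE(U,S_0)$ is a single prime end (or an arc collapsing to one if $(D_n)$ is not itself a chain in $S_0$). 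Continuity is checked against the cross-section basis, and surjectivity comes from the fact that every cross-cut of $U$ in $S_0$ is already a cross-cut in $S$. Monotonicity then amounts to showing that the preimage of a prime end of $(U,S_0)$ is a connected subarc of $\bdPE(U,S)\cong\mathbb{S}^1$, which I would prove by a cyclic order argument: if $p_1,p_2\in i_*^{-1}(q)$ and $p_3$ lies between them on $\bdPE(U,S)$, then a chain representing $q$ separates $p_1$ from $p_2$ inside $\cPE(U,S)$ and must therefore also separate $p_3$ from one of them, forcing $i_*(p_3)=q$.

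The main obstacle I expect is (3), specifically producing the trimmed cross-cut with control on its endpoint and verifying that the induced correspondence of chains respects the equivalence relation (non-trivial since a chain that is prime in $S_0$ may a priori be refinable in $S$ using cross-cuts with endpoints in $S\sm S_0$; one must rule this out, again via the trimming lemma). The rest is structural and follows from the classical theory once we are on a closed ambient surface.
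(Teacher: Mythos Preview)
Your overall strategy---prove (3), then (1), then (2)---matches the paper's. Your approach to (3) via ``trimming'' cross-cuts near endpoints in $S\setminus S_0$ is essentially the paper's ``perturb the endpoints'' argument; both are left sketchy, but the core idea (cross-sections in $S$ and in $S_0$ are cofinal in each other) is the same. Your route to (1) via lifting to the universal cover is a valid alternative to the paper's direct citation of Mather for the relatively compact case; the lifting works (a lifted cross-cut is still a cross-cut because an arc does not separate $\R^2$, so neither lifted cross-section can have boundary contained in the lifted arc alone), though you have not fully justified the bijection on cross-cuts in both directions.

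The real issue is in (2). Your proposed definition of $i_*(p)$---the unique prime end of $(U,S_0)$ whose every cross-section contains some $D_n$---is equivalent to requiring $\bigcap_n \cl_{\cPE(U,S_0)} D_n$ to be a single point. Compactness from (1) only tells you this intersection is a nonempty closed connected subset of $\bdPE(U,S_0)$, hence a closed arc or a point; ruling out a nondegenerate arc is exactly the substantive step, and your justification (``by part (1)'') does not address it. The paper handles this via an explicit Claim: every prime chain of $U$ in $S$ divides some prime chain of $U$ in $S_0$. When the principal point $x$ of $(D_n)$ lies in $S_0$ this is immediate, but when $x\in S\setminus S_0$ one must pass to $\cIB(S_0)$, extract an ideal boundary point $\mathfrak{p}\in\bdIB(S_0)$ to which the cross-cuts converge (after a subsequence, using that the Hausdorff limit lies in the totally disconnected set $\bdIB(S_0)$), and then build a prime chain in $S_0$ from arcs of $\partial B_k\cap U$ for nested disks $B_k$ around $\mathfrak{p}$. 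This is a genuine construction that your outline omits. Your intersection approach can be salvaged---one shows that any accessible prime end in the interior of the putative arc would give a prime chain in $S$ meeting every $D_n$, contradicting Hausdorffness of $\cPE(U,S)$ unless it is equivalent to $(D_n)$, which in turn collapses the arc---but this is the same work in disguise and needs to be written out.

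Your monotonicity argument also needs a small correction: if $p_1,p_2\in i_*^{-1}(q)$, a chain $(D_m')$ representing $q$ does not \emph{separate} $p_1$ from $p_2$; rather, both lie in the interval $\iPE_{U,S}(D_m')$ for every $m$, and so does any $p_3$ between them---that is the correct reason $i_*(p_3)=q$. The paper phrases this as $i_*^{-1}(q)=\bigcap_m \iPE_{U,S}(D_m')$, a nested intersection of open intervals with strictly nested closures, hence a nonempty closed interval.
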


\subsection{Accessible prime ends and criteria for primality}  Let us introduce the following notation: if $(K_i)_{i\in \N}$ is a sequence of sets, then $K_i\to x\in S$ as $i\to \infty$ if for each neighborhood $V$ of $x$ there is $i_0$ such that $K_i\subset V$ for all $i\geq i_0$. 
We recall a useful criterion for ``primality'' in the case that $U$ is relatively compact in $S$. The results are contained in \cite[Lemmas 3.1-3.4 and Proposition 3.7]{mather-area}. 

\begin{proposition}\label{pro:prime-criterion} If $U$ is relatively compact in $S$, then
\begin{itemize}
\item[(1)] A chain of $U$ in $S$ is prime if and only if there is $x\in \bd_S U$ and some equivalent chain $(D_i)_{i\in \N}$ such that $\bd_U D_i\to x$ as $i\to \infty$;
\item[(2)] If $\gamma\colon [0,1)\to U$ is an end-cut of $U$ in $S$, then there is a prime end $p\in \bdPE(U,S)$ such that $\gamma(t)\to p$ in $\cPE(U,S)$ as $t\to 1^-$.
\end{itemize}
\end{proposition}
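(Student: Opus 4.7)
The plan is to exploit the compactness of $\ol U$ together with a surgery argument on cross-cuts. Fix an ambient metric so that $\ol U$ is totally bounded.

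For part (1), forward direction, suppose $\mc{C}=(D_n)$ is prime and assume, toward a contradiction, that $\diam(\bd_U D_n)\not\to 0$. After passing to a subsequence, $\bd_U D_n$ converges in the Hausdorff metric to a continuum $K\subset \bd U$ with $\diam K>0$. I would then construct a chain $\mc{C}'=(D_n')$ that divides $\mc{C}$ but is not equivalent to it, contradicting primality. Concretely: choose points $a_n,b_n\in \bd_U D_n$ converging to distinct points of $K$, and join them by a cross-cut $\beta_n$ of $U$ inside $D_{n-1}\sm \ol{D_{n+2}}$, arranged so that the $\ol{\beta_n}$ are pairwise disjoint; one of the two cross-sections cut off by $\beta_n$, call it $D_n'$, contains the tail $D_{n+2},D_{n+3},\ldots$ The resulting chain $(D_n')$ divides $\mc{C}$, but it cannot be divided by $\mc{C}$ because $\bd_U D_n'$ retains diameter comparable to $d(a_n,b_n)$, while the diameters of $\bd_U D_m$ do not shrink either. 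This contradicts primality, so $\diam(\bd_U D_n)\to 0$. Extracting a subsequence whose cross-cut boundaries converge to a single point $x\in \bd U$ produces an equivalent chain with the required property. For the converse, suppose $\mc{C}$ is equivalent to a chain with $\bd_U D_n\to x$. If $\mc{C}'=(D_n')$ divides $\mc{C}$, then applying the forward direction to $\mc{C}'$ (which is also prime as any finer chain inherits primality when the original was) forces $\bd_U D_n'$ to concentrate at $x$ as well; consequently, for each $n$ one has $D_m'\subset D_n$ for large $m$, so $\mc{C}$ divides $\mc{C}'$ and they are equivalent.

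For part (2), given the end-cut $\gamma$ with $\gamma(1)=x\in \bd U$, I would construct a prime chain converging to $x$ directly. Pick a decreasing sequence of closed disk neighborhoods $V_n$ of $x$ with $\diam V_n\to 0$ and $V_{n+1}\subset \inter V_n$. Since $\gamma(t)\to x$, there are $t_n<1$ with $\gamma([t_n,1))\subset V_n\cap U$. Inside $V_n\cap U$ I would select a cross-cut $\alpha_n$ of $U$ in $S$ that crosses $\gamma$ at a single point, chosen so that the $\ol{\alpha_n}$ are pairwise disjoint; this uses that accessible boundary points are dense near $x$ and that $U$ is locally path-connected in a suitable sense around each end-cut. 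Let $D_n$ be the cross-section cut off by $\alpha_n$ that contains the tail $\gamma([t_n,1))$. Then $(D_n)$ is a chain whose cross-cut boundaries lie in $V_n$, hence satisfy $\bd_U D_n\to x$. By part (1), this chain is prime and defines a prime end $p$. Finally, $\gamma(t)\to p$ in $\cPE(U,S)$, because every basic neighborhood of $p$ has the form $D\cup\iPE_{U,S}(D)$ for some cross-section $D$ divided by $p$, and then some $D_n\subset D$ contains the tail of $\gamma$.

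The main obstacle will be the surgical construction of the cross-cuts $\beta_n$ in (1) and $\alpha_n$ in (2): in each case one must produce an arc in $U$ with prescribed approximate endpoints on $\bd U$, lying in a prescribed region, and disjoint from a previously fixed finite collection of cross-cut closures. The standard way around this is to work in a local chart near an accessible boundary point and use a ``fan'' of half-open arcs running from that point into $U$, together with the fact that small discs about any $y\in K$ meet $U$ in a nonempty open set whose boundary contains accessible points of $\bd U$ arbitrarily close to $y$. Once these local constructions are in hand, the global chain assembly and the topological identification with a basic neighborhood of $p$ are routine.
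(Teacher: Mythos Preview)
The paper does not give its own proof of this proposition; it simply cites \cite[Lemmas 3.1--3.4 and Proposition 3.7]{mather-area} and moves on. So there is no ``paper's proof'' to compare against, and your attempt should be judged on its own merits.

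There are genuine gaps in your argument for part (1). In the forward direction, your construction of $(D_n')$ does not produce a contradiction: you place $\beta_n$ in $D_{n-1}\sm \ol{D_{n+2}}$ and take $D_n'$ to be the side containing $D_{n+2}$, so in particular $D_{n+2}\subset D_n'$. That means $\mc{C}$ \emph{does} divide $(D_n')$, and the two chains are equivalent---your stated reason (``$\bd_U D_n'$ retains diameter comparable to $d(a_n,b_n)$, while the diameters of $\bd_U D_m$ do not shrink either'') is not a valid obstruction to one chain dividing another. The standard way to run this argument is to pass to a subsequence so that the two endpoints of $\bd_U D_n$ converge to distinct points $a,b\in\bd U$, and then build a chain $(E_n)$ with $\bd_U E_n\to a$ that divides $\mc{C}$; the key point is that $\mc{C}$ cannot divide $(E_n)$ because every $D_m$ has a boundary endpoint near $b$, hence $D_m\not\subset E_n$ once $E_n$ is confined to a small neighborhood of $a$.

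Your converse direction is circular. You write that $\mc{C}'$ ``is also prime as any finer chain inherits primality when the original was'', but primality of $\mc{C}$ is exactly what you are trying to prove, and in any case a chain dividing a prime chain is prime only \emph{after} you know it is equivalent to the prime chain---which is again the conclusion you want. The correct argument is direct: given $\mc{C}'=(D_n')$ dividing $\mc{C}$, fix $n$ and note that $\ol{\bd_U D_n'}$ is a compact arc disjoint from $\ol{\bd_U D_{n+1}'}$; since $\bd_U D_m\to x$ and $D_m$ eventually lies in $D_{n+1}'$, for large $m$ the cross-cut $\bd_U D_m$ is contained in a small ball about $x$ missing $\ol{\bd_U D_n'}$, which forces $D_m\subset D_n'$.

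Your sketch for part (2) is along the right lines, and the obstacle you identify (producing cross-cuts in $V_n\cap U$ with pairwise disjoint closures that cross $\gamma$ once) is the real work; this is handled cleanly by taking $\alpha_n$ to be an arc of the circle $\bd V_n$ meeting $U$, which automatically gives a cross-cut with the required disjointness.
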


Parts (1) and (3) of Theorem \ref{th:prime-equiv} imply the following version of the previous result in the case that $U$ is not relatively compact:

\begin{proposition}\label{pro:prime-criterion2} The following properties hold:
\begin{itemize}
\item[(1)] A chain of $U$ in $S$ is prime if and only if there is $x\in \cIB(S)$ and some equivalent chain $(D_i)_{i\in \N}$ of $U$ in $S$ such that $\bd_U D_i\to x$ in $\cIB(S)$ as $i\to \infty$;
\item[(2)] If $\gamma\colon [0,1)\to U$ is an end-cut of $U$ in $S$, then there is a prime end $p\in \bdPE(U,S)$ such that $\gamma(t)\to p$ in $\cPE(U,S)$ as $t\to 1^-$.
\end{itemize}
\end{proposition}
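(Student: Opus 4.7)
The plan is to reduce both claims to the relatively compact case already covered by Proposition~\ref{pro:prime-criterion}, by embedding $S$ into its ends compactification $\hat S := \cIB S$. Since $S$ is a surface of finite genus, $\hat S$ is a closed orientable surface, the complement $\hat S \setminus S = \bdIB S$ is totally disconnected, and $U$ becomes relatively compact in $\hat S$ with $\hat S \setminus U$ still containing more than one point.

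Applying Theorem~\ref{th:prime-equiv}(3) with the ``bigger'' space $\hat S$ and the ``smaller'' space $S$ yields a homeomorphism
\[
  i_\ast \colon \cPE(U, \hat S) \longrightarrow \cPE(U, S)
\]
extending the identity on $U$, hence a canonical bijection between prime ends of $U$ in $S$ and in $\hat S$. Moreover, every cross-section of $U$ in $S$ is also a cross-section of $U$ in $\hat S$, and the division/equivalence relations among chains are purely set-theoretic and transfer between the two viewpoints.

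For part~(2), an end-cut $\gamma\colon [0,1) \to U$ of $U$ in $S$ with endpoint in $\bd_S U \subset \bd_{\hat S} U$ is also an end-cut of $U$ in $\hat S$. Relative compactness in $\hat S$ lets us apply Proposition~\ref{pro:prime-criterion}(2) there, obtaining $\hat p \in \bdPE(U, \hat S)$ with $\gamma(t) \to \hat p$ in $\cPE(U, \hat S)$. Continuity of $i_\ast$ together with the fact that it fixes $U$ pointwise yields $\gamma(t) \to p := i_\ast(\hat p)$ in $\cPE(U, S)$.

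For part~(1), the $(\Leftarrow)$ direction is direct: a chain $(D_i)$ of $U$ in $S$ with $\bd_U D_i \to x \in \cIB S$ is automatically a chain in $\hat S$, and Proposition~\ref{pro:prime-criterion}(1) in $\hat S$ makes it prime there; transferring back, any chain $(E_i)$ in $S$ equivalent to $(D_i)$ is prime in $\hat S$, which forces primality in $S$ since any strictly finer chain in $S$ would also be a strictly finer chain in $\hat S$. For $(\Rightarrow)$, a prime chain $(E_i)$ of $U$ in $S$ corresponds via $i_\ast$ to a prime end of $U$ in $\hat S$, and Proposition~\ref{pro:prime-criterion}(1) in $\hat S$ then furnishes an equivalent chain $(\tilde D_i)$ of $U$ in $\hat S$ with $\bd_U \tilde D_i \to x$ for some $x \in \bd_{\hat S} U \subset \cIB S$. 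The main technical obstacle, which I expect to be the most delicate step, is to replace $(\tilde D_i)$ by an equivalent chain $(D_i)$ composed of cross-sections of $U$ \emph{in $S$}: this is painless when $x \in \bd_S U$ since the cross-cuts bounding $\tilde D_i$ eventually lie in a small neighborhood of $x$ in $S$, but when $x$ is an end of $S$ one must construct cross-cuts in $S$ whose endpoints stay in $\bd_S U$ while their interiors escape every compact subset of $S$; this approximation step, carried out inside $U$, is where the lack of relative compactness genuinely shows up and where most of the work lies.
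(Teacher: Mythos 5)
You follow the same route the paper intends: Proposition \ref{pro:prime-criterion2} is stated there precisely as a consequence of parts (1) and (3) of Theorem \ref{th:prime-equiv}, i.e.\ of the identification $\cPE(U,S)\cong\cPE(U,\cIB S)$ (your $\hat S$) together with Proposition \ref{pro:prime-criterion} applied in the closed surface $\cIB S$, where $U$ is relatively compact. Your proofs of part (2) and of the ``if'' half of part (1) are complete and coincide with this reduction. Your ``painless'' claim for the case $x\in\bd_S U$ is also correct, for a reason worth recording: a cross-cut $\gamma$ of $U$ in $\cIB S$ whose closure $\ol{\gamma}$ lies in $S$ is automatically a cross-cut of $U$ in $S$, since a component of $U\sm\gamma$ with no boundary point in $\bd_S U\sm\ol{\gamma}$ would be open and closed in $S\sm\ol{\gamma}$, hence equal to $S\sm\ol{\gamma}$ (a compact arc does not separate a surface), leaving no room for the other component.

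The step you flag in the ``only if'' half of (1) --- replacing the chain $(\til D_i)$ of $U$ in $\cIB S$ with $\bd_U\til D_i\to x$ by an \emph{equivalent} chain of cross-sections of $U$ in $S$ when $x\in\bdIB S$ --- is genuinely needed: equivalence of chains does not by itself transport convergence of the cross-cuts, because the cross-sections of an equivalent chain are only eventually contained in the $\til D_j$, whose closures shrink to the impression of the prime end rather than to the single point $x$. It is, however, a routine rather than a structural gap, and it closes with material already in the paper. Either carry out the endpoint perturbation from the beginning of the appendix proof of Theorem \ref{th:prime-equiv}(3) so that each perturbed cross-cut stays in a prescribed small $\cIB S$-neighborhood of the original (convergence to $x$ is then preserved), or argue as in the appendix proof of Theorem \ref{th:prime-equiv}(2): choose closed disks $B_k$ with $B_{k+1}\subset\inter B_k$, $\bigcap_{k} B_k=\{x\}$ and $\bd B_k\subset S$ (possible because $\bdIB S$ is totally disconnected), and use suitable components of $\bd B_k\cap U$ as cross-cuts of $U$ in $S$ to build a chain $\mc{C}'$ that is divided by the given prime chain $\mc{C}$ and satisfies $\bd_U D_k'\to x$. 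Since $\mc{C}'$ is then prime by the ``if'' direction you have already established, ``$\mc{C}$ divides $\mc{C}'$'' upgrades to equivalence, which is exactly what the statement requires.
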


We say that $p\in \bdPE(U,S)$ is an \emph{accessible prime end} if there is an end-cut $\gamma\colon [0,1)\to U$ such that $\gamma(t)\to p$ in $\cPE(U,S)$ as $t\to 1^-$. In this case, the limit $z=\lim_{t\to 1^-} \gamma(t)$ in $S$ does not depend on the choice of $\gamma$, but only on the choice of $p$. Indeed, if $(D_i)_{i\in \N}$ is any chain of $U$ in $S$ representing $p$ and such that $\bd_U D_i\to x\in \cIB(S)$ as $i \to \infty$ (given by part (1) of the previous proposition), since $\gamma$ accumulates in $p$ in $\cPE(U,S)$, it must intersect any neighborhood of $p$ in $\cPE(U,S)$ and so $\gamma$ intersects $D_i$ for each $i\in \N$. This implies that $\gamma$ intersects $\bd_U D_i$ if $i$ is large enough, and since $\bd_U D_i\to x$ we conclude that  $x$ is a limit point of $\gamma$, so $z=x$. But $x$ depends only on the chosen cross-section and not on the choice of $\gamma$; thus we conclude that there is a unique possible value of $z$ (and of $x$) for the given $p$.

Therefore, any accessible prime end has an associated accessible point in $\bd U$ which is uniquely determined. Similarly, by  part (2) of the previous proposition, every accessible point $z\in \bd U$ gives rise to at least one (but possibly more than one) accessible prime end, namely the endpoint in $\cPE(U)$ of any end-cut of $U$ which converges to $z$ in $S$.

Note that the previous remarks imply in particular that if $\gamma$ is a cross-cut of $U$ in $S$, then $\cl_{\cPE(U,S)}\gamma$ consists of $\gamma$ together with two points of $\bdPE(U,S)$. The next proposition summarizes some useful properties of prime ends; we omit the proofs since they are simple consequences of the definitions and the previous propositions.

\begin{proposition} \label{pro:prime-all} 
Let $D$ be a cross-section of $U$ in $S$ bounded by the cross-cut $\gamma$ in $U$.
\begin{itemize}
\item[(1)] $D$ contains some end-cut $\eta\colon [0,1)\to D$ with endpoint in $\bd U\sm \ol{\gamma}$.
\item[(2)] Let $a,b\in \bdPE(U,S)$ be the endpoints of $\gamma$ in $\cPE(U,S)$. Then the endpoint of $\eta$ in $\cPE(U,S)$ as $t\to 1^-$ is a prime end $p$ that divides $D$ and is different from $a$ and $b$.
\item[(3)] $a\neq b$, and $\iPE_{U,S}(D)$ is a nonempty open interval of the circle $\bdPE(U,S)$ bounded by $a$ and $b$.
\item[(4)] Accessible prime ends are dense in $\bdPE(U,S)$. 
\item[(5)] Accessible points of $\bd U$ are dense in $\bd U$.
\end{itemize}
\end{proposition}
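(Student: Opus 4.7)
I would prove the items in the order (1)$\to$(2)$\to$(3)$\to$(4)$\to$(5). For (1), the definition of cross-cut guarantees the existence of some $z\in \bd U\setminus \ol{\gamma}$ lying in $\cl_S D$. Since $D$ is an open topological disk, I would fix a homeomorphism $\phi\colon \mathbb{D}\to D$ and a sequence $z_n\in D$ with $z_n\to z$ in $S$; after passing to a subsequence, $w_n:=\phi^{-1}(z_n)\to w_\infty\in \mathbb{S}^1$, because the sequence cannot accumulate in $\mathbb{D}$ (otherwise $z$ would lie in $D$). The image under $\phi$ of the radial segment from $0$ to $w_\infty$ gives a candidate end-cut $\eta$ in $D$; if $\lim_{t\to 1^-}\eta(t)$ does not exist in $S$, then a standard shortcut argument (replacing long detours by short arcs in $\mathbb{D}$ along circles of radii $1-1/n$) produces an end-cut whose endpoint lies in $\bd_S D\subset \ol{\gamma}\cup \bd U$, and which can be arranged to lie in $\bd U\setminus \ol{\gamma}$ by choosing the approximating sequence close enough to $z$.

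For (2), Proposition \ref{pro:prime-criterion2}(2) applied to $\eta$ produces a prime end $p\in \bdPE(U,S)$ with $\eta(t)\to p$ in $\cPE(U,S)$. Using part (1) of the same proposition, I would pick a representing chain $(D_i)$ for $p$ with $\bd_U D_i\to z:=\lim_{t\to 1^-} \eta(t)$ in $\cIB S$. Since $z\notin\ol{\gamma}$, for large $i$ the set $\bd_U D_i$ is disjoint from $\gamma$; since $D_i$ contains a tail of $\eta$ (hence meets $D$) and is connected in $U$, this forces $D_i\subset D$, so $p\in\iPE_{U,S}(D)$. The prime ends $a$ and $b$, on the other hand, are represented by chains whose $\bd_U D_i$ converge to endpoints of $\gamma$ lying in $\ol\gamma$, so $p\neq a,b$. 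For (3), applying (2) to the other cross-section $D'$ of $U\setminus\gamma$ yields some $p'\in \iPE_{U,S}(D')$. No prime end can divide both $D$ and $D'$, since a chain representative would eventually lie in $D\cap D'=\emptyset$; conversely, any prime end not equal to $a$ or $b$ divides exactly one of $D$, $D'$, since a representing chain with $\bd_U D_i\to x\in \cIB S\setminus \ol\gamma$ eventually avoids $\gamma$ and, by connectedness, lies in one component of $U\setminus\gamma$. Thus $\iPE_{U,S}(D)$ and $\iPE_{U,S}(D')$ are disjoint nonempty open sets partitioning $\bdPE(U,S)\setminus\{a,b\}$, which on the circle $\bdPE(U,S)$ forces $a\neq b$ and each to be an open arc with endpoints $a$ and $b$.

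For (4), a basis of open sets for $\cPE(U,S)$ is given by the sets $D\cup\iPE_{U,S}(D)$, so any nonempty open subset of the circle $\bdPE(U,S)$ contains some $\iPE_{U,S}(D)$, and by (2) this contains an accessible prime end. For (5), given $z\in\bd U$ and an open neighborhood $V$ of $z$, I would construct a cross-cut $\gamma'$ of $U$ contained in $V$ by fixing a smaller open disk $W$ with $\ol W\subset V$ around $z$ and selecting a short arc in $W\cap U$ whose two endpoints approach accessible points of $\bd U$ lying in $W$; then (1) applied to either cross-section of $U\setminus\gamma'$ yields an accessible point of $\bd U$ inside $V$.

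The hardest step is (1): producing a genuine end-cut in $D$ terminating at a point of $\bd U\setminus \ol{\gamma}$. The delicate issue is that $D$ need not be relatively compact in $S$, so under $\phi$ the ``radial'' arc might fail to converge in $S$; one must prune oscillations by a thinning argument, classical in the theory of conformal maps but requiring some care in our purely topological setting. The analogous subtlety reappears in (5) when constructing arbitrarily short cross-cuts near a prescribed boundary point.
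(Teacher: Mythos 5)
The paper does not actually write out a proof of this proposition (it declares all five items ``simple consequences of the definitions and the previous propositions''), so the only meaningful comparison is with the routine argument it has in mind; your treatment of (2) and (4) is essentially that argument, but (1), (3) and (5) have genuine gaps as written. For (1), the radial-arc-plus-pruning construction is not justified in this purely topological setting: under an arbitrary homeomorphism $\phi\colon \D\to D$ there is no length--area or metric control, so cutting along the circles of radius $1-1/n$ in $\D$-coordinates produces an arc that is proper in $D$ but need not converge to any point of $S$, and even if it converges, nothing keeps its endpoint near $z$, hence in $\bd U\sm\ol\gamma$ rather than at an endpoint of $\ol\gamma$. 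The statement is immediate without $\phi$: by the definition of cross-cut there is $z\in\bd_S D\cap(\bd U\sm\ol\gamma)$; take a small disk $B$ around $z$ with $\ol B\cap\ol\gamma=\emptyset$, a point $x\in B\cap D$, an arc $\beta$ in $B$ from $x$ to $z$, and stop at the first parameter at which $\beta$ leaves $D$; since $\bd_S D\subset\bd U\cup\ol\gamma$ and $B$ misses $\ol\gamma$, the exit point lies in $\bd U\sm\ol\gamma$ and $\beta$ up to that time is the desired end-cut in $D$. The same first-exit argument with $D$ replaced by $U$ proves (5) directly; your route to (5) is both circular in flavour (building the cross-cut near $z$ already presupposes accessible boundary points near $z$) and incomplete, because applying (1) to a cross-section of your small cross-cut only produces an accessible point in $\bd U\sm\ol{\gamma'}$, with no reason for it to lie in $V$, since neither cross-section need be contained in $V$.

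In (3), the step ``any prime end not equal to $a$ or $b$ admits a representing chain with $\bd_U D_i\to x\in\cIB(S)\sm\ol\gamma$'' is unjustified and false in general: distinct prime ends can share the same limit point, so a prime end $p\neq a,b$ may have every representing chain converging to an endpoint of $\ol\gamma$. For instance, if $U$ is a slit disk, a point of the slit carries two accessible prime ends, one from each side; take $\ol\gamma$ landing at such a point from one side and let $p$ be the prime end from the other side. Hence the partition of $\bdPE(U,S)\sm\{a,b\}$, from which you also derive $a\neq b$, is not established. The conclusion is true, but you must either show that a chain representing $p\neq a,b$ is eventually disjoint from $\gamma$ even when its limit point is an endpoint of $\ol\gamma$, or argue in the disk model of Theorem \ref{th:prime-equiv}(1): $\cl_{\cPE(U,S)}\gamma=\gamma\cup\{a,b\}$ is an arc (a Jordan curve meeting the boundary circle once if $a=b$) in $\cPE(U,S)\simeq\ol\D$, and a separation argument in $\ol\D$ combined with (1)--(2) applied to each cross-section rules out $a=b$ and identifies $\iPE_{U,S}(D)$ and $\iPE_{U,S}(D')$ with the two open boundary arcs. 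Finally, a small point in (2): from $\bd_U D_i\cap\gamma=\emptyset$ you must also exclude $\gamma\subset D_i$ before concluding $D_i\subset D$; this follows because $\bigcap_i D_i=\emptyset$ (a point of the intersection could be joined inside $U$ to the other cross-section of $D_1$ by a compact arc, which would have to meet $\bd_U D_i$ for every $i$, contradicting $\bd_U D_i\to z\in\bd U$).
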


%\begin{lemma} If $D$ is a cross-section bounded by a cross-cut $\gamma$ in $U$, then $D$ contains some endcut whose endpoint differs from the endpoints of $\gamma$.
%\end{lemma}
%\begin{proof} 
%Let $D$ be a cross-section of $U$ in $S$, bounded in $U$ by a cross-cut $\gamma$, so $U\sm \gamma$ has exactly two connected components, $D$ and $D'$. Since $\ol{\gamma}$ is homeomorphic to $[0,1]$ and $S$ is a connected surface, it follows that $S\sm \ol{\gamma}$ is connected, and so we may choose a simple arc $\sigma\colon [0,1]\to S$ joining a point $z\in D$ to a point $z'\in D'$ and disjoint from $\ol{\gamma}$. The subarc of $\sigma$ from $z$ to the first point of $\sigma$ in $S\sm D$ is the required endcut.
%\end{proof}

\subsection{Dynamics and rotation numbers for prime ends}

If $f\colon S\to S$ is an orientation-preserving homeomorphism such that $f(U)=U$, then $f|_U$ maps cross-sections to cross-sections, and it extends to a homeomorphism $f_*\colon \cPE(U,S) \to \cPE(U,S)$. The next result, which is a direct consequence of Theorem \ref{th:prime-equiv}, says that if one considers a smaller connected surface $S_0$ such that $U\subsetneq S_0\subset S$, then the map induced by $f$ in $\cPE(U,S)$ is semi-conjugate to the map induced by $f$ in $\cPE(U,S_0)$.

\begin{corollary}\label{coro:prime-semi} If $S_0$ is an open connected set such that $U\subsetneq S_0\subset S$ and $f(S_0)=S_0$, and if $f_*$ and $f_*'$ denote the extensions of $f|_U$ to $\cPE(U,S)$ and $\cPE(U,S_0)$, respectively, then $i_*f_*=f_*'i_*$, where $i_*\colon \cPE(U,S)\to \cPE(U,S_0)$ is the monotone map extending the inclusion $i:U\mapsto \cPE(U,S_0)$ given by Theorem \ref{th:prime-equiv}.
\end{corollary}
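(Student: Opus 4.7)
The plan is to exploit density of $U$ in $\cPE(U,S)$ combined with continuity of every map involved. Both $i_*f_*$ and $f_*'i_*$ are continuous maps $\cPE(U,S)\to\cPE(U,S_0)$: $f_*$ and $f_*'$ are homeomorphisms by construction, and $i_*$ is continuous by Theorem \ref{th:prime-equiv}(2). Moreover, $U$ is a dense subset of $\cPE(U,S)$ since $\cPE(U,S)\cong\ol{\D}$ with $\bdPE(U,S)\cong\mathbb{S}^1$ corresponding to the boundary circle (Theorem \ref{th:prime-equiv}(1)). So it suffices to show the two maps agree on $U$ and then invoke continuity.

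The first step is the pointwise computation on $U$. Fix $x\in U$. Because $f_*$ extends $f|_U$ and $f(U)=U$, we have $f_*(x)=f(x)\in U$, so $i_*(f_*(x))=i(f(x))=f(x)$, regarded as an element of $U\subset\cPE(U,S_0)$. On the other hand, $i_*(x)=i(x)=x\in U\subset\cPE(U,S_0)$, and since $f_*'$ extends $f|_U$ we get $f_*'(i_*(x))=f_*'(x)=f(x)$. Thus $i_*f_*$ and $f_*'i_*$ coincide on $U$.

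The second step is the density/continuity extension: since two continuous maps into a Hausdorff space that agree on a dense subset must agree everywhere, the identity $i_*f_* = f_*'i_*$ holds on all of $\cPE(U,S)$.

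No serious obstacle is expected here; the only content of the statement is a formal consequence of the existence of the monotone extension $i_*$ provided by Theorem \ref{th:prime-equiv} together with the fact that every ingredient on $U$ is just $f$. The semi-conjugacy is automatic once one checks these compatibilities on $U$.
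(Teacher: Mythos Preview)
Your proof is correct and is essentially the same as the paper's: the paper simply observes that $i f|_U = f|_U i$ trivially on $U$ and concludes by continuity, which is exactly your density-plus-continuity argument spelled out in more detail.
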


Indeed, note that $if|_U = f|_Ui$ holds trivially, so the previous result for the extensions follows by continuity.

\begin{remark} A simple example of the previous corollary is a map $f\colon \R^2\to \R^2$ that leaves the unit disk invariant and such that $f|_{\mathbb{S}^1}$ has two fixed points: an attractor $N$ and a repellor $S$. If $S_0=S\sm K$, where $K$ is one of the two closed intervals of $\mathbb{S}^1$ limited by $N$ and $S$, then $f|_U$ induces on the circle of prime ends $\bdPE(U,\R^2)$ the same dynamics of $f|_{\mathbb{S}^1}$. However, the map induced by $f|_U$ on $\bdPE(U,S_0)$ corresponds to a circle homeomorphism with a unique fixed point, which is a saddle-node. The semi-conjugation $i_*$ collapses one of the intervals between fixed points to a single point.
\end{remark}

If $f_*$ is the homeomorphism of $\cPE(U,S)$ induced by $f|_U$, then $f_*|_{\bdPE(U,S)}$ is a homeomorphism of the circle, and we may consider its rotation number in the sense of Poincar\'e, which we denote by $\rho(f,U, S)\in \R/\Z$ (or by $\rho(f,U)$ when there is no ambiguity). If two circle homeomorphisms are monotonically semi-conjugate, then they have the same rotation number. Thus from the previous corollary we obtain the following crucial result.

\begin{corollary}\label{coro:prime-rot} If $S_0\subset S$ is open, connected, $f$-invariant, and $U\subsetneq S_0$, then $\rho(f,U,S) = \rho(f,U,S_0)$.
\end{corollary}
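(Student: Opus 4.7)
The plan is to derive the equality of rotation numbers from the monotone semi-conjugacy furnished by the previous corollary, using the classical fact that the Poincar\'e rotation number is invariant under monotone semi-conjugacy of circle homeomorphisms.

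First, I would invoke Corollary \ref{coro:prime-semi} to get the continuous monotone surjection $i_*\colon \cPE(U,S)\to \cPE(U,S_0)$ extending the inclusion $i\colon U\to \cPE(U,S_0)$, together with the intertwining relation $i_*f_* = f_*' i_*$, where $f_*$ and $f_*'$ denote the extensions of $f|_U$ to $\cPE(U,S)$ and $\cPE(U,S_0)$ respectively. Since $i_*$ is the identity on $U$ (which is open and dense in both compactifications) and is continuous, $i_*$ must send the complement $\bdPE(U,S) = \cPE(U,S)\sm U$ onto $\bdPE(U,S_0) = \cPE(U,S_0)\sm U$. By Theorem \ref{th:prime-equiv}(1), both boundaries are circles, and the restriction $h := i_*|_{\bdPE(U,S)}\colon \bdPE(U,S)\to \bdPE(U,S_0)$ is a continuous surjection.

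Next, I would verify that $h$ is a \emph{monotone} map of circles of degree one, i.e.\ that the preimage of each point is a closed arc (possibly degenerate). Because $i_*$ is a monotone map between closed disks fixing the interior pointwise and behaving as the identity on a dense subset, its point-preimages in the boundary circle are connected; this is essentially the content of Theorem \ref{th:prime-equiv}(2). One also checks that $h$ preserves the cyclic order inherited from the boundary circles, which is the orientation induced from $U$ on both sides. Therefore $h$ is a degree-one monotone surjection of circles semi-conjugating the two prime-end homeomorphisms via $h\circ (f_*|_{\bdPE(U,S)}) = (f_*'|_{\bdPE(U,S_0)})\circ h$.

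Finally, I would apply the standard Poincar\'e theory fact that if two orientation-preserving circle homeomorphisms are related by such a degree-one monotone semi-conjugacy, then they have the same rotation number (this is done by lifting $h$ to a nondecreasing map of $\R$ commuting with integer translations and the lifted dynamics, and reading off the common translation asymptotics). This yields $\rho(f,U,S) = \rho(f,U,S_0)$. The only subtle point in the argument is the monotonicity and degree-one property of $h$; everything else is formal once the semi-conjugacy of Corollary \ref{coro:prime-semi} is in hand.
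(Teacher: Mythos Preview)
Your proposal is correct and follows exactly the paper's approach: the paper simply observes, immediately before stating the corollary, that the monotone semi-conjugacy from Corollary \ref{coro:prime-semi} restricts to a monotone semi-conjugacy of the boundary circles, and that monotonically semi-conjugate circle homeomorphisms share the same rotation number. You have merely unpacked in greater detail (monotonicity of $h$, degree one, the lifting argument) what the paper leaves as a one-line remark.
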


\subsection{Prime ends for non-simply connected sets}
\label{sec:prime-general}

Suppose that $U\subset S$ is an open connected set but not necessarily simply connected, and let $\mathfrak{p}\in \bdIB U$ be a regular ideal boundary point of $U$. We may define a $\mathfrak{p}$-cross-cut as any simple arc $\gamma$ contained in a $\mathfrak{p}$-collar which extends to a compact arc $\ol{\gamma}$ joining two different points of $Z(\mathfrak{p})$, and such that each of the two components of $U\sm \gamma$ contains some point of $Z(\mathfrak{p})\sm \ol{\gamma}$ in its boundary. If $\gamma$ is a $\mathfrak{p}$-cross-cut, then at least one of the two components of $U\sm \gamma$ is contained in some $\mathfrak{p}$-collar. Any such component is called a $\mathfrak{p}$-cross-section. Define $\mathfrak{p}$-chain and prime $\mathfrak{p}$-chain using the same definitions as in the previous sections, except that using $\mathfrak{p}$-cross-sections; and define a $\mathfrak{p}$-prime end as an equivalence class of prime $\mathfrak{p}$-chains. 

We denote the family of all $\mathfrak{p}$-prime ends by $\bdPE_{\mathfrak{p}} U$, and we set $\cPE_{\mathfrak{p}} U = U\cup \bdPE_{\mathfrak{p}} U$. As before, the topology on $\cPE_{\mathfrak{p}} U$ is defined by using as a basis of open sets the open sets of $U$ together with sets of the form $V\cup \iPE_{\mathfrak{p}}V$, where $V$ is a $\mathfrak{p}$-cross-section of $U$ and $\iPE_{\mathfrak{p}} V$  is the set of all $\mathfrak{p}$-prime ends which have a representative chain entirely contained in $V$.
Note that if $\mathfrak{p}\in \cIB U$ and if $A$ is a $\mathfrak{p}$-collar, then every $\mathfrak{p}$-prime end has a representative $\mathfrak{p}$-chain entirely contained in $A$.

If $U$ has finitely many ideal boundary points, all of which are regular, then one can do the above process simultaneously for each end of $U$ obtaining a compactification which we denote by $\cPE U$ and is a compact surface with boundary. We use the notation $\bdPE U = \cPE U \sm U = \bigcup_{\mathfrak{p}\in \bdIB U} \bdPE_{\mathfrak{p}} U$. 

The results from the previous section can be naturally extended to this setting. In particular, $\bdPE_{\mathfrak{p}} U$ is homeomorphic to $\mathbb{S}^1$ for each regular ideal boundary point $\mathfrak{p}$. To avoid further technicality, we will only make use of this fact in the case that $U$ is relatively compact, and for such case this is proved in \cite{mather-area} and \cite{mather-caratheodory}.

If $f\colon S\to S$ is an orientation preserving homeomorphism and $\mathfrak{p}$ is a regular ideal boundary point of $U$ fixed by $f$, then $f|_U$ extends to a homeomorphism $f_*$ of $\cPE_{\mathfrak{p}} U$, and we may define the rotation number $\rho(f,\mathfrak{p})$ as the classical rotation number of the circle homeomorphism $f_*|_{\bdPE_{\mathfrak{p}}U}$.

\subsection{The $\bd$-nonwandering condition}\label{sec:bdnw}

The main hypothesis for many of our results is the following (see Proposition \ref{pro:bdnw-gen} for simpler properties that imply this, for example the preservation of a finite area).

\begin{definition}\label{def:bdnw} Let $f\colon S\to S$ be a homeomorphism and $U\subset S$ a simply connected open $f$-invariant set. We will say $U$ is \emph{$\bd$-nonwandering} for $f$ (or $f$ is $\bd$-nonwandering in $U$) if, for every $n\in \N$, there is some compact set $K\subset U$ such that $U\sm K$ does not contain the forward or backward $f^n$-orbit of any $f^n$-wandering cross-section. In other words,  if $D$ is a cross-section of $U$ such that $f^{nk}(D)\cap D=\emptyset$ for all $k\neq 0$, then $f^{nk}(D)$ intersects $K$ for some positive and some negative value of $k$.
\end{definition}

\begin{remark} It can be assumed that the set $K$ is a closed topological disk, by increasing its size. We will make this assumption most of the time.
\end{remark}

\begin{remark} We do not know if Definition \ref{def:bdnw} using only $n=1$ implies the definition as stated here. However, in view of Proposition \ref{pro:bdnw-gen}, this does not make a difference in the usual setting (e.g. nonwandering or area-preserving maps).
\end{remark}
We can extend the definition to the non-simply connected setting as well.

\begin{definition}\label{def:bdnw-2} If $U\subset S$ is a connected open $f$-invariant set and $\mathfrak{p}\in \bdIB U$ a fixed regular end of $U$. We will say $f$ is \emph{$\bd$-nonwandering at $\mathfrak{p}$} if for each $n\in \N$ there is a $\mathfrak{p}$-collar $A\subset U$ such that $A$ does not contain the forward or backward orbit of any $f^n$-wandering $\mathfrak{p}$-cross-section.
\end{definition}

The next proposition implies, in particular, that $f$ is $\bd$-nonwandering in $U$ if $f|_U$ is nonwandering.

\begin{proposition}\label{pro:bdnw-gen} Let $f\colon S\to S$ be a homeomorphism and $U\subset S$ a simply connected open $f$-invariant set. Then $f$ is $\bd$-nonwandering, provided that there is a compact set $K\subset U$ such that any one of the following holds:
\begin{enumerate}
\item[(1)] $U\sm K$ does not contain the forward or backward orbit of any $f$-wandering open set.
\item[(2)] There is an $f$-invariant Borel measure (not necessarily finite) such that $\mu(U\sm K)<\infty$ and $\mu(D)>0$ for any cross-section $D$, or
\item[(3)] $U\sm K$ is contained in the nonwandering set of $f$.
\item[(4)] The dynamics induced by $f|_U$ on $\bdPE(U,S)$ is transitive.
\end{enumerate}
\end{proposition}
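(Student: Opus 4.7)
The plan is to verify, case by case, that each of the hypotheses (1)--(4) implies the $\bd$-nonwandering condition of Definition \ref{def:bdnw}, producing for each $n\in\N$ a compact set $K_n\subset U$ constructed from the given $K$.

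\textbf{Case (2).} Take $K_n=K$. If $D$ is any $f^n$-wandering cross-section whose forward $f^n$-orbit lies in $U\sm K$, then the sets $f^{nk}(D)$ for $k\geq 0$ are pairwise disjoint, each of $\mu$-measure $\mu(D)>0$ by $f$-invariance of $\mu$ and the positivity hypothesis on cross-sections, forcing $\mu(U\sm K)=\infty$, a contradiction. The backward orbit is symmetric.

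\textbf{Case (4).} By Poincar\'e's classification, a topologically transitive circle homeomorphism is conjugate to an irrational rotation, so every iterate $f_*^n|_{\bdPE(U,S)}$ is minimal and admits no nonempty wandering open arc. Cross-sections map equivariantly to nonempty open arcs via $D\mapsto \iPE_{U,S}(D)$, and disjointness of cross-sections implies disjointness of their prime end arcs (a prime end dividing two disjoint cross-sections would admit a chain in their intersection). Hence no $f^n$-wandering cross-section exists at all, and the condition holds vacuously.

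\textbf{Case (3) implies (1).} If $V\subset U\sm K$ is open and $f$-wandering, then $V$ itself is a wandering open neighborhood of each of its points, contradicting $V\subset \Omega(f)$. So it suffices to handle (1).

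\textbf{Case (1).} Fix $n$ and take $K_n=\bigcup_{|j|<n}f^{j}(K)$, still compact. An $f^n$-wandering cross-section $D$ with forward $f^n$-orbit in $U\sm K_n$ satisfies $f^{nk-j}(D)\cap K=\emptyset$ for all $k\geq 0$ and $0\leq j<n$, so the full forward $f$-orbit of $D$ lies in $U\sm K$. The main step is to exhibit a nonempty open $f$-wandering subset $V\subset D$; applying hypothesis (1) to $V$ then yields the contradiction. Since $f^n(D)\cap D=\emptyset$, no point of $D$ is fixed by any iterate $f^{nk}$ with $k\neq 0$, so in particular for every $x\in D$ the points $x,f(x),\dots,f^{n-1}(x)$ are pairwise distinct, and continuity provides a small open neighborhood $V\subset D$ of $x$ with $V,f(V),\dots,f^{n-1}(V)$ pairwise disjoint. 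A further shrinking---selecting $x$ outside the countable union of $f$-periodic orbits meeting $D$ (which does not exhaust $D$, since $f^n$-wandering rules out periodicity of period dividing a multiple of $n$) and using local discreteness of the $f$-orbit of $x$---produces $V$ with $f^m(V)\cap V=\emptyset$ for every $m\neq 0$, making $V$ an $f$-wandering open set with forward $f$-orbit in $U\sm K$. The backward direction is analogous.

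The single delicate point, and the main obstacle, is this last shrinking step in (1): upgrading $f^n$-wandering to genuine $f$-wandering by killing the residual returns at times $nk+i$ with $0<i<n$. This is precisely the gap between the $n=1$ and general-$n$ versions of Definition \ref{def:bdnw} flagged in the remark following that definition; the hypotheses (1)--(4) are each strong enough to bridge it, but the shrinking argument is the only place where this must be carried out by hand.
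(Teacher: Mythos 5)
Your cases (2), (3) and (4) are fine and essentially coincide with the paper's proof (finite measure plus pairwise disjoint positive-measure iterates; (3) reduces trivially to (1); transitivity on the circle of prime ends forces minimality of every power, and disjoint cross-sections have disjoint prime-end intervals, so no $f^n$-wandering cross-section exists). The problem is case (1), which is the heart of the proposition, and exactly at the step you yourself flag: the passage from an $f^n$-wandering cross-section $D$ to a genuinely $f$-wandering open subset $V\subset D$. Your justification does not work. What the $f^n$-wandering property gives you is that no point of $D$ is periodic and that the orbit of a point $x\in D$ cannot accumulate at any point of $D$ (two returns $f^{m_j}(x),f^{m_{j'}}(x)\in D$ with $m_j\equiv m_{j'}\ (\mathrm{mod}\ n)$ would force $f^{(m_{j'}-m_j)}(D)\cap D\neq\emptyset$). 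But ``the orbit of $x$ is locally discrete'' is far weaker than ``$x$ has a wandering neighborhood'': a non-periodic, non-recurrent point can perfectly well be a nonwandering point (think of a point whose orbit leaves and whose neighborhoods nevertheless return, as for homoclinic-type behaviour), and the returns $f^m(V)\cap V\neq\emptyset$ you need to kill involve points of $V$ other than $x$, which your discreteness statement does not control. So no amount of shrinking around a well-chosen point is justified by the argument you give, and as written the proof of (1) (and hence of (3)) is incomplete.

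The paper closes this gap by a different mechanism, which never produces a single $f$-wandering set by a local argument. Assuming the forward $f$-orbit of $D_0$ lies in $U\sm K$ (your bookkeeping with $K_n=\bigcup_{|j|<n}f^j(K)$ is the same as the paper's choice of an enlarged compact set, and is correct), hypothesis (1) is applied \emph{repeatedly}: $D_0$ cannot be $f$-wandering, so $D_1=f^{k_0}(D_0)\cap D_0\neq\emptyset$; $D_1\subset D_0$ still has its forward orbit in $U\sm K$, so $D_2=f^{k_1}(D_1)\cap D_1\neq\emptyset$; and so on up to $D_{n+1}$. Since $D_j\subset f^{l_j-l_i}(D_i)$ for the partial sums $l_i=\sum_{m<i}k_m$, the pigeonhole principle gives $i<j$ with $l_j-l_i=mn>0$, whence $f^{mn}(D_0)\cap D_0\neq\emptyset$, contradicting $f^n$-wandering. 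Note that this nested-intersection argument in fact proves the very lemma you were after (every $f^n$-wandering open set contains an $f$-wandering open subset, namely one of the $D_i$), so your overall strategy for (1) is salvageable, but only by supplying precisely this pigeonhole argument, not by the local shrinking you describe. A small additional slip: there is no need to avoid a ``countable union of periodic orbits'' (periodic orbits need not be countable in general, and in any case $D$ contains no periodic point at all, since a period-$p$ point would give $f^{np}(D)\cap D\neq\emptyset$).
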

\begin{proof}
Suppose that (1) holds and $f$ is not $\bd$-nonwandering. Then there is $n\in \N$ such that any neighborhood of $\bd U$ contains the forward or the backward $f^n$-orbit of some $f^n$-wandering cross-section. Let $K'$ be a compact set such that $K\subset f^k(K')\subset U$ for all $k\in \Z$ with $\abs{k}\leq n$. Then we may assume that there is a cross-section $D_0$ of $U$ such that $f^{kn}(D)_0\cap D_0=\emptyset$ for all nonzero $k\in \Z$, and $f^{kn}(D_0)\subset U\sm K'$ for all integers $k> 0$ (the case $k< 0$ is analogous). 

Note that our choice of $K'$ implies that $f^k(D_0)\subset U\sm K$ for all $k>0$, so by (1) we must have $f^{k_0}(D_0)\cap D_0\neq \emptyset$ for some $k_0\in \N$. Let $D_1=f^{k_0}(D_0)\cap D_0$. Again (1) implies that $f^{k_1}(D_1)\cap D_1\neq \emptyset$ for some $k_1\in \N$. Inductively, we may define  $k_0, k_1, \dots, k_{n}$ and a sequence $\emptyset\neq D_{n+1}\subset D_{n} \cdots \subset D_1\subset D_0$, such that $D_{i+1}=f^{k_i}(D_i)\cap D_i$. We can find $0\leq i<j\leq n$ such that, if $l_i=\sum_{m=0}^{i-1} k_m$, then $l_i=l_j\, (\mathrm{mod}\, n)$, so that $l_j=l_i+mn$ for some integer $m>0$. But it follows from our definition that $D_j\subset f^{l_j-l_i}(D_i)$, and since $D_j$ and $D_i$ are both subsets of $D_0$ we conclude that $f^{mn}(D_0)=f^{l_j-l_i}(D_0)$ intersects $D_0$, contradicting the fact that $f^{km}(D_0)\cap D_0=\emptyset$ for all $k>0$.

If (2) holds, fix $n\in \N$ and let $D$ be an $f^n$-wandering cross-section, then $\{f^{kn}(D):k> 0\}$ is an infinite family of pairwise disjoint sets of the same positive measure, so they cannot be all contained in $U\sm K$, which has finite measure. Similarly for $\{f^{kn}(D):k< 0\}$, proving that $f$ is $\bd$-nonwandering in $U$.

If (3) holds then (1) clearly  holds. Finally, if (4) holds, then the induced dynamics by $f^n$ on $\bdPE(U,S)$ is minimal for any $n\in \N$. This implies that there are no $f^n$-wandering cross-sections at all, so $f$ is $\bd$-nonwandering in $U$.
\end{proof}

The next result will also be helpful in most of our proofs.

\begin{proposition}\label{pro:bdw} Let $f\colon S\to S$ be an orientation-preserving homeomorphism of an orientable surface $S$ of finite genus, $U\subset S$ an open invariant topological disk, and $K\subset S\sm U$ is a closed $f$-invariant set such that $\bd U\sm K\neq \emptyset$. If $f$ is $\bd$-nonwandering in $U$, then $f|_{S\sm K}$ is also $\bd$-nonwandering in $U$. If, in addition, $K$ is totally disconnected, then the converse also holds.
\end{proposition}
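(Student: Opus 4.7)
The proposition splits into two implications, and I plan to treat them separately.

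For the forward direction, I will observe that every cross-cut of $U$ in $S' = S \sm K$ is automatically a cross-cut of $U$ in $S$. Indeed, $\bd_{S'} U = \bd_S U \cap S' = \bd_S U \sm K \subset \bd_S U$, so the endpoints of such a cross-cut, as well as its boundary-separating points on each side, all lie in $\bd_S U$. Therefore every cross-section of $U$ in $S'$ is a cross-section of $U$ in $S$; given $n \in \N$, the compact set $K_0 \subset U$ witnessing the $\bd$-nonwandering of $f$ in $U$ relative to $S$ then witnesses the $\bd$-nonwandering of $f|_{S'}$ in $U$ relative to $S'$.

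For the reverse direction, assuming in addition that $K$ is totally disconnected, the plan is to apply Theorem \ref{th:prime-equiv}(3) to obtain a homeomorphism $i_* \colon \cPE(U,S) \to \cPE(U,S')$ that restricts to the identity on $U$. Given $n \in \N$, I will take the compact set $K_0 \subset U$ witnessing the $\bd$-nonwandering of $f|_{S'}$ in $U$ relative to $S'$, and argue the same $K_0$ works for $f$ relative to $S$. The argument will be by contradiction: if some $f^n$-wandering cross-section $D$ of $U$ in $S$ satisfies $f^{kn}(D) \subset U \sm K_0$ for all $k > 0$ (the case $k < 0$ being symmetric), it will suffice to produce a cross-section $D'$ of $U$ in $S'$ contained in $D$, since such a $D'$ inherits the wandering property and the forward orbit condition, contradicting the hypothesis on $f|_{S'}$.

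To construct $D'$, I will consider the basic open set $V = D \cup \iPE_{U,S}(D)$ in $\cPE(U,S)$. Since $i_*$ is a homeomorphism restricting to the identity on $U$, its image $i_*(V) = D \cup i_*(\iPE_{U,S}(D))$ is open in $\cPE(U,S')$. Picking any prime end $p' \in i_*(\iPE_{U,S}(D))$, the definition of the topology on $\cPE(U,S')$ furnishes a basic open neighborhood of $p'$ of the form $D' \cup \iPE_{U,S'}(D')$ contained in $i_*(V)$, for some cross-section $D'$ of $U$ in $S'$; intersecting with $U$ then gives $D' \subset U \cap i_*(V) = D$, as desired. The principal conceptual input is Theorem \ref{th:prime-equiv}(3), which permits a canonical identification of prime ends in $S$ and $S'$ when $K$ is totally disconnected; beyond this, the main task is simply to track carefully which cross-cuts and cross-sections belong to which ambient surface, and I do not expect a serious further obstacle.
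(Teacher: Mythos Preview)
Your proposal is correct and follows essentially the same approach as the paper's proof: the forward direction uses that a cross-section of $U$ in $S\sm K$ is automatically a cross-section of $U$ in $S$, and the converse reduces to showing that every (wandering) cross-section of $U$ in $S$ contains a cross-section of $U$ in $S\sm K$, which the paper also deduces from part (3) of Theorem \ref{th:prime-equiv}. Your write-up simply spells out explicitly the extraction of $D'$ via the basic open sets of $\cPE(U,S')$, which the paper leaves implicit.
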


\begin{proof} 
The first implication is direct, since a wandering cross-section of $U$ in $S\sm K$ is also a wandering cross-section of $U$ in $S$. For the converse, it suffices to show that if $K$ is totally disconnected, then any wandering cross-section $D$ of $U$ in $S$ contains some cross-section of $U$ in $S\sm K$. But this follows from part (3) of Theorem \ref{th:prime-equiv}.
\end{proof}

\subsection{Cartwright-Littlewood result for rational rotation numbers}\label{sec:CL}

This theorem is essentially proved in \cite{cartwright-littlewood}, but the proof is on the sphere and the hypotheses are slightly different.

\begin{theorem}[Cartwright-Littlewood] Let $f\colon S\to S$ be an orientation-preserving homeomorphism of a surface $S$ of finite genus, and $U\subset S$ an open, connected, relatively compact invariant set with a regular end $\mathfrak{p}$ fixed by $f$. If $\rho(f,\mathfrak{p})=0$ and $f$ is $\bd$-nonwandering at $\mathfrak{p}$, then there is a fixed point in $Z(\mathfrak{p})$.
\end{theorem}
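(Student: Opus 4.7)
The goal is to produce a fixed point of $f$ inside the impression $Z(\mathfrak{p})$. The first step is to extract from $\rho(f,\mathfrak{p})=0$ a Poincaré-type fixed point for the induced circle dynamics: since $f_*|_{\bdPEp U}$ is a circle homeomorphism with rotation number $0$, it has at least one fixed $\mathfrak{p}$-prime end $p\in\bdPEp U$. I will then consider the impression $X:=Z(p)\subset Z(\mathfrak{p})$, which by general properties of prime ends is a compact connected subset of $\bd U$ and is $f$-invariant because $f_*(p)=p$. If $X$ is a single point, that point is automatically fixed by $f$, and we are done; so the whole difficulty is when $X$ is a nondegenerate continuum.

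Assume now, for contradiction, that $f$ has no fixed point in $X$. Since $\ol U$ is compact and $X\subset\bd U$, I can fix a small open neighborhood $N$ of $X$ in $S$ on which $f$ has no fixed point. The idea is then to use the $\bd$-nonwandering hypothesis applied at scale $n=1$ to fix a $\mathfrak{p}$-collar $A$ such that no $f$-wandering $\mathfrak{p}$-cross-section has its full forward (or backward) orbit contained in $A$. Shrinking $A$ if necessary, I will arrange that $A\cup\{\mathfrak{p}\}$ is a closed topological disk in $\cPEp U$ (so the picture near $\mathfrak{p}$ is essentially planar) and that the cross-sections involved in representing $p$ lie inside $A$.

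The core of the argument is then to construct a $\mathfrak{p}$-cross-section $D\subset A$ whose entire $f$-orbit lies in $A$ and is pairwise disjoint from $D$. For this, I would start with a prime chain $(D_n)_{n\in\N}$ representing $p$ chosen so that, by Proposition \ref{pro:prime-criterion2}, $\bd_U D_n$ converges to a point $x\in X$. Pick a non-fixed point $y\in X$ (possible because $X$ has no fixed points) and a small disk $V\subset N$ around $y$ with $f(V)\cap V=\emptyset$. Using the density of accessible points in $\bd U$ (Proposition \ref{pro:prime-all}(5)), I produce a cross-cut $\gamma$ of $U$ with both endpoints in $V\cap\bd U$, which automatically satisfies $\gamma\cap f(\gamma)=\emptyset$; let $D$ be the $\mathfrak{p}$-cross-section it bounds inside $A$. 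Because $p$ is a fixed prime end, the iterates $f^k(\gamma)$ will still be cross-cuts whose $\mathfrak{p}$-cross-sections divide $p$, and because $f$ has no fixed point in $N$, a Brouwer-translation-arc argument applied near $X$ (using Proposition \ref{pro:brouwer-trans} in a planar chart, together with the fact that the chain $D_n$ shrinks onto $x$) forces the iterates $f^k(D)$ to be pairwise disjoint from $D$, hence from each other. At the same time, the nested structure of the prime chain forces these iterates to remain confined to the collar $A$, which furnishes the contradiction with the $\bd$-nonwandering property.

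\textbf{Main obstacle.} The crucial and most delicate step is the confinement of the whole orbit of the constructed wandering cross-section to $A$: a priori one only controls one iterate $f(D)$. Translating the classical Cartwright--Littlewood argument, which exploited area preservation to prevent orbits from escaping to $\mathfrak{p}$, into the $\bd$-nonwandering framework requires a careful use of the prime-chain structure near the fixed prime end $p$ together with Brouwer's lemma on translation arcs (Proposition \ref{pro:brouwer-trans}) in the planar disk $A\cup\{\mathfrak{p}\}$, and it is here that the assumption $f_*(p)=p$ (rather than just $\rho=0$) is essential, since it guarantees that the relevant iterates really stay within cross-sections close to $\mathfrak{p}$.
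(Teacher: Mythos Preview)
Your overall strategy---fix a prime end $p$ with $f_*(p)=p$, look at its impression, and try to manufacture a wandering $\mathfrak{p}$-cross-section whose full orbit stays inside a collar---is in spirit close to the paper's, but the execution has real gaps, and the paper's argument is both simpler and avoids them.

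First, the step where you invoke Brouwer's lemma on translation arcs is not justified. Proposition~\ref{pro:brouwer-trans} applies to a fixed-point-free orientation-preserving homeomorphism of the \emph{whole plane}. You only have $f$ defined on a neighborhood of $X$ (or on the disk $A\cup\{\mathfrak{p}\}$, where $\mathfrak{p}$ is a fixed point), so neither hypothesis is available. Also, from $\gamma\cap f(\gamma)=\emptyset$ you cannot conclude $D\cap f(D)=\emptyset$, let alone that $D$ is wandering: the cross-section $D$ is not contained in your free disk $V$, only its boundary cross-cut is. So the ``Brouwer'' step does not produce the pairwise disjointness you need. Finally, you correctly identify confinement of the whole orbit to $A$ as the crux, but you do not actually establish it.

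The paper's proof bypasses all of this. Instead of working with the full impression $Z(p)$, it exploits the \emph{principal point}: choose a prime chain $(D_i)$ representing $p$ with $\bd_U D_i\to x\in Z(\mathfrak{p})$ (Proposition~\ref{pro:prime-criterion}). The claim is simply that $x$ is fixed. Since $p$ is fixed, for each $j$ there is $i>j$ with $f(D_i)\cup D_i\subset D_j$. If $\ol{\bd_U D_j}\cap \ol{f^{-1}(\bd_U D_j)}=\emptyset$, then one of $\cl_U D_j$, $f^{-1}(\cl_U D_j)$ is contained in the other; the topological ``gap'' between them immediately contains a cross-section $E$ with $f^{-n}(E)\subset D_j$ for all $n\ge 0$, contradicting $\bd$-nonwandering once $D_j$ sits in a collar. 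Hence $\ol{\bd_U D_j}$ meets $\ol{f^{-1}(\bd_U D_j)}$ for all large $j$, and passing to the limit gives $f^{-1}(x)=x$. No Brouwer theory, no control of impressions, and the confinement to the collar comes for free from the chain structure.
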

\begin{proof} First note that since $U$ is relatively compact, $\cl_S U=\cl_{\cIB S} U$, so that we may assume that $S$ is closed by working with $\cIB S$ instead of $S$.

Let $(D_i)_{i\in \N}$ be a chain representing a fixed prime end in $\bdPE_{\mathfrak{p}}U$. By Proposition \ref{pro:prime-criterion}, we may choose it such that $\bd D_i \to x$ for some $x\in \bd U$. We will show that $x$ is a fixed point.

Since the chain represents a fixed prime end, for each $j\in \N$ there is $i>j$ such that $f(D_i)\subset D_j$. Note that  $D_i\subset D_j$ as well, because $i>j$. We claim that $\ol{\bd_U D_j}$ intersects $\ol{f^{-1}(\bd_U D_j)}$ if $j$ is large enough. Suppose on the contrary that $\ol{\bd D_j}\cap f^{-1}(\ol{\bd_U D_j})=\emptyset$.  
Since $f^{-1}(D_j)$ and $D_j$ both contain $D_i$, there are two cases: either $D_i\subset f^{-1}(\cl_U D_j)\subset D_j$ or $D_i\subset \cl_U(D_j)\subset f^{-1}(D_j)$. Suppose without loss of generality that $D_i\subset f^{-1}(\cl_U D_j)\subset D_j$. By our assumption, the cross-cut $\bd_U D_j$ has no endpoint in common with $f^{-1}(\bd_U D_j)$. From this, it is easy to obtain a cross-section $E\subset D_j\sm f^{-1}(D_j)$ (as in Figure \ref{fig:max6}), and it follows that $f^{-n}(E)\subset D_j$ for all $n\in \N$. Since $D_j$ lies inside any given $\mathfrak{p}$-collar if $j$ is large enough, this contradicts the $\bd$-nonwandering condition at $\mathfrak{p}$.

We have showed that $\ol{f^{-1}(\bd D_j)}\cap \ol{\bd D_j}\neq \emptyset$ for all sufficiently large $j$. Since $\bd D_j\to x$, we conclude that $f^{-1}(x)=x$, completing the proof.
\end{proof}

% XXX \subsection{Brouwer theory? or leave it for later?}

%The following is a direct consequence of Brouwer's fixed point theorem.
%\begin{proposition} Let $S$ be a surface, $U\subset S$ a simply connected open set such that $S\sm U$ has more than one point, and $f\colon S\to S$ a homeomorphism. If the extension of $f$ to the prime ends boundary of $U$ has no fixed point, then $f$ has a fixed point in $U$.
%\end{proposition}

\section{A lemma on translation arcs}
\label{sec:arclemma}

The main result of this section is motivated by the following simple result.
\begin{proposition}\label{prop:pre-arc}
Let $S$ be a closed surface, $f\colon S\to S$ an area-preserving homeomorphism and $U\subset S$ an open simply connected $f$-invariant set such that $S\sm U$ has more than one point. Then there is no $\infty$-translation arc intersecting both $U$ and its complement.
\end{proposition}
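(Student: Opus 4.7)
The plan is to assume such a $\gamma$ exists and obtain a contradiction by producing infinitely many pairwise disjoint open subsets of $U$ having the same positive area, which is impossible since $S$ has finite area.

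First, I set $\Gamma=\bigcup_{k\in\Z}f^{k}(\gamma)$. Because $\gamma$ is an $\infty$-translation arc, $\Gamma$ is a topologically embedded line in $S$, parametrized so that $\gamma$ corresponds to $[0,1]$ and $f$ acts on $\Gamma$ as the translation $t\mapsto t+1$. Since $\gamma$ meets both $U$ and $S\setminus U$, the set $\Gamma\cap U$ is a nonempty, proper, open $f$-invariant subset of $\Gamma$, so each of its connected components is a bounded open interval (an unbounded component would force $\Gamma\cap U=\Gamma$). I fix one such component $J$: it is a simple arc in $U$ whose two endpoints lie in $\partial U$, hence a cross-cut of $U$ in $S$, and the iterates $J_{k}:=f^{k}(J)$, $k\in\Z$, are pairwise disjoint cross-cuts. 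Let $V_{+},V_{-}$ be the two cross-sections of $U$ cut by $J$, labeled so that $J_{1}\subset V_{+}$.

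Since $J$ and $J_{1}$ are disjoint, the connected set $J$ lies either in $f(V_{+})$ or in $f(V_{-})$. If $J\subset f(V_{-})$, then $V_{-}\cup J$ is connected and disjoint from $J_{1}$, hence contained in $f(V_{-})$, giving $V_{-}\subset f(V_{-})$. Because $f$ preserves a measure of full support on $S$ and $V_{-}$ has finite area, this inclusion of nested open sets of equal area must be an equality, so $f(V_{-})=V_{-}$; applying $f$ to the identity $\partial_{U}V_{-}=J$ then forces $J_{1}=J$, contradicting $J\cap J_{1}=\emptyset$. Therefore $J\subset f(V_{+})$, and by the same connectedness reasoning $V_{-}\subset f(V_{+})$. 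In particular $V_{-}\cap f(V_{-})=\emptyset$, and setting $V^{(k)}:=f^{k}(V_{-})$ makes $\{V^{(k)}\}_{k\in\Z}$ a family of cross-sections of $U$ all of the same positive area, with consecutive terms disjoint; each $V^{(k)}$ is cut off by the chord $J_{k}$ together with an arc $\alpha_{k}$ of $\partial U$ between its two accessible endpoints.

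To finish, I would show that the whole family $\{V^{(k)}\}$ is pairwise disjoint, which is incompatible with the finiteness of the area of $S$. Given $j\ne k$, the arcs $\alpha_{j}$ and $\alpha_{k}$ on the prime end circle of $U$ cannot partially overlap: an interleaving of the accessible endpoints of $J_{j}$ and $J_{k}$ would force the (disjoint) chords to cross. Hence $\alpha_{j}$ and $\alpha_{k}$ are either disjoint, so that $V^{(j)}\cap V^{(k)}=\emptyset$, or one is contained in the other, say $\alpha_{j}\subset\alpha_{k}$. In the latter case $J_{j}$ is a cross-cut sitting inside $V^{(k)}$, so $V^{(j)}\subsetneq V^{(k)}$ strictly; this is absurd because $V^{(j)}$ and $V^{(k)}$ have the same finite area while $V^{(k)}\setminus V^{(j)}$ is a nonempty open set. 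The step I expect to be the most delicate is the use of area preservation and connectedness to collapse the hypothetical inclusion $V_{-}\subset f(V_{-})$ to the equality $V_{-}=f(V_{-})$ and thence to the identity $J=J_{1}$; the rest of the argument is a planar-topology bookkeeping of the chords $J_{k}$ inside the topological disk $U$.
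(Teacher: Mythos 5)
Your reduction to the family $V^{(k)}=f^k(V_-)$ and the proof that \emph{consecutive} terms are disjoint are essentially sound, but the last paragraph has a genuine gap. For two disjoint cross-cuts $J_j,J_k$ whose four endpoint prime ends are distinct and non-interleaved, there are not two but three possible configurations of the chosen sides: both $\alpha_j,\alpha_k$ are the ``small'' arcs (then the regions are disjoint), one arc is contained in the other (regions nested), or both are the ``large'' arcs, i.e.\ the \emph{facing} configuration in which $J_k\subset V^{(j)}$ and $J_j\subset V^{(k)}$. In that third case the arcs partially overlap even though no endpoints interleave, so your step ``partial overlap forces interleaving forces crossing'' is false; moreover one checks that facing implies $V^{(j)}\cup V^{(k)}=U$, so the regions intersect without either containing the other, and your equal-area argument does not apply. (In the degenerate situation where the two cross-cuts share endpoint prime ends one can even have $\alpha_j\cap\alpha_k=\emptyset$ while $V^{(j)}\cap V^{(k)}\neq\emptyset$, so ``disjoint arcs imply disjoint cross-sections'' also fails as stated.) Hence pairwise disjointness of the $V^{(k)}$ is not established. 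The gap is repairable: in the facing case $U=V^{(j)}\cup V^{(k)}$ forces $V^{(j+1)}\subset U\setminus V^{(j)}\subset V^{(k)}$, and since $J_{j+1}$ is disjoint from $J_k$ one gets $J_{j+1}\subset V^{(k)}$ and therefore $\mu(V^{(k)})>\mu(V^{(j+1)})$, contradicting area preservation --- but no such argument appears in your write-up, and it is exactly the missing case.

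A smaller, fixable imprecision: ``an inclusion of nested open sets of equal area must be an equality'' is not true in general (an open set may exceed another of the same measure by part of its boundary). What full support actually gives is $f(V_-)\setminus\ol{V_-}=\emptyset$, hence $f(V_-)\subset \cl_U V_-=V_-\cup J$; then openness of $f(V_-)$ together with the fact that every point of $J$ lies in $\ol{V_+}$ rules out $f(V_-)\cap J\neq\emptyset$, and only then $f(V_-)=V_-$ and your contradiction $J_1=J$ follow. Finally, note that your packing strategy is quite different from the paper's one-paragraph argument, which takes a single cross-cut $\sigma\subset\gamma\cup f(\gamma)$, applies the product homeomorphism $f\times f$ of $S\times S$ (which preserves a finite measure of full support, so $D_1\times D_2$ is nonwandering) to obtain an $n$ with simultaneous returns of both complementary components, and contradicts the fact that $\sigma\cup f^n(\sigma)$ lies in the simple arc $\bigcup_{k\in\Z}f^k(\gamma)$; if you want to keep your route, the facing configuration must be dealt with explicitly as above.
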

\begin{proof} If there is an $\infty$-translation arc $\gamma$ intersecting $U$ and $S\sm U$, then there is some sub-arc $\sigma \subset \gamma\cup f(\gamma)$ that is a cross-cut of $U$. If $D_1$ and $D_2$ are the two components of $U\sm \sigma$, the preservation of area implies that the homeomorphism $(x,y)\mapsto (f(x),f(y))$ of $S\times S$ preserves a measure of full support, and therefore the set $D_1\times D_2$ is nonwandering for such map. This implies that there is $n>0$ such that $f^n(D_1)\cap D_1\neq \emptyset$ and $f^n(D_2)\cap D_2\neq \emptyset$, which in turn implies that $f^n(\sigma) \cap \sigma\neq \emptyset$, contradicting the fact that $\sigma \cup f^n(\sigma)$ is contained in the simple arc $\bigcup_{k\in \Z} f^k(\gamma)$. 
\end{proof}

We observe that in this proposition it is essential that the translation arc $\gamma$ intersects both $U$ and $\bd U$. For example, if $U$ is a disk bounded by the stable manifold of a fixed hyperbolic saddle with a homoclinic connection, any simple arc in the homoclinic connection joining a point to its image gives an $\infty$-translation arc contained in $\bd U$. The idea of the next result is to remove the hypothesis that $\gamma$ intersects $U$, by requiring that the rotation number on the prime ends circle of $U$ be nonzero. To do that, we prove a finite version of the previous result, namely that for some $N>0$ there is no $N$-translation arc intersecting both $U$ and its complement. Note that any small perturbation of an $N$-translation arc supported on the complement of a neighborhood of the endpoints of the arc is still an $N$-translation arc, a fact that is no longer true for $\infty$-translation arcs. This is why having a version of the previous proposition for $N$-translation arcs (with the restriction that the rotation number is nonzero) allows us to remove the condition that $\gamma$ intersects $U$.

The proof of the finite version of the previous result is much more delicate, and it is necessary to take into account both the prime ends rotation number and the genus of the surface in the choice of $N$. 

Additionally, one can prove the previous result using not the preservation of area but the fact that the nonwandering set of $f$ contains $U$. We will relax this condition even more by requiring a weak type of nonwandering condition near the boundary of $U$, namely the $\bd$-nonwandering condition (see Definition \ref{def:bdnw}).

The main theorem of this section is Theorem \ref{thm:intro-arclemma} in the introduction:

\begin{theorem} \label{th:arclemma}
Let $S$ be an orientable surface of genus $g<\infty$, $f\colon S\to S$ an orientation preserving homeomorphism, and $U\subset S$ an invariant open topological disk such that $S\sm U$ has more than one point. Suppose that $f$ is $\bd$-nonwandering in $U$, and $\rho(f,U)=\alpha \neq 0$. Then there is $N_{\alpha,g}\in \N$, depending only on $\alpha$ and $g$, and there is a compact set $K\subset U$ such that every $N_{\alpha,g}$-translation arc in $S\sm K$ is disjoint from $\bd U$.
\end{theorem}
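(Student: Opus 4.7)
The plan is to argue by contradiction: assuming that no such $N_{\alpha,g}$ and $K$ work, for every $N \in \N$ and every compact $K \subset U$ there is an $N$-translation arc $\gamma \subset S \sm K$ meeting $\bd U$. I will use these very long translation arcs, together with the rotation number $\alpha \neq 0$ and the $\bd$-nonwandering condition, to produce more than $g$ pairwise disjoint pairs of simple loops in $S$ with mutual intersection index $\pm 1$, contradicting $\mathrm{genus}(S) = g$.

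First, from $\gamma$ I would extract a sub-arc $\sigma$ that is a cross-cut of $U$ in $S$ (possibly after replacing $\gamma$ by a concatenation $f^i(\gamma) \cdot f^{i+1}(\gamma)$). The $N$-translation property ensures that $\Gamma := \gamma \cdot f(\gamma) \cdots f^N(\gamma)$ is a simple arc in $S \sm K$; hence the iterates $\sigma_k := f^k(\sigma)$, for $0 \leq k \leq N - O(1)$, are pairwise disjoint cross-cuts of $U$. Passing to $\cPE(U,S) \cong \ol{\D}$, each $\sigma_k$ has two endpoints $a_k, b_k$ on $\bdPE(U,S) \cong \mathbb{S}^1$, with $a_k = f_*^k(a_0)$ and $b_k = f_*^k(b_0)$. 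Since disjoint simple arcs in a closed disk cannot have interleaved endpoints on the boundary circle, and since $\rho(f_*|_{\mathbb{S}^1}) = \alpha \neq 0$, the collection $\{a_k, b_k\}_k$ must occupy a specific cyclic arrangement on $\mathbb{S}^1$ dictated by $\alpha$.

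Next, I would close the cross-cuts into simple loops in $S$. The exterior pieces are provided by the maximal sub-arcs of $\Gamma$ lying in $S \sm U$, which are pairwise disjoint and join endpoints of consecutive cross-cuts. The interior return arcs come from the $\bd$-nonwandering hypothesis: given $n \in \N$, no wandering cross-section $D$ of $U$ can have all of its forward $f^n$-iterates miss a fixed compact $K_0 \subset U$, so one obtains arcs inside $U$ connecting distant iterates $\sigma_i$ and $\sigma_j$ through a common compact region. Combining such an interior arc with a disjoint exterior piece of $\Gamma$ yields a simple loop $\ell_{ij}$. The cyclic ordering on $\bdPE(U,S)$ forced by $\alpha$ allows one to select pairs $(\ell_{ij}, \ell_{i'j'})$ whose cross-cut endpoints interleave in such a way that the loops meet transversally in exactly one point (hence have intersection number $\pm 1$), while choosing disjoint index ranges for distinct pairs makes them disjoint in $S$.

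The hard part will be the simultaneous control of disjointness and single-point intersections across many such pairs, and the conversion of the prime-ends combinatorics into a quantitative bound $N_{\alpha, g}$. For $\alpha$ irrational one can appeal to Weyl-type equidistribution of $\{k\alpha\} \subset \R/\Z$ to guarantee enough suitable index pairs; for rational $\alpha = p/q$ the orbit on $\bdPE(U,S)$ is periodic with period $q$ and a separate (periodic) accounting is needed, which is precisely why $N_{\alpha,g}$ must depend on $\alpha$ and not only on $g$. A delicate technical point is that the interior return arcs produced by $\bd$-nonwandering must be kept disjoint from cross-cuts not involved in the current pair; this couples the choice of the exhausting compact $K$ with the length $N$, and is where most of the care must be exercised.
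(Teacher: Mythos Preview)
Your high-level strategy is correct and matches the paper: contradiction via the intersection pairing on $H_1(S,\Q)$, using the cyclic ordering of cross-cut endpoints on $\bdPE(U,S)$ forced by $\alpha\neq 0$ to build $g+1$ pairs of simple loops with pairwise intersection $\pm 1$. The explicit constant in the paper is $N_{\alpha,g}=(2g+1)q+2$ for a suitable $q=q(\alpha)$, and your discussion of the rational/irrational dichotomy for $\alpha$ is on the right track.

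However, you have the role of the $\bd$-nonwandering condition wrong, and this is the genuine gap. You propose to use it to produce ``interior return arcs'' in $U$ connecting distant iterates $\sigma_i,\sigma_j$ through a compact region. But the condition only says that orbits of wandering cross-sections eventually leave any neighborhood of $\bd U$; it does not hand you arcs joining specified cross-cuts, and there is no obvious way to keep such arcs disjoint from the many other cross-cuts $f^k(\sigma)$---a difficulty you yourself flag at the end. In the paper the interior connecting arcs exist for a purely topological reason that has nothing to do with recurrence: one first proves a \emph{maximal cross-cut lemma} (Lemma~\ref{lem:uzero}) which shows that, after setting $U_0 = U\setminus \bigcup_\sigma \til D_\sigma$ (removing all the ``wandering'' cross-sections cut off by components of $\Gamma\cap U$), $U_0$ is open and simply connected and every \emph{maximal} cross-cut is in $\bd_U U_0$, accessible from $U_0$. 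The $\bd$-nonwandering hypothesis enters only in part~(4) of that lemma: it guarantees the existence of a single cross-cut $\sigma\subset\gamma$ all of whose iterates $f^k(\sigma)$, $0\le k\le N$, are maximal (hence all lie on $\bd_U U_0$). Once this is in hand, the arcs $a_i,b_i$ joining prescribed iterates of $\sigma$ can be drawn freely inside the simply connected set $U_0$, automatically disjoint from every other iterate of $\sigma$, and the loops are closed using sub-arcs of $\Gamma'''$ (not ``exterior pieces in $S\setminus U$''). Without this maximal cross-cut lemma, your step of producing disjoint interior arcs does not go through.
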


\begin{remark} Let us emphasize that $N_{\alpha,g}$ necessarily depends on both $\alpha$ and $g$. See examples \ref{exm1} and \ref{exm2} in Section \ref{sec:examples}.
\end{remark}

Before proceeding to the proof, we state a direct but useful consequence.

\begin{corollary}\label{cor:arclemma} Under the hypotheses of Theorem \ref{th:arclemma}, every $\infty$-translation arc in $S\sm K$ is disjoint from $\bd U$. 
\end{corollary}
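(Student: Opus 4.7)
The plan is essentially immediate: unpack the definitions of $N$-translation arc and $\infty$-translation arc, observe that an $\infty$-translation arc is in particular an $N_{\alpha,g}$-translation arc, and then quote Theorem \ref{th:arclemma}.

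More precisely, I would fix the compact set $K\subset U$ provided by Theorem \ref{th:arclemma} (the one such that every $N_{\alpha,g}$-translation arc in $S\sm K$ is disjoint from $\bd U$) and let $\gamma$ be an $\infty$-translation arc contained in $S\sm K$. By the definition given in Section \ref{sec:prelim}, saying that $\gamma$ is an $\infty$-translation arc means precisely that $\gamma$ is an $N$-translation arc for every $N\geq 1$. In particular, choosing $N=N_{\alpha,g}$, the arc $\gamma$ is an $N_{\alpha,g}$-translation arc lying entirely in $S\sm K$.

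Theorem \ref{th:arclemma} then applies directly and yields $\gamma\cap \bd U=\emptyset$, which is the conclusion of the corollary. No further input (no additional use of the $\bd$-nonwandering hypothesis, of the rotation number, or of the genus bound) is needed here beyond what has already been absorbed into the conclusion of Theorem \ref{th:arclemma}. Since the argument is a single application of the theorem to a special class of arcs, there is no serious obstacle; the only thing to be slightly careful about is the definitional point that $\infty$-translation arc implies $N$-translation arc for the specific $N=N_{\alpha,g}$ produced by the theorem, which follows at once from the equivalent reformulation given in Section \ref{sec:prelim} in terms of $\prod_{k\in\Z}f^k(\gamma)$ being a simple arc.
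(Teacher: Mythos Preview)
Your proposal is correct and matches the paper's treatment: the paper states the corollary as a ``direct but useful consequence'' of Theorem \ref{th:arclemma} without further proof, and your argument---that an $\infty$-translation arc is in particular an $N_{\alpha,g}$-translation arc, so the theorem applies---is exactly the intended one-line justification.
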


\begin{remark} It follows from the proof of Theorem \ref{th:arclemma} that, if one replace the $\bd$-nonwandering condition by the condition that there are no wandering cross-sections (which holds for instance if $f$ preserves area and $U$ has finite area), then there is no $N_{\alpha,g}$-translation arc intersecting $\bd U$ at all (\ie $K=\emptyset$).
\end{remark}
%\begin{proof} If there is an $\infty$-translation arc intersecting $\bd U$, it is an $N_{\alpha,g}$-translation arc as well, and since it intersects $\bd U$ we may perturb it in such a way that it remains an $N_{\alpha,g}$-translation arc intersecting $\bd U$ but it also intersects $U$. This contradicts the previous theorem.
%\end{proof}

\subsection{Maximal cross-cut lemma}
We begin with a technical lemma before moving to the proof of Theorem \ref{th:arclemma}.

\begin{lemma}\label{lem:uzero} Let $f\colon S\to S$ be a homeomorphism and $U\subset S$ a simply connected open $f$-invariant set such that $S\sm U$ has more than one point and $\rho(f, U)\neq 0$. Suppose $N\geq 2$, and $\gamma$ is an $N$-translation arc with endpoints in $\bd U$ and intersecting $U$, and let $\Gamma = \bigcup_{k=0}^N f^k(\gamma)$. Then
\begin{enumerate}
\item The set $\mc{F}$ of all connected components of $U\cap \Gamma$ consists of cross-cuts of $U$ such that each $\sigma\in \mc{F}$ is disjoint from its image by $f$, and exactly one component of $U\sm \sigma$ is disjoint from its image by $f$. We denote this component by $D_\sigma$, and its closure in $U$ by $\til{D}_\sigma = D_\sigma \cup \sigma$.
\item There is a unique connected component $U_0$ of $U\sm \Gamma$ which is not disjoint from its image by $f$. Moreover, $U\sm U_0 = \bigcup_{\sigma \in \mc{F}} \til{D}_\sigma$, and $U_0$ is open and simply connected.
\item The boundary of $U_0$ in $U$ is a union of elements of a subfamily $\mc{F}^*\subset \mc{F}$, and each point of $\bd_U U_0$ is accessible from $U_0$.
\item Suppose additionally that $\Gamma\subset V$, where $V$ is a neighborhood of $\bd U$ such that $U\sm V$ is connected and intersects its image by $f$, and $V$ does not contain the forward or backward orbit of any wandering cross-section of $U$. Then there is an arc $\sigma \subset \gamma$ such that $f^k(\sigma)\in \mc{F}^*$ for all $k$ with $0\leq k\leq N$.
\end{enumerate}
\end{lemma}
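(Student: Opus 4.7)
The proof turns on a careful topological analysis of the simple arc $\Gamma=\bigcup_{k=0}^N f^k(\gamma)$ in its interaction with $U$. Since $x\in\bd U$ and $\bd U$ is $f$-invariant, the $N+2$ points $x,f(x),\ldots,f^{N+1}(x)$ all lie on $\bd U$, so every connected component $\sigma$ of $U\cap\Gamma$ cannot cross them and is therefore confined to a single iterate $f^k(\gamma)$. This is the organizing observation.

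For claim (1), such a $\sigma$ is a simple arc in the simply connected $U$ with both endpoints in $\bd U$, hence a cross-cut separating $U$ into two components. The disjointness $\sigma\cap f(\sigma)=\emptyset$ follows from $f(\sigma)\subset f^{k+1}(\gamma)$ together with the $N$-translation property, which gives $f^k(\gamma)\cap f^{k+1}(\gamma)\subset\bd U$. For the identification of $D_\sigma$, I would use that two disjoint cross-cuts in a disk are nested: $f(\sigma)$ lies in one component of $U\sm\sigma$, say $E$, and cuts off a thin sub-disk inside $E$. Orientation-preservation of $f$ then forces this thin sub-disk to be $f(D)$ for a unique choice $D$ among the two components of $U\sm\sigma$; this $D=D_\sigma$ satisfies $D_\sigma\cap f(D_\sigma)=\emptyset$, while the other component is not disjoint from its image because it strictly contains the "large" image component of $U\sm f(\sigma)$.

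For (2), I would define $U_0$ as the union of those components of $U\sm\Gamma$ which are not disjoint from their $f$-image. Any other component $V$ of $U\sm\Gamma$ must sit on the $D_\sigma$-side of some $\sigma\in\mc F$ it borders, so $V\subset D_\sigma$ and hence $V\cap f(V)\subset D_\sigma\cap f(D_\sigma)=\emptyset$. This proves uniqueness of $U_0$ as a single component of $U\sm\Gamma$ and gives the decomposition $U\sm U_0=\bigcup_{\sigma\in\mc F}\til D_\sigma$. Simple-connectedness of $U_0$ follows since it is a component of the complement in the disk $U$ of a finite family of pairwise disjoint cross-cuts. Claim (3) is then bookkeeping: setting $\mc F^*=\{\sigma\in\mc F:\sigma\subset\bd_U U_0\}$, the identity $\bd_U U_0=\bigcup_{\sigma\in\mc F^*}\sigma$ is immediate, accessibility of interior points of $\sigma\in\mc F^*$ from $U_0$ follows via small transversal arcs, and accessibility of endpoints by passage to limit.

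Claim (4) is the main obstacle. The additional hypothesis implies $U\sm V\subset U_0$, since $U\sm V$ is connected, disjoint from $\Gamma\subset V$, and not $f$-wandering, so it must lie in the unique non-wandering component of $U\sm\Gamma$. My plan is to argue by contradiction: suppose every component $\alpha$ of $\gamma\cap U$ admits some $k\in\{0,\ldots,N\}$ with $f^k(\alpha)\notin\mc F^*$. Then $f^k(\alpha)$ is strictly nested inside some pocket $D_{\sigma'}$ with $\sigma'\in\mc F^*$ outer to it. Organizing the nesting via the partial order $\sigma\preceq\sigma'\iff D_\sigma\subset D_{\sigma'}$ on $\mc F$, and tracking how $f$ acts on chains of nested pockets, I aim to extract an $f$-wandering cross-section whose entire forward or backward orbit is contained in $V$, contradicting the wandering-exclusion hypothesis. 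The hard part is exactly this combinatorial bookkeeping: identifying which nested pocket to follow so that iterating the nesting relation backward or forward yields an orbit confined to $V$.
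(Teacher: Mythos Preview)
Your overall strategy matches the paper's, but there are two genuine gaps.

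\textbf{Part (1): you never use $\rho(f,U)\neq 0$.} Your claim that ``orientation-preservation of $f$ then forces this thin sub-disk to be $f(D)$ for a unique choice $D$'' does not hold. Once you know $f(\sigma)$ lies in a component $E$ of $U\sm\sigma$, the two components of $U\sm f(\sigma)$ are $A=D\cup\sigma\cup(\text{the piece of }E\text{ adjacent to }\sigma)$ and $B=(\text{the other piece of }E)$; orientation-preservation alone does not tell you whether $A=f(D)$ or $A=f(E)$. If $A=f(D)$ then $D\subset f(D)$ and \emph{neither} component is disjoint from its image. The paper's argument is cleaner: define $D_\sigma$ as the component of $U\sm\sigma$ not containing $f(\sigma)$; then $D_\sigma$ lies in one component of $U\sm f(\sigma)$, and the case $D_\sigma\subset f(D_\sigma)$ is excluded precisely because it would force the prime-ends rotation number to vanish. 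This is where the hypothesis $\rho(f,U)\neq 0$ enters, and it is essential.

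\textbf{Part (2): $\mc F$ is not finite.} You write that $U_0$ is ``a component of the complement in the disk $U$ of a finite family of pairwise disjoint cross-cuts,'' but $\Gamma\cap U$ is merely an open subset of the compact arc $\Gamma$ and can have infinitely many components. With infinitely many cross-cuts it is no longer automatic that $\bigcup_{\sigma\in\mc F}\til D_\sigma$ is closed in $U$ (hence that $U_0$ is open), nor that $U_0$ is nonempty or connected. The paper handles this by introducing the partial order $\sigma\prec\sigma'\iff\til D_\sigma\subset D_{\sigma'}$, proving a local-finiteness statement (only finitely many $\sigma$ meet a given compact set in $U$, and only finitely many $D_\sigma$ fail to lie in a given neighborhood of $\bd U$), and deducing that every element lies below a maximal one. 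The maximal family $\mc F^*$ then consists of pairwise disjoint $\til D_\sigma$'s whose union is closed, which is exactly what makes $U_0$ open, connected, and simply connected. This same family $\mc F^*$ is what appears in part (3), and the order structure is the engine for the combinatorics in part (4); without it your sketch for (4) has no objects to work with.

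Your outline for (4) is in the right spirit (and matches the paper's contradiction via a wandering cross-section trapped in $V$), but it cannot be completed until the order-theoretic scaffolding above is in place.
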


\begin{figure}[ht!]
\centering
\includegraphics[height=4cm]{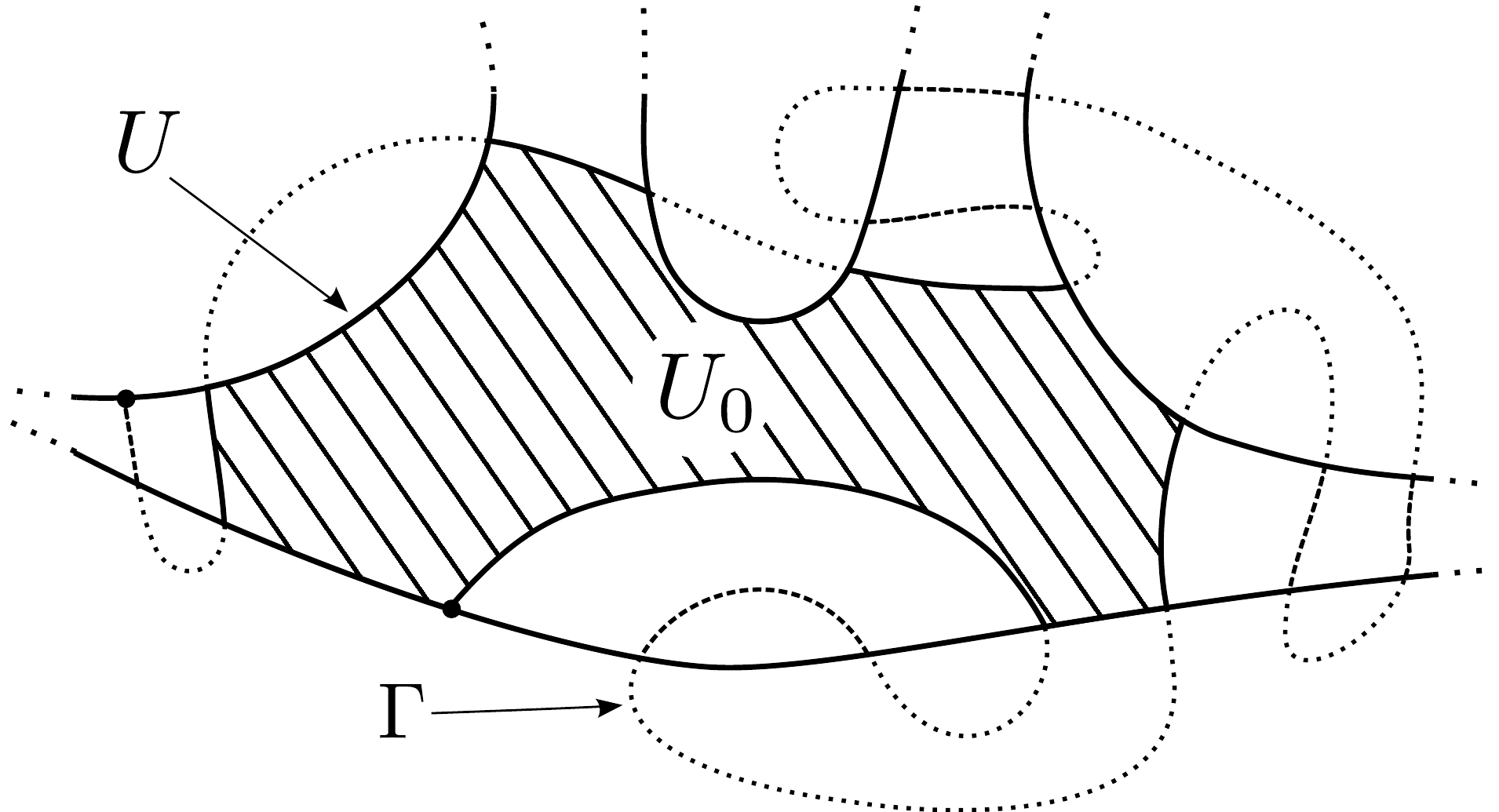}
\caption{The set $U_0$}
\label{fig:uzero}
\end{figure}

\begin{proof}
(1) Let $\mc{F}_0$ be the family of all connected components of $U\cap \gamma$. Since $\gamma$ is a translation arc, $f(\gamma)$ can only intersect $\gamma$ in its endpoints (which are in $\bd U$). Thus, $f(\sigma)$ is disjoint from $\sigma$ for any $\sigma\in \mc{F}_0$. Noting that $\bd U$ is invariant and $\mc{F} = \{f^k(\sigma) : \sigma\in \mc{F}_0,\, 0\leq k\leq N\}$, it follows that $f(\sigma)$ is disjoint from $\sigma$ for any $\sigma\in \mc{F}$. But then, one of the two components of $U\sm\sigma$ is disjoint from the image of $\sigma$. We denote that component by $D_\sigma$. It follows easily that the image of $D_\sigma$ either contains $D_\sigma$ or is disjoint from $D_\sigma$: indeed, the connected set $D_\sigma$ must be contained in one of the two connected components of $U\sm f(\sigma)$, and one of them is $f(D_\sigma)$. But $D_\sigma\subset f(D_\sigma)$ is not possible since $\rho(f,U)\neq 0$. Thus $f(D_\sigma)$ is disjoint from $D_\sigma$. Letting $\til{D}_\sigma = \sigma\cup D_\sigma = \cl_U D_\sigma$, it follows that $U\sm \til{D}_\sigma$ is not disjoint from its image. This proves (1).

(2) We now define a partial order in $\mc{F}$ as follows: $\sigma \prec \sigma'$ if $\til{D}_\sigma \subset D_{\sigma'}$. Note that whenever it makes sense, $f$ preserves the order. 

\begin{claim}\label{claim:max1} If $\sigma$ and $\sigma'$ are different elements of $\mc{F}$ and neither $\sigma \prec \sigma'$ nor $\sigma'\prec \sigma$, then $\til{D}_\sigma$ is disjoint from $\til{D}_{\sigma'}$.
\end{claim}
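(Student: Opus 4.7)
My plan is to reduce the statement to a finite case analysis based on how two disjoint cross-cuts can sit inside the topological disk $U$. First, since $\sigma$ and $\sigma'$ are distinct connected components of $U\cap\Gamma$, they are disjoint as subsets of $U$. Using that $U$ is simply connected and that the endpoints of two disjoint cross-cuts cannot interleave along $\bd U$ (two disjoint chords in a disk cannot interleave), the pair $\sigma\cup\sigma'$ separates $U$ into exactly three open regions $R_1, R_2, R_3$, where, after relabeling, $\cl_U R_1$ meets only $\sigma$, $\cl_U R_3$ meets only $\sigma'$, and $\cl_U R_2$ meets both. Consequently the two components of $U\setminus\sigma$ are $R_1$ and $R_2\cup\sigma'\cup R_3$, and symmetrically the two components of $U\setminus\sigma'$ are $R_3$ and $R_1\cup\sigma\cup R_2$.

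Next I would enumerate the four possibilities for the ordered pair $(D_\sigma, D_{\sigma'})$. In the configuration $D_\sigma=R_1$ and $D_{\sigma'}=R_3$, one has $\til D_\sigma=R_1\cup\sigma$ and $\til D_{\sigma'}=R_3\cup\sigma'$, which are visibly disjoint, giving the desired conclusion of the claim. In either of the two ``mixed'' configurations, namely $D_\sigma=R_1$ with $D_{\sigma'}=R_1\cup\sigma\cup R_2$, or $D_\sigma=R_2\cup\sigma'\cup R_3$ with $D_{\sigma'}=R_3$, a direct containment check gives $\sigma\prec\sigma'$ or $\sigma'\prec\sigma$, contradicting the hypothesis; so these do not occur.

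The only genuinely delicate case is $D_\sigma=R_2\cup\sigma'\cup R_3$ together with $D_{\sigma'}=R_1\cup\sigma\cup R_2$: here the sets $\til D_\sigma$ and $\til D_{\sigma'}$ do intersect, yet neither order holds, so a contradiction must be produced. The idea is to apply $f$ and invoke the invariance property $f(D_\tau)\cap D_\tau=\emptyset$ for $\tau\in\{\sigma,\sigma'\}$ already established in part~(1) of the lemma. In this configuration $\sigma\subset D_{\sigma'}$, hence $f(\sigma)\subset f(D_{\sigma'})$ and therefore $f(\sigma)\cap D_{\sigma'}=\emptyset$, which eliminates $R_1\cup\sigma\cup R_2$ from $f(\sigma)$. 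Combined with the defining property $f(\sigma)\cap D_\sigma=\emptyset$, which eliminates $R_2\cup\sigma'\cup R_3$, we conclude that $f(\sigma)\cap U=\emptyset$. This contradicts the fact that $f$ is a homeomorphism carrying the nonempty (in $U$) open arc $\sigma\setminus\bd U$ into $f(U)=U$.

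The main technical point I foresee is making the three-region decomposition completely rigorous when $\cl_S\sigma$ and $\cl_S\sigma'$ happen to share one or two points of $\bd U$; the generic case follows immediately from the Jordan--Schoenflies picture, but the degenerate coincidences of endpoints must be handled by a small separate argument, most cleanly by passing to the prime ends compactification $\cPE(U,S)$ given by Theorem~\ref{th:prime-equiv}, where $\sigma$ and $\sigma'$ lift to disjoint Jordan arcs in the closed disk and the trichotomy $R_1, R_2, R_3$ becomes transparent.
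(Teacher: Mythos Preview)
Your proof is correct, but it takes a different route from the paper's. You set up a three-region decomposition of $U\setminus(\sigma\cup\sigma')$ and run a four-case analysis on which region equals $D_\sigma$ and $D_{\sigma'}$; the only nontrivial case (both $D$'s being the ``large'' side) is ruled out dynamically by showing $f(\sigma)$ would have to avoid all of $U$. The paper instead argues directly with connectedness: since $U\setminus D_\sigma$ is the component of $U\setminus\sigma$ that is \emph{not} disjoint from its image, it cannot sit inside $D_{\sigma'}$; and it cannot sit inside $U\setminus\til D_{\sigma'}$ either, for that would give $\sigma'\prec\sigma$. Hence $\sigma'$ meets $U\setminus D_\sigma$, and since $\sigma'\cap\sigma=\emptyset$ one gets $\sigma'\subset U\setminus\til D_\sigma$, so $\til D_\sigma$ lies in a single component of $U\setminus\sigma'$, which cannot be $D_{\sigma'}$ (else $\sigma\prec\sigma'$).

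Both arguments use the same dynamical input, namely that each $D_\tau$ is disjoint from its $f$-image while its complement is not. The paper's version is shorter and, crucially, sidesteps the endpoint-coincidence issue you flag at the end: it never needs the three-region picture, so there is nothing to patch when $\cl_S\sigma$ and $\cl_S\sigma'$ share points of $\bd U$. Your fix via the prime ends compactification would work, but it is extra machinery the paper's argument simply does not need.
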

\begin{proof} 
The connected set $U\setminus D_{\sigma}$ is not included in $D_{\sigma'}$ because this last set is disjoint from its image by $f$ and the first one is not. It is not included in $U\setminus \tilde D_{\sigma'}$ by hypothesis (otherwise one would have $\sigma'\prec \sigma$). One deduces that $\sigma'\cap (U\setminus D_{\sigma})\neq\emptyset$. The boundary of $U\setminus D_{\sigma}$ in $U$ being $\sigma$, which is disjoint from $\sigma'$, one deduces that $\sigma'\cap \til{D}_{\sigma}=\emptyset$. This implies that the connected set $\til{D}_{\sigma}$ must  be contained in one of the two connected components of $U\setminus \sigma'$. As it is not included in $D_{\sigma'}$ (otherwise one would have $\sigma\prec \sigma')$ it is contained in the other one. In other words, it is disjoint from $\til{D}_{\sigma'}$. 
\end{proof}

\begin{claim}\label{claim:max2} For each neighborhood $V$ of $\bd U$ there are at most finitely many elements of $\mc{F}$ that are not contained in $V$. Moreover, there are finitely many $\sigma \in\mc{F}$ such that $D_\sigma$ is not contained in $V$.

\end{claim}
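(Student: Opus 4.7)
My plan is to exploit the compactness of the arc $\Gamma = \bigcup_{k=0}^N f^k(\gamma)$ together with Claim~\ref{claim:max1}.

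For the first statement, I will argue by sequential compactness of $\Gamma$. The set $\Gamma\cap (S\sm V)$ is closed in $\Gamma$, hence compact; since $V\supset \bd U$ is open, it is contained in $(\Gamma\cap U)\cup(\Gamma\cap(S\sm\ol U))$. If infinitely many $\sigma_n\in \mc{F}$ fail to be contained in $V$, I choose $x_n\in \sigma_n\sm V$ and pass to a convergent subsequence $x_n\to x_\infty\in \Gamma\sm V$. If $x_\infty\in U$, the component of $U\cap \Gamma$ containing $x_\infty$ is open in $\Gamma$, hence contains $x_n$ for large $n$, contradicting the distinctness of the $\sigma_n$. If $x_\infty\in S\sm\ol U$, the openness of $S\sm\ol U$ yields $x_n\notin U$ eventually, contradicting $x_n\in \sigma_n\subset U$.

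For the second statement, I will reduce to the first via Claim~\ref{claim:max1}. Suppose infinitely many $\sigma\in\mc{F}$ have $D_\sigma\not\subset V$; by the first statement, all but finitely many such $\sigma$ themselves lie in $V$, so I extract an infinite sequence $(\sigma_n)$ with $\sigma_n\subset V$ and a witness $y_n\in D_{\sigma_n}\sm V$. Applying Ramsey's theorem to the poset $(\{\sigma_n\},\prec)$, I pass to either an infinite antichain or an infinite chain. In the antichain case, Claim~\ref{claim:max1} gives pairwise disjoint $\til D_{\sigma_n}$; the $\sigma_n$ are pairwise disjoint open subintervals of the compact simple arc $\Gamma$, so their arclengths sum to a bounded total and tend to zero, and a subsequence Hausdorff-converges to a single point $z\in \bd U\cap\Gamma\subset V$. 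Choosing a small open disk $W\ni z$ with $\ol W\subset V$, I have $\sigma_n\subset W$ and $y_n\notin W$ for $n$ large, so the connected $\til D_{\sigma_n}$ must contain some $x_n\in D_{\sigma_n}\cap \bd W$; compactness of $\bd W\cap \ol U$ gives a further subsequence with $x_n\to x_\infty$. The contradiction will come from observing that infinitely many disjoint $\til D_{\sigma_n}$ each threading from $\sigma_n\subset \Gamma$ near $z$ across $\bd W$ to $y_n$, together with the constraint that the $\sigma_n$ must sit as disjoint subarcs of the single simple arc $\Gamma$, cannot coexist. The chain case will be handled analogously, with ``nested'' replacing ``disjoint''.

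The principal obstacle will be completing the local topological argument in the antichain case: $\bd U$ may be wildly complicated near $z$, precluding any naive half-plane or half-disk local picture. The key will be to fully exploit that the $\sigma_n$ all lie on the single simple arc $\Gamma$, which severely constrains their relative positions and the geometry of the $\til D_{\sigma_n}$ reaching from $z$ across $\bd W$ to the witnesses $y_n$.
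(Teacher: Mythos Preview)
Your argument for the first statement is correct and essentially the same as the paper's: the paper phrases it as a compactness/open-cover argument (the elements of $\mc{F}$ not contained in $V$ are pairwise disjoint open subsets of $\Gamma$ covering the compact set $(\Gamma\sm V)\cap U$), while you phrase it sequentially, but the content is identical.

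For the second statement, however, your proposal is genuinely incomplete, and you acknowledge as much. The Ramsey dichotomy followed by a Hausdorff-limit argument is far more delicate than you suggest. In the antichain case, you produce infinitely many pairwise disjoint connected sets $\til D_{\sigma_n}\subset U$, each reaching from near a boundary point $z$ across $\bd W$; but nothing you have said rules this out. The boundary $\bd U$ can be extremely wild, and $U$ can indeed have infinitely many disjoint ``fingers'' threading through any small disk around a boundary point. The constraint that the $\sigma_n$ lie on the single arc $\Gamma$ does not obviously help: the $D_{\sigma_n}$ are cross-sections of $U$, not sub-arcs of $\Gamma$, and their geometry in $U$ is not controlled by the linear order of the $\sigma_n$ in $\Gamma$. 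The chain case is no easier, and you cannot appeal to termination of chains (Claim~\ref{claim:max3}) since that claim is proved \emph{using} the present one.

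The paper's argument for the second statement is far simpler and avoids all of this. Choose a smaller neighborhood $V'\subset V$ of $\bd U$ such that $K:=U\sm V'$ is connected and meets its own image $f(K)$ (possible because $U$ is a topological disk: take $K$ to be a large closed disk in $U$ containing $U\sm V$ together with some point and its image). Now if $\sigma\in\mc{F}$ satisfies $\sigma\subset V'$, then $\sigma$ misses $K$, so the connected set $K$ lies in one of the two components of $U\sm\sigma$. It cannot lie in $D_\sigma$, because $D_\sigma$ is disjoint from $f(D_\sigma)$ whereas $K$ meets $f(K)$. Hence $D_\sigma$ is disjoint from $K$, i.e.\ $D_\sigma\subset V'\subset V$. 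By the first part, only finitely many $\sigma$ fail to be contained in $V'$, so only finitely many $\sigma$ can have $D_\sigma\not\subset V$. The key idea you are missing is precisely this: the dynamical property ``$D_\sigma\cap f(D_\sigma)=\emptyset$'' from part~(1) of the lemma forces $D_\sigma$ to avoid any connected set that meets its own image.
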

\begin{proof} It follows from the fact that $\Gamma$ is an embedded compact arc and elements of $\mc{F}$ are connected open subsets in the restricted topology of $\Gamma$: the set $\Gamma\sm V$ is compact, and the elements of $\mc{F}$ that are not contained in $V$ form an open covering of $\Gamma\sm V$ (in the restricted topology of $\Gamma$) by pairwise disjoint sets, so that there are finitely many such sets.

To prove the second claim, note that one may choose a neighborhood $V'\subset V$ of $\bd U$ such that $K=U\sm V'$ is connected and $f(K)$ intersects $K$.
If $\sigma\in \mc{F}$ and $\sigma\subset V'$, then one of the components of $U\sm \sigma$ contains $K$. But since $D_\sigma$ is disjoint from its image, it follows that $D_\sigma$ is disjoint from $K$, so it must be contained in $V'$. Thus there are at most finitely many $\sigma\in \mc{F}$ such that $D_\sigma \not\subset V'$.
\end{proof}

\begin{claim}\label{claim:max3} Every element of $\mc{F}$ is contained in a maximal element.
\end{claim}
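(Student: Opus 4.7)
My plan is to derive this directly from the second part of Claim \ref{claim:max2} (the finiteness statement for the sets $D_\sigma$) together with the monotonicity of the partial order. The crucial observation is that if $\sigma \prec \sigma'$, then by definition $\til{D}_\sigma \subset D_{\sigma'}$, hence in particular $D_\sigma \subset D_{\sigma'}$; so the $D_{\sigma'}$ grow as one moves up in the order above a fixed $\sigma$.

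Given $\sigma \in \mc{F}$, I would begin by fixing any point $p \in D_\sigma$, which is possible because $D_\sigma$ is a nonempty open subset of $U$ (it is a component of $U \sm \sigma$ for the cross-cut $\sigma$). Since $p \in U$, we have $p \notin \bd U$, so $V_0 := S \sm \{p\}$ is an open set containing $\bd U$, i.e.\ a neighborhood of $\bd U$. By the monotonicity above, every $\sigma' \in \mc{F}$ with $\sigma' \succeq \sigma$ satisfies $p \in D_\sigma \subset D_{\sigma'}$, so $D_{\sigma'} \not\subset V_0$. Claim \ref{claim:max2} then forces the set
\[
\mc{F}_\sigma := \{\sigma' \in \mc{F} : \sigma' \succeq \sigma\}
\]
to be finite.

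Once the finiteness of $\mc{F}_\sigma$ is in hand, I would finish with a purely combinatorial step: $\mc{F}_\sigma$ is a nonempty finite poset, so it contains a maximal element $\sigma^*$, obtained for instance by starting at $\sigma$ and repeatedly replacing the current element by a strictly larger one as long as this is possible (a process that must terminate). This $\sigma^*$ is automatically maximal in the full family $\mc{F}$: any hypothetical $\sigma^{**} \in \mc{F}$ with $\sigma^{**} \succ \sigma^*$ would satisfy $\sigma^{**} \succeq \sigma$, so $\sigma^{**} \in \mc{F}_\sigma$, contradicting the maximality of $\sigma^*$ within $\mc{F}_\sigma$.

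I do not expect a genuine obstacle here; the entire content of the proof is the reduction to a finite set, which is supplied by the second assertion of Claim \ref{claim:max2}. The only points requiring a little care are (a) recording that $D_\sigma$ is nonempty (used to select the point $p$), and (b) explaining why maximality within $\mc{F}_\sigma$ implies maximality in $\mc{F}$, both of which are immediate from the definitions.
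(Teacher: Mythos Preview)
Your proof is correct and is essentially the same as the paper's: both exploit the monotonicity $\sigma \prec \sigma' \Rightarrow D_\sigma \subset D_{\sigma'}$ together with the second part of Claim \ref{claim:max2} to rule out an infinite strictly increasing chain above $\sigma$. The only cosmetic difference is that the paper argues by contradiction (assuming an infinite strictly increasing sequence and invoking Claim \ref{claim:max2} with any neighborhood $V$ of $\bd U$ not containing $D_{\sigma_0}$), whereas you argue directly by showing the upset $\{\sigma'\in\mc{F}:\sigma'\succeq\sigma\}$ is finite.
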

\begin{proof}
Suppose not. Then one can find a sequence $\{\sigma_n\}_{n\geq 0}$ of elements of $\mc{F}$ such that $\sigma_n\prec \sigma_{n+1}$ for each $n\geq 0$. Let $V$ be a neighborhood of $\bd U$ such that $D_{\sigma_0}\not\subset V$. One deduces that $D_{\sigma_n}\not\subset V$ for any $n\geq 0$. This contradicts Claim \ref{claim:max2}.
\end{proof}

Denote by $\mc{F}^*$ the family of maximal elements of $\mc{F}$, and let
$U_0 = U\sm \bigcup_{\sigma \in \mc{F}} \til{D}_\sigma = U\sm \bigcup_{\sigma \in \mc{F}^*} \til{D}_\sigma$.
Note that if $\sigma$ and $\sigma'$ are different elements of $\mc{F}^*$, then neither $\sigma\prec \sigma'$ nor $\sigma'\prec\sigma$, so Claim \ref{claim:max1} implies that $\{\til{D}_\sigma:\sigma \in \mc{F}^*\}$ are pairwise disjoint, and from Claim \ref{claim:max2} one sees that the union of those sets is closed in $U$. This shows that $U_0$ is open. 
The connectedness and simply connectedness follows easily from the fact that $\til{D}_{\sigma}$ is a simply connected unbounded (\ie not relatively compact) set in $U$. This proves (2).

(3) Let $\sigma_0\in \mc{F}^*$ and $x\in \sigma_0$. From Claim \ref{claim:max2}, if $B$ is a small enough neighborhood of $x$, there are at most finitely many elements $\sigma_1,\dots,\sigma_k$ of $\mc{F}$ intersecting $B$ and different from $\sigma_0$. Since the cross-cuts $\sigma_i$ are closed in $U$ and pairwise disjoint, there is a smaller neighborhood $B'$ of $x$ that is disjoint from $\sigma_1\cup\cdots\cup\sigma_k$. Hence $B'$ is disjoint from all elements of $\mc{F}$ except $\sigma_0$, and since $\sigma_0$ is an embedded arc, there is a smaller neighborhood $B''\subset B'$ such that $\sigma_0\cap B''$ is a connected embedded arc separating $B''$ into exactly two components $B_1$ and $B_2$ (and therefore $x$ is accessible from each component). In particular $B_1$ and $B_2$ are connected and disjoint from $\bigcup_{\sigma\in \mc{F}^*}\sigma$, and both $\bd_U B_1$ and $\bd_U B_2$ contain $x$. Suppose $B_1$ is the component that intersects $D_{\sigma_0}$ (and thus it is contained in $D_{\sigma_0}$). We claim that $B_2\subset U_0$. Indeed, if $B_2$ intersects $\til{D}_\sigma$ for some $\sigma\in\mc{F}^*$, then $B_2\subset D_\sigma$, and $\sigma\neq \sigma_0$. Thus $\til{D}_\sigma$ is disjoint from $\til{D}_{\sigma_0}$, contradicting the fact that $x\in\cl_U B_1 \cap \cl_U B_2 \subset \til{D}_\sigma \cap \til{D}_{\sigma_0}$. Thus, $B_2$ is disjoint from $\til{D}_\sigma$ for all $\sigma\in \mc{F}^*$, implying that $B_2\subset U_0$. 

We have showed that any point of an element $\sigma_0\in \mc{F}^*$ is accessible from $U_0$. This also shows that $\bigcup_{\sigma\in\mc{F}^*}\sigma\subset \bd_U U_0$. But as we already saw, $\bigcup_{\sigma\in \mc{F}^*}\til{D}_\sigma$ is closed, so its boundary (and hence the boundary of its complement $U_0$) is contained in $\bigcup_{\sigma\in \mc{F}^*} \sigma$, and we conclude that $\bigcup_{\sigma\in\mc{F}^*}\sigma = \bd_U U_0$ as claimed.

(4) Denote by $\mc{F}_k$ those elements of $\mc{F}$ of the form $f^k(\sigma)$ with $\sigma\in \mc{F}_0$, and $\mc{F}_k^*=\mc{F}_k\cap \mc{F}^*$. Thus $\mc{F}=\bigcup_{k=0}^N \mc{F}_k$ and $\mc{F}^*=\bigcup_{k=0}^N \mc{F}^*_k$.

\begin{claim}\label{claim:max4} If $\sigma\in \mc{F}_k^*$ with $0\leq k<N$, then either $f(\sigma)\in \mc{F}_{k+1}^*$ or $f(\sigma)\prec \sigma'$ for some $\sigma'\in \mc{F}_0^*$.

Similarly, if $\sigma\in \mc{F}_k^*$ with $0<k\leq N$, then either $f^{-1}(\sigma)\in \mc{F}_{k-1}^*$ or $f^{-1}(\sigma)\prec \sigma'$ for some $\sigma'\in \mc{F}_N^*$.
\end{claim}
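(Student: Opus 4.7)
The plan is to deduce Claim \ref{claim:max4} from a clean $f$-equivariance statement for the partial order $\prec$. First I would verify that if $\sigma\in \mc{F}$ and $f(\sigma)\in \mc{F}$, then the set $D_\sigma$ (the component of $U\sm \sigma$ disjoint from $f(\sigma)$) satisfies $f(D_\sigma)=D_{f(\sigma)}$: indeed $f(D_\sigma)$ is one of the two components of $U\sm f(\sigma)$, and it is disjoint from $f^2(\sigma)$ because $D_\sigma$ is disjoint from $f(\sigma)$, so by the defining property of $D_{f(\sigma)}$ equality follows. Consequently $f(\til{D}_\sigma)=\til{D}_{f(\sigma)}$, and $\sigma\prec\tau$ implies $f(\sigma)\prec f(\tau)$ whenever $f(\sigma), f(\tau)\in \mc{F}$. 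Applying this to $f^{-1}$ gives order preservation by $f^{-1}$ as long as $f^{-1}(\sigma)$ and $f^{-1}(\tau)$ both lie in $\mc{F}$. This is the only place where the hypothesis $0\le k\le N$ enters the argument.

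For the first half of the claim, let $\sigma\in \mc{F}^*_k$ with $0\le k<N$, so that $f(\sigma)\in \mc{F}_{k+1}\subset \mc{F}$ and the notion of maximality makes sense for $f(\sigma)$. By Claim \ref{claim:max3} there is some $\sigma'\in \mc{F}^*$ with either $f(\sigma)=\sigma'$ or $f(\sigma)\prec \sigma'$. In the first case $f(\sigma)\in \mc{F}^*_{k+1}$ and we are done. Otherwise write $\sigma'\in \mc{F}^*_j$ for some $0\le j\le N$ and suppose for contradiction that $j\ge 1$. Then $f^{-1}(\sigma')\in \mc{F}_{j-1}\subset \mc{F}$, so applying $f^{-1}$ to $f(\sigma)\prec \sigma'$ yields $\sigma\prec f^{-1}(\sigma')$, contradicting the maximality of $\sigma\in \mc{F}^*$. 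Hence $j=0$ and $\sigma'\in \mc{F}_0^*$, which is exactly the desired alternative.

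The second half is obtained by the same reasoning run in reverse time. Given $\sigma\in \mc{F}^*_k$ with $0<k\le N$, the element $f^{-1}(\sigma)\in \mc{F}_{k-1}\subset \mc{F}$ is either already in $\mc{F}^*_{k-1}$, or it is strictly dominated by some $\sigma'\in \mc{F}^*_j$ produced by Claim \ref{claim:max3}. In the latter case $j$ cannot be less than $N$, for otherwise $f(\sigma')\in \mc{F}_{j+1}\subset \mc{F}$ and order preservation by $f$ forces $\sigma\prec f(\sigma')$, again contradicting the maximality of $\sigma$. So $\sigma'\in \mc{F}^*_N$, as required.

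Since the proof is just a formalization of the $f$-equivariance of $\prec$ plus a one-line maximality argument, there is no serious obstacle. The only point needing care is to invoke order preservation only when the relevant pair of iterates lies inside the finite window $\mc{F}=\bigcup_{k=0}^N \mc{F}_k$; the hypotheses $0\le k<N$ (respectively $0<k\le N$) in the two statements are designed precisely to guarantee this.
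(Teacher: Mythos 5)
Your proof is correct and follows essentially the same route as the paper: invoke Claim \ref{claim:max3} to dominate $f(\sigma)$ (resp.\ $f^{-1}(\sigma)$) by a maximal element, then use the $f$-equivariance of $\prec$ (which the paper records as the remark that ``whenever it makes sense, $f$ preserves the order'') to pull the dominating element back and contradict the maximality of $\sigma$ unless it lies in $\mc{F}_0^*$ (resp.\ $\mc{F}_N^*$). The only difference is that you spell out the verification $f(D_\sigma)=D_{f(\sigma)}$, which the paper leaves implicit.
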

\begin{proof} We only prove the first claim; the other is proved with a similar argument.

Suppose $0\leq k<N$ and $\sigma\in \mc{F}_k^*$. If $f(\sigma)$ is not maximal, then $f(\sigma)\prec \sigma'$ for some $\sigma'\in \mc{F}^*_i$, where $0\leq i\leq N$. Suppose that $i>0$. Then $f^{-1}(\sigma')\in \mc{F}$, and so $\sigma = f^{-1}(f(\sigma))\prec f^{-1}(\sigma')$, which contradicts the maximality of $\sigma$. Thus the only possibility is $i=0$, as required.
\end{proof}

\begin{claim} \label{claim:max5} Either $\mc{F}_0^*\neq \emptyset$ or $\mc{F}_N^*\neq \emptyset$.
\end{claim}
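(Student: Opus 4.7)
The plan is to argue by contradiction, extracting the disjunction almost immediately from Claims \ref{claim:max3} and \ref{claim:max4}. Suppose for contradiction that both $\mc{F}_0^* = \emptyset$ and $\mc{F}_N^* = \emptyset$.

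First I would verify that $\mc{F}^*$ is nonempty. Since the translation arc $\gamma$ intersects $U$ by hypothesis, the family $\mc{F}_0$ of connected components of $U\cap \gamma$ contains at least one element, so $\mc{F} \neq \emptyset$. By Claim \ref{claim:max3}, every element of $\mc{F}$ lies below some maximal element, so $\mc{F}^* = \bigcup_{k=0}^{N} \mc{F}_k^*$ is nonempty. Pick any $\sigma \in \mc{F}_k^*$. Under the contradiction hypothesis we must have $0 < k < N$.

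Next I would propagate forward using the first half of Claim \ref{claim:max4}. Because $\mc{F}_0^* = \emptyset$, the alternative ``$f(\tau) \prec \sigma'$ for some $\sigma'\in \mc{F}_0^*$'' is vacuous, so for every $\tau \in \mc{F}_j^*$ with $0 \leq j < N$ we are forced into the first alternative $f(\tau) \in \mc{F}_{j+1}^*$. Applying this $N-k$ times in succession, starting from $\sigma \in \mc{F}_k^*$, produces $f^{N-k}(\sigma) \in \mc{F}_N^*$, contradicting $\mc{F}_N^* = \emptyset$.

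I do not expect any real obstacle: the claim is essentially a logical consequence of the preceding two claims once one knows $\mc{F}^*$ is nonempty, and symmetrically one could equally well run the backward half of Claim \ref{claim:max4} to descend from $\sigma$ to $\mc{F}_0^*$ and contradict $\mc{F}_0^* = \emptyset$ instead. Note that hypothesis (4) of the lemma (the $\bd$-nonwandering condition embodied in the assumption on $V$) is \emph{not} used for this particular claim; it will presumably be invoked in subsequent claims to upgrade this disjunction into the stronger statement that both $\mc{F}_0^*$ and $\mc{F}_N^*$ are nonempty, and ultimately to produce a single subarc $\sigma \subset \gamma$ whose full orbit $f^k(\sigma)$, $0\leq k\leq N$, consists of maximal elements, as part (4) asserts.
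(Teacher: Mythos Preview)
Your proof is correct and follows essentially the same approach as the paper: both use Claim~\ref{claim:max3} (together with the hypothesis that $\gamma$ meets $U$) to ensure $\mc{F}^*\neq\emptyset$, and then invoke Claim~\ref{claim:max4} to propagate a maximal element toward one of the extremes. The paper's proof is simply a terser version of yours, leaving the iterative application of Claim~\ref{claim:max4} implicit.
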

\begin{proof} From Claim \ref{claim:max3} we know that $\mc{F}^*$ is nonempty, which means that there exists some maximal element. So some $\mc{F}_k^*$ is nonempty, and the previous claim then implies that either $\mc{F}_0^*$ is nonempty or $\mc{F}_N^*$ is nonempty.
\end{proof}

\begin{claim}\label{claim:max6} Suppose there is no $\sigma\in \mc{F}_0^*$ such that $f^k(\sigma)\in \mc{F}^*_k$ for all $0\leq k\leq N$. 
If $\mc{F}_0^* \neq \emptyset$ (resp. $\mc{F}_N^*\neq \emptyset$), then any neighborhood $V$ of $\bd U$ containing $\Gamma$ contains the forward (resp. backward) orbit of some cross-section of $U$.  
\end{claim}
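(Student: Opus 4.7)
The plan is to use the hypothesis ``no $\sigma\in\mc{F}_0^*$ admits a full chain'' to inductively build an infinite sequence $(\sigma_l)_{l\ge 0}\subset\mc{F}_0^*$ of cross-cuts, and then show that the forward orbit of the cross-section $D_{\sigma_0}$ stays inside $V$ by following the resulting nested structure. I will treat only the case $\mc{F}_0^*\neq\emptyset$; the case $\mc{F}_N^*\neq\emptyset$ is handled symmetrically using the backward half of Claim \ref{claim:max4} and $f^{-1}$ in place of $f$.

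First I would define a self-map $\phi\colon\mc{F}_0^*\to\mc{F}_0^*$. For $\sigma\in\mc{F}_0^*$, let $k(\sigma)$ be the largest $k\in\{0,\ldots,N\}$ such that $f^j(\sigma)\in\mc{F}_j^*$ for all $0\le j\le k$. The hypothesis of the claim forces $k(\sigma)<N$, so Claim \ref{claim:max4} applied to $f^{k(\sigma)}(\sigma)\in\mc{F}_{k(\sigma)}^*$ (using $k(\sigma)<N$) yields some $\sigma'\in\mc{F}_0^*$ with $f^{k(\sigma)+1}(\sigma)\prec\sigma'$; pick one and set $\phi(\sigma)=\sigma'$. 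Fix any $\sigma_0\in\mc{F}_0^*$ and let $\sigma_l=\phi^l(\sigma_0)$, $k_l=k(\sigma_l)$, $n_l=\sum_{i<l}(k_i+1)$; since each $k_l+1\ge 1$, we have $n_l\to\infty$.

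The second step is to establish the nesting $f^{n_l}(D_{\sigma_0})\subset D_{\sigma_l}$ for every $l\ge 0$ by induction. The base case $l=0$ is trivial, and the inductive step reads
\[
f^{n_{l+1}}(D_{\sigma_0})=f^{k_l+1}\bigl(f^{n_l}(D_{\sigma_0})\bigr)\subset f^{k_l+1}(D_{\sigma_l})=D_{f^{k_l+1}(\sigma_l)}\subset D_{\sigma_{l+1}},
\]
where the identity $f^{k_l+1}(D_{\sigma_l})=D_{f^{k_l+1}(\sigma_l)}$ follows from part (1) of Lemma \ref{lem:uzero} together with $\rho(f,U)\ne 0$ (so that $f^j(D_\sigma)=D_{f^j(\sigma)}$ whenever $\sigma,f^j(\sigma)\in\mc{F}$), and the last inclusion uses $f^{k_l+1}(\sigma_l)\prec\sigma_{l+1}$, which is built into the definition of $\phi$.

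Finally I would show $f^j(D_{\sigma_0})\subset V$ for every $j\ge 0$. Given $j$, choose the unique $l$ with $n_l\le j<n_{l+1}$ and write $j=n_l+r$ with $0\le r\le k_l<N$. By the nesting,
\[
f^j(D_{\sigma_0})\subset f^r(D_{\sigma_l})=D_{f^r(\sigma_l)},
\]
and since $f^r(\sigma_l)\subset f^r(\gamma)\subset\Gamma\subset V$ with $f^r(\sigma_l)\in\mc{F}$, Claim \ref{claim:max2} gives $D_{f^r(\sigma_l)}\subset V$. Thus the cross-section $D_{\sigma_0}$ has forward $f$-orbit contained in $V$, as required. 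The main obstacle is a modest bookkeeping exercise, namely verifying that the local transitions $f^{k_i+1}(\til{D}_{\sigma_i})\subset D_{\sigma_{i+1}}$ compose correctly into the global containment at stage $n_l$. A minor technical point is that Claim \ref{claim:max2} is invoked in its strong form ``$\tau\subset V$ implies $D_\tau\subset V$'', which needs $U\setminus V$ to be connected and to intersect its image under $f$; these are precisely the standing hypotheses on $V$ in part (4) of Lemma \ref{lem:uzero} where Claim \ref{claim:max6} will be applied, so they may be freely assumed at this stage.
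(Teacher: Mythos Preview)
Your argument is correct for the claim as literally stated and is more direct than the paper's route. Rather than constructing the forward-invariant set $W=\bigcup_{\tau\in\mc{C}}D_\tau$ (where $\mc{C}=\{f^k(\sigma):\sigma\in\mc{F}_0^*,\ 0\le k<n_\sigma\}$) and then working inside it, you iterate Claim~\ref{claim:max4} to produce a sequence $(\sigma_l)_{l\ge0}\subset\mc{F}_0^*$ and track the forward orbit of the single cross-section $D_{\sigma_0}$ through the nested sets $D_{\sigma_l}$. This is cleaner bookkeeping. The trade-off is that the paper's proof establishes more than the claim's statement: it produces a \emph{wandering} cross-section $E$ (by first arranging $f^n(D_\sigma)\subsetneq D_\sigma$ via some intermediate $D_{\sigma''}$, and then using that two distinct elements of $\mc{F}$ cannot share both endpoints to locate $E$ in the gap $D_{\sigma''}\sm f^n(D_\sigma)$). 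This ``wandering'' is precisely what is needed to derive part~(4) of Lemma~\ref{lem:uzero}, whose hypothesis only forbids the forward/backward orbit of \emph{wandering} cross-sections from lying in $V$. Your $D_{\sigma_0}$ is not shown to be wandering, so with your version the passage from Claim~\ref{claim:max6} to part~(4) would need an extra step---for instance, once your sequence repeats (say $\sigma_l=\sigma_{l+p}$) you obtain $f^{n_{l+p}-n_l}(\til D_{\sigma_l})\subset D_{\sigma_l}$ and can then invoke the paper's endpoint argument to extract a wandering cross-section.
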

\begin{proof}  We assume $\mc{F}_0^*\neq \emptyset$ (the other case is analogous, using $f^{-1} $ instead of $f$). For $\sigma\in \mc{F}^*_0$, let $n_\sigma$ be the first positive integer such that $f^{n_\sigma}(\sigma)\notin \mc{F}^*$. From our assumption, we have that $1\leq n_\sigma\leq N$ for each $\sigma\in \mc{F}_0^*$. Moreover, from Claim \ref{claim:max4} we must have $f^{n_\sigma}(\sigma)\prec \sigma'$ for some $\sigma'\in \mc{F}^*_0$. 
Let $$\mc{C}=\{f^k(\sigma) : \sigma \in \mc{F}_0^*,\, 0\leq k<n_\sigma\}\subset \mc{F}^*.$$ 
If $W=\bigcup_{\sigma\in \mc{C}} D_\sigma$, noting that this is a disjoint union, it follows easily from Claim \ref{claim:max2} that $\bd_U W = \bigcup_{\sigma\in \mc{C}} \sigma$. Let us see that $f(W)\subset W$: Any $\sigma\in \mc{C}$ is of the form $\sigma=f^k(\sigma_0)$ for some $\sigma_0\in \mc{F}_0^*$ and $0\leq k< n_{\sigma_0}$. If $k<n_{\sigma_0}-1$, then $f(\sigma)=f^{k+1}(\sigma_0)\in \mc{C}$, so that $f(D_\sigma)=D_{f(\sigma)}\subset W$; and if $k=n_{\sigma_0}-1$ then Claim \ref{claim:max4} implies that there is $\sigma'\in \mc{F}_0^*\subset \mc{C}$ such that $f(\sigma) = f^{n_{\sigma_0}}(\sigma_0)\prec \sigma'$, so that $f(D_\sigma)=D_{f(\sigma)}\subset D_{\sigma'}\subset W$. In both cases, $f(D_\sigma)\subset W$, showing that $f(W)\subset W$.

We also claim that $\cl_U f^N(W)\subset W$. To see this, note that if $\sigma\in \mc{F}_0^*$ then $f^{n_\sigma}(\sigma)\subset W$ and thus $f^n(\sigma)\subset W$ for any $n>n_\sigma$. Since every element of $\mc{C}$ is a positive iterate of some element of $\mc{F}_0^*$, it follows that $f^N(\sigma)\subset W$ for all $\sigma\in \mc{C}$. Recalling that $\bd_U W = \bigcup_{\sigma\in \mc{C}} \sigma$, we conclude that $f^N(\cl_U W)\subset W$ as claimed.

We will find a wandering cross-section of $U$ contained in $W$. To do this, fix $\sigma \in \mc{F}_0^*$, and suppose $D_\sigma$ is nonwandering (otherwise there is nothing to do). Then there is a smallest integer $n>0$ such that $f^n(D_\sigma)$ intersects $D_\sigma$. Since $f^n(D_\sigma)\subset W$, there is $\sigma_0\in \mc{C}$ such that $f^n(D_\sigma)\subset D_{\sigma_0}$, and so $D_\sigma$ intersects $D_{\sigma_0}$. But then from Claim \ref{claim:max1} and the maximality of $\sigma_0$ and $\sigma$, we conclude that $\sigma=\sigma_0$. Thus $f^n(D_\sigma)\subset D_\sigma$. 

Let us show that $n\geq n_\sigma$: if $n<n_\sigma$, then $f^n(\sigma)\in \mc{F}^*$ for all $0\leq k\leq n$, and together with the fact that $D_{f^n(\sigma)}=f^n(D_\sigma)\subset D_\sigma$, this implies that $f^n(\sigma)=\sigma$. But then $f^k(\sigma)\in \mc{F}^*$ for all $k\in \Z$, contradicting our hypothesis. Thus $n\geq n_\sigma$.

%Moreover, it holds $f^n(\ol{D}_\sigma)\subset D_\sigma$, since $f^n(\sigma)$ cannot intersect $\sigma$. Indeed, if $0\leq n\leq N$, this is a consequence of the fact that $f^n(\sigma)\in \mc{F}$; and if $n>N$ this is because $f^n(\sigma)\subset W$, as we proved earlier (while $\sigma\in \bd W$). 

Note that from the fact that $\gamma$ is an $N$-translation arc with $N\geq 2$ follows that two different elements of $\mc{F}$ cannot share their two endpoints. Thus, if $\sigma'$ is the element of $\mc{F}_0^*$ such that $f^{n_\sigma}(\sigma)\prec \sigma'$, we know that $f^{n_\sigma}(\sigma)$ and $\sigma'$ do not share their two endpoints, and $f^n(D_\sigma)\subset f^{n-n_\sigma}(D_{\sigma'})$. Denote $\sigma''=f^{n-n_\sigma}(\sigma')$ and $D_{\sigma''}=f^{n-n_\sigma}(D_{\sigma'})$. Since $f^n(D_\sigma)\subset D_{\sigma''}$ and $f^n(D_\sigma)$ intersects $D_\sigma$, it follows that $D_{\sigma''}$ intersects $D_\sigma$. Since $D_\sigma$ is a connected component of $W$ and $D_{\sigma''}\subset W$, it follows that
$$f^n(D_\sigma)\subset D_{\sigma''} \subset D_\sigma.$$

\begin{figure}[ht!]
\centering
\includegraphics[height=5cm]{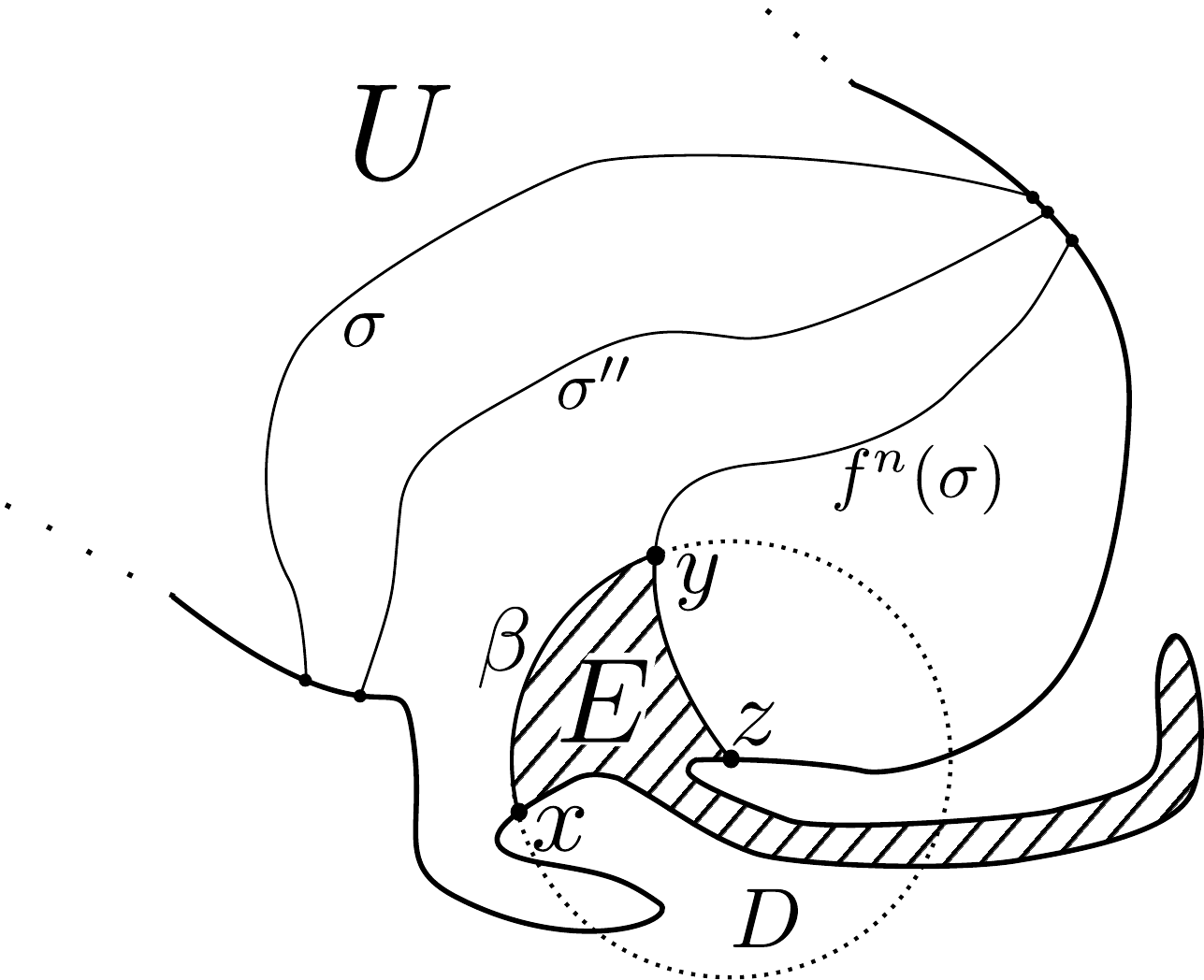}
\caption{Obtaining the wandering cross-section $E$}
\label{fig:max6}
\end{figure}

This implies that $D_\sigma\sm f^{n}(D_\sigma)$ is a wandering set, so to find a wandering cross-section of $U$ in $W$ it suffices to show that $D_\sigma\sm f^{n}(D_\sigma)$ contains some cross-section $E$ of $U$. This will be a consequence of the fact that $f^n(\sigma)$ and $\sigma''$ do not share some endpoint (see Figure \ref{fig:max6}): 
Let $z\in \bd U$ be  an endpoint of $f^{n}(\sigma)$ that is not an endpoint of $\sigma''$. Then we can choose a small disk $D$ around $z$ that is disjoint from $\sigma''$ and from the remaining endpoint of $f^{n}(\sigma)$. Since $D$ necessarily intersects $D_{\sigma''}\sm f^{n}(D_\sigma)$ and it does not contain $U$, the boundary of $D$ contains a simple arc $\beta$ joining a point $x\in \bd U$ to a point $y\in f^{n}(\sigma)$ and such that $\beta\subset D_{\sigma''}\sm f^n(\til{D}_{\sigma})$. The concatenation of $\beta$ with the sub-arc of $f^{n}(\sigma)$ joining $y$ to $z$ gives a cross-cut $\beta'$ of $U$ such that $\beta'\subset D_{\sigma''}\sm f^n(D_{\sigma})$. A simple connectedness argument shows that one of the cross-sections $E$ defined by $\beta'$ in $U$ is contained in $D_{\sigma''}\sm f^n(D_{\sigma})$, as we wanted.

%
%   Since $\beta'\subset D_{\sigma'}$, one component $E$ of $U\sm \beta'$ is also contained in $D_{\sigma'}$. It remains to show that $E$ is disjoint from $\til{D}_{f^{n_\sigma}(\sigma)}$, for which it suffices to see that $E$ is disjoint from $D_{f^{n_\sigma}(\sigma)}$. To see this, note that $E$ intersects $U\sm \til{D}_{f^{n_\sigma}(\sigma)}$, so if $E$ also intersects $D_{f^{n_\sigma}(\sigma)}$, then $\beta$ would have to
%
% either $D_{f^{n_\sigma}(\sigma)}\subset E$, which is not possible because $\bd_U E$ intersects $f^{n_\sigma}(\sigma)$, or $E\subset D_{f^{n_\sigma}(\sigma)}$, which is also not possible because $E$ intersects $U\sm D_{f^{n_\sigma}(\sigma)}$.
%

Finally, let $V$ be a neighborhood of $\bd U$ such that $U\sm V$ is connected and not disjoint from its image by $f$, and suppose $\Gamma\subset V$. If $\gamma\in \mc{F}$, then $\gamma$ is disjoint from $U\sm V$ and so $U\sm V$ is contained in one component of $U\sm \gamma$. Since $D_\gamma$ is disjoint from its image, it follows that $U\sm V$ is disjoint from $D_\gamma$. Since this is true for all $\gamma\in \mc{F}$, in particular $U\sm V$ is disjoint from $W$, \ie $W\subset V$. Thus the wandering cross-section that we found earlier is contained in $V$. This proves the claim.
\end{proof}
Part (4) of the lemma follows directly from Claim \ref{claim:max6}, completing the proof of the lemma.
\end{proof}

\subsection{Proof of Theorem \ref{th:arclemma}}

Let us begin by explaining what is $N_{\alpha,g}$. Note that we are not interested in this proof in searching the optimal value of this constant. For every $\xi\in\R/\Z$ write $\Vert \xi\Vert =\inf_{t+\Z=\xi} \vert t\vert$. We will set $N_{\alpha,g}=(2g+1)q+2$, where the integer $q$ is chosen as the smallest positive integer such that
$$\begin{cases} \Vert q\alpha\Vert=0& \text{if $\alpha$ is rational}, \cr   (2g+1)\Vert q\alpha\Vert\leq \Vert\alpha\Vert & \text{if $\alpha$ is irrational.}\end{cases}$$
This choice of $q$ guarantees the following:
\begin{claim*}
If $F$ is an orientation preserving homeomorphism of $\R/\Z$ of rotation number $\alpha$ and if $(\xi_i)_{i\in\Z}$ is an orbit of $F$ that is not periodic, then one can choose an orientation of $\R/\Z$ such that the sequence
\begin{equation}\label{eq:orbit}
(\xi_0, \xi_q, \dots ,\xi_{2gq}, \xi_{(2g+1)q}, \xi_1, \xi_{q+1}, \dots ,\xi_{2gq+1}, \xi_{(2g+1)q+1},\xi_0)
\end{equation} is cyclically ordered in the sense that the open intervals defined by two consecutive terms are nonempty and pairwise disjoint. 
\end{claim*}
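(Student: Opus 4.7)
My plan is to distinguish two cases according to whether $\alpha$ is irrational or rational.

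\medskip

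\noindent\textbf{Irrational case.} I would appeal to Poincar\'e's classical theorem to obtain a continuous monotone degree-one map $h\colon\R/\Z\to\R/\Z$ semi-conjugating $F$ to the rigid rotation $x\mapsto x+\alpha$. Since $\alpha$ is irrational, the orbit $\{i\alpha\bmod 1\}$ is injective, so the points $h(\xi_i)=h(\xi_0)+i\alpha$ are pairwise distinct; hence $h$ does not collapse any two points of the orbit $(\xi_i)$, and the cyclic order of $(\xi_i)$ coincides, in any fixed orientation, with that of $(i\alpha\bmod 1)$. It therefore suffices to verify the claim for the orbit of $0$ under the rigid rotation. Choose the orientation of $\R/\Z$ so that the lift $\hat\beta\in[-1/2,1/2]$ of $q\alpha$ is nonnegative, so $\hat\beta=\|q\alpha\|$, and let $\hat\alpha\in(0,1)$ be the canonical lift of $\alpha$. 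Using the hypothesis $(2g+1)\|q\alpha\|\le\|\alpha\|$ and a case split on whether $\hat\alpha\le 1/2$ or $\hat\alpha>1/2$, one obtains both $(2g+1)\hat\beta\le\hat\alpha$ and $\hat\alpha+(2g+1)\hat\beta\le 1$. Consequently
\[
0<\hat\beta<2\hat\beta<\cdots<(2g+1)\hat\beta<\hat\alpha<\hat\alpha+\hat\beta<\cdots<\hat\alpha+(2g+1)\hat\beta<1,
\]
where the strict inequalities $(2g+1)\hat\beta<\hat\alpha$ and $\hat\alpha+(2g+1)\hat\beta<1$ follow from the irrationality of $\alpha$ (any equality would force $m\alpha\in\Z$ for some nonzero integer $m$). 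This gives the desired cyclic ordering in the positive direction.

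\medskip

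\noindent\textbf{Rational case.} Write $\alpha=p/q$ in lowest terms, so $q\ge 2$ because $\alpha\ne 0$. The homeomorphism $F^q$ has rotation number zero, hence its fixed-point set $K$ is nonempty. Since $(\xi_i)$ is not periodic, $\xi_0$ lies in some connected component $J_0$ of $\R/\Z\setminus K$; on $J_0$ the restriction of $F^q$ is fixed-point free, so it displaces every point strictly toward one of the two endpoints. I would orient $\R/\Z$ so that this displacement is positive, which yields that $\xi_0,\xi_q,\ldots,\xi_{(2g+1)q}$ are strictly increasing in $J_0$; applying the orientation-preserving homeomorphism $F$ then shows that $\xi_1,\xi_{q+1},\ldots,\xi_{(2g+1)q+1}$ are strictly increasing inside the component $J_1=F(J_0)$.

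\medskip

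\noindent\textbf{Completion and main difficulty.} It remains to check that, in the chosen orientation, going positively around $\R/\Z$ after the last listed point of $J_0$ one next meets the first listed point of $J_1$. For this I would verify that the $q$ components $J_0,J_1,\ldots,J_{q-1}$ are pairwise distinct: if $F^k(J_0)=J_0$ for some $0<k<q$, then $F^k$ either fixes both endpoints of $J_0$ or swaps them; in the first subcase $F^k$ has rotation number zero, forcing $q\mid k$, and in the second $F^{2k}$ has rotation number zero, forcing $q\mid 2k$; both possibilities contradict $\gcd(p,q)=1$. Since $F$ is orientation-preserving, its action on the cyclic set of $q$ components is a nontrivial cyclic shift, and because no listed point lies outside $J_0\cup J_1$, the listed sequence is cyclically ordered as claimed. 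The main obstacle in the proof is this rational case, particularly tracking the orientation choices coherently and deducing the distinctness of the $J_i$ from the coprimality $\gcd(p,q)=1$; the irrational case is comparatively direct once Poincar\'e's semi-conjugacy is invoked.
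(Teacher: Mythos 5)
Your argument is essentially the paper's: in the irrational case you reduce via Poincar\'e's semiconjugacy to the rigid rotation and check the inequalities coming from $(2g+1)\Vert q\alpha\Vert\le\Vert\alpha\Vert$, and in the rational case you use that $\xi_0,\xi_q,\xi_{2q},\dots$ moves monotonically inside a complementary interval of the period-$q$ point set while $F$ sends that interval to a different one; this is the same mechanism as the paper's ``monotone convergence of $F^{kq}(x)$ to a periodic point, orient by the direction of convergence.'' The one step that does not work as written is your dismissal of the endpoint-swapping subcase: if $F^k(J_0)=J_0$ with the endpoints of $J_0$ interchanged, you conclude $q\mid 2k$ and claim this contradicts $\gcd(p,q)=1$, but with $0<k<q$ the relation $q\mid 2k$ only forces $q=2k$ (e.g.\ $q=2$, $k=1$, or $q=4$, $k=2$), which is perfectly compatible with coprimality, so no contradiction is reached. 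The subcase is in fact vacuous for a different reason: an orientation-preserving circle homeomorphism cannot map the open arc $J_0=(a,b)$ onto itself while sending $a\mapsto b$ and $b\mapsto a$, since it would have to send the positively oriented arc from $a$ to $b$ onto the positively oriented arc from $b$ to $a$, i.e.\ onto the complementary arc. With that one-line replacement (and note that for the conclusion you only need $F(J_0)\neq J_0$, i.e.\ the case $k=1$), your proof is complete.
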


This claim should be clear in the irrational case, since the fact that $\alpha$ is irrational implies by a classic result of Poincar\'e that $F$ is monotonically semiconjugate to the rigid rotation by $\alpha$, so that the relative ordering of an orbit of $F$ in the circle coincides with the relative ordering of points of an orbit of the rigid rotation by $\alpha$ (and so the condition on $q$ easily implies that the claim holds using the usual orientation of the circle; see Figure \ref{fig:alpha}).

\begin{figure}[ht]
\begin{minipage}[b]{0.49\linewidth}
\centering
\includegraphics[width=\linewidth]{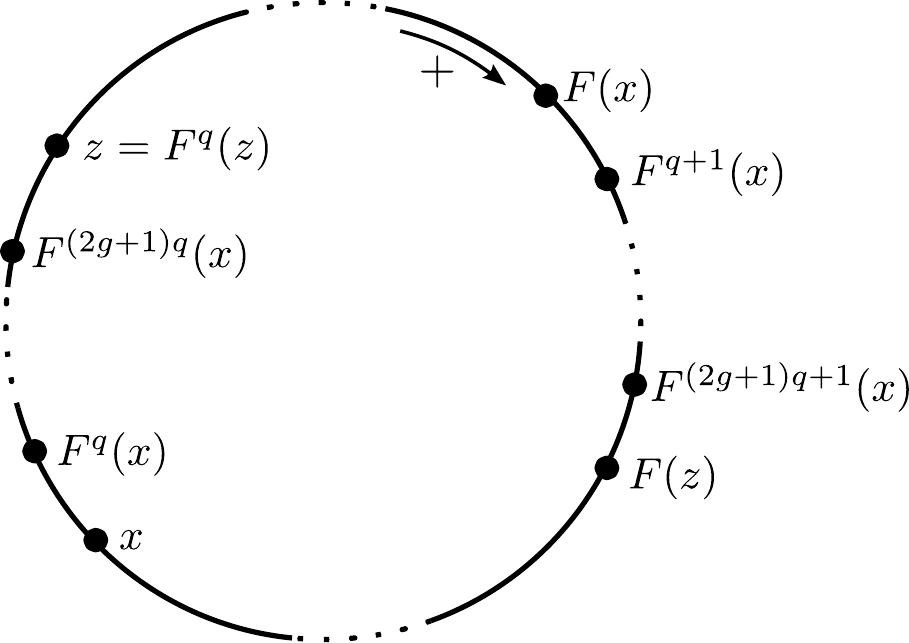}
\caption{Rational case}
\label{fig:alpha-Q}
\end{minipage}
\hfill
\begin{minipage}[b]{0.49\linewidth}
\centering
\includegraphics[width=.87\linewidth]{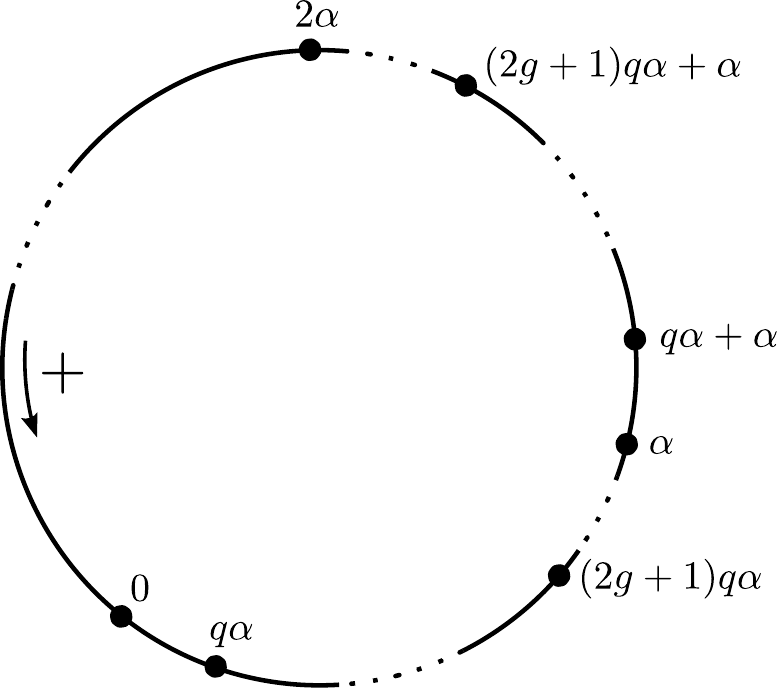}
\caption{Irrational case}
\label{fig:alpha}
\end{minipage}
\end{figure}

On the other hand, if $\alpha =p/q$ is rational (with $q\in \N$ smallest possible), then every periodic point has period $q$ and the non-periodic point $x=\xi_0$ is such that $F^{kq}(x)$ converges monotonically to a periodic point $z$ as $k\to  \infty$. In particular, since $q\geq 2$, the claim is easily seen to hold, by choosing the orientation of the circle depending on the direction of convergence of $f^{kq}(x)$ to $z$ (\ie from the left or from the right). See Figure \ref{fig:alpha-Q}.

Let us suppose that the hypotheses of Theorem \ref{th:arclemma} are satisfied, and let $K_0\subset U$ be a compact set such that $U\sm K_0$ does not contain the forward or backward orbit of any wandering cross-section of $U$. Choose a larger compact set $K$ such that $K_0\subset f^n(K)\subset K$ for $0\leq n\leq N_{\alpha,g}$. We may further assume that $K$ is connected and not disjoint from its image.

From now on, we assume that there exists an $N$-translation arc $\gamma$ in $U\sm K$ such that $\gamma$ meets $\partial U$, where $N\geq N_{\alpha,g}$, and we will derive a contradiction. Let us consider the simple arc $\Gamma=\bigcup_{0\leq n\leq N} f^k(\gamma)$. Observe that every sub-arc $\gamma'$ of $\Gamma$ that joins a point $x\in \gamma$ to $f(x)$ is an $(N-1)$-translation arc. By hypothesis, one may choose $\gamma'$ such that it meets $\partial U$ at a point $x'$ distinct from its endpoints. If $D$ is a topological disk  that contains $x'$ in its interior and that is sufficiently small, then  $f^n(\gamma'\cup D)\cap (\gamma'\cup D)\setminus\{f(x)\}=\emptyset$ for every positive $n\leq N-1$. This implies that every arc $\gamma''$ that joins $x$ to $f(x)$ and is included in $\gamma'\cup D$ is an $N-1$ translation arc. As $x'$ belongs to $\partial U$, one may construct such an arc $\gamma''$ that contains $x'$ but also meets $U$ inside $D$. Let us consider $\Gamma''=\bigcup_{0\leq n\leq N-1} f^k(\gamma'')$. The sub-arc of $\Gamma''$ that joins $x'$ to $f(x')$ is an $(N-2)$-translation arc $\gamma'''$ that meets $U$ and whose ends are on $\partial U$. Let us define $\Gamma'''=\bigcup_{0\leq n\leq N-2} f^k(\gamma''')$.

Let us apply Lemma \ref{lem:uzero} to $\Gamma'''$ by keeping the notations. Note that the hypothesis of part (4) of that lemma holds using $V=U\sm K$ and $N-2$ instead of $N$. Thus one may find a cross-cut $\sigma\in{\mathcal F}^*$ such that $f^k(\sigma)\in{\mathcal F}^*$ for all $k$ with $0\leq k\leq N-2$. 

Since $\sigma$ is a cross-cut of $U$, its closure in $\cPE(U)$ is $\sigma \cup \{\xi_0, \eta_0\}$, where $\xi_0, \eta_0$ are prime ends of $U$. Let $\til{f}$ be the extension of $f|_U$ to $\cPE(U)$. We claim that $\xi_0$ is not $\til{f}$-periodic. To see this, assume that $\xi_0$ is periodic, and let  $q$ be the smallest positive integer such that $\til{f}^q(\xi_0)=\xi_0$. Then  $\alpha = \rho(\til{f}|_{\bdPE(U)})=p/q\, (\text{mod }\Z)$ for some $p\in \Z$, and so by our definition, $N\geq N_{\alpha, g} = (2g+1)q+2 \geq q+2$. Since $\xi_0$ is an accessible prime end, it has a corresponding accessible point $\xi_0'$ in $\bd U$. The fact that $\til{f}^q(\xi_0)=\xi_0$ implies that $f^q(\xi_0')=\xi_0'$. But $\xi_0'$ is an endpoint of $\sigma$, which is contained in the compact arc $\gamma'''$, so $\xi_0'\in \gamma'''$. Since $\gamma'''$ is an $(N-2)$-translation arc and $2\leq q\leq N-2$, it follows that $f^{q}(\gamma''')\cap \gamma'''=\emptyset$. This contradicts the fact that $\xi_0'\in \sigma$.

Thus $\xi_0$ is not periodic, and the claim at the beginning of the proof applied to $F=\til{f}|_{\bdPE(U)}$ implies that if $\xi_k = \til{f}^k(\xi_0)$, the sequence (\ref{eq:orbit}) is cyclically ordered in the circle $\bdPE(U)$ (using an appropriate orientation). If $U_0\subset U$ is the set from Lemma \ref{lem:uzero}, then $f^i(\sigma)\subset \bd U_0$ for $0\leq i\leq N-2$. Using these facts (and noting that $N-2 \geq g$), one may choose two sequences of arcs $(a_i)_{0\leq i\leq g}$ and , $(b_i)_{0\leq i\leq g}$ with the following properties (see Figure \ref{fig:arcs-ab}): 
\begin{itemize}
\item $a_i$ and $b_i$ are in $U_0$ except for their endpoints;
\item $a_i$ joins a point of $f^{2iq}(\sigma)$ to a point of $f^{(2i+1)q}(\sigma)\cap U$;
\item $b_i$ joins a point of $f^{2iq+1}(\sigma)$ to a point of $f^{(2i+1)q+1}(\sigma)\cap U$;
\item $a_i\cap a_j=\emptyset = b_i\cap b_j$ if $i\neq j$, and $a_i\cap b_j=\emptyset$ for any $i,j$;
\end{itemize}

\begin{figure}[ht!]
\centering
\includegraphics[height=5.5cm]{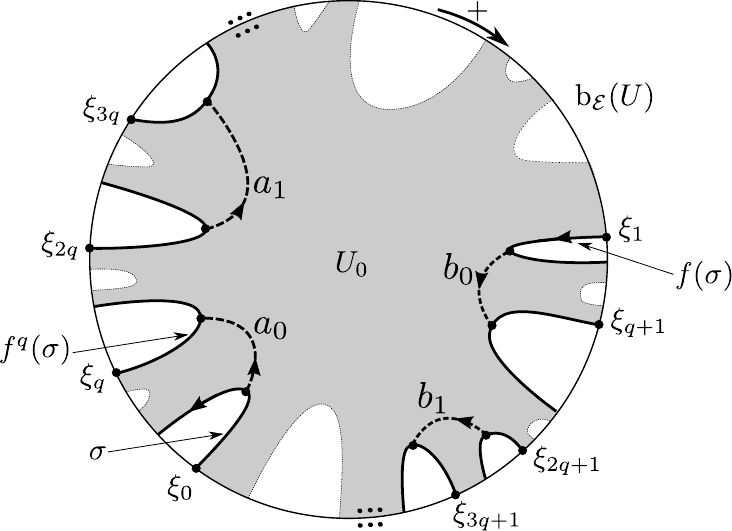}
\caption{The arcs $a_i$ and $b_i$ viewed in $\cPE(U)$}
\label{fig:arcs-ab}
\end{figure}

%Thereofore we may choose a family 
%$$(\beta_i)_{i\in\{0, 2q, \dots, 2gq\}\cup\{1,2g+1, \dots, 2gq+1\}}$$ of pairwise disjoint simple arcs such
%that $\beta_i$ joins a point of $f^i(x)$ to $f^{i+q}(x)$ and satisfies $\beta_i\setminus\{f^i(x), f^{i+q}(x)\}\subset U_0$. 

The arc $\Gamma'''$ is linearly ordered by its orientation. Recall that $\sigma$ is a sub-arc of $\gamma'''$, so that $\sigma$ lies between $x'$ and $f(x')$ in the ordering of $\Gamma'''$, and the sequence of arcs $$\left(\sigma, f(\sigma), f^q(\sigma), f^{q+1}(\sigma), \dots, f^{(2g+1)q}(\sigma), f^{(2g+1)q+1}(\sigma)\right)$$ are pairwise disjoint and increasingly ordered as sub-arcs of $\Gamma'''$; furthermore, they are all oriented in the same way (either positively or negatively) with respect to the orientation of $\Gamma'''$. We may close $a_i$ to a simple loop $a_i''$ by letting $a_i''=a_i\cup a_i'$, where $a_i'\subset \Gamma'''$ is a simple arc joining the endpoint of $a_i$ to its starting point (oriented accordingly). Similarly, we define a simple loop $b_i''=b_i\cup b_i'$ where $b_i'\subset \Gamma'''$ is a simple arc joining the endpoint of $b_i$ to its initial point and oriented accordingly. The loops $a_0'',\dots, a_g''$ are pairwise disjoint, and so are the loops $b_0'',\dots, b_g''$. Moreover, if $i\neq j$ then $a_i''\cap b_j''=\emptyset$, while $\emptyset\neq a_i''\cap b_i''\subset \Gamma'''$ and $\emptyset\neq a_i'\cap b_i'\subset \Gamma'''$.

\begin{figure}[ht!]
\centering
\includegraphics[width=\linewidth]{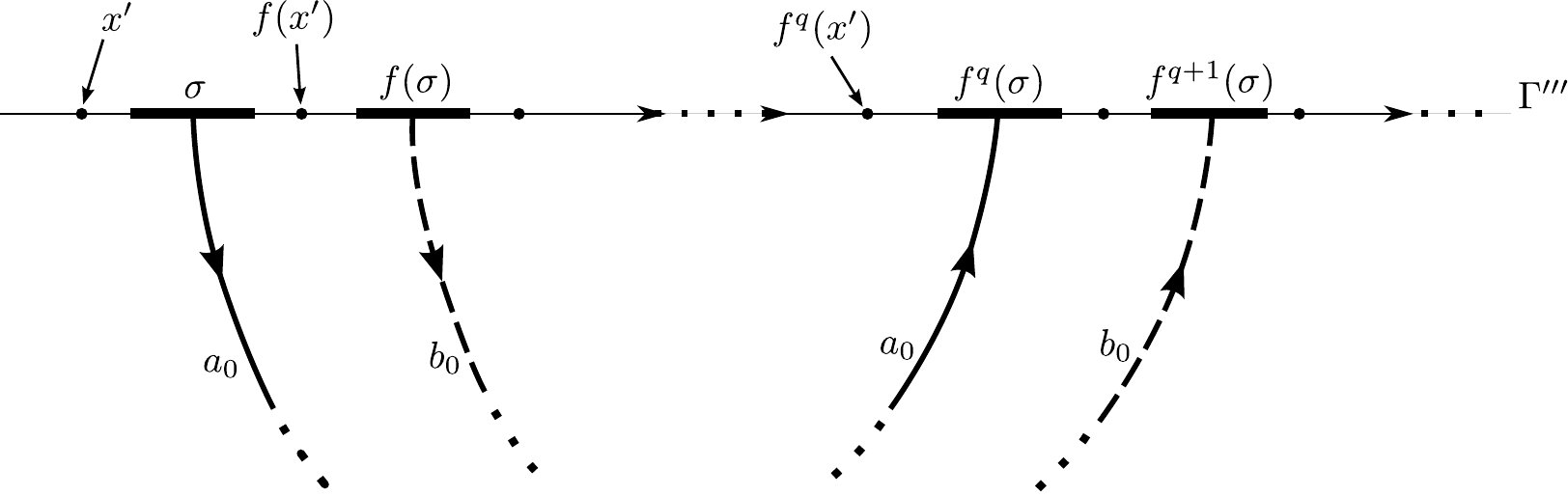}
\caption{The arcs $a_i$ and $b_i$ viewed in $\Gamma'''$}
\label{fig:gamma3}
\end{figure}

%XXX Maybe the next part requires better explanation of the index thing.
Note that by construction, the intersection index of $a_i$ with $\Gamma'''$ at the initial point of $a_i$ is opposite to the corresponding index at the final point of $a_i$, and $a_i$ does not intersect $\Gamma'''$ anywhere else (and similarly with $b_i$). This means that in a neighborhood of $\Gamma'''$, the curves $a_i$ and $b_i$ lie on the same side of $\Gamma$ (see Figure \ref{fig:gamma3}). From these facts one concludes that the algebraic intersection number of $a_i''$ and $b_j''$ is $0$ if $i\neq j$ and $\pm 1$ if $i=j$. This implies that one can construct $g+1$ pairwise disjoint $1$-punctured tori $T_0,\dots, T_g$ such that $a_i''\cup b_i'' \subset T_i$ for each $i$, contradicting the fact that $S$ has genus $g$.

A more algebraic way of saying the same thing is to note that, if $[a_i'']$ and $[b_i'']$ denotes the respective homology classes of $a_i''$ and $b_i''$ in $H_1(S,\Q)$ and $[a_i'']\wedge [b_i'']$ denotes the intersection pairing, then $[a_i'']\wedge[b_j'']$ is $0$ if $i\neq j$ and $\pm 1$ if $i=j$, while $[a_i'']\wedge [a_j''] = 0 = [b_i'']\wedge [b_j'']$ for $0\leq i\leq j \leq g$. It follows that $\{[a_0''],\dots, [a_g''], [b_0''],\dots, [b_g'']\}$ is linearly independent in $H_1(S, \Q)$, concluding that $\dim H_1(S, \Q) \geq 2g+2$. 
But since $g$ is the genus of $S$ and $S$ is orientable, one should have $\dim H_1(S,\Q) = 2g$. This contradiction completes the proof of the theorem.
\qed

\section{Converse of a theorem of Cartwright and Littlewood}
\label{sec:converse}

The main theorem of this section is the following:

\begin{theorem}\label{th:main}
Let $f\colon \R^2\to \R^2$ be an orientation preserving homeomorphism and $U\subsetneq \R^2$ an open $f$-invariant simply connected set. If $f$ is $\bd$-nonwandering in $U$ and $\rho(f,U)\neq 0$, then there is no fixed point of $f$ in the boundary of $U$. 
\end{theorem}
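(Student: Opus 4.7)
The plan is a proof by contradiction. Assume there is a fixed point $z_0\in\bd U$. Since $S=\R^2$ has genus $g=0$ and $\alpha:=\rho(f,U)\neq 0$, Theorem \ref{th:arclemma} produces an integer $N=N_{\alpha,0}$ and a compact set $K\subset U$ such that every $N$-translation arc of $f$ contained in $\R^2\setminus K$ is disjoint from $\bd U$. Pick an open connected neighborhood $V$ of $z_0$ with $\ol V\cap K=\emptyset$. The task thereby reduces to producing, inside $V$, an $N$-translation arc of $f$ that meets $\bd U$, in contradiction with the above.

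To build translation arcs of arbitrary length near the fixed point $z_0$, I would invoke the theorem of Jaulent on maximal unlinked fixed point sets applied to $f$ (which is automatically isotopic to the identity on $\R^2$). This yields a closed set $X\subset\fix(f)$ with $z_0\in X$, together with an isotopy $(I_t)_{t\in[0,1]}$ from the identity to $f$ which fixes $X$ pointwise and whose trajectories in $\R^2\setminus X$ are pairwise unlinked in a suitable sense. The restricted homeomorphism $f|_{\R^2\setminus X}$ is then fixed-point free, so by Brouwer's lemma on translation arcs (Proposition \ref{pro:brouwer-trans}) every translation arc of $f$ contained in $\R^2\setminus X$ is automatically an $\infty$-translation arc, hence in particular an $N$-translation arc.

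It therefore suffices to exhibit a single translation arc of $f$ lying in $V\setminus X$ and meeting $\bd U$. Since $\alpha\neq 0$, the circle homeomorphism $f_*|_{\bdPE(U,\R^2)}$ has no fixed prime end, so Proposition \ref{pro:prime-all} provides accessible prime ends close to $z_0$ with nontrivial $f_*$-orbit and, correspondingly, a cross-cut $\sigma$ of $U$ lying in $V$ with $\sigma\cap f(\sigma)=\emptyset$ and with one endpoint on $\bd U$ arbitrarily close to $z_0$. Applying the Jaulent isotopy to a point $x\in\sigma$ near that endpoint produces a path from $x$ to $f(x)$ in $\R^2\setminus X$; by choosing $x$ close enough to $z_0$ and using the continuity of $I_t$, this path stays in $V$, and using the unlinkedness of $X$ one can straighten it into a genuine simple translation arc $\gamma$ of $f$ in $V\setminus X$. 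Concatenating $\gamma$ with a small initial portion of $\sigma$ reaching its endpoint on $\bd U$ then yields a translation arc in $V\setminus X$ that meets $\bd U$. By the preceding paragraph this is an $N$-translation arc in $\R^2\setminus K$ intersecting $\bd U$, contradicting the defining property of $N$ and $K$.

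The delicate step, and the main obstacle, is the construction and control of the translation arc $\gamma$: the raw Jaulent trajectory must be upgraded into a simple arc that meets $\bd U$ while remaining in $V\setminus X$. This is exactly where both standing hypotheses play essential roles. The condition $\alpha\neq 0$ ensures nontrivial motion of cross-cuts near $z_0$ (without which the Jaulent trajectory could be homotopic to the constant path $z_0$ and yield no useful arc), and the $\bd$-nonwandering assumption prevents the forward orbits of the small cross-cuts $\sigma$ from wandering past $K$ before returning, so that the entire construction can be localized in $V$. Example \ref{exm:invers} shows that, on the sphere, without the nonwandering control the statement of the theorem already fails. Once the translation arc is produced, Theorem \ref{th:arclemma} immediately yields the desired contradiction, completing the proof.
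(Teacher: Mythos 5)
Your outline reproduces the paper's high-level strategy (contradiction via Theorem \ref{th:arclemma}, Jaulent's theorem, and Brouwer theory), but the two steps that carry the actual weight are gapped. The central one is your claim that $f|_{\R^2\sm X}$ is fixed-point free and that therefore, by Proposition \ref{pro:brouwer-trans}, any translation arc in $\R^2\sm X$ is an $\infty$-translation arc. Both halves fail. A maximal unlinked set $X$ produced by Theorem \ref{th:jaulent} need not contain all of $\fix(f)$ -- the last clause of that theorem is precisely about fixed points \emph{outside} $X$ -- so $f$ may have fixed points in $\R^2\sm X$, possibly accumulating on $z_0$. And even on a region where $f$ is fixed-point free, Brouwer's lemma is a statement about homeomorphisms of the plane; on the non-simply-connected set $\R^2\sm X$ it only yields, after lifting to a universal cover, the dichotomy of Lemma \ref{lem:brouwerlink}: either the arc is an $N$-translation arc, or $\bigcup_{k=0}^{N} f^k(\gamma)$ contains a loop essential in the complement of the fixed point set. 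Ruling out the second alternative is where the real work lies: one must confine the first $N$ iterates of the arc in a simply connected region disjoint from the relevant fixed points, which forces choosing the neighborhood so that $f^k(V)\subset B$ for $0\leq k\leq N$, proving (via the homotopical-nontriviality clause of Jaulent's theorem) that no fixed point of the lifted map projects into a neighborhood of $z_0$, and running the whole argument -- including the application of Theorem \ref{th:arclemma} -- for the lift $\hat f$ and a lifted disk $\hat U$ in the universal cover of $M=\R^2\sm (X\sm\{w\})$, where $w$ is a fixed point of $f$ inside $U$, rather than for $f$ and $U$ in the plane. None of this appears in your proposal.

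The construction of the translation arc meeting $\bd U$ is also not correct as written. Concatenating a translation arc $\gamma$ with ``a small initial portion of $\sigma$'' does not produce an arc joining a point to its image, and ``using the unlinkedness of $X$ one can straighten it'' is precisely the delicate point: to invoke Lemma \ref{lem:gettrans} you need an arc from $z$ to $f(z)$ containing no fixed point, and fixed points of $f$ may accumulate on $z_0$. The paper instead takes the isotopy trajectory of a \emph{boundary} point $\hat z\in\bd\hat U$ projecting near the fixed point, extracts a simple sub-arc from $\hat z$ to $\hat f(\hat z)$, and applies Lemma \ref{lem:gettrans} in the cover, where fixed points have already been excluded near $z_0$; the translation arc then meets $\bd\hat U$ because it passes through $\hat z$ itself. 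Moreover, the paper's preliminary reductions -- a unique fixed point inside $U$, a totally disconnected fixed point set, and the case distinction according to whether $\fix(f)\sm\{w\}$ has a bounded component -- are not cosmetic: they are what make $\hat U$ invariant under the lift (via a lifted fixed point in $U$) and allow the Jaulent isotopy to extend across $X$. Finally, your use of the $\bd$-nonwandering hypothesis ``to localize the construction in $V$'' is not how that hypothesis enters; it is consumed inside Theorem \ref{th:arclemma} itself, together with the irrationality-free hypothesis $\rho(f,U)\neq 0$.
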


Before proceeding to the details, we mention an easy consequence:
\begin{corollary}\label{coro:main}  Under the hypotheses of the above theorem, if $U$ is unbounded then $\fix(f)\subset U$.
\end{corollary}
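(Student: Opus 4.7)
Assume for contradiction that $f$ has a fixed point $p\in\R^2\setminus U$. Theorem~\ref{th:main} rules out $p\in\bd U$, so $p$ lies in the open set $\R^2\setminus\ol U$. Let $V$ be the connected component of $\R^2\setminus\ol U$ containing $p$; since $f$ permutes the components of the invariant open set $\R^2\setminus\ol U$ and fixes $p$, the set $V$ is $f$-invariant.

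The first step of the plan uses the unboundedness of $U$ to show that $V$ is simply connected. For any loop $\gamma\subset V$, Jordan's theorem furnishes a topological disk $D\subset\R^2$ with $\bd D=\gamma$. The closed connected set $\ol U$ is disjoint from $\gamma$, hence lies entirely inside or entirely outside $D$; since $\ol U$ is unbounded and $D$ is bounded, we must have $D\cap\ol U=\emptyset$, so $D\subset\R^2\setminus\ol U$. By connectedness, $D\subset V$, and thus $\gamma$ contracts in $V$.

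The contradiction is then extracted from the prime-end dynamics of $U$. I would study the closed $f$-invariant subset
\[ E:=\{\xi\in\bdPE(U,\R^2)\colon Z(\xi)\subset\bd V\} \]
of the prime-end circle of $U$, consisting of those prime ends whose impression is contained in the $f$-invariant subcontinuum $\bd V\subset\bd U$. Using the connectedness of $V$ together with the prime-end criteria of Propositions~\ref{pro:prime-criterion2} and~\ref{pro:prime-all}, I expect to show that $E$ is a closed arc of $\bdPE(U,\R^2)$. If $E$ is a proper arc then the orientation-preserving circle homeomorphism $f|_{\bdPE(U,\R^2)}$ leaves $E$ invariant, forcing it to fix the endpoints of $E$ and hence $\rho(f,U)=0$, contradicting the hypothesis.

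The degenerate case $E=\bdPE(U,\R^2)$, equivalently $\bd V=\bd U$, requires an additional argument. Simple connectedness of $U$ then forces $V$ to be unbounded (otherwise $U$ would be the complement of a compact connected continuum, contradicting the simple connectedness of $U$ in $\R^2$), so $U$ and $V$ are unbounded simply connected $f$-invariant open sets sharing a common boundary, in a symmetric configuration. In this symmetric setting I would argue that $\rho(f,V)\neq 0$ holds as well via an analogue of the Cartwright--Littlewood theorem of \S\ref{sec:CL} adapted to unbounded simply connected domains, and then rerun the prime-end arc argument on $V$ in place of $U$ to obtain the contradiction. The main obstacle will be the prime-end analysis identifying $E$ as a closed arc, since one must translate the connectedness of $\bd V$ into connectedness in the prime-end circle through the structure of accessible points; the symmetric case also requires extending Cartwright--Littlewood-type reasoning to the unbounded setting to control $\rho(f,V)$.
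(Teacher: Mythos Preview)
Your approach is far more involved than necessary, and the two obstacles you flag are real gaps. The paper's proof is essentially one line: given a fixed point $z_1\in\R^2\setminus U$, pass to the sphere $\R^2\cup\{\infty\}$, remove $z_1$, and work on the new plane $M=(\R^2\cup\{\infty\})\setminus\{z_1\}$, where $f$ extends with $f(\infty)=\infty$. By Proposition~\ref{pro:bdw} and Corollary~\ref{coro:prime-rot} the hypotheses of Theorem~\ref{th:main} still hold for $U\subset M$; but since $U$ was unbounded in $\R^2$, the point $\infty$ lies in $\bd_M U$ and is fixed, contradicting Theorem~\ref{th:main}. No analysis of complementary components or prime-end subsets is needed.

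As for your route, the assertion that $E=\{\xi:Z(\xi)\subset\bd V\}$ is a closed arc does not follow merely from connectedness of $\bd V$: impressions of distinct prime ends can overlap in complicated ways, and the set of prime ends whose impression lies in a prescribed subcontinuum of $\bd U$ is not in general an interval of $\bdPE(U)$. In the degenerate case you also need $\rho(f,V)\neq 0$, but nothing in the hypotheses gives control over the prime-end dynamics of $V$: the $\bd$-nonwandering condition was assumed for $U$, not $V$, and the Cartwright--Littlewood direction from \S\ref{sec:CL} does not manufacture a nonzero rotation number from the presence of a fixed point. The paper's compactification trick sidesteps both issues entirely by turning the hypothetical fixed point outside $U$ into a point at infinity, so that $\infty$ becomes the offending boundary fixed point in a fresh copy of $\R^2$.
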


Note that the two results above are precisely Theorem \ref{thm:intro-main} in the introduction.

\subsection{Maximal unlinked sets and Brouwer theory} Let us state some results and definitions that we need to prove Theorem \ref{th:main}.

If $f\colon S\to S$ is a homeomorphism isotopic to the identity of an orientable surface, let us say that a set $X\subset \fix(f)$ is \emph{unlinked} if $X$ is closed and $f|_{S\sm X}\colon S\sm X\to S\sm X$ is isotopic to the identity. We say that $X$ is \emph{maximal unlinked} if it is unlinked and there is no unlinked set of fixed points containing $X$ properly.

\begin{remark}\label{rem:unlinked}
If $S=\R^2$ and $f$ preserves orientation, then any set consisting of two or less fixed points of $f$ is unlinked. To see this, note that every orientation preserving homeomorphism of $\R^2$ is isotopic to the identity \cite{kneser}, and let $(f_t)_{t\in[0,1]}$ be an isotopy from $\id$ to $f$. For every fixed point $z$, one gets an isotopy  $(f'_t)_{t\in[0,1]}$ that fixes $z$, by defining $f'_t=g_t\circ f_t$, where $g_t$ is the affine translation that sends $f_t(z)$ on $z$. If $z'$ is another fixed point,  one gets an isotopy  $(f''_t)_{t\in[0,1]}$ that fixes $z$ and $z'$ by defining $f''_t=h_t\circ f_t$, where $h_t$ is the affine map that sends $f_t(z)$ on $z$ and $f_t(z')$ on $z'$.
\end{remark}

The next theorem is due to O. Jaulent. We only state the part that we will use.
\begin{theorem}\cite{jaulent}\label{th:jaulent} Let $S$ be an orientable surface, $f\colon S\to S$ a homeomorphism isotopic to the identity, and $X_0$ is an unlinked set of fixed points of $f$. There exists a maximal unlinked set $X\subset \fix(f)$ containing $X_0$. Moreover, if $(f_t)_{t\in [0,1]}$ is an isotopy from $\id_{S\sm X}$ to $f|_{S\sm X}$ and $z\in \fix(f)\sm X$, then the loop $(f_t(z))_{t\in [0,1]}$ is homotopically nontrivial in $S\sm X$. 
\end{theorem}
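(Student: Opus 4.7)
The theorem has two parts: existence of a maximal unlinked set $X \supset X_0$, and the homotopical rigidity statement for fixed points outside $X$. My plan is to treat them in that order.

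For the existence part, the natural route is Zorn's lemma applied to the poset of unlinked sets containing $X_0$, ordered by inclusion. The substantive point is to show that if $(X_\alpha)$ is a totally ordered chain of unlinked sets, then $X := \overline{\bigcup_\alpha X_\alpha}$ is itself unlinked. To see this, fix a reference isotopy $I=(f_t)$ from $\id_S$ to $f$. For each $\alpha$ there is, by definition, an isotopy $I_\alpha$ from $\id$ to $f$ on $S\sm X_\alpha$, and by the classification of surface isotopies through the homotopy classes of their trajectories at each fixed point (compare Remark \ref{rem:unlinked}), one may arrange the $I_\alpha$ to form a compatible system. A compact-exhaustion diagonal argument then produces an isotopy from $\id$ to $f$ on $S\sm X$, and closedness of $X$ (ensured by taking the closure above) makes $S\sm X$ open, so this isotopy witnesses the unlinkedness of $X$.

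For the maximality clause, suppose for contradiction that $z\in \fix(f)\sm X$ and that some isotopy $(f_t)$ from $\id_{S\sm X}$ to $f|_{S\sm X}$ makes the loop $\gamma(t)=f_t(z)$ null-homotopic in $S\sm X$. The idea is to exploit a null-homotopy $H\colon [0,1]^2\to S\sm X$ from $\gamma$ to the constant loop at $z$ to surgically modify the isotopy in a tubular neighborhood of the image of $H$, producing a new isotopy $(f_t')$ that still equals $\id$ at $t=0$ and $f$ at $t=1$, and additionally satisfies $f_t'(z)=z$ for all $t$. Such a modification restricts to a genuine isotopy from $\id$ to $f$ on $S\sm(X\cup\{z\})$, so $X\cup\{z\}$ is unlinked, contradicting the maximality of $X$.

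The main obstacle is the compatibility/limit step in the Zorn argument: an isotopy in $S\sm X_\alpha$ is determined, up to the relevant equivalence, by its class in the mapping class group of $S\sm X_\alpha$, and these classes must mesh consistently as $\alpha$ varies. A clean way to handle this, following Jaulent, is to work on the universal cover $\tilde S$ of $S$ and reformulate unlinkedness of $X$ as the existence of a distinguished lift $\tilde f$ of $f$ whose fixed-point set projects onto $X$; the ascending-chain condition then becomes the tautological statement that a compatible limit of lifts is a lift, and the maximal unlinked set is recovered as the image under the covering projection of the fixed-point set of a suitably chosen maximal lift. With that reformulation in hand, the final homotopical statement becomes the assertion that any fixed point not in the projection of $\fix(\tilde f)$ corresponds to a lift that is translated by a nontrivial deck transformation, which is exactly the obstruction to contracting its trajectory in $S\sm X$.
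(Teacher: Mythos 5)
First, a point of comparison: the paper does not prove this statement at all — it is quoted from \cite{jaulent} ("we only state the part that we will use") — so your write-up has to stand on its own as a proof of Jaulent's theorem, and it does not. The decisive gap is the chain step of your Zorn argument. Showing that $X=\overline{\bigcup_\alpha X_\alpha}$ is unlinked is not a routine "compatibility plus diagonal extraction" matter; it is the central difficulty of Jaulent's work (and of the later strengthening by B\'eguin–Crovisier–Le Roux). The isotopies $I_\alpha$ live on different surfaces $S\sm X_\alpha$, and an isotopy of such a surface is \emph{not} determined, up to the relevant equivalence, by the homotopy classes of its trajectories at the fixed points: Remark \ref{rem:unlinked} is a special feature of at most two points in the plane and gives no such classification on general (punctured, open) surfaces. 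Moreover, even a coherently chosen family $(I_\alpha)$ has no reason to converge on $S\sm X$: producing a limit isotopy requires serious geometric control (Jaulent works with lifts to universal covers of the complements, hyperbolic structures, and limits of trajectories), none of which is supplied by "a compact-exhaustion diagonal argument". As written, the existence half of the theorem is asserted rather than proved.

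The reformulation you propose to repair this is false as stated. Unlinkedness of $X$ concerns lifts of $f|_{S\sm X}$ to the universal cover of $S\sm X$ (commuting with its deck group) — a cover that changes as $X$ grows along the chain — not the fixed-point set of a lift $\til f$ of $f$ to the universal cover of $S$. Concretely, take $S=\R^2$, whose universal cover is $S$ itself, so the only lift is $f$: your criterion would declare every closed subset of $\fix(f)$ unlinked, yet a Dehn twist about a circle enclosing two of its fixed points but not a third gives a plane homeomorphism (isotopic to the identity, by \cite{kneser}) for which that triple of fixed points is linked. So the "tautological limit of lifts" does not exist in the relevant sense. By contrast, the second half of your argument (maximality forces essential trajectory loops) is correct in spirit: the clean implementation is the homotopy lifting property of the evaluation fibration $h\mapsto h(z)$ on the identity component of the homeomorphism group of $S\sm X$, rather than a "surgery in a tubular neighborhood" of the image of the null-homotopy, whose meaning is unclear for a $2$-dimensional image. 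But that part cannot rescue the unproved existence step.
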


We will also use the following classic lemma.

\begin{lemma}\label{lem:gettrans}  Let $h\colon \R^2\to \R^2$ be an orientation preserving homeomorphism, and $\gamma_0$ a simple arc joining a point $z$ to its image $h(z)$ and containing no fixed point of $h$. Then any neighborhood of $\gamma_0$ contains a translation arc $\gamma$ for $h$ such that $z\in \gamma$. 
\end{lemma}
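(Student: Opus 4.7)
Plan: This is a classical result in Brouwer theory, and I would prove it by constructing $\gamma$ through a chain of free disks covering $\gamma_0$.

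The first step is to work in a fixed-point-free neighborhood of $\gamma_0$. Since $\gamma_0$ is compact and disjoint from $\fix(h)$, one can find an open neighborhood $W\subset V$ of $\gamma_0$ with $\ol{W}$ compact and disjoint from $\fix(h)$. By continuity of $h$ together with compactness, every point of $\ol{W}$ admits an open topological disk $B\subset V$ containing it and satisfying $h(B)\cap B=\emptyset$; call such a $B$ a \emph{free} disk. Extracting a finite subcover of $\gamma_0$ and walking along $\gamma_0$ from $z$ to $h(z)$, one obtains a chain of free disks $B_0,B_1,\dots,B_n$ with $z\in B_0$, $h(z)\in B_n$, each $B_i\subset V$, and $B_i\cap B_{i+1}\neq\emptyset$ for every $0\le i<n$. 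By further refinement we may assume the disks have arbitrarily small diameter, in particular smaller than any prescribed fraction of $d(\gamma_0,\fix(h)\cup(\R^2\setminus V))$.

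The second step is to build $\gamma$ as a concatenation of simple sub-arcs through the chain. Pick transition points $p_i\in B_i\cap B_{i+1}$ for $0\le i<n$ (and set $p_{-1}=z$, $p_n=h(z)$), and simple sub-arcs $\gamma^{(i)}\subset B_i$ joining $p_{i-1}$ to $p_i$, chosen in general position. The concatenation $\gamma=\gamma^{(0)}\cup\cdots\cup\gamma^{(n)}$ is then a simple arc in $V$ from $z$ to $h(z)$ which in particular contains $z$.

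The final step is to verify that, for a suitable choice, $\gamma$ is a translation arc. Freeness of each $B_i$ immediately gives $h(\gamma^{(i)})\cap \gamma^{(i)}=\emptyset$, so any intersection of $h(\gamma)$ with $\gamma$ outside $\{z,h(z)\}$ must be a cross-disk crossing $h(\gamma^{(i)})\cap\gamma^{(j)}$ with $i\neq j$. I expect this to be the main obstacle: one must shrink the $B_i$ sufficiently so that each $h(B_i)$ meets only finitely many $B_j$ in controlled fashion, and then use a transversality/general position perturbation of the arcs $\gamma^{(i)}$ within their own free disks to remove all such spurious crossings, retaining only the forced intersection at $h(z)$ (and at $z$ in the case $h^2(z)=z$). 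This is handled by the standard construction of translation arcs via free-disk chains, in the spirit of the proof of Brouwer's plane translation theorem as presented in \cite{fathi}.
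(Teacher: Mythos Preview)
Your free-disk-chain approach is a classical route and can be made to work, but it is genuinely different from the paper's argument, and the step you flag as ``the main obstacle'' is indeed where your sketch stops short. The difficulty with the proposed ``general position perturbation'' is that $\gamma$ and $h(\gamma)$ are not independent: any perturbation of $\gamma^{(j)}$ inside $B_j$ simultaneously moves $h(\gamma^{(j)})$ inside $h(B_j)$, so removing one crossing $h(\gamma^{(i)})\cap \gamma^{(j)}$ may create another elsewhere. The standard arguments via free-disk chains do not use transversality at this point; they typically pass to a minimal or first-return sub-chain and argue directly, which is more delicate than you indicate. Note also that you are aiming for a translation arc from $z$ to $h(z)$, which is stronger than what the lemma asks: it only requires $z\in\gamma$, not that $z$ be an endpoint.

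The paper's proof sidesteps the crossing problem entirely with a single-disk ``first touch'' idea. Fix a simply connected fixed-point-free neighborhood $V$ of $\gamma_0$ and a homeomorphism $\phi\colon \D\to V$ with $\phi(0)=z$; set $V_t=\phi(B_t(0))$ and let $\tau$ be the smallest $t$ with $\ol{V_t}\cap h(\ol{V_t})\neq\emptyset$. Then $V_\tau\cap h(V_\tau)=\emptyset$, but the closures meet on their boundaries, so there is $w\in\bd V_\tau$ with $h(w)\in\bd V_\tau$ and $w\neq h(w)$. Any simple arc in $\ol{V_\tau}$ from $w$ to $h(w)$ with interior in $V_\tau$ and passing through $z$ is then automatically a translation arc, since disjointness of $V_\tau$ and $h(V_\tau)$ forces $\gamma\cap h(\gamma)\subset\{w,h(w)\}$. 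This yields a translation arc containing $z$ with no crossing-removal needed; the price is that its endpoints are $w$ and $h(w)$ rather than $z$ and $h(z)$, which is all the lemma requires.
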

\begin{proof} Let $V$ be a given neighborhood of $\gamma_0$. Reducing $V$, we may assume that it is simply connected and contains no fixed point of $h$. Choose a homeomorphism $\phi\colon \D\to V$ such that $\phi(0)=z$. For $t\in [0,1]$ define $V_t = \phi(B_t(0))$, where $B_t(0)$ is the open disk of radius $t$ centered at the origin. If $\gamma_0$ is not already a translation arc, we may choose $\tau$ to be the smallest number such that $\ol{V}_\tau$ intersects $\ol{h(V_\tau)}$. Clearly $0<\tau<1$, and $V_\tau\cap h(V_\tau) =\emptyset$. This means that there is $w\in \bd V_\tau$ such that $h(w)\in \bd V_\tau$, and moreover $w\neq h(w)$. Since $\ol{V}_\tau$ is homeomorphic to a disk and $z\in V_\tau$, we can choose a simple arc $\gamma$ in $V_\tau$ joining $w$ to $h(w)$ and going through $z$. It is clear from the defintion that $h(\gamma)\cap \gamma\subset \{w,h(w)\}$, which means that $\gamma$ is a translation arc.
\end{proof}

The next key lemma is a consequence of Brouwer's lemma on translation arcs (see, for instance, \cite{fathi}): 

\begin{lemma}\label{lem:brouwerlink} Let $h\colon \R^2\to \R^2$ be an orientation preserving homeomorphism, and $\gamma$ a translation arc for $h$. Then for each $N\in \N$, either $\gamma$ is an $N$-translation arc, or $\bigcup_{k=0}^N h^k(\gamma)$ contains a loop that is homotopically nontrivial in $\R^2\sm \fix(h)$.
\end{lemma}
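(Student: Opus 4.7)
The plan is to reduce to Brouwer's lemma on translation arcs (Proposition~\ref{pro:brouwer-trans}) by locating the first moment of failure of the $N$-translation arc property, extracting a loop there, and arguing by a surgery argument that if this loop were trivial in $\R^2\sm\fix(h)$ it would contradict Brouwer's lemma for a fixed-point-free modification of $h$. Concretely, since $\gamma$ is a translation arc but not an $N$-translation arc, I would let $k_0\in\{2,\dots,N\}$ be the smallest integer for which $\gamma$ fails to be a $k_0$-translation arc. Minimality forces $\Gamma:=\bigcup_{k=0}^{k_0-1} h^k(\gamma)$ to be a simple arc from $x$ to $h^{k_0}(x)$, whereas $\Gamma\cup h^{k_0}(\gamma)$ is not simple. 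Parametrising $h^{k_0}(\gamma)$ from its attaching point $h^{k_0}(x)$ toward $h^{k_0+1}(x)$, and letting $p$ be the first subsequent point at which this arc returns to $\Gamma$ (it exists and differs from $h^{k_0}(x)$ by the failure of simplicity), the concatenation of the sub-arc of $h^{k_0}(\gamma)$ from $h^{k_0}(x)$ to $p$ with the sub-arc of $\Gamma$ from $p$ back to $h^{k_0}(x)$ yields a simple loop $L\subset\bigcup_{k=0}^N h^k(\gamma)$.

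Next I would argue that $L$ is homotopically nontrivial in $\R^2\sm\fix(h)$ by contradiction. Since $L$ is a Jordan curve in the plane, it bounds a closed Jordan domain $\ol D$, and triviality in $\R^2\sm\fix(h)$ is equivalent to $\ol D\cap\fix(h)=\emptyset$. Assume this; then $K:=\Gamma\cup h^{k_0}(\gamma)\cup \ol D$ and $h(K)$ are compact sets disjoint from $\fix(h)$. I would then construct an orientation-preserving fixed-point-free homeomorphism $\tilde h$ of $\R^2$ coinciding with $h$ on an open neighbourhood of $K$: concretely, set $\tilde h=\psi\circ h$ for an orientation-preserving homeomorphism $\psi$ equal to the identity on an open neighbourhood $V\supset K\cup h(K)$ disjoint from $\fix(h)$, and displacing a neighbourhood of $\fix(h)$ by a small vector. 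With such $\tilde h$, the iterates $\tilde h^k(\gamma)$ for $0\le k\le k_0$ coincide with $h^k(\gamma)$, so $\gamma$ remains a translation arc for $\tilde h$ whose concatenation $\bigcup_{k=0}^{k_0}\tilde h^k(\gamma)$ fails to be simple, contradicting Proposition~\ref{pro:brouwer-trans}.

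The hardest part is the construction of $\tilde h$: one must arrange that $\psi$ displaces every point of $\fix(h)$ without introducing new fixed points for $\psi\circ h$, while keeping $\psi$ the identity on the prescribed neighbourhood $V$. I would proceed component by component on $\R^2\sm V$, using the Extension Theorem~\ref{th:extension} to localise the required surgery near each connected piece of $\fix(h)\sm V$: on each such component one can replace $h$ by a local modification that eliminates its fixed points while acting as the identity near the component's boundary, so that $\tilde h$ remains a globally defined orientation-preserving homeomorphism. The planarity of the ambient space together with the compactness of $\ol D$ and the simple connectivity of $K$ remove any topological obstruction to gluing these local modifications, but verifying all the details — continuity, injectivity, orientation-preservation, and global fixed-point-freeness of $\tilde h$ — is where the argument is most delicate.
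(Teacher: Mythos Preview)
Your strategy differs substantially from the paper's and carries a genuine gap in its hardest step. The paper bypasses all surgery: it passes to the connected component $W$ of $\R^2\setminus\fix(h)$ containing $\gamma$ (which is $h$-invariant since $\gamma$ joins $x$ to $h(x)$), lifts $h|_W$ to the universal cover $\tilde\pi\colon\tilde W\to W$ with $\tilde W\simeq\R^2$, choosing the lift $\tilde h$ that sends a chosen $\tilde x$ to the endpoint of the lifted arc $\tilde\gamma$. This $\tilde h$ is automatically fixed-point free. Brouwer's lemma then makes $\tilde\gamma$ an $\infty$-translation arc, so $\tilde\Gamma_N=\bigcup_{k=0}^N\tilde h^k(\tilde\gamma)$ is simple. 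If $\gamma$ fails to be an $N$-translation arc, the covering map $\tilde\pi$ is not injective on $\tilde\Gamma_N$, and any sub-arc joining two points of the same fibre projects to a loop that is homotopically nontrivial in $W$---hence in $\R^2\setminus\fix(h)$---by the very definition of the universal cover. No modification of $h$ is required.

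Your approach, by contrast, hinges on constructing an orientation-preserving fixed-point-free $\tilde h$ agreeing with $h$ near $K$, and this is where it breaks down. The Extension Theorem (Theorem~\ref{th:extension}) does not remove fixed points: it extends a local homeomorphism with a \emph{unique} fixed point to a global one still having that fixed point, so invoking it ``component by component'' on $\R^2\setminus V$ is a misapplication. More seriously, $\fix(h)$ may be unbounded or have infinitely many components, $h$ need not respect the components of $\R^2\setminus V$, and the tail of $h^{k_0}(\gamma)$ past $p$ may create further loops so that $K$ is not simply connected. Even in favourable cases, asking for a $\psi$ equal to the identity on $V$ with $\psi\circ h$ fixed-point-free amounts to producing a homeomorphism of the plane avoiding the graph of $h^{-1}$ outside $V$---a nontrivial problem you have not addressed. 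The covering-space route sidesteps all of this: the fixed-point-free map comes for free, and nontriviality of the loop is built into the construction.
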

\begin{proof} Suppose $\gamma\colon [0,1]\to \R^2$ joins $x$ to $h(x)$, let $W$ be the connected component of $M\sm \fix(h)$ containing $x$. Since $\gamma$ is a translation arc, it is disjoint from $\fix(f)$, so it is contained in $W$. This implies that $W$ is invariant. Let $\til{\pi}\colon \til{W}\to W$ be its universal covering map. Choose $\til{x}\in \til{\pi}^{-1}(x)$, let $\til{\gamma}$ be the lift of $\gamma$ with $\til{\gamma}(0)= \til{x}$, and let $\til{h}\colon \til{W}\to \til{W}$ be the lift of $h|_W$ such that $\til{h}(\til{x})=\til{\gamma}(1)$.

Then clearly $\til{\gamma}$ is a translation arc for $\til{h}$, and since $\til{W}\simeq \R^2$ and $\til{h}$ has no fixed points, Proposition \ref{pro:brouwer-trans} implies that $\til{\gamma}$ is an $\infty$-translation arc for $\til{h}$. Let $\til{\Gamma}_n$ be the concatenation of $\til{\gamma},\til{h}(\til{\gamma}),\dots, \til{h}^N(\til{\gamma})$. If $\til{\pi}$ is injective on $\til{\Gamma}_N$, then it is clear that $\gamma$ is an $N$-translation arc for $h$. Otherwise, there is a sub-arc $\til{\sigma}\subset \til{\Gamma}_n$ that joins two different points of $\til{\pi}^{-1}(z)$, for some $z\in W$. This implies that $\til{\pi}(\til{\sigma})$ is a loop that is  homotopically nontrivial in $W$ (and thus in $\R^2\sm \fix(h)$, as claimed).
\end{proof}

\subsection{Accessible fixed prime ends}

Before proceeding to the proof of Theorem \ref{th:main} in its general form, we state a result which does not require the $\bd$-nonwandering condition.

\begin{proposition}\label{pro:main-acc} Let $f\colon \R^2\to \R^2$ be an orientation preserving homeomorphism and $U\subsetneq \R^2$ a simply connected $f$-invariant set. If there is an \emph{accessible} fixed point in $\bd U$, then $\rho(f,U)=0$.
\end{proposition}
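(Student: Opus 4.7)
The plan is to prove the contrapositive: assuming $\rho(f,U)\neq 0$, I show that no fixed point of $f$ in $\bd U$ can be accessible from $U$. Suppose for contradiction that $z\in \bd U$ is an accessible fixed point, and let $\gamma\colon [0,1]\to \ol{U}$ be an end-cut with $\gamma(1)=z$, defining an accessible prime end $p$ via Proposition \ref{pro:prime-criterion2}. Since $f(z)=z$, the arc $f\circ\gamma$ is again an end-cut to $z$, defining the prime end $f_*(p)$. The goal is to show $f_*(p)=p$, which forces $\rho(f,U)=0$ and contradicts the hypothesis.

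The strategy combines Brouwer plane translation theory with Jaulent's theorem on maximal unlinked sets. First I would apply Theorem \ref{th:jaulent} to $X_0=\{z\}$, which is unlinked by Remark \ref{rem:unlinked}: this produces a maximal unlinked set $X\supset\{z\}$ and an isotopy $(f_t)_{t\in[0,1]}$ from the identity to $f$ on $\R^2\setminus X$, with the property that any fixed point $z'\in\fix(f)\setminus X$ has homotopically nontrivial trajectory $t\mapsto f_t(z')$ in $\R^2\setminus X$. Next, taking $w=\gamma(1-\delta)$ for $\delta$ small so that $w$ and $f(w)$ both lie in a prescribed small disk $D$ centered at $z$, I would join $w$ to $f(w)$ by a simple arc $\gamma_0\subset U$ built from tail pieces of $\gamma$ and $f\circ\gamma$ plus a short connector in $U$ near $z$, avoiding fixed points of $f$. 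Then Lemma \ref{lem:gettrans} yields a translation arc $\eta$ for $f$ through $w$, lying in a small neighborhood of $\gamma_0$ and hence of $z$.

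Lemma \ref{lem:brouwerlink} then gives a dichotomy: either $\eta$ is an $\infty$-translation arc, or for some $N\geq 1$ the set $\Gamma_N=\bigcup_{k=0}^N f^k(\eta)$ contains a simple loop $L$ that is homotopically nontrivial in $\R^2\setminus\fix(f)$. In the loop case, $L\subset U$ (invariance) and $U$ is simply connected, so $L$ bounds a closed disk $\Delta\subset U$; nontriviality forces $\Delta$ to contain some fixed point $z'\in U$. Choosing $\delta$ small makes $\Delta$ lie in an arbitrarily small neighborhood of $z$, and the unlinking property of $X$ applied to $z'$ together with the configuration of $\eta$ near the accessible point $z$ produces, after lifting the dynamics to the universal cover of $\R^2\setminus X$ (or equivalently working in $\cPE(U)\simeq\ol{\D}$), a winding-number computation that forces $\rho(f,U)=0$. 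In the $\infty$-translation case, the simple arc $\bigcup_{k\in\Z}f^k(\eta)\subset U$ would have to accumulate somewhere: the iterates of $\eta$ are constrained to stay near the fixed point $z$ (by continuity of $f$ at $z$ and the choice of $\delta$), yet being pairwise disjoint and infinite they must also escape every neighborhood of $z$, giving the required contradiction.

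The main obstacle is the loop case, specifically extracting the quantitative link between the Jaulent loop $t\mapsto f_t(z')$ (whose homotopy class records winding around points of $X$) and the prime-ends rotation number $\rho(f,U)$: one must show that the isotopy winding of the trapped fixed point $z'$ around $z$, combined with the accessibility of $z$ via $\gamma$, translates into a fixed prime end for $f_*|_{\bdPE(U)}$. Constructing the connecting arc $\gamma_0$ near $z$ is itself delicate if $f_*(p)\neq p$, because then $w$ and $f(w)$ lie in different local components of $U\cap D$; the cross-cut separating tails of $\gamma$ and $f\circ\gamma$ must then be used to route $\gamma_0$ through $U$ in a way that guarantees the resulting loop $L$ encloses $z$, yielding the final contradiction $z\in\Delta\subset U$ against $z\in\bd U$.
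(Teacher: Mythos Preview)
Your approach diverges sharply from the paper's and contains genuine gaps. The paper's argument is completely elementary: no Brouwer theory, no Jaulent, no translation arcs. The crucial observation you are missing is that since $z_1$ is \emph{fixed}, the end-cuts $\gamma$ and $f\circ\gamma$ land at the \emph{same} point of $\R^2$ (even though at different prime ends when $\rho\neq 0$). After shortening $\gamma$ so that $\gamma$ and $\gamma'=f\circ\gamma$ are disjoint and joining their initial points by an arc $\eta\subset U$, the union $\sigma=\gamma\cup\eta\cup\gamma'$ together with $\{z_1\}$ is a Jordan curve in $\R^2$ bounding a disk $D$. In $\cPE(U)$ the arc $\sigma$ joins $\tilde z_1$ to $\tilde f(\tilde z_1)$, so the interval $I\subset\bdPE(U)$ it cuts off is a fundamental domain for $\tilde f|_{\bdPE(U)}$. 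Hence there is $N$ such that every accessible point $z\neq z_1$ of $\bd U$ satisfies $f^k(z)\in D$ for some $0\le k\le N$; density of accessible points gives $\bd U\subset\ol W$ with $W=\bigcup_{k=0}^N f^{-k}(D)$ bounded, and a short separation argument finishes the proof.

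Your proposal, by contrast, imports the heavy machinery used for Theorem~\ref{th:main} (which concerns \emph{inaccessible} fixed points and genuinely needs those tools), and several steps do not go through. In the $\infty$-translation case your claim that the iterates $f^k(\eta)$ are ``constrained to stay near $z$'' is false: continuity of $f$ at $z$ controls only finitely many iterates, not the whole orbit. In the loop case, the ``winding-number computation that forces $\rho(f,U)=0$'' is never specified; a fixed point trapped in a disk $\Delta\subset U$ does not by itself say anything about prime ends rotation. Finally, you correctly identify the construction of $\gamma_0\subset U$ near $z$ as delicate when $f_*(p)\neq p$, but you do not resolve it: if the tails of $\gamma$ and $f\circ\gamma$ lie in different local components of $U$ near $z$, any arc in $U$ joining them must leave the neighborhood, so you cannot place $\eta$ (or its iterates) near $z$ as the rest of your argument requires. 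The fix is to abandon the dynamics-on-the-plane approach and exploit directly that $\gamma$ and $f\circ\gamma$ close up in $\R^2$ at $z_1$, as the paper does.
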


\begin{remark} The proposition above is essentially Corollary 2 of \cite{cartwright-littlewood}, if one translates the statement to their setting. Here, we present a short direct proof for the sake of completeness.

% To be precise: assume that $\rho(f,U)$ is nonzero, and consider the non-separating continuum $I$ defined as the complement of $U$ in the plane $M=S^2\sm \{z_0\}$, where $S^2=\R^2\cup\{\infty\}$ is the one-point compactification and $z_0$ is any fixed point of $f$ in $U$ (which exists due to the assumption on $\rho(f,U)$ and Brouwer's fixed point theorem). Then, extending $f$ by fixing $\infty$, Corollary 2 from \cite{cartwright-littlewood} applied to $f|_M$ states that there is at most one accessible (from $U$) fixed point in $\bd_M U$, in which case it is the unique fixed point of $f|_M$ in $I$. Since $\infty$ is a fixed point in $I$, it must be that the unique fixed point accessible from $U$ in $\bd_M U$ is $\infty$. This implies that no fixed point of $\bd_{\R^2} U$ is accessible from $U$.
\end{remark}

\begin{proof}[Proof of Proposition \ref{pro:main-acc}]
Suppose there is an accessible fixed point $z_1\in \bd U$, so there is an arc $\gamma\colon [0,1)\to U$ such that $\gamma(t)\to z_1$ in $\R^2$ as $t \to 1^-$. Let $\til{z}_1\in \cPE(U)$ be the prime end such that $\gamma(t)\to \til{z}_1$ in $\cPE(U)$ as $t\to 1^-$, and denote by $\til{f}$ be the extension of $f|_U$ to $\cPE(U)$. Assume for contradiction that $\rho(f,U)\neq 0$. Then $\til{f}(\til{z}_1)\neq \til{z}_1$. Letting $\gamma'(t) = f(\gamma(t))$ we obtain an arc in $U$ such that $\gamma'(t) \to z_1$ in $\R^2$ and $\gamma'(t)\to \til{f}(\til{z}_1)\neq \til{z}_1$ in $\cPE(U)$ as $t\to 1^-$. By reducing $\gamma$, we may assume that $\gamma$ and $\gamma'$ are disjoint, and by joining the initial point of $\gamma$ to the initial point of $\gamma'$ by a simple arc $\eta$ in $U$ that is disjoint from $\gamma$ and $\gamma'$ except at its endpoints, we obtain a simple arc $\sigma = \gamma\cup \eta \cup \gamma'$ in $U$ that extends in $\cPE(U)$ to an arc joining $\til{z}_1$ to $\til{f}(\til{z}_1)$. %Note that $\sigma$ is not a cross-cut of $U$ in $\R^2$ according to our definition, because the closure of $\sigma$ in $\R^2$, which is the loop $\sigma\cup \{z_1\}$, does not have different endpoints. 
See Figure \ref{fig:access}.

\begin{figure}[ht!]
\centering
\includegraphics[width=\linewidth]{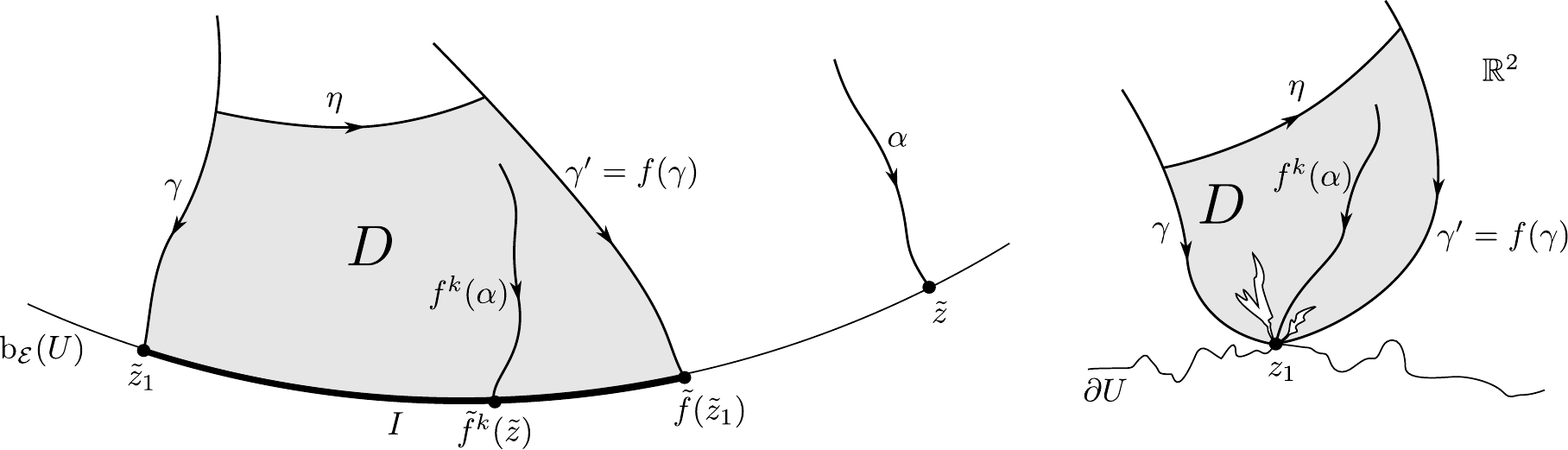}
\caption{View in $\cPE(U)$ (left) and $\R^2$ (right)} 
\label{fig:access}
\end{figure}

Let $D\subset \R^2$ be the open disk bounded by $\sigma\cup \{z_1\}$. 
The boundary of $D\cap U$ in $\cPE(U)$ consists of $\sigma$ together with one of the two open intervals $I$ of $\bdPE(U)$ determined by the points $\til{z}_1$ and $\til{f}(\til{z}_1)$ (moreover, $D$ is a neighborhood of $I$ in $\bdPE(U)$).  Since $I$ is a fundamental domain of the map $\til{f}|_{\bdPE(U)}$, which has nonzero rotation number, it follows that there is $N>0$ such that for any prime end $\til{z}\in \bdPE(U)$ there is an integer $k$ such that $0\leq k\leq N$ and either $\til{f}^k(\til{z})\in I$ or $\til{f}^k(\til{z})=\til{z}_1$.

 Let $z\neq z_1\in \bd U$ be an accessible point, and $\til{z}$ an accessible prime end associated to $z$, so that there is an arc $\alpha\colon [0,1)\to U$ such that $\alpha(t)\to z$ in $\R^2$ and $\alpha(t)\to \til{z}$ in $\cPE U$ as $t\to 1^-$. Then as we just mentioned, there is $k$ with $0\leq k\leq N$ such that either $\til{f}^k(\til{z})\in I$ or $\til{f}^k(\til{z})=\til{z}_1$. Let us first show that the latter case does not hold: if $\til{f}^k(\til{z})=\til{z}_1$, since the accessible point associated to $\til{z}_1$ is precisely $z_1$, it follows that $f^k(\alpha(t))\to z_1$ (in $\R^2$) as $t\to 1^-$, and since $z_1$ is fixed by $f$ it follows that $\alpha(t)\to z_1$ as $t\to 1^-$, contradicting the assumption that $z\neq z_1$.

Thus we must have $\til{f}^k(\til{z})\in I$. Since $f^k(\alpha(t))=\til{f}^k(\alpha(t))\in D$ if $t<1$ and is close enough to $1$, it follows that $f^k(z)$, which is the limit of $f^k(\alpha(t))$ in $\R^2$ as $t\to 1^-$, belongs to $\ol{D}\cap \bd U$. Moreover, since $f^k(z)\neq z_1$, and $z_1$ is the only element of $\bd{D}\cap\bd U$, we conclude that $f^k(z)\in D$. Since $0\leq k \leq N$, it follows that $z\in \bigcup_{k=0}^N f^{-k}(D)$. Let $W = \bigcup_{k=0}^N f^{-k}(D)$. We just showed that every accessible point of $\bd U$ other than $z_1$ belongs to $W$. Since accessible points are dense in $\bd U$, it follows that $\bd U\subset \ol{W}$. Thus, if $\mc{O}$ denotes the unbounded connected component of $\R^2\sm \ol{W}$, we conclude that $U$ is disjoint from $\mc{O}$. But $\bd\mc{O}\subset \bd W\subset U\cup \{z_1\}$ (because $\bd{D}\subset U\cup \{z_1\}$), and clearly $\bd\mc{O}\neq \{z_1\}$, so $U\cap \bd\mc{O}\neq \emptyset$, contradicting the fact that $U\cap \mc{O}=\emptyset$. This completes the proof.
\end{proof}

\subsection{Proof of Theorem \ref{th:main}}
%\begin{proof}[Proof of Theorem \ref{th:main}] 
The proof is by contradiction. Assume from now on that there is a fixed point $z_1\in \bd U$.
We begin by reducing the problem to a simpler setting.

\setcounter{claim}{0}
\begin{claim} We may assume that $f$ has a unique fixed point $z_0$ in $U$.
\end{claim}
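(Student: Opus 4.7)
The plan is to modify $f$ inside a compact disk $K'\subset U$, leaving it unchanged in a neighborhood of $\bd U$, so that the modified homeomorphism has a unique fixed point in $U$ while retaining all hypotheses of the theorem (in particular, $z_1\in\bd U$ remains fixed and $\rho(f,U)\neq 0$ is preserved). First I would observe that $\fix(f)\cap U$ is relatively compact in $U$: if a sequence $x_n\in\fix(f)\cap U$ accumulated on some $x\in\bd U$, then $x$ would be a fixed point, and concatenating short arcs in $U$ joining consecutive $x_n$ would produce an end-cut of $U$ converging to $x$, making $x$ accessible. Proposition \ref{pro:main-acc} would then force $\rho(f,U)=0$, contradicting our assumption.

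Next, I would choose $K'\subset U$ to be a closed topological disk whose interior contains $\fix(f)\cap U$, which witnesses the $\bd$-nonwandering condition for $f$ for the relevant iterates $n$, and which satisfies $K'\cap f(K')\neq\emptyset$; the last condition can be arranged by enlarging $K'$ so that it contains some smaller disk $K_0$ together with $f(K_0)$. I would then define $g\colon\R^2\to\R^2$ to coincide with $f$ on $\R^2\sm K'$ and, on $K'$, to be a homeomorphism $K'\to f(K')$ that agrees with $f$ on $\bd K'$ and whose unique fixed point is a chosen $z_0\in K'\cap f(K')$. The index of $z_0$ is forced to equal the winding number of $z\mapsto f(z)-z$ along $\bd K'$, but any integer index is realizable at an isolated fixed point of a planar orientation-preserving homeomorphism, so such an extension can be built by a direct construction (a local model at $z_0$ realizing the required index, followed by a fixed-point-free interpolation over the annular region $K'\sm\{z_0\}$ matching the boundary data).

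To finish, I would verify that $g$ inherits every needed property of $f$. Since $g=f$ outside $K'\subset U$, the map $g$ preserves $U$ and fixes $z_1$, and the induced dynamics on the prime ends circle is unchanged, so $\rho(g,U)=\rho(f,U)\neq 0$. The $\bd$-nonwandering condition transfers to $g$ with witness $K'$: any $g$-orbit contained entirely in $U\sm K'$ would (by induction on iterates, since $g=f$ there) coincide with the corresponding $f$-orbit, and such an $f$-orbit would have to enter $K'$ by the $f$-$\bd$-nonwandering condition, a contradiction. Finally, $z_0$ is the unique fixed point of $g$ in $U$: uniqueness inside $K'$ holds by construction, and outside $K'$ we have $g=f$ with $\fix(f)\cap(U\sm K')=\emptyset$ by the choice of $K'$.

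The main obstacle in this plan is the geometric construction of $g|_{K'}$: one needs to produce a homeomorphism between two given disks that matches prescribed boundary data and has a single interior fixed point of the correct integer index. The nonempty intersection $K'\cap f(K')$ is what makes the placement of a fixed point possible, and the realizability of arbitrary integer indices at isolated planar fixed points handles the topological constraint imposed by the boundary.
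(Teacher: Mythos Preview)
Your overall strategy---modify $f$ inside a compact disk in $U$ to leave a unique fixed point---is the same as the paper's. However, there is a genuine gap in your first step, and your main construction takes a more laborious route than necessary.

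\textbf{The gap.} Your argument that $\fix(f)\cap U$ is relatively compact in $U$ does not work as stated. If $x_n\in\fix(f)\cap U$ with $x_n\to x\in\bd U$, you cannot in general join consecutive $x_n$ by ``short arcs in $U$'' whose concatenation converges to $x$: when $\bd U$ is wild (think of a boundary of topologist's-sine-curve type), two points of $U$ that are close in $\R^2$ may only be connected inside $U$ by arcs of large diameter, and the limit point need not be accessible at all. The correct (and shorter) argument is the one implicit in the paper: the extension $\hat f$ of $f|_U$ to $\cPE(U)\simeq\ol{\D}$ has no fixed point on the boundary circle because $\rho(f,U)\neq 0$, so $\fix(\hat f)$ is a closed subset of the compact disk contained in the open interior $U$, hence compact in $U$. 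This also gives existence of a fixed point in $U$ by Brouwer.

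\textbf{The construction.} Your plan to build $g|_{K'}\colon K'\to f(K')$ with a single fixed point of the correct index, followed by a fixed-point-free annular interpolation matching the boundary data, is essentially a bare-hands proof of a special case of the Extension Theorem (Theorem~\ref{th:extension}). The paper sidesteps this work entirely: it passes to the one-point compactification $U_*=U\sqcup\{z_*\}\simeq\mathbb S^2$, removes a pre-existing fixed point $z_0\in U$ to get $M=U_*\sm\{z_0\}\simeq\R^2$, and applies Theorem~\ref{th:extension} directly to $f_*|_M$ near $z_*$. This produces a homeomorphism of $M$ with $z_*$ as its unique fixed point, which---after re-inserting $z_0$ and patching with $f$ outside $U$---yields the desired $f'$ with $\fix(f')\cap U=\{z_0\}$. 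No index computation is needed. Your approach can be made to work, but you are reproving a cited theorem; the paper's compactification trick is cleaner.

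Your verification that the hypotheses (invariance of $U$, $\rho\neq 0$, $z_1$ fixed, $\bd$-nonwandering) transfer to the modified map is correct in spirit; since $g=f$ outside a compact subset of $U$, all boundary behavior is unchanged.
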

\begin{proof}
To see this, we will find a map $f'$ that coincides with $f$ in a neighborhood of $\bd U$ (and hence satisfies the hypotheses of the theorem) but has a unique fixed point in $U$. 

Note that since $\rho(f,U)\neq 0$, the extension of $f$ to the prime ends compactification of $U$ has no fixed point in the circle of prime ends.
This implies that $f$ has at least one fixed point $z_0$ in $U$, by Brouwer's fixed point theorem applied to $\cPE(U,S)$. Moreover, one can find a closed topological disk $\ol{D}_0\subset U$ whose interior $D_0$ contains all the fixed points of $f|_U$. Let $\ol{D}_1\subset U$ be a closed disk containing $D_0\cup f(D_0)$.

Since $U$ is homeomorphic to $\R^2$, we may consider the one-point compactification $U_* = U\sqcup\{z_*\}\simeq \mathbb{S}^2$, in which $f|_U$ induces a homeomorphism $f_*$ by fixing $z_*$. Let $M = U_*\sm \{z_0\}$, which is homeomorphic to $\R^2$. Since $W=U_*\sm \ol{D}_1$ is a connected neighborhood of $z_*$, and $z_*$ is the unique fixed point of $f_*|_M$ in $W$, we see from the Extension Theorem \ref{th:extension} that there is a map $F\colon M\to M$ which fixes $z_*$, coincides with $f_*$ in some neighborhood $V$ of $z_*$ (in $M$), and has no other fixed point in $M=U_*\sm \{z_0\}$. Note that since $V$ is a neighborhood of $z_*$, the set $K=M\sm V$ is a compact subset of $U$. The map $F$ extends to a homeomorphism $F_*\colon U_*\to U_*$ by fixing $z_0$, and so $F_*|_U$ is a homeomorphism which coincides with $f$ in $U\sm K$ and has $z_0$ as its unique fixed point. In particular defining $f'(z) = F_*(z)$ for $z\in K$ and $f(z)$ for $z\in S\sm K$ we have  map with the required properties.
\end{proof}

Thus from now on we assume that $z_0\in U$ is the unique fixed point of $f$ in $U$. Recall that we are assuming that there is a fixed point in $\bd U$. We will consider two separate cases.

\medskip

\noindent \textbf{Case I.} Every connected component of $\fix(f)\sm\{z_0\}$ is unbounded.

\smallskip

In this case, if $V$ is the connected component of $(\R^2\sm \fix(f))\cup \{z_0\}$ containing $z_0$, then $V$ is a simply connected invariant set whose boundary consists of fixed points of $f$, and $U\subset V$. % and $\bd U\cap \bd V\neq \emptyset$. 
Consider the prime ends compactification $\mathrm{c}_{\mc{E}}V$. Since any accessible point from $V$ of $\bd V$ is fixed by $f$, Proposition \ref{pro:main-acc} implies that the extension $\til{f}$ of $f|_V$ to $\cPE V$ has a fixed point in the prime ends circle $\bdPE V$. Since $\bd U\cap \fix(f)$ is not empty (by our assumption), it follows that $\cl_{\cPE V} U\cap \bdPE V\neq \emptyset$. Since $\til{f}|_{\bdPE V}$ is an orientation preserving circle homeomorphism with a fixed point, every orbit converges to a fixed point, so the closed, nonempty, $\til{f}|_{\bdPE V}$-invariant set $\cl_{\cPE V} U\cap \bdPE V$ contains a fixed point of $\til{f}$. Thus we may choose $\til{z}_1\in \cl_{\cPE V} U\cap \bdPE V$ such that $\til{f}(\til{z}_1)=\til{z}_1$. 

By Theorem \ref{th:arclemma}, there is $N>0$ and a compact set $K\subset U$ such that there is no $N$-translation arc contained in $\R^2\sm K$ and intersecting $\bd U$. Note that $f$ has no fixed points in $V$ other than $z_0$, so by increasing the size of $K$ we may further assume that $K$ is a closed topological disk in $U$ containing $z_0$. 

\begin{figure}[ht!]
\centering
\includegraphics[height=5cm]{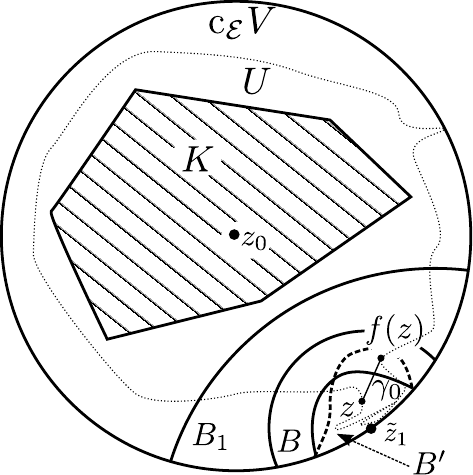}
\caption{Picture for case I}
\label{fig:case1}
\end{figure}
 
Let $B_1\subset V\sm K$ be a cross-section of $V$ such that $z_0\notin B_1$ and $\til{z}_1$ lies in the open interval of prime ends of $V$ determined by $B_1$; \ie $\til{z}_1\in \inter_{\cPE V} (\cl_{\cPE V} B_1)$ (see Figure \ref{fig:case1}). Choose a smaller cross-section $B\subset B_1$ of $V$ such that $\til{z}_1\in \inter_{\cPE V}(\cl_{\cPE V} B)$ and $f^k(B)\subset B_1$ for all $k$ with $0\leq k\leq N$, and finally let $B'\subset B$ be a smaller cross-section of $V$ such that $\til{z}_1\in \inter_{\cPE V}(\cl_{\cPE V} B')$ and $f(B')\subset B$. Note that $\bd U \cap B'\neq \emptyset$, since otherwise $U$ would contain $B'$, and that would imply that there is some fixed point accessible from $U$ (namely, any point of $\bd V$ accessible from $B'$) contradicting the fact that $\rho(f,U)\neq 0$ (and Proposition \ref{pro:main-acc}). Thus we may choose $z\in \bd U \cap B'$ and an arc $\gamma_0\subset B$ joining $z$ to $f(z)$. Since there are no fixed points in $B$, we know that $\gamma_0$ does not contain a fixed point, and by Lemma \ref{lem:gettrans} there is a arc $\gamma \subset B$ for $f|_V$ such that $z\in \gamma$ (so $\gamma$ intersects $\bd U$). Since $\gamma\subset \R^2\sm K$, Theorem \ref{th:arclemma} implies that $\gamma$ is not an $N$-translation arc, thus we conclude from Lemma \ref{lem:brouwerlink} (applied to $f|_V$) that $\bigcup_{k=0}^N f^k(\gamma)$ contains a loop that is homotopically nontrivial in $V\sm \fix(f|_V) = V\sm\{z_0\}$. This contradicts the fact that $\bigcup_{k=0}^N f^k(\gamma)\subset B_1$, which is simply connected and disjoint from $z_0$. This contradiction shows that Case I is not possible.

\medskip

\noindent \textbf{Case II.} There is a bounded connected component of $\fix(f)\sm\{z_0\}$.

\smallskip

In this case, we may further assume that there is a bounded component of $\fix(f)$ intersecting $\bd U$. Indeed, let $C$ be a bounded connected component of $\fix(f)$. If $C$ intersects $\bd U$, there is nothing to do. Otherwise, there is another connected component $C'$ of $\fix(f)$ that intersects $\bd U$ (since we are assuming that there is a fixed point $z_1$ in $\bd U$).  We may choose any $p_0\in C$ and consider the space $M=\R^2\sqcup \{\infty\}\sm\{p_0\}$, which is homeomorphic to $\R^2$. In this new space, $f$ induces a homeomorphism $f'$ for which all our hypotheses hold (the $\bd$-nonwandering condition in $U$ and the condition on the prime ends rotation number hold for $f'$ thanks to Proposition \ref{pro:bdw} and Corollary \ref{coro:prime-rot}). Since $C'$ is a compact subset of $\R^2$ intersecting $\bd U$ and $p_0\notin C'$, it follows that $C'$ is a connected component of $\fix(f')$ intersecting $\bd_M U$, and it is bounded in $M$ (because it is compact). Thus we may replace $f$ by $f'$ and have the required assumption.

\begin{claim} We may assume that $\fix(f)$ is totally disconnected.
\end{claim}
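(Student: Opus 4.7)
My plan is to collapse each non-trivial connected component of $\fix(f)$ to a point via Moore's decomposition theorem, producing a new homeomorphism $f'$ of $\R^2$ with totally disconnected fixed set that still satisfies the hypotheses of the theorem (including those established in Case II). Let $\mathcal{C}$ denote the family of connected components of $\fix(f)$ with more than one point.

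I would first observe that for every $C\in\mathcal{C}$, both $z_0\notin C$ and $U\cap C=\emptyset$: since $z_0$ is the unique fixed point of $f$ in the open set $U$, the intersection $C\cap U$ is at most $\{z_0\}$, which is both open and closed in the connected set $C$; connectedness then forces either $C=\{z_0\}$ (contradicting $|C|>1$) or $C\cap U=\emptyset$.

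Next, working in the one-point compactification $S^2=\R^2\cup\{\infty\}$ (extending $f$ to fix $\infty$), I would define, for each $C\in\mathcal{C}$, the cellular hull $\widehat{C}\subset S^2$ as the union of $C$ with all connected components of $S^2\setminus C$ except the one containing $U$. Then $\widehat{C}$ is cellular in $S^2$ (its complement is the $U$-component, which is connected), it is $f$-invariant (since $f$ preserves the $U$-component), and $\widehat{C}\cap U=\emptyset$: indeed, $U$ is connected and disjoint from $C$, so it lies entirely in a single component of $S^2\setminus C$, which is excluded by the definition of $\widehat{C}$. Passing to the maximal elements of the collection $\{\widehat{C}:C\in\mathcal{C}\}$ under inclusion yields a pairwise disjoint family of cellular $f$-invariant subsets of $S^2$, all disjoint from $U$. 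Applying Moore's decomposition theorem, collapsing each such maximal hull to a point produces a quotient space homeomorphic to $S^2$, on which $f$ descends to a homeomorphism $\bar{f}$ via the quotient map $\pi$.

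Finally, removing from the quotient the image of $\infty$ yields a space homeomorphic to $\R^2$, and $\bar{f}$ restricts to a homeomorphism $f'$ there. Since $\pi$ is a homeomorphism from $U$ onto $\pi(U)$ (all hulls being disjoint from $U$), $\pi(U)$ is a simply connected $f'$-invariant open set with $\pi(z_0)$ as its unique $f'$-fixed point; the prime-ends dynamics transfer unchanged, so $\rho(f',\pi(U))=\rho(f,U)\neq 0$ and $f'$ is $\bd$-nonwandering in $\pi(U)$. The bounded component of $\fix(f)$ meeting $\bd U$ descends to a bounded fixed point of $f'$ in $\bd\pi(U)$, preserving the Case II hypothesis, and by construction $\fix(f')$ is totally disconnected. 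The main obstacle is to verify that the quotient can be arranged to produce a homeomorphism of $\R^2$ rather than merely of $S^2$ with an identified distinguished point — this requires a careful choice of which hull (if any) absorbs $\infty$ and of which point in the quotient plays the role of the new point at infinity — and that the maximal hulls genuinely exist as a pairwise disjoint family (which follows from a standard nesting argument on the poset of hulls, using that two disjoint continua $C_1,C_2\in\mathcal{C}$ either lie in the same $U$-component or nest inside one another's hulls).
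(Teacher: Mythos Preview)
Your route via Moore's theorem is quite different from the paper's, and it carries real technical debt that you only partially acknowledge. The paper proceeds as follows: let $V$ be the connected component of $(\R^2\setminus\fix(f))\cup\{z_0\}$ containing $U$, and set $K=\R^2\setminus V$. Since accessible points of $\bd U$ are never fixed (Proposition~\ref{pro:main-acc}), one has $\bd U\setminus K\neq\emptyset$, so Corollary~\ref{coro:prime-rot} gives $\rho(f,U,\R^2)=\rho(f,U,V)$. The open set $V$ is a planar surface, so by the classification of noncompact surfaces there is a homeomorphism $h\colon V\to\R^2\setminus E$ with $E$ closed and totally disconnected. A continuity argument extends $hfh^{-1}$ to a homeomorphism $f'$ of $\R^2$ with $\fix(f')\setminus\{h(z_0)\}=E$, and the bounded component of $\fix(f)$ meeting $\bd U$ yields the required fixed point of $f'$ in $\bd h(U)$. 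Proposition~\ref{pro:bdw} transfers the $\bd$-nonwandering condition. No decomposition-space machinery is needed.

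Your approach has gaps beyond the one you flag. First, passing to \emph{maximal} hulls is not routine: for an increasing chain $\widehat{C_1}\subset\widehat{C_2}\subset\cdots$ the union need not be closed, and while its boundary lies in $\fix(f)$, it is not obvious that the closure is again the hull of some component, so Zorn's lemma does not apply directly. Second, and more seriously, Moore's theorem requires the decomposition to be upper semicontinuous, and you never address this; one must rule out sequences of hulls of diameter bounded below accumulating on a singleton decomposition element. Third, the assertion that $\fix(f')$ is totally disconnected ``by construction'' needs proof: the fixed set of $f'$ is the $\pi$-image of $\fix(f)$ together with all the hull-points, and showing this has no nontrivial component is exactly the content you are trying to obtain. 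Each of these obstacles can probably be overcome, but doing so amounts to reproving, in this special case, the ends-compactification statement that the paper invokes in one line.
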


\begin{proof}

%In few words, to do this one may consider the connected component $M_0$ of $\R^2\sm \fix(f)$ that contains $U$, and the extension of $f|_{M_0}$ to the ends compactification $\mathrm{c}_I M_0$. Each end is fixed by $f$, and the set of ends $\mathrm{b}_I M_0$ is totally disconnected. Moreover, it has at least three points: one corresponding to $z_0$, one corresponding to a point in a bounded component of $\fix(f)$ intersecting $\bd U$, and the point at $\infty$. Removing the point at infinity, we obtain a homeomorphism of $\R^2$ which satisfies our hypotheses but has a totally disconnected set of fixed points. 
Let $V$ be the connected component of $(\R^2\sm \fix(f))\cup\{z_0\}$ containing $U$, and $K=\R^2\sm V$. Clearly $V$ is an $f$-invariant open set.  It follows from the hypothesis on the rotation number and Proposition \ref{pro:main-acc} that an accessible point from $U$ in $\bd U$ cannot be fixed by $f$, so it does not belong to $K$. Thus $\bd U\sm K\neq \emptyset$, and by Corollary \ref{coro:prime-rot} we have that $\rho(f, U, \R^2)= \rho(f,U, V)$. 

From the classification of noncompact surfaces \cite{richards}, $V$ is homeomorphic to $\R^2\sm E$ for some totally disconnected closed set $E$ (this can be seen by fixing $\mathfrak{p}\in \bdIB V$ and noting that $\cIB(V) \sm \{\mathfrak{p}\}$ is homeomorphic to $\R^2$ and $\bdIB(V)\sm \{\mathfrak{p}\}$ is totally disconnected). Let $h\colon V\to \R^2\sm E$ be a homeomorphism, and consider the map  $hfh^{-1}\colon \R^2\sm E\to \R^2\sm E$. A standard continuity argument shows that $hfh^{-1}$ extends to a homeomorphism $f'\colon \R^2\to \R^2$ such that $\fix(f')\sm\{h(z_0)\} = E$. Moreover, $U'=h(U)$ is a simply connected open $f'$-invariant set with a fixed point of $f'$ in its boundary, namely the image by $h$ of any bounded connected component of $\fix(f)$ that intersects $\bd U$ (we know that there is at least one such component by our assumption in Case II). Again by Corollary \ref{coro:prime-rot}, 
$$\rho(f,U,\R^2) = \rho(f,U,V) = \rho(f',U',\R^2\sm E) = \rho(f',U',\R^2).$$
Therefore $f'$ leaves $U'$ invariant, has a nonzero rotation number in $U'$, and also $f'$ has a fixed point in the boundary of $U'$. Moreover, the first part of Proposition \ref{pro:bdw} implies that $f_V$ is $\bd$-nonwandering in $U$, so $f'|_{\R^2\sm E}$ is $\bd$-nonwandering in $U'$. Since $E$ is totally disconnected, the second part of the same proposition implies that $f'$ is $\bd$-nonwandering in $U'$. Thus the same hypotheses that we have on $f$ hold for $f'$, and since $\fix(f')$ is totally disconnected, we may use $f'$ instead of $f$, proving the claim.
\end{proof}

Let $z_1\in \bd U\cap \fix(f)$. Since the set $\{z_0, z_1\}\subset \fix(f)$ is unlinked (see Remark  \ref{rem:unlinked}), Theorem \ref{th:jaulent} implies that there exists a closed set $X\subset \fix(f)$ containing $z_0$ and $z_1$ such that $X$ is maximal unlinked, and a corresponding isotopy $\mc{I} = (f_t\colon \R^2\sm X \to \R^2\sm X)_{t\in [0,1]}$ from $f_0=\id_{\R^2\sm X}$ to $f_1=f|_{\R^2\sm X}$. Moreover, since $X\subset \fix(f)$ is totally disconnected, the isotopy extends to $\R^2$ fixing all points of $X$; thus from now on we assume that $\mc{I} = (f_t)_{t\in [0,1]}$ with $f_t\colon \R^2 \to \R^2$, and $f_t(x) = x$ for all $t\in [0,1]$, $x\in X$. 

Let $X_1 = X\sm \{z_0\}$, and let $M = \R^2\sm X_1$. Note that since $X$ is totally disconnected and closed, $M$ is connected. Let us denote by $\mc{I}_M$ the restricted isotopy $(f_t|_M)_{t\in [0,1]}$. 
Let $\hat{\pi}\colon \hat{M}\to M$ be the universal covering of $M$ and $\hat{\mc{I}} = (\hat{f}_t)_{t\in [0,1]}$ the isotopy from $\id_{\hat M}$ to $\hat{f}=\hat{f}_1$ that lifts $\mc{I}|_M$. It is easy to see that $\hat{f}$ commutes with the elements of the group of covering transformations $\deck(\hat{\pi})$. Since the isotopy $\mc{I}|_M$ fixes $z_0$, it follows that any element of $\hat{\pi}^{-1}(z_0)$ is fixed by $\hat{\mc{I}}$, and in particular by $\hat{f}$. 

\begin{claim} \label{claim:main3} If $\hat{U}$ is a connected component of $\hat{\pi}^{-1}(U)$, then $\hat{U}$ is $\hat{f}$-invariant, unbounded and simply connected. Moreover, $\hat{\pi}|_{\hat{U}}$ is a homeomorphism onto $U$ that conjugates $\hat{f}|{\hat{U}}$ to $f|_U$, $\hat{f}$ is $\bd$-nonwandering in $\hat{U}$, and $\rho(f,U) = \rho(\hat{f},\hat{U})$.
\end{claim}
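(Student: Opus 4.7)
The plan is to verify the listed properties in order, using that $U\subset M$ (which holds because $z_0$ is the unique fixed point of $f$ in $U$, so $U\cap X_1=\emptyset$) and that $U$ is simply connected. Since $\hat\pi\colon \hat\pi^{-1}(U)\to U$ is a covering of the simply connected space $U$, every connected component of $\hat\pi^{-1}(U)$ is mapped homeomorphically onto $U$; in particular $\hat\pi|_{\hat U}$ is a homeomorphism and $\hat U$ is simply connected, while the identity $\hat\pi\circ \hat f|_{\hat U}=f\circ \hat\pi|_{\hat U}$ is built into the definition of a lift. For the $\hat f$-invariance of $\hat U$, I would pick $\hat z_0\in\hat U$ with $\hat\pi(\hat z_0)=z_0$; since the isotopy $\mc I_M$ fixes $z_0$ and $\hat{\mc I}$ starts at the identity, $\hat{\mc I}$ fixes every preimage of $z_0$, and in particular $\hat f(\hat z_0)=\hat z_0$. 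As $\hat f$ permutes the connected components of $\hat\pi^{-1}(U)$, the condition $\hat z_0\in \hat f(\hat U)\cap\hat U$ forces $\hat f(\hat U)=\hat U$.

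For the unboundedness, I would use the fixed point $z_1\in\bd U\cap X_1\subset \R^2\setminus M$. Taking a sequence $z_n\in U$ with $z_n\to z_1$ in $\R^2$ and setting $\hat z_n=(\hat\pi|_{\hat U})^{-1}(z_n)$, if $\hat U$ were relatively compact in $\hat M$ then some subsequence $\hat z_{n_k}$ would converge to a point $\hat w\in \hat M$, giving $\hat\pi(\hat w)=z_1\in M$, which contradicts $z_1\in X_1=\R^2\setminus M$.

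The main obstacle is transferring the $\bd$-nonwandering condition and the rotation number from the pair $(U,\R^2)$ to the pair $(\hat U,\hat M)$. The key point is a natural correspondence between cross-cuts of $U$ in $M$ and cross-cuts of $\hat U$ in $\hat M$: a cross-cut $\gamma$ of $U$ in $M$ extends to a compact path in $M$ with endpoints in $\bd_M U\subset M$ and interior in $U$, and the restriction of $(\hat\pi|_{\hat U})^{-1}$ to this interior extends by continuity, via path lifting, to an arc in $\hat M$ whose endpoints lie in $\hat\pi^{-1}(\bd_M U)\cap \cl_{\hat M}\hat U\subset \bd_{\hat M}\hat U$; conversely, $\hat\pi$ carries any cross-cut of $\hat U$ in $\hat M$ to a cross-cut of $U$ in $M$, since an endpoint whose image lay in $U$ would, by openness of $\hat\pi^{-1}(U)$, itself lie in $\hat U$. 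This correspondence is equivariant under $\hat f$ and $f$, matches compact sets through the homeomorphism $\hat\pi|_{\hat U}$, and extends to a homeomorphism of the two prime end circles conjugating the boundary dynamics; it follows that the $\bd$-nonwandering conditions for $f$ in $U$ relative to $M$ and for $\hat f$ in $\hat U$ relative to $\hat M$ are equivalent, and that $\rho(\hat f,\hat U,\hat M)=\rho(f,U,M)$. Finally, since $\R^2\setminus M=X_1$ is totally disconnected (because by the preceding reduction $\fix(f)$ is totally disconnected), Proposition \ref{pro:bdw} and Corollary \ref{coro:prime-rot} let me pass from the condition and rotation number taken relative to $M$ to those relative to $\R^2$, yielding $\bd$-nonwandering of $\hat f$ in $\hat U$ and the equality $\rho(f,U)=\rho(\hat f,\hat U)$.
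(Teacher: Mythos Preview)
Your proof is correct and follows essentially the same approach as the paper: both arguments use that $\hat U$ contains a fixed preimage of $z_0$ to get invariance, use $z_1\in X_1\cap\bd U$ to get unboundedness, and rely on the fact that $\hat\pi|_{\hat U}$ carries cross-cuts to cross-cuts to transfer the $\bd$-nonwandering condition and the prime ends rotation number. Your version is in fact more careful than the paper's on one point: you make explicit that the cross-cut correspondence naturally relates $(\hat U,\hat M)$ to $(U,M)$, and then invoke Proposition~\ref{pro:bdw} and Corollary~\ref{coro:prime-rot} (using that $X_1$ is totally disconnected) to pass from $M$ to $\R^2$, whereas the paper leaves this step implicit.
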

\begin{proof} It is invariant because $\hat{\pi}^{-1}(U)$ is $\hat{f}$-invariant and $\hat{U}$ is a connected component of that set, which necessarily contains an element of $\hat{\pi}^{-1}(z_0)\subset \fix(\hat{f})$. That $\hat{\pi}|_{\hat{U}}$ is injective follows from the fact that $U$ is simply connected. Since $\bd U$ contains $z_1\in X_1$, it follows that $\hat{U}$ is unbounded. The fact that $f|_U$ is conjugated to $\hat{f}|_{\hat{U}}$ is obvious.

To prove the last two claims, note that the homeomorphism $h=\hat{\pi}_{\hat{U}}\colon \hat{U}\to U$ has the special property that it maps cross-cuts to cross-cuts. This implies that $\hat{f}$ is $\bd$-nonwandering in $\hat{U}$, and it also gives a natural way to extend $h$ to a homeomorphism between the prime ends compactifications of $\hat{U}$ and $U$, which by continuity should conjugate the respective extensions of $\hat{f}|_{\hat{U}}$ and $f|_{U}$. In particular the two extended maps have the same rotation number in their prime ends circles.
\end{proof}

\begin{claim}\label{claim:main4} For any $z_1\in X_1 = X\sm \{z_0\}$, there is a neighborhood $B$ of $z_1$ in $\R^2$ such that $$\hat{\pi}(\fix(\hat{f}))\cap B= \emptyset.$$
\end{claim}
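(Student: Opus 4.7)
The plan is to argue by contradiction: suppose there is a sequence $(z_n)$ in $\hat\pi(\fix(\hat f))$ with $z_n\to z_1$, and derive a contradiction with Theorem \ref{th:jaulent}. Recall that by construction of $\hat f$ as the time-$1$ map of the lifted isotopy $\hat{\mc I}$, a point $z\in \fix(f)\cap M$ lies in $\hat\pi(\fix(\hat f))$ if and only if the loop $\gamma_z:=(f_t(z))_{t\in[0,1]}$ is null-homotopic in $M$. Since $X\cap M=\{z_0\}$ and $z_n\to z_1\neq z_0$, for $n$ large we have $z_n\in \fix(f)\sm X$, so by Jaulent's theorem $\gamma_n:=\gamma_{z_n}$ is homotopically nontrivial in $\R^2\sm X$, while by hypothesis $\gamma_n$ is contractible in $M=\R^2\sm X_1$. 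I will show these two properties are mutually exclusive once $\gamma_n$ is confined to a small neighborhood of $z_1$.

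Joint continuity of $(t,z)\mapsto f_t(z)$ together with $f_t(z_1)=z_1$ for every $t$ implies that $\gamma_n$ converges uniformly to the constant loop at $z_1$. Next I would produce a convenient local model: since $X$ is closed and totally disconnected in $\R^2$ and $z_0\neq z_1$, a standard argument (pick a clopen-in-$X$ neighborhood $Q$ of $z_1$ of small diameter disjoint from $z_0$, observe that $Q$ and $X\sm Q$ are disjoint closed subsets of $\R^2$, and enclose $Q$ in an open topological disk avoiding $X\sm Q$ whose boundary misses $X$) yields an open topological disk $V\subset \R^2$ with $z_1\in V$, $z_0\notin\ol V$, $\bd V\cap X=\emptyset$, and diameter as small as wanted. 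Since $\bd V$ misses $X$ and $V$ is bounded, $V\cap X$ is compact, and since $z_0\notin V$ we have $V\cap X=V\cap X_1$. For $n$ large, $\gamma_n\subset V\sm X=V\sm X_1$ (as $\gamma_n\subset \R^2\sm X$ because the isotopy fixes $X$ pointwise).

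The heart of the argument is a van Kampen computation. Choose a closed topological disk $K$ with $V\cap X\subset \inter K\subset K\subset V$, which exists by compactness of $V\cap X$. Decompose $\R^2\sm X=(V\sm X)\cup ((\R^2\sm K)\sm X)$ and $M=(V\sm X_1)\cup ((\R^2\sm K)\sm X_1)$. In both decompositions the two open connected pieces intersect in the annulus $V\sm K$, whose fundamental group $\Z$ is generated by a loop $c$ going around $V\cap X$. Because $z_1\in V\cap X$, the loop $c$ has winding number $1$ around $z_1$, and hence is homotopically nontrivial in each of the four pieces $V\sm X$, $(\R^2\sm K)\sm X$, $V\sm X_1$, $(\R^2\sm K)\sm X_1$. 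The classical injectivity theorem for amalgamated free products (under the hypothesis that the edge group injects into both vertex groups) then gives that the inclusions $V\sm X\hookrightarrow \R^2\sm X$ and $V\sm X_1\hookrightarrow M$ are $\pi_1$-injective. Since $V\sm X=V\sm X_1$, for $n$ large $\gamma_n$ is null-homotopic in $M$ if and only if it is null-homotopic in $V\sm X$ if and only if it is null-homotopic in $\R^2\sm X$, contradicting Theorem \ref{th:jaulent}.

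The main technical obstacle is the construction of $V$ with $\bd V\cap X=\emptyset$: this requires the standard but non-trivial fact that compact totally disconnected subsets of $\R^2$ are tame (so $z_1$ admits arbitrarily small disk neighborhoods whose boundary avoids $X$). Once $V$ is produced, the remainder is a routine application of van Kampen's theorem combined with the homotopy-class characterization of $\hat\pi(\fix(\hat f))$ coming from the lifted isotopy.
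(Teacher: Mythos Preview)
Your proof is correct, but it takes a more elaborate route than the paper's. Both arguments hinge on the same observation: a point $z\in\fix(f)\cap M$ lies in $\hat\pi(\fix(\hat f))$ precisely when the isotopy loop $\gamma_z=(f_t(z))_{t\in[0,1]}$ is null-homotopic in $M$, and Jaulent's theorem forces this loop to be essential in $\R^2\sm X$ whenever $z\notin X$; the task is then to reconcile these two facts for loops lying near $z_1$. You establish the required $\pi_1$-injectivity of $V\sm X\hookrightarrow M$ and $V\sm X\hookrightarrow \R^2\sm X$ via a van Kampen decomposition along the annulus $V\sm K$, checking that the edge group $\Z$ injects into each vertex group by reading off winding numbers around $z_1$. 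The paper instead chooses the same kind of disk $D$ (with $z_1\in D$, $z_0\notin\ol D$, $\bd D\cap X=\emptyset$) and argues directly: given a null-homotopy $(\gamma_s)$ of $\gamma$ in $M$, it post-composes with the retraction $r\colon\R^2\to\ol D$ that is the identity on $D$ and collapses $\R^2\sm D$ onto $\bd D$. Since $r$ sends $M$ into $\ol D\sm X_1=\ol D\sm X\subset \R^2\sm X$, the retracted homotopy $r\circ\gamma_s$ exhibits $\gamma$ as null-homotopic in $\R^2\sm X$, contradicting Jaulent. The retraction argument is shorter and avoids any appeal to van Kampen or to $\pi_1$-injectivity of amalgams; your approach, while heavier, makes the underlying topological reason (the inclusion of the local picture into either ambient space is $\pi_1$-injective) more explicit and would generalize more readily to other settings.
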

\begin{proof} 
We may choose a disk $D$ in $M$ containing $z_1$, disjoint from $z_0$ and such that the boundary of $D$ is disjoint from $X$ (because $X$ is totally disconnected). Fix a neighborhood $B$ of $z_1$ such that for any $z$ in $B$, the arc $(f_t(z))_{t\in[0,1]}$ is contained in $D$. To show that $B$ has the required property, suppose for contradiction that $z\in B$ is the image by $\hat{\pi}$ of a fixed point of $\hat{f}$. Then $\gamma=(f_t(z))_{t\in [0,1]}$ is a loop in $D$, and it must be homotopically trivial in $M$, because it lifts to a trivial loop in the universal covering. Let us show that $\gamma$ is homotopically trivial in $M\sm\{z_0\}$ as well: we know that there is a homotopy $(\gamma_s)_{s\in [0,1]}$ in $M$ from $\gamma$ to $z$ fixing the base point $z$. Let $r:\R^2\sm D \to \bd D$ be a retract (i.e. $r$ is continuous, $r(x) \in \bd D$ for all $x$ in $\R^2\sm D$ and $r(x)=x$ for $x\in \bd D$), and extend it to $\R^2$ by letting $r(x)=x$ in $D$. Then $(r\circ \gamma_s)_{s\in [0,1]}$ is a homotopy from $\gamma$ to $z$ which never leaves $\ol{D}$. Thus $\gamma$ is homotopically trivial in $M\sm \{z_0\}=\R^2\sm X$. But since  $z\in \fix(f)\sm X$ (because $z\in M$, $z\neq z_0$ and $M\cap X=\{z_0\}$), the last part of Theorem \ref{th:jaulent} says that $\gamma$ must be homotopically nontrivial in $\R^2\sm X$. This contradiction proves the claim.
\end{proof}

Fix a connected component $\hat{U}$ of $\hat{\pi}^{-1}(U)$.

\begin{claim} \label{claim:main5}
For any neighborhood $V$ of $z_1\in \bd U\cap X$, there exists a translation arc $\hat{\gamma}$ for $\hat{f}$ such that $\hat{\pi}(\hat{\gamma})\subset V$ and $\hat{\gamma}$ intersects $\bd \hat{U}$.
\end{claim}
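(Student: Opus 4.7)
The plan is to use the isotopy $\hat{\mc{I}}$ to produce, inside a small neighborhood of $z_1$, an arc from a suitable boundary point $\hat{z} \in \bd \hat{U}$ to its image under $\hat{f}$, and then invoke Lemma \ref{lem:gettrans} on the lift to extract a genuine translation arc. The essential input is Claim \ref{claim:main4}, which provides a neighborhood $B$ of $z_1$ above which $\hat{f}$ has no fixed points.

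First, I would replace $V$ by a smaller open neighborhood of $z_1$ contained in $B$, and then, by uniform continuity of the isotopy $(f_t)_{t \in [0,1]}$ on compact sets, choose an open neighborhood $V_0 \subset V$ of $z_1$ so that for each $w \in V_0$ the trajectory $(f_t(w))_{t \in [0,1]}$ is contained in $V$. Since $z_1$ is a fixed point of $f$ and $\rho(f,U) \neq 0$, Proposition \ref{pro:main-acc} forbids $z_1$ from being accessible from $U$. Because $V_0 \cap \bd U$ is a nonempty open subset of $\bd U$ (it contains $z_1$), Proposition \ref{pro:prime-all}(5) yields an accessible point $z \in V_0 \cap \bd U$, which is necessarily distinct from $z_1$.

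I would then lift: fix an end-cut $\alpha : [0,1) \to U$ with $\alpha(t) \to z$. Because $\hat\pi|_{\hat U} : \hat U \to U$ is a homeomorphism by Claim \ref{claim:main3}, its inverse gives a lift $\hat\alpha \subset \hat U$. Since $z \in M = \R^2 \setminus X_1$ and $\hat\pi$ is a local homeomorphism near $z$, the path $\hat\alpha$ converges to a point $\hat z \in \cl(\hat U) \setminus \hat U = \bd \hat U$ with $\hat\pi(\hat z) = z$. The isotopy provides a continuous path $\hat\gamma_0(t) = \hat f_t(\hat z)$ joining $\hat z$ to $\hat f(\hat z)$; its projection $(f_t(z))_{t \in [0,1]}$ lies in $V$ by the choice of $V_0$, hence $\hat\gamma_0 \subset \hat\pi^{-1}(V) \subset \hat\pi^{-1}(B)$, which is disjoint from $\fix(\hat f)$ by Claim \ref{claim:main4}. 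In particular $\hat f(\hat z) \neq \hat z$.

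To finish, I would extract an embedded arc $\hat\gamma_1$ from $\hat z$ to $\hat f(\hat z)$ inside the image of $\hat\gamma_0$ (using the standard fact that any path between two distinct points of a Hausdorff space contains an embedded arc). Then Lemma \ref{lem:gettrans}, applied to $\hat f$ and $\hat\gamma_1$ in a small enough neighborhood of $\hat\gamma_1$ still contained in $\hat\pi^{-1}(V)$, produces a translation arc $\hat\gamma$ for $\hat f$ passing through $\hat z$ with $\hat\pi(\hat\gamma) \subset V$. Since $\hat z \in \bd \hat U$, this $\hat\gamma$ meets $\bd \hat U$, as required. The most delicate step is guaranteeing that the lift $\hat z$ of $z$ lies in the boundary of the prescribed component $\hat U$, rather than in the boundary of some other component of $\hat\pi^{-1}(U)$; starting the lift inside $\hat U$ is precisely what forces the limit into $\cl(\hat U) \setminus \hat U$.
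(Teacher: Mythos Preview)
Your argument follows essentially the same route as the paper's: shrink $V$ inside the neighborhood $B$ from Claim~\ref{claim:main4}, pick a boundary point $z$ close to $z_1$, use the lifted isotopy to get an arc from $\hat z$ to $\hat f(\hat z)$ with no fixed points, extract a simple sub-arc, and apply Lemma~\ref{lem:gettrans}. Your treatment of the lifting via an end-cut is in fact more explicit than the paper's, which simply asserts the existence of $\hat z\in\bd\hat U$ projecting near $z_1$.

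There is one small gap. You deduce only that the accessible point $z$ is distinct from $z_1$, and then assert ``since $z\in M=\R^2\setminus X_1$''. But $X_1$ may contain many points besides $z_1$, so $z\neq z_1$ alone does not give $z\in M$. The fix is immediate: Proposition~\ref{pro:main-acc} shows that \emph{no} fixed point of $f$ in $\bd U$ is accessible (not just $z_1$), hence the accessible point $z$ lies outside $\fix(f)\supset X\supset X_1$, and therefore $z\in M$. With this one-line correction your proof is complete and matches the paper's.
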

\begin{proof}
Reducing $V$ if necessary, we may assume that $V\subset B$, where $B$ is the set from Claim \ref{claim:main4}, so that $\hat{\pi}(\fix(\hat{f}))\cap V= \emptyset$. Since the isotopy $\mc{I}$ fixes elements of $X$ (particularly $z_1$), if $\hat{z}\in \bd \hat{U}$ is chosen such that $z=\hat{\pi}(\hat{z})$ is close enough to $z_1$, one has that $\gamma_0=(f_t(z))_{t\in [0,1]}$ is contained in $V\sm X$. If $\hat{\gamma}_0=(\hat{f}_t(\hat{z}))_{t\in [0,1]}$, then $\hat{\gamma}_0$ is the lift of $\gamma_0$ with initial point $\hat{z}$ and its final point is $\hat{f}(\hat{z})$ (recall that $\hat{f} = \hat{f}_1$). The arc $\hat{\gamma}_0$ might not be simple, but it necessarily contains a simple arc $\hat{\gamma}_1$ joining $\hat{z}$ to $\hat{f}(\hat{z})$, which also satisfies $\hat{\pi}(\hat{\gamma}_1)\subset V\sm X$. Moreover, $\hat{\gamma}_1$ contains no fixed points of $\hat{f}$, since $\hat{\pi}(\fix(\hat{f}))$ is disjoint from $V$.  Thus from Lemma \ref{lem:gettrans} there is, in any neighborhood of $\hat{\gamma}_1$, a translation arc $\hat{\gamma}$ for $\hat{f}$ such that $\hat{z}\in \hat{\gamma}$. In particular, one may choose $\hat{\gamma}$ such that $\hat{\pi}(\hat{\gamma})\subset V$, as required.
\end{proof}

To finish the proof, note that from Claim \ref{claim:main3} one has that $\hat{U}$ is $\hat{f}$-invariant, simply connected and $\bd$-nonwandering for $\hat{f}$, and $\alpha = \rho(\hat{f},\hat{U})\neq 0$. Thus, from Theorem \ref{th:arclemma} applied to $\hat{f}$ and $\hat{U}$ we obtain a constant $N=N_{\alpha,0}$ and a compact set $K\subset \hat{U}$ such that no $N$-translation arc of $\hat{f}$ intersecting $\bd\hat{U}$ is contained in $\hat{M}\sm K$. Let $z_1\in \bd U\cap X$ and consider the set $B$ given by Claim \ref{claim:main4}. We assume additionally that $B$ is simply connected and disjoint from $\pi^{-1}(K)$ (replacing it by a smaller neighborhood of $z_1$ if necessary). 
Note that $\hat{\pi}^{-1}(B)\subset \hat{M}\sm \fix(\hat{f})$, and each of its connected components is simply connected.

Choose a neighborhood $V$ of $z_1$ such that $f^k(V)\subset B$ for $0\leq k\leq N$, and let $\hat{\gamma}\subset V$ be the translation arc for $\hat{f}$ given by Claim \ref{claim:main5}.
Let $\hat{\Gamma}_N=\bigcup_{k=0}^N \hat{f}^k(\hat{\gamma})$. Since $\hat{\pi}(\hat{\Gamma}_N)\subset B$, we have that $\hat{\Gamma}_N$ is contained in a connected component of $\hat{\pi}^{-1}(B)$, which is simply connected and disjoint from $\fix(\hat{f})$. In particular, any loop contained in $\hat{\Gamma}_N$ is homotopically trivial in $\hat{M}\sm \fix(\hat{f})$. 
Thus, the only possibility in Lemma \ref{lem:brouwerlink} applied to $\hat{f}$, is that $\hat{\gamma}$ be an $N$-translation arc.  Since $\hat{\gamma}$ intersects $\bd\hat{U}$ and is contained in $\hat{M}\sm K$, we get a contradiction from Theorem \ref{th:arclemma}. This proves that Case II does not hold, completing the proof of the theorem. 
\qed

\subsection{Proof of Corollary \ref{coro:main}}

Suppose there is a fixed point $z_1$ in $\R^2\sm U$, and $U$ is unbounded. Then we consider the new surface $M=(\R^2\cup\{\infty\})\sm \{z_1\}$ which is still homeomorphic to $\R^2$, and the homeomorphism defined from $f$ by setting $f(\infty)=\infty$. By Proposition \ref{pro:bdw} and Corollary \ref{coro:prime-rot}, the hypotheses of Theorem \ref{th:main} hold for this new map, so there are no fixed points in the boundary of $U$ in $M$. But this contradicts the fact that $\infty$ is in the boundary of $U$ in $M$ (because $U$ was assumed to be unbounded).
\qed

%\begin{theorem}[Consequence 1 of main theorem] Let $f\colon S\to S$ be a homeomorphism of a surface of finite genus $S$, and $U\subsetneq S$ an open $f$-invariant simply connected set. Suppose that there is an $f$-invariant Borel measure $\mu$ that is positive on cross-sections of $U$ and a compact set $K\subset U$ such that $\mu(U\sm K)<\infty$. Suppose further that $\rho(f,U)\notin Q/\Z$. Then $\bd U$ has no periodic points of period grater than $1$, and at most one fixed point. Moreover, one of the following holds:
%\begin{itemize}
%\item There are no periodic points of any period in $\bd U$, and there is a closed annulus $\ol{A}$ containing $\bd U$. The ``classical'' rotation set defined in $\bd U$ is a single point which coincides with the prime ends rotation number of $U$.
%\item $S$ has genus $0$, $\bd U$ is compact, there is a unique fixed point $p$ in $\bd U$, and no other periodic point. Every component of $S\sm \ol{U}$ is wandering.
%\item $S$ has genus $0$, $\bd U$ is unbounded, there are no periodic points of any period in $\bd U$, and every component of $S\sm \ol{U}$ is wandering.
%\end{itemize}
%In all cases, $\ol U$ is planar.
%\end{theorem}

\section{Generalizations to arbitrary surfaces}
\label{sec:main-gen}

If $S$ is a closed surface of positive genus and $U\subset S$ is a simply connected invariant set with irrational prime ends rotation number for a homeomorphism $f$, we wonder not only whether there can be fixed points in $\bd U$ but also whether $U$ can be too complicated from the point of view of the topology of the ambient space. Specifically, can the closure of $U$ be non-contractible in $S$? Without any additional hypothesis, the answer is yes. For example, $\bd U$ could be a Denjoy-type continuum (\ie the minimal set of the suspension flow on $\T^2$ of a Denjoy example in the circle). See Example \ref{ex:walker2}. The next theorem, which is Theorem \ref{thm:intro-main-gen} in the introduction, shows that if $f$ is $\bd$-nonwandering in $U$ then this is no longer the case, and gives additional dynamical information.

\begin{theorem}\label{th:main-gen} Let $f\colon S\to S$ be an orientation preserving homeomorphism of an orientable surface $S$ of finite type, and $U\subset S$ an open $f$-invariant topological disk such that $S\sm U$ has more than one point. Assume further that $f$ is $\bd$-nonwandering in $U$ and $\rho(f,U)$ is irrational. Then exactly one of the following holds:
\begin{itemize}
\item[(i)] $\bd U$ is aperiodic, and $\ol{U}$ is compact and contractible;
\item[(ii)] $S$ is a sphere, and the only periodic point of $f$ in $\bd U$ is a unique fixed point;
\item[(iii)] $S$ is a plane, and $\bd U$ is unbounded and aperiodic.
\end{itemize}
Moreover, there is a neighborhood $W$ of $\bd U$, which can be chosen as an annulus in case (i), a disk in case (ii) and the complement of a closed disk in case (iii), such that every connected component of $S\sm \bd U$ contained in $W$ is wandering. 
\end{theorem}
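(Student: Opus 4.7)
The overall strategy is to reduce each of the three alternatives to Theorem \ref{th:main} by restricting or lifting the ambient surface to $\R^2$. Two preparatory remarks justify applying Theorem \ref{th:main} to arbitrary iterates: since $\rho(f,U)$ is irrational, the rotation number of $f^n$ on $U$ is $n\rho(f,U)\bmod\Z\neq 0$ by Corollary \ref{coro:prime-rot}; and the $\bd$-nonwandering condition of Definition \ref{def:bdnw} is uniform in $n$ and transfers to restricted surfaces by Proposition \ref{pro:bdw}.

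I would begin with $S=S^2$. A periodic point $p\in\bd U$ of minimal period $k$ is fixed by $f^k$, which restricts to a homeomorphism of $S^2\sm\{p\}\cong\R^2$; Theorem \ref{th:main} then forces $p$ to be the unique fixed point of $f^k$ in $\bd U$ viewed in $S^2$. Since $f(p)\in\bd U$ is also fixed by $f^k$, we must have $k=1$, and uniqueness among periodic points follows by repeating the argument at any other fixed point. Hence the presence of a periodic boundary point lands us in case (ii) while its absence gives case (i). For $S=\R^2$, Theorem \ref{th:main} applied directly to every $f^n$ yields aperiodicity of $\bd U$ and, when $U$ is unbounded, aperiodicity of all of $\R^2\sm U$; compactness of $\ol U$ then distinguishes case (i) from case (iii).

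For a surface $S$ of finite type distinct from $S^2$ and $\R^2$, I would pass to the universal cover $\pi\colon\til S\to S$, which is homeomorphic to $\R^2$. Since $U$ is simply connected, every component of $\pi^{-1}(U)$ projects homeomorphically onto $U$ and has trivial stabilizer in the deck group $\Gamma$. Composing $f$ with a suitable element of $\Gamma$ gives a lift $\til f$ preserving some component $\til U$; arguing as in Claim \ref{claim:main3} of \S\ref{sec:converse}, $\til f$ is $\bd$-nonwandering in $\til U$ with $\rho(\til f,\til U)=\rho(f,U)$ irrational, so Theorem \ref{th:main} applied to every iterate yields aperiodicity of $\bd\til U$. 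The transfer of aperiodicity down to $\bd U$ is not automatic: a hypothetical periodic $p\in\bd U$ of period $k$ lifts to $\til p\in\bd\til U$ with $\til f^k(\til p)=T\til p$ for some $T\in\Gamma$, and the main obstacle is that $T\neq\id$ does not by itself contradict aperiodicity of $\bd\til U$. I expect this to be resolved by combining discreteness of the $\Gamma$-orbit of $\til p$ in $\til S$ with compactness of $\bd\til U$ in $\til S$ (the latter being part of what must be proved in case (i)), forcing some return $\til f^{mk}(\til p)=\til p$ and hence the desired contradiction.

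The structural conclusion in case (i) (contractibility of $\ol U$, plus existence of the neighborhood $W$ and wandering of its complementary components in $S\sm\bd U$) is obtained as follows. I would establish contractibility by showing that $\bd U$ is an annular continuum around $U$, in the spirit of Theorem \ref{th:koro} but using only the $\bd$-nonwandering hypothesis: the aperiodic invariant continuum $\bd U$ admits a neighborhood basis of nested annuli, so $U$ lies inside the innermost one and $\ol U$ is cellular in $S$. The annular neighborhood $W$ is then one of these annuli, and the wandering of any component of $S\sm\bd U$ inside $W$ other than $U$ follows because $f|_{\bdPE U}$ is monotonically semiconjugate to the irrational rotation by $\rho(f,U)$: any such component corresponds to an arc in $\bdPE U$ whose forward iterates under the rotation are pairwise disjoint, and therefore have pairwise disjoint images in $W$. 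The disk and complement-of-disk versions of $W$ in cases (ii) and (iii) are handled analogously. The main difficulty throughout will be establishing annularity of $\bd U$ without a global nonwandering hypothesis, which presumably requires adapting Theorem \ref{th:koro} to the $\bd$-nonwandering setting and exploiting the irrationality of $\rho(f,U)$ to rule out wild complementary components.
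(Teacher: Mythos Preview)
Your reduction strategy (restrict or lift to $\R^2$, then invoke Theorem \ref{th:main} on iterates) matches the paper's, and your sphere/plane cases are essentially right. But there are two genuine gaps in the higher-genus case and in the ``moreover'' clause.

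First, in the universal cover you correctly isolate the real obstacle: a periodic $p\in\bd U$ lifts to $\til p$ with $\til f^k(\til p)=T\til p$, and you need $\bd\til U$ compact in $\til S$ to force $T^m=\id$. You then say compactness is ``part of what must be proved'' and leave it there. This is the crux of the argument, and the paper's method is not one you mention: it uses Nielsen theory (or the analogous linear-part argument on $\T^2$) to extend $\til f$ continuously to the boundary circle of the disk model $\ol\D$, finds a periodic point of this extension on $\partial\D$, and then applies Theorem \ref{th:main} to the extended map on $\C\supset\ol\D$ to rule out $\cl_{\ol\D}\til U$ touching $\partial\D$. Contractibility of $\ol U$ is then obtained not by adapting Theorem \ref{th:koro} (which the paper does not attempt) but by a second, separate application of Theorem \ref{th:main}: one shows that the filled closure $K$ of $\cl\til U$ is disjoint from all its nontrivial deck translates, by collapsing a translate $T(K)$ to a point and noting that $\til f^k$ would then have a fixed point on $\bd\til U$ in the resulting plane.

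Second, your argument for the wandering of components of $S\sm\bd U$ inside $W$ is incorrect. Such components lie on the side of $\bd U$ opposite to $U$, so they have no correspondence with arcs in $\bdPE U$; the prime-ends circle of $U$ sees $\bd U$ only from inside $U$. The paper's argument is entirely different: assuming some component $V_1\subset W$ is periodic, one reduces to the sphere, takes the second (necessarily existing) invariant component $V_2$, and performs a surgery identifying a circle in $V_1$ with a circle in $V_2$ to produce a torus in which $U$ still satisfies all hypotheses. Claim \ref{claim:gen1} then forces $\ol U$ to be contractible in this torus, but the surgery has created an annular complementary component, a contradiction.
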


\begin{remark} For closed surfaces, only cases (i) and (ii) are possible, and in both cases $\bd U$ is contractible.
Observe also that the last part of the theorem implies that all but one of the connected components of $S\sm \ol{U}$ are wandering in case (i), and all connected components of $S\sm \ol{U}$ are wandering in the remaining cases. In particular, in cases (ii) and (iii), there are no periodic points in $S\sm U$ except possibly a unique fixed point.
\end{remark}

For the case of surfaces of finite genus (but possibly not of finite type) one can easily obtain the following statement, by applying the previous theorem to the ends compactification of $f$.

\begin{corollary}\label{coro:main-gen-gen} If in the hypotheses of the prevoius theorem one replaces the condition that $S$ be of finite type by the weaker condition that $S$ has finite genus only, then one of the following cases holds:
\begin{itemize}
\item[(i)] $\bd U$ is aperiodic, and exactly one connected component of $S\sm \ol{U}$ is nonwandering (hence invariant);%, and the classical rotation set of $f$ on $\bd U$ is a single point equal to $\rho(f,U)$ or 
\item[(ii)] $S$ has genus $0$, the set of periodic points of $f$ has at most one element (thus necessarily fixed), and every connected component of $S\sm \ol{U}$ is wandering.
\end{itemize}
\end{corollary}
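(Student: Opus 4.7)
The plan is to apply Theorem \ref{th:main-gen} on the ends compactification $\cIB S$: since $S$ has finite genus, $\cIB S$ is a closed orientable surface of the same genus (hence of finite type), and $f$ extends to a homeomorphism $\tilde f\colon\cIB S\to\cIB S$ leaving the totally disconnected closed set $\bdIB S$ invariant. First I would verify the hypotheses of Theorem \ref{th:main-gen} for $(\tilde f,U,\cIB S)$: irrationality of $\rho(\tilde f,U,\cIB S)=\rho(f,U,S)$ follows from Corollary \ref{coro:prime-rot} applied with $S_0=S$, while the $\bd$-nonwandering condition transfers via the converse implication in Proposition \ref{pro:bdw} used with $K=\bdIB S$ (which is totally disconnected, closed, $\tilde f$-invariant, and satisfies $\bd_{\cIB S}U\sm K\supset\bd_S U\neq\emptyset$). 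Since $\cIB S$ is compact, case (iii) of Theorem \ref{th:main-gen} is ruled out and only cases (i) and (ii) can occur.

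Suppose case (i) holds. The key observation is that $\cIB S\sm\cl_{\cIB S}U$ is then connected. Writing $W$ for the annular neighborhood of $\bd_{\cIB S}U$ provided by the theorem and $W^-$ for the essential ``outer'' side of $W\sm\bd_{\cIB S}U$, one has $W^-=W\cap(\cIB S\sm\cl_{\cIB S}U)$, which is connected, so $W^-$ lies in a unique component $V$ of $\cIB S\sm\cl_{\cIB S}U$. Any hypothetical second component $C$ would be disjoint from $W^-$ and hence from $W$, placing $C$ in the closed set $\cIB S\sm W$; but $\bd_{\cIB S}C\subset\bd_{\cIB S}U\subset W$ would then also sit in $\cIB S\sm W$, forcing $C$ to be clopen in the connected surface $\cIB S$ and hence empty, a contradiction. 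Thus $\cIB S\sm\cl_{\cIB S}U=V$ is a single $\tilde f$-invariant, hence nonwandering, component. Intersecting with $S$ --- which preserves the connectedness of open subsets of $\cIB S$, since $\bdIB S$ is closed and totally disconnected --- one recovers a unique $f$-invariant component of $S\sm\ol U$, and aperiodicity of $\bd_S U\subset\bd_{\cIB S}U$ is immediate.

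Suppose instead that case (ii) holds, so that $\cIB S=S^2$ (forcing $S$ to have genus $0$) and $\bd_{\cIB S}U$ has a disk neighborhood. Then $\bd_{\cIB S}U$ is non-separating in $S^2$, so $\cIB S\sm\bd_{\cIB S}U$ is connected; since $U$ is one of its components one has $U=\cIB S\sm\bd_{\cIB S}U$, whence $U$ is dense in $\cIB S$ and $S\sm\ol U=\emptyset$, making the wandering conclusion vacuous. Any periodic point of $f$ in $S\sm U$ necessarily lies in $\bd_S U\subset\bd_{\cIB S}U$, so by the theorem there is at most one such point, namely the unique fixed point of $\tilde f$ in $\bd_{\cIB S}U$ in the case that it belongs to $S$. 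The delicate step of the argument is the connectedness claim in case (i); everything else is routine bookkeeping through the compactification.
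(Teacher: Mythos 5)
Your reduction to the closed surface $\cIB S$ (transferring the rotation number via Corollary \ref{coro:prime-rot} and the $\bd$-nonwandering condition via the converse part of Proposition \ref{pro:bdw} with $K=\bdIB S$) is exactly the intended route, but the way you translate the conclusions back is flawed at the step you yourself flag as delicate. In case (i) you assert that $W\cap(\cIB S\sm\cl_{\cIB S}U)$ is the connected ``outer side'' of the annulus $W$, and deduce that $\cIB S\sm\cl_{\cIB S}U$ is connected. Neither holds in general: $\bd_{\cIB S}U$ is a continuum, not a curve, and $\ol U$ may have infinitely many complementary components (``holes'') lying inside $W$, so $W\cap(\cIB S\sm\cl_{\cIB S}U)$ is typically highly disconnected. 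Theorem \ref{th:main-gen} does not say such components are absent, only that those contained in $W$ are \emph{wandering} --- indeed the statement of the corollary (``exactly one component is nonwandering'') presupposes that other, wandering, components may exist, so the connectedness you prove is strictly stronger than the corollary and false in general. The correct bookkeeping is: taking a closed disk $B$ with $\cl_{\cIB S}U\subset\inter B$ and $W\subset\inter B$, every component of the complement of $\cl_{\cIB S}U$ except the unique one $V_0$ meeting $\cIB S\sm\inter B$ is contained in $W$ and hence wanders; one then has to argue separately that $V_0$ is invariant (for genus $\geq 1$ because $V_0$ contains essential loops while any other component lies in the disk $B$; on the sphere this can fail, and in that event all components wander and one lands in case (ii) of the corollary rather than case (i)).

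The same over-reach occurs in your case (ii): you claim $\bd_{\cIB S}U$ is non-separating, hence $U$ dense and $S\sm\ol U=\emptyset$. That conclusion is only available under the genuinely nonwandering hypothesis (Corollary \ref{coro:main-gen}); under mere $\bd$-nonwandering, $S\sm\ol U$ may well be nonempty, consisting of wandering components (this is exactly what the disk neighborhood $W$ in case (ii) of Theorem \ref{th:main-gen} controls: every component avoids a point $z_0\in U$, hence lies in $W=\cIB S\sm\{z_0\}$ and wanders). So the wandering conclusion is not vacuous, and your claim that any periodic point of $f$ in $S\sm U$ lies in $\bd_S U$ needs this wandering-components argument rather than density. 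With these corrections the compactification strategy does yield the corollary, but as written the proof has genuine gaps in both cases.
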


%
%\begin{theorem}\label{th:main-gen} Let $f\colon S\to S$ be a homeomorphism of a closed surface $S$, and $U\subset S$ an open $f$-invariant topological disk such that $S\sm U$ has more than one point and $f$ is $\bd$-nonwandering in $U$. Assume further that $\rho(f,U)$ is irrational. Then one of the following holds:
%\begin{itemize}
%\item[(i)] $\ol{U}$ is contractible, $\bd U$ is aperiodic, and exactly one connected component of $S\sm \ol{U}$ is nonwandering (hence invariant).  %, and the classical rotation set of $f$ on $\bd U$ is a single point equal to $\rho(f,U)$ or 
%\item[(ii)] $S$ is a sphere, the only periodic point of $f$ in $S\sm U$ is a unique fixed point, and every connected component of $S\sm \ol{U}$ is wandering. %, and every component of $S\sm \ol{U}$ is wandering.
%\end{itemize}
%% Moreover, at most one connected component of $S\sm \ol{U}$ is nonwandering, in which case it is an $f$-invariant set.
%\end{theorem}

Finally, in the case that $f$ is nonwandering on a closed surface one has a simpler statement, which is Theorem \ref{thm:intro-main-gen-NW} in the introduction.

\begin{corollary}\label{coro:main-gen} Let $f\colon S\to S$ be a nonwandering homeomorphism of a closed surface, and $U$ an open $f$-invariant simply connected set with irrational prime ends rotation number. Then one of the following holds:
\begin{itemize}
\item[(i)] $\bd U$ is a contractible annular continuum without periodic points;
\item[(ii)] $S$ is a sphere, $U$ is dense in $S$, and $S\sm U$ is a cellular continuum containing a unique fixed point and no other periodic points.
\end{itemize}
\end{corollary}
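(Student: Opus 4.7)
The plan is to deduce Corollary \ref{coro:main-gen} as a refinement of the already-established Theorem \ref{th:main-gen}. Since $f$ is nonwandering on a closed (hence finite type) surface, Proposition \ref{pro:bdnw-gen}(3) implies that $f$ is $\bd$-nonwandering in $U$, so Theorem \ref{th:main-gen} applies. Because $S$ is not a plane, case (iii) of that theorem is ruled out, and we are left to massage cases (i) and (ii) into the stronger statements of the corollary.

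Case (i) of Theorem \ref{th:main-gen} directly gives that $\bd U$ is aperiodic and $\ol{U}$ is compact and contractible in $S$. Since $\ol{U}$ has an open disk neighborhood in $S$ and $\bd U \subset \ol U$, the same neighborhood serves $\bd U$, so $\bd U$ is contractible in $S$. Working inside that disk neighborhood, we see $U$ as a bounded simply connected open subset of $\R^2$; the standard planar fact that the boundary of such a set is connected (equivalently, that the complement of a simply connected open subset of $S^2$ is connected) yields that $\bd U$ is a continuum (it has more than one point because $\rho(f,U)$ is well-defined). Since this continuum is aperiodic, $f$-invariant, and $f$ is nonwandering on the closed surface $S$, Theorem \ref{th:koro} forces $\bd U$ to be annular, producing case (i) of the corollary.

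For case (ii) of Theorem \ref{th:main-gen}, we already know $S = S^2$, that $\bd U$ contains a unique fixed point, and no other periodic points. The theorem also furnishes a disk neighborhood $W$ of $\bd U$; I will show $U$ is dense. Let $D := S\sm W$, a closed disk disjoint from $\bd U$, hence contained in a single connected component of $S\sm \bd U$. If $D\subset U$, any putative component $V$ of $S\sm \ol U$ is disjoint from $D$, hence entirely contained in $W$; then $V$ is a component of $S\sm \bd U$ in $W$, which by Theorem \ref{th:main-gen} is wandering, contradicting the nonwandering hypothesis. If instead $D$ is contained in a component of $S\sm \bd U$ distinct from $U$, then $U$ itself, being disjoint from $D$, would lie in $W$ and be a wandering component of $S\sm \bd U$, contradicting the $f$-invariance of $U$. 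Therefore $U$ is dense, and $S\sm U = \bd U$. As $U\subset S^2$ is an open topological disk, $S\sm U$ is connected, so $\bd U$ is a continuum; it has the disk neighborhood $W$ (hence is contractible) and its complement $U$ is connected (hence it is non-separating), so it is cellular. This yields case (ii) of the corollary.

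The main obstacle is really a bookkeeping one: ensuring connectedness of $\bd U$ in case (i) via the embedding into a planar disk neighborhood, and carefully exploiting the neighborhood $W$ from the last clause of Theorem \ref{th:main-gen} (which is a disk in case (ii) and an annulus in case (i)) together with the strong nonwandering hypothesis to rule out hypothetical extra components of $S\sm \bd U$. Once these topological points are handled, the dynamical content is entirely packaged in Theorems \ref{th:main-gen} and \ref{th:koro}.
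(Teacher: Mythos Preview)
Your proof is correct and follows essentially the same approach as the paper: apply Theorem \ref{th:main-gen}, then in case (i) invoke Theorem \ref{th:koro} for annularity, and in case (ii) use the wandering property of components from the ``Moreover'' clause together with the nonwandering hypothesis to force $S\sm\ol U=\emptyset$. The paper's argument for case (ii) is a one-liner (it directly cites that all components of $S\sm\ol U$ are wandering, hence there are none), whereas you re-derive this via the disk $D=S\sm W$; your extra care about connectedness of $\bd U$ in case (i) is also just filling in a detail the paper takes for granted.
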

\begin{proof}
If the first case in Theorem \ref{th:main-gen} holds, then $\bd U$ is an aperiodic invariant continuum, and so by Theorem \ref{th:koro} it is an annular continuum as well, so the first case of the corollary holds. 
On the other hand, if the second case of Theorem \ref{th:main-gen} holds, then since $f$ is nonwandering and all components of $S\sm \ol{U}$ are wandering, it follows that $S\sm\ol{U}=\emptyset$, so that $\bd U = S\sm U$ is nonseparating (\ie a cellular continuum).
\end{proof}

\begin{remark} Examples \ref{exm:invers} and \ref{exm:invers2} show that the case with a fixed point cannot be excluded, even in the smooth area-preserving setting. 
\end{remark}

\subsection{Proof of Theorem \ref{th:main-gen}}
\setcounter{claim}{0}
We begin considering closed surfaces only.

%\begin{claim}\label{claim:gen-sphere} If $S$ is a sphere, then either $\bd U$ is aperiodic and $\ol{U}$ is contractible, or case (ii) holds and every connected component of $S\sm \cl{U}$ is wandering.
%\end{claim}
%\begin{proof}
%
%\end{proof}

%Let us consider the case where $z_0\in \bd_S U$. Then $U$ is unbounded in $S'$ and so by Corollary \ref{coro:main} there is no periodic point in $S'\sm U$ and so the fixed point $z_0$ is the unique fixed point of $f$ in $\bd_S U$. Thus the second case of the theorem holds.
%Now consider the case where $z_0\in S\sm \ol{U}$. Then $\bd_S U = \bd_{S'} U$, and so $\bd_S U$ has no periodic points of any period and it is contained in the annular  neighborhood $S\sm \{z_0,z_1\}$ where $z_1$ is any point of $U$, and so the second case of the theorem holds.
%

\begin{claim}\label{claim:gen1} If $S$ is a closed surface, then one of cases (i) or (ii) holds.
\end{claim}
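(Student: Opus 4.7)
The plan is to handle the sphere and positive-genus cases separately, in each reducing to Theorem~\ref{th:main} in $\R^2$---by removing a point on $\SS$, or by lifting $f$ to the universal cover for positive genus---while Proposition~\ref{pro:bdw} and Corollary~\ref{coro:prime-rot} guarantee that the $\bd$-nonwandering condition and the irrational prime ends rotation number persist through these reductions.

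Sphere case. If $z_0\in\bd U$ has period $n$ for $f$, then $f^n$ fixes $z_0$, has $\rho(f^n,U)=n\alpha\neq 0$, and is $\bd$-nonwandering in $U$; applying Theorem~\ref{th:main} on $\SS\sm\{z_0\}\simeq\R^2$ gives no other fixed point of $f^n$ in $\bd U$. For $n\geq 2$ the orbit $f(z_0),\dots,f^{n-1}(z_0)$ contradicts this, forcing $n=1$. The same argument with two candidates forces at most one fixed point in $\bd U$; if there is exactly one we are in case~(ii). Otherwise $\bd U$ is aperiodic, and we need $\ol U\neq\SS$ for case~(i) (on $\SS$, this is equivalent to $\ol U$ being contractible). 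If instead $\ol U=\SS$, then $\bd U=\SS\sm U$ is $f$-invariant, connected (by Alexander duality together with the simple connectivity of $U$), and non-separating in $\SS$ (since $\SS\sm\bd U=U$). Removing any fixed point of $f$ in $U$ (guaranteed by Brouwer applied to the extension of $f$ to $\cPE(U)$), the classical Cartwright--Littlewood theorem for non-separating invariant continua of orientation-preserving planar homeomorphisms yields a fixed point in $\bd U$, contradicting aperiodicity.

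Positive-genus case. Case~(ii) is excluded, so case~(i) must hold. Let $\pi\colon\til S\to S$ be the universal cover (so $\til S\simeq\R^2$), fix a component $\til U$ of $\pi^{-1}(U)$, and let $\til f$ be the unique lift of $f$ preserving $\til U$. Since $U$ is simply connected, $\pi|_{\til U}$ is a homeomorphism onto $U$ that identifies cross-cuts, so $\rho(\til f,\til U)=\rho(f,U)$ is irrational and $\til f$ is $\bd$-nonwandering in $\til U$; Theorem~\ref{th:main} applied to $\til f^n$ for each $n\geq 1$ gives no fixed points of $\til f^n$ in $\bd\til U$. If $z\in\bd U$ is periodic of period $n$ for $f$, then $\pi^{-1}(z)\cap\bd\til U$ is nonempty (some translate of $\til U$ has a lift of $z$ in its closure) and finite (since $\ol{\til U}$ is compact in $\til S$---distinct lifts of the simply connected $U$ are disjoint open sets of equal area, whose local finiteness uses the compactness of $S$). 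The map $\til f^n$ permutes this finite set, so $\til f^{nk}$ fixes a lift of $z$ in $\bd\til U$ for some $k\geq 1$, contradicting Theorem~\ref{th:main} since $\rho(\til f^{nk},\til U)=nk\alpha\neq 0$. Hence $\bd U$ is aperiodic. Contractibility of $\ol U$ and the annular neighborhood $W$ are then obtained by invoking Theorem~\ref{th:koro}: after using Theorem~\ref{th:extension} to modify $f$ on a compact subset of $U$ disjoint from $\bd U$ so as to produce an auxiliary nonwandering homeomorphism of a closed surface agreeing with $f$ near $\bd U$, one concludes that $\bd U$ is annular; combined with the positive genus of $S$, this forces $\ol U$ into an open disk of $S$, and the wandering of complementary components of $\bd U$ in $W$ follows from the $\bd$-nonwandering condition.

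The main obstacle, as the authors note, is the possibility that $f$ is not isotopic to the identity: then $\til f$ need not commute with the deck group, preventing a direct transfer of a periodic point $z\in\bd U$ to a fixed point of $\til f^n$ in $\bd\til U$. This is circumvented through the finiteness of $\pi^{-1}(z)\cap\bd\til U$, which forces $\til f^n$ to have a periodic orbit---and hence a fixed iterate---on this set. A secondary technical point is the compactness of $\ol{\til U}$ in $\til S$, which relies on the simple connectivity of $U$ together with the compactness of $S$.
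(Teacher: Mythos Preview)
Your sphere case is fine and close in spirit to the paper's (you start from a hypothetical periodic point in $\bd U$ and remove it, whereas the paper first locates a fixed point in $S\sm U$ via Lefschetz and removes that; both reductions work). The positive-genus case, however, has a genuine gap.

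The crucial unproved step is the compactness of $\cl_{\til S}\til U$. Your justification---``distinct lifts of the simply connected $U$ are disjoint open sets of equal area, whose local finiteness uses the compactness of $S$''---does not give boundedness of any individual lift: the universal cover $\til S\simeq\R^2$ (or $\D$) has \emph{infinite} area, so infinitely many pairwise disjoint sets of the same positive area fit without obstruction, and local finiteness of the deck orbit $\{T\til U\}$ says nothing about the diameter of a single $T\til U$. In fact, simply connected open subsets of a closed surface of positive genus can have unbounded lifts (a complementary component of a Denjoy-type continuum on $\T^2$ is such an example). It is precisely the dynamical hypotheses that exclude this, and the argument is not soft: this is Subclaim~\ref{claim:nielsen} in the paper, proved by extending $\hat f$ to the circle at infinity via Nielsen theory (or the analogous linear extension for $\T^2$), locating a fixed point of some $\hat f^n$ on that circle, and then applying Theorem~\ref{th:main} to rule out $\cl_{\ol\D}\hat U$ touching $\SS^1$. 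Without compactness, $\pi^{-1}(z)\cap\bd\til U$ need not be finite, and your permutation argument producing a fixed point of some $\til f^{nk}$ collapses.

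Your contractibility argument has a separate problem. Theorem~\ref{th:koro} requires a globally nonwandering homeomorphism, and modifying $f$ on a compact subset of $U$ (via Theorem~\ref{th:extension} or otherwise) leaves the dynamics on $S\sm\ol U$ untouched, so you cannot manufacture a nonwandering map this way. The paper instead proves contractibility directly by showing that $\pi$ is injective on the filled closure of $\til U$ (Subclaim~\ref{claim:contract}), via yet another application of Theorem~\ref{th:main} after collapsing a deck-translate of this filled set to a point.
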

\begin{proof}
We consider first the case where $S$ is a sphere. Then $S\sm U$ is a non-separating $f$-invariant continuum. We claim that there is a fixed point $z_0\in S\sm U$. This is a consequence of Cartwright-Littlewood's theorem, but alternatively one can prove it as follows: Since $\rho(f,U)\neq 0$, if $D\subset U$ is a closed topological disk such that $\bd D$ is a loop disjoint from $\fix(f)$ and homotopic to the circle $\bdPE(U,S)$ in $\cPE(U,S)\sm \fix(f)$, then the fixed point index of $f$ in $U$ is $i(f,D)=1$. Since $f$ preserves orientation and $S$ is a sphere, Lefschetz' formula implies that $i(f,D)+i(f, S\sm D) = 2$, and in particular there is a fixed point in $S\sm D$. But there are no fixed points in $U\sm D$, so there must be a fixed point in $S\sm U$ as claimed. 

Let $S'=S\sm \{z_0\} \simeq \R^2$. Then $\bd_{S'} U$ has more than one point, and by Corollary \ref{coro:prime-rot}, $\rho(f|_{S'}, U, S') = \rho(f,U, S)$. Also, $f|_{S'}$ is $\bd$-nonwandering in $U$ by Proposition \ref{pro:bdw}. Hence by Theorem \ref{th:main} applied to $f^n|_{S'}$ for each $n\in \N$ we conclude that there is no periodic point in $\bd_{S'}U$. 

Suppose first that $z_0\notin \bd_S U$. Then $\bd_S U=\bd_{S'}U$, and so there are no periodic points in $\bd_S U$. This means that $\bd_S U$ is aperiodic. Since $z_0\in \inter S\sm \ol{U}$, it follows that $\ol{U}\neq S$, and since $S$ is a sphere this easily implies that $\ol{U}$ is contractible, so case (i) holds.

Now suppose that $z_0\in \bd_S U$. Then $U$ is unbounded in $S'$, and Corollary \ref{coro:main} implies that there is no periodic point in $S'\sm U$. Thus the unique periodic point of $f$ in $S\sm U$ is $z_0$. This proves that case (ii) holds, completing the proof of the claim in the case that $S$ is a sphere.

Now assume that $g$ is a surface of genus $g>1$. The case $g=1$ is similar, and is explained at the end of the proof.
Let $\hat{\pi}\colon \D\to S$ be the universal covering of $S$. We may assume that $\D$ is endowed with the hyperbolic metric and the group of covering transformations $\deck(\pi)$ consists of isometries of $\D$. 
There is some lift $\hat{f}$ of $f$ that fixes a connected component of $\hat{U}$ of $\hat{\pi}^{-1}(U)$. Claim \ref{claim:main3} from the proof of Theorem \ref{th:main} remains valid in the present setting, with the same proof, so we have that $\hat{U}$ is $\bd$-nonwandering for $\hat{f}$ and $\rho(\hat{f},\hat{U}) = \rho(f,U)$. From Theorem \ref{th:main} applied to $\hat{f}^n$ for each $n\in \N$, we conclude that there are no periodic points of $\hat{f}$ in $\bd \hat{U}$. Let us divide the remainder of the proof of Claim \ref{claim:gen1} into two sub-claims.

\begin{subclaim}\label{claim:nielsen} $\cl_{\ol{\D}} \hat{U} \cap \mathbb{S}^1= \emptyset$ (\ie $\hat{U}$ is bounded in $\D$).
\end{subclaim}
\begin{proof}
By the theory of Nielsen, the map $\hat{f}$ extends to a the boundary circle $\mathbb{S}^1$, defining a homeomorphism $F$ from $\ol{\D}=\D\cup \mathbb{S}^1$ to itself. Moreover, there is $n\in \N$ such that $\hat{f}^n$ has a fixed point in $\bd D$ (this is for instance the content of \cite[Theorem 1]{nielsen}\footnote{An english translation of \cite{nielsen} can be found in \cite{nielsen-collected}}). We may extend $F$ to the whole plane $\C$ continuously, for instance by letting $F(re^{it}) = rF(e^{it})$ for $r\geq 1$, $t\in \R$. 

Therefore, the map $F^n\colon \C\to \C$ leaves invariant the open $\bd$-nonwandering topological disk $\hat{U}\subset \D$ and also the disk $\D$, and $F^n$ has a fixed point in $\mathbb{S}^1$. Since $F^n|_{\mathbb{S}^1}$ is a circle homeomorphism with a fixed point, the orbit of any point in $\mathbb{S}^1$ converges to a fixed point of $F^n|_{\mathbb{S}^1}$. 

Assume for contradiction that the claim does not hold, \ie $\bd_{\ol{\D}}\hat{U}\cap \mathbb{S}^1\neq \emptyset$. Since the latter is a closed $F^n$-invariant set, it follows from the previous observation that $\bd_{\ol{\D}}\hat{U}$ contains a fixed point of $F^n$ (which lies in $\mathbb{S}^1$). From Corollary \ref{coro:prime-rot} we have that
$\rho(F^n,\hat{U}, \C)=\rho(\hat{f}^n, \hat{U}, \D)$ which is just $n\rho(\hat{f},\hat{U},\D)$, which in turn is equal to $n\rho(f,U)$, an irrational rotation number. 

Let us show that $F$ is $\bd$-nonwandering in $U$ (and hence so is $F^n$ for any $n\in \N$). Since we already know that $F|_{\hat{U}} = \hat{f}$ is $\bd$-nonwandering in $U$, it suffices to show that any wandering cross-section of $U$ in $\C$ contains some cross-section of $U$ in $\D$. Let $D$ be any cross-section of $U$ in $\C$ so that $\gamma = \bd_U D$ is a cross-cut of $U$ in $\C$. 
Observe that $\bd_{\C} D \subset \cl_{\C}\gamma \cup \mathbb{S}^1\cup \bd_{\D} U$. Moreover, if $\bd_{\C} D \sm \cl_{\C}\gamma$ intersects $\bd_{\D} U$ at some point $z\in \D$, then we may easily obtain a cross-section $D'$ of $U$ in $\D$ which is contained in $D$, by an argument already used in Section \ref{sec:prime}: letting $C$ be a small enough circle around $z$ (so that it bounds a disk disjoint from $\cl_{\C}\gamma$), one has that any connected component of $D\cap C$ is a cross-cut of $U$ contained in $D$, and one of its cross-sections must be contained in $D$, as required.

Thus we need to show that $\bd_{\C} D \sm \cl_{\C}\gamma$ always intersects $\bd_{\D} U$.  If this is not the case, then $\bd_{\C} D \sm \cl_{\C}\gamma\subset \mathbb{S}^1$, so that $D$ is bounded by $\cl_{\C}\gamma\cup \mathbb{S}^1$. This implies that both endpoints of $\cl_{\C}\gamma$ are in $\mathbb{S}^1$ and they are different (otherwise $D$ would not be a cross-section of $U$ in $\C$). Thus $D$ is a cross-section of $\D$.  But from the fact that $S$ is a closed surface it follows that given $z\in S$, any cross-section of $\D$ contains infinitely many elements of $\pi^{-1}(z)$, contradicting the fact that $\pi(\hat{U})$ is simply connected.
%
%Thus $P$ is dense in $\bdPE(\hat{U}, \C)$, and by Proposition \ref{pro:prime2} (and the remark below it) we have that 

We have thus obtained an orientation-preserving homeomorphism $F^n$ of the plane with a $\bd$-nonwandering invariant open topological disk $\hat{U}$ which has a fixed point in the boundary, and $\rho(F^n, \hat{U}, \C)\neq 0$. This contradicts Theorem \ref{th:main}, completing the proof of Claim \ref{claim:nielsen}.
\end{proof}

Thus the set $K_0=\cl_{\ol{\D}} \hat{U}$ is a compact subset of $\D$. Let $K$ be the ``filling'' of $K_0$, \ie the union of $K_0$ with all the bounded (in $\D$) connected components of $\D\sm K_0$. 

\begin{subclaim}\label{claim:contract} $\hat{\pi}|_K$ is injective. In other words, $K\cap T(K)=\emptyset$ for any Deck transformation $T\neq \id$.
\end{subclaim}

\begin{proof} Consider the family of sets $H = \{T(K): T\in \deck(\pi),\,  T(K)\cap K\neq \emptyset\}$.  The elements of $H$ are permuted by $\hat{f}$. Moreover, since $\deck(\pi)$ acts properly on $\D$, there are finitely many elements in $H$. Therefore, there is $k\in \N$ such that $\hat{f}^k$ fixes every element of $H$. Assume for contradiction that $H$ contains some element $T(K)$ with $T\neq \id$. Since $\hat{U}$ is disjoint from $T(\hat{U})$, we have that $\hat{U}\cap \bd T(\hat{U})=\emptyset$ as well. Since $\bd T(K) \subset \bd T(\hat{U})$, we deduce that $\hat{U}\cap \bd T(K)=\emptyset$. The fact that $\deck(\pi)$ acts properly on $\D$ also implies that $T^{-1}(K)\not\subset K$, so that $K\not\subset T(K)$. This implies that $\bd K$ intersects $\D\sm T(K)$, so that $\hat{U}\not\subset T(K)$ (because $\bd K \subset \bd \hat{U}$). The latter, together with the fact that $\hat{U}$ is disjoint from $\bd T(K)$, implies that $\hat{U}$ is disjoint from $T(K)$. Observe that since $T(K)$ intersects both $K$ and $\D\sm K$, we have that $\bd K\subset \bd\hat{U}$ intersects $T(K)$. 

Since $K$ is a non-separating continuum, $\D\sm T(K)$ is homeomorphic a one-punctured plane $\R^2\sm \{z_0\}$. The map $\til{g}$ induced by $\hat{f}^k$ in $\R^2\sm \{z_0\}$ extends to $\R^2$ by fixing $z_0$ (because $\hat{f}^k$ fixes $T(K)$), and the set $\til{U}$ corresponding to $\hat{U}$ after this identification is a $\bd$-nonwandering invariant topological disk for $\til{g}$ in $\R^2$ due to Proposition \ref{pro:bdw}. Moreover, by Corollary \ref{coro:prime-rot}
$$\rho(\til{g},\til{U}, \R^2)  = \rho(\hat{f}^k|_{\D\sm T(K)},\hat{U},\D\sm T(K)) = \rho(\hat{f}^k,\hat{U},\D) = n\rho(\hat{f},\hat{U},\D)\neq 0.$$ Thus Theorem \ref{th:main} implies that $\til{g}$ has no fixed points in $\bd\til{U}$, contradicting the fact that $z_0$ is in the boundary of $\til{U}$ (which follows from the fact that $\bd\hat{U}$ intersects $T(K)$). This contradiction proves Claim \ref{claim:contract}.
\end{proof}

Since $K$ is compact and connected, and we showed that it projects injectively to $S$, it follows that $\pi(K)$ is contractible in $S$. Since $\ol{U}\subset \pi(K)$, we conclude that $\ol{U}$ is contractible in $S$ as well. Finally, as we already explained before Claim \ref{claim:nielsen}, by Theorem \ref{th:main} there are no periodic points of $\hat{f}$ in $\bd \hat{U}$, and since $\pi|_{\cl{\hat{U}}}$ is a homeomorphism onto $\ol{U}$ and $\pi{\hat{f}}=f\pi$, we conclude that there are no periodic points of $f$ in $\bd U$ as well. This completes the proof of Claim \ref{claim:gen1} when $S$ has genus $g>1$.

It remains to consider the case where the genus of $S$ is $g=1$, that is, when $S\simeq \T^2$. But in this case,  the same proof used for the case $g>1$ works, after observing the following facts: any lift $\hat{f}\colon \R^2\to \R^2$ of a homeomorphism $f\colon \T^2\to \T^2$ has the form $\hat{f}(x)=Ax + \phi(x)$ where $A\in \GL(2,\Z)$ and $\phi$ is a $\Z^2$-periodic map. This implies that if one compactifies $\R^2$ with a ``circle at infinity'' $S_\infty\simeq \mathbb{S}^1$ with the topology where a neighborhood system of $re^{it}$, for $r>0$, is given by the open sets $V_{\epsilon,M}=\{re^{is}$ where $r>M$ and $\abs{s-t}<\epsilon\}$. Since $\hat{f}=A+\phi$, it follows easily that the map $\hat{f}$ extends to $\R^2\sqcup S_\infty \simeq \ol{\D}$ continuously, and this extension depends only on $A$ (in fact, if $v=re^{it}$, then the extension maps $v$ to $Av/\norm{Av}$).

Moreover, since $A\in \GL(2,\Z)$, one has from classical results that some power of $A$ has a real eigenvector. Therefore there is $n>0$ and $v\in S_\infty$ such that $A^nv/\norm{A^nv}=v$, and so the extension of $\hat{f}^n$ to $\R^2\sqcup S_\infty\simeq \ol{\D}$ has a fixed point in the boundary circle. Thus in Claim \ref{claim:nielsen} we may use these facts instead of the result of Nielsen, and the rest of the proof works without any modification for this case. This completes the proof of Claim \ref{claim:gen1}.
\end{proof}

\begin{claim}\label{claim:gen2} If $S$ is closed, then there a neighborhood $W$ of $\bd U$, which is an annulus in case (i) and a disk in case (ii), such that every connected component of $S\sm \bd U$ contained in $W$ is wandering.
\end{claim}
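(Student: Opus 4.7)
The plan has two parts: constructing the neighborhood $W$, and verifying that the components of $S\setminus\bd U$ trapped inside $W$ are all wandering.

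\textbf{Construction of $W$.} In case (i), compactness and contractibility of $\ol U$ furnish (by the equivalent formulation from \S\ref{sec:continua}) an open disk neighborhood $V\subset S$ of $\ol U$. Define $\hat U^\ast$ as the union of $\ol U$ with the connected components of $V\setminus\ol U$ that are relatively compact in $V$; then $\hat U^\ast$ is a cellular continuum and admits a closed disk neighborhood $\Delta_-\subset V$. On the $U$-side, pick the compact set $K\subset U$ produced by the $\bd$-nonwandering condition applied to a sufficiently large iterate $n$, and enclose $K$ in a closed disk $\Delta_+\subset U$. Then $W:=\inter\Delta_-\setminus\Delta_+$ is an open topological annulus containing $\bd U$. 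In case (ii), $S$ is a sphere and I would simply take $W:=S\setminus\Delta_+$, a topological disk containing $\bd U$.

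\textbf{Identification of the components.} Let $C$ be a component of $S\setminus\bd U$ with $C\subset W$. Since $C\cap\Delta_+=\emptyset$ and $U\supset K\subset\Delta_+$, we have $C\neq U$ (as $U$ is the component of $S\setminus\bd U$ meeting $K$), and hence $C\subset S\setminus\ol U$. Moreover, $C$ is a bounded connected component of the complement of the continuum $\bd U$ in the topological disk $\inter\Delta_-$ (case (i)) or $W$ (case (ii)); by a standard planarity/Schoenflies argument, $C$ is an open topological disk and $\ol C$ is a non-separating planar continuum with $\bd C\subset\bd U$.

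\textbf{Wandering.} Suppose for contradiction that $f^n(C)=C$ for the smallest $n\geq 1$. Classical Cartwright–Littlewood for non-separating planar continua applied to $f^n|_{\ol C}$ yields a fixed point $p$ of $f^n$ in $\ol C$. If $p\in\bd C\subset\bd U$, the aperiodicity of $\bd U$ (case (i)) gives an immediate contradiction, while in case (ii) we must have $p=z_0$ and $z_0\in\bd C$. Otherwise $p\in C\subset S\setminus\ol U$. In case (ii) we then note that, working in the plane $S\setminus\{z_0\}$ exactly as in Claim \ref{claim:gen1}, Corollary \ref{coro:main} applied to $f^n$ (whose prime ends rotation on $U$ is $n\alpha\neq 0$) forces $\fix(f^n)\subset U\cup\{z_0\}$, which excludes the possibility $p\in C$; hence every periodic component would have $z_0$ in its boundary, and the $n$ disjoint iterates $f^i(C)$ all share $z_0$ as a boundary point. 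The local ``flower'' of $n$ petals cyclically permuted by $f$ at $z_0$, together with $U$, would produce a rational local rotation of $f$ at $z_0$; transferring this to $\bdPE U$ via the accessible fixed prime ends associated to $z_0$ contradicts the irrationality of $\rho(f,U)=\alpha$. In case (i), the interior fixed point $p\in C$ is lifted to the universal cover exactly as in the proof of Claim \ref{claim:gen1}: $C$ corresponds to a bounded component $\hat C$ of $\D\setminus\hat U$ lying in the fill $K$, with $\hat f^n(\hat C)=\hat C$ and a $\hat f^n$-fixed lift $\hat p\in\hat C$. Puncturing $\D$ at $\hat p$ and applying Theorem \ref{th:main} to $\hat f^n$ on the resulting plane (with invariant disk $\hat U$ of rotation number $n\alpha\neq 0$) forbids fixed points of $\hat f^n$ on $\bd\hat U$; but the same argument as in Subclaim \ref{claim:contract}, run now with the invariant fill of $\hat U\cup\hat C$ rather than of $\hat U$ alone, produces a deck-translate forcing $\hat U$ and its translate to share a boundary fixed point, giving the desired contradiction.

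\textbf{Main obstacle.} The delicate step is the wandering argument in case (i): turning the $f^n$-fixed point $p\in C$ into a contradiction requires passing to the universal cover and carefully replaying the injectivity/filling argument of Claim \ref{claim:gen1} for the enlarged invariant set $\hat U\cup\hat C$. A secondary technicality, needed to invoke Cartwright–Littlewood and Theorem \ref{th:main} on $C$, is verifying that the $\bd$-nonwandering condition for $U$ transfers to one for $f^n$ on $C$, which is handled by a cross-cut surgery converting any wandering cross-section of $C$ into a wandering cross-section of $U$ inside the same annular region provided by our chosen $K$.
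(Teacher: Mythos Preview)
Your argument in case (i) does not produce a contradiction. After locating a fixed point $\hat p\in\hat C$ of $\hat f^n$, you puncture at $\hat p$ and invoke Theorem \ref{th:main}; but that theorem only forbids fixed points \emph{on} $\bd\hat U$, and $\hat p$ lies in the interior of a complementary component, not on $\bd\hat U$. The subsequent appeal to Subclaim \ref{claim:contract} is backwards: that argument proves that the fill $K$ satisfies $K\cap T(K)=\emptyset$ for every nontrivial deck transformation $T$---it \emph{rules out} intersecting translates rather than producing one, and since $\hat C\subset K$ already, enlarging to the fill of $\hat U\cup\hat C$ changes nothing. The genuine difficulty is that, once $\bd U$ is aperiodic, there is no fixed point on $\bd U$ against which to play Theorem \ref{th:main}, so a fixed point sitting in the interior of $C$ is simply not contradictory. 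The paper handles this by an entirely different mechanism: after reducing to the sphere one has \emph{two} invariant complementary components $V_1,V_2$; using Epstein's theorem one modifies $f$ to fix a circle in each, and then glues $\bd B_1$ to $\bd B_2$ to obtain a torus $S'$ on which $V_1$ and $V_2$ merge into a single \emph{annular} component of $S'\setminus\ol U$. Reapplying Claim \ref{claim:gen1} on $S'$ forces $\ol U$ to be contractible there, which is incompatible with the existence of an annular complementary component.

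Your case (ii) argument also has a gap in the subcase $z_0\in\bd C$. The ``flower of $n$ petals'' picture and the transfer to $\bdPE U$ ``via the accessible fixed prime ends associated to $z_0$'' are not justified: since $\rho(f,U)$ is irrational, Proposition \ref{pro:main-acc} shows that $z_0$ is \emph{not} accessible from $U$, so there are no such prime ends. The paper sidesteps Cartwright--Littlewood entirely here: given any periodic component $V$, it modifies $f^n$ by a compactly supported homeomorphism of $V$ to \emph{create} a fixed point $x_0\in V$, and then applies Theorem \ref{th:main} on $S\setminus\{x_0\}$, where the pre-existing fixed point $z_1\in\bd U$ furnishes the contradiction. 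This works uniformly, regardless of whether $z_0\in\bd C$.
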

\begin{proof}
By Claim \ref{claim:gen1}, one of cases (i) or (ii) holds. Assume first that case (ii) holds, so $S$ is a sphere, and there is a fixed point $z_1\in \bd U$. We will show that every connected component of $S\sm \ol{U}$ is wandering, which implies that the disk $W=S\sm \{z_0\}$ satisfies the claim for any $z_0\in U$. 
Suppose on the contrary that there is a nonwandering component $V$ of $S\sm \ol{U}$. Since $\ol{U}$ is invariant, it follows that $f^n(V)=V$ for some $n$. Since $\ol{U}$ is connected and $S$ is a sphere, $V$ is simply connected. Choose $x_0\in V$ and let $h\colon S\to S$ be a homeomorphism $h\colon S\to S$ which is the identity outside a compact subset of $V$ and such that $h(f^n(x_0))=x_0$. Then $f'=hf^n$ is a new map which coincides with $f$ in a neighborhood of $\ol{U}$ and $f'^n(x_0)=x_0$. But this contradicts Theorem \ref{th:main} applied to $f'^n|_{S\sm \{x_0\}}$. This proves the claim when case (ii) holds.

Now suppose that case (i) holds, so $\ol{U}$ is contractible. This means that there is a closed topological disk $B\subset S$ that contains $\ol{U}$ in its interior. Choose any point $z_0\in U$ and let $W=B\sm \{z_0\}$. To prove that $W$ satisfies the claim, assume for contradiction that there is some nonwandering component $V_1$ of $S\sm \ol{U}$ such that $V_1\subset W$. Then $V_1$ is periodic, and we may assume $f(V_1)=V_1$ by considering a power of $f$ instead of $f$ (while keeping the remaining hypotheses).

Let us show that it suffices to consider the case where $S$ is a sphere. If $S$ is not a sphere, then its universal covering space $\hat{S}$ is homeomorphic to $\R^2$. If $\pi\colon \hat{S}\to S$ is the covering projection, there is a connected component $\hat{U}$ of $\pi^{-1}(U)$ and a lift $\hat{f}\colon \hat{S}\to \hat{S}$ of $f$ such that $\hat{f}(\hat{U})=\hat{U}$. Let $\hat{B}$ be the connected component of $\pi^{-1}(B)$ containing $\hat{U}$, so that $\pi|_{\hat{B}}$ is a homeomorphism onto $B$, and let $\hat{V}_1$ be the connected component of $\pi^{-1}(V_1)$ in $B$, so that $\hat{f}(\hat{V}_1)=\hat{V}_1$. 
%In particular if $K=\ol{U}$ and $\hat{K}=\cl_{\hat{S}}{\hat{U}}\subset B$ we have that $\pi|_{K}$ conjugates $\hat{f}|_{K}$ with $f|_K$. 
Let $S'=S\sqcup\{\infty\}$ be the one-point compactification of $\hat{S}$ (so $S'$ is a sphere), $f'$ the map induced on $S'$ by $\hat{f}$, and $U'=\hat{U}$. The facts that $\pi f'=f'\pi$ holds on $\cl_{\hat{S}}\hat{U}$ and $\pi|_{\cl_{\hat{S}}\hat{U}}$ is a homeomorphism onto $\ol{U}$ imply that $f'$ is $\bd$-nonwandering in $U'$ and $\rho(f',U')$ is irrational. Moreover, if $V_1'=\hat{V}_1$ and $V_2'$ is the connected component of $S'\sm \cl_{\hat{S}}\hat{U}$ containing $\infty$, then $V_1'$ and $V_2'$ are both $f'$-invariant connected components of $S'\sm \cl_{S'} U'$. Thus we are in the same setting as in the beginning of the proof, but on the sphere.

Thus we assume that $S$ is already the sphere and we will find a contradiction. Choose any point $z_i\in V_i$ and let $B_i$ be a closed topological disk containing $z_i$ and $f(z_i)$. If $\gamma_i\colon [0,1]\to \bd B_i$ is a parametrization of $\bd B_i$ (oriented positively), then $\gamma_i$ is homotopic to $f\circ \gamma_i$ in $V_i\sm \{f(z_i)\}$ (because $f$ preserves orientation), and so by Theorem \ref{th:epstein} applied to $V_i\sm \{f(z_i)\}$ there is a homeomorphism $h_i\colon V_i\sm\{f(z_i)\}$ such that $h_i\circ f\circ \gamma_i = \gamma_i$ (note that this means that $h_if$ fixes each point of $\bd B_i$), and $h_i$ is fixed outside a compact subset of $V_i\sm \{f(z_i)\}$. 

We may define $h\colon S\to S$ by $h(z) = h_i(z)$ if $z\in V_i$, and $h(z)=z$ if $z\notin V_1\cup V_2$, which is a homeomorphism of $S$. Then $f'=hf$ coincides with $f$ on $\ol{U}$ (in particular $\rho(U,f')$ is irrational and $f'$ is $\bd$-nonwandering in $U$) and has a pointwise fixed circle $\bd B_i$ in $V_i$ for $i\in \{1,2\}$. Consider the new surface $S'= S\sm (\inter B_1\cup \inter B_2)/{{\sim}}$, where ${{\sim}}$ is the relation that identifies $\bd B_1$ with $\bd B_2$ by $\gamma_1(t)\sim\gamma_2(1-t)$ for $x\in \mathbb{S}^1$ (the orientation of $\gamma_2$ was reversed so that the identification gives an orientable surface again, see Figure \ref{fig:gluing}). Then $S'$ is a torus, and after the identification, $V_1\sm \inter(B_1)\cup V_2\sm \inter(B_2)$ becomes a topological annulus $A$ in $S'$ which is a connected component of $S'\sm \ol{U}$. 

\begin{figure}[ht!]
\centering
\includegraphics[width=.8\textwidth]{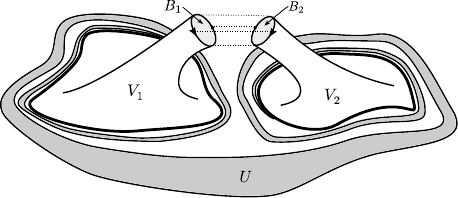}
\caption{Identifying the boundaries of $B_1$ and $B_2$ to obtain a torus}
\label{fig:gluing}
\end{figure}

Let $g'\colon S'\to S'$ be the map induced by $f'$. Observe that since $\bd U$ is aperiodic for $f$, it is aperiodic for $g'$ as well. Since $g'$ is $\bd$-nonwandering in $U$ and $\rho(g',U)$ is irrational, Claim \ref{claim:gen1} applied to $g'$ and $S'$ implies that $\bd U$ is contractible in $S'$. But that means that $\bd U$ is contained in some closed topological disk $B$, and so one of the connected components of $S'\sm \ol{U}$ is a punctured torus and the remaining ones are open topological disks. But as we already mentioned, $A$ is a connected component of $S'\sm \ol{U}$ homeomorphic to an annulus; so we arrived to a contradiction, proving the claim.
\end{proof}

Claims 1 and 2 complete the proof of the theorem in the case that $S$ is closed. To finish the proof we need to consider the case where $S$ is a non-closed surface of finite type. In this case, the ideal boundary $\bdIB(S)$  is finite, so if $S'=\cIB(S)$ and $f'\colon S'\to S'$ is the extension of $f$, then $\bdIB(S)$ is a finite $f'$-invariant set, which must therefore consist of finitely many periodic orbits. Again by Corollary \ref{coro:prime-rot} and Proposition \ref{pro:bdw} the hypotheses of the theorem remain valid for $U$ and $f'$; in particular since $S'$ is a closed surface, what we already proved implies that one of the cases (i) or (ii) holds for $f'$.

 Suppose first that case (i) holds for $S'$ and $f'$. Then $\bd_{S'} U$ is aperiodic and $\cl_{S'}U$ is contractible in $S'$. Since points of $\bdIB(S)$ are periodic for $f'$, it follows that $\bd_{S'}U$ is disjoint from $\bdIB(S)$. Note that this implies that $\bd_S{U} = \bd_{S'} U$, so $\bd U = \bd_S U$ is aperiodic. Also we have an annular neighborhood $W'$ containing $\bd U$ such that every connected component of $S'\sm \bd_{S'} U$ contained in $W'$ is wandering. One of the boundary components of $\bd W'$ must be contained in $U$, so $B=U\cup W'$ is an open topological disk containing $\ol{U}$. Since points of $\bdIB(S)$ are disjoint from $\bd U$ and from any connected component of $S'\sm \bd U$ contained in $B$, this means that the finite set $\bdIB(S)$ is contained in the unique connected component $V$ of $S'\sm \bd U$ not contained in $B$. It follows that there is a smaller open topological disk $B'\subset B$ such that $\ol{U}\subset B'$ and $B'$ is disjoint from $\bdIB(S)$. Thus $B'\subset S$, and $\ol{U}$ is contractible in $S$. Thus case (i) holds for $S$ and $f$. Also, letting $W = B'\sm \{z_0\}$ for some point $z_0\in U$, we obtain an annular neighborhood of $\bd U$ with the required property, completing the proof in this case.
 
 Now assume that case (ii) holds for $S'$ and $f'$. Then $S'$ is a sphere, and every component of $S' \sm \cl_{S'} U$ is wandering for $f'$. Moreover, $\bd_{S'} U$ has a unique fixed point $z_0$ and no other periodic point, and there is an open topological disk $W'$ containing $\bd_{S'}U$ such that every connected component of $S'\sm U$ contained in $W'$ is wandering. Since $U$ is invariant, $S'\sm W'$ intersects $U$, so $\bd W'\subset U$. This implies that $S'\sm \cl_{S'}U \subset W'$.
 
Consider first the case where $z_0\in \bdIB(S)$. Then, since $\bdIB(S)$ consists of periodic points of $f'$ and there are no other periodic points in $\bd_{S'}U$, it follows that $\bdIB(S)\sm \{z_0\} \subset S'\sm \cl_{S'} U$. But since all connected components of $S'\sm \cl_{S'}(U)$ are contained in $W'$, they are wandering, so they cannot contain a point of $\bdIB(S)$. This implies that $\bdIB(S)=\{z_0\}$, hence $S$ is homeomorphic to $\R^2$ and $\bd U = \bd_S U$ is aperiodic. Thus, case (iii) holds for $f$ and $S$. Moreover $S'\sm W'$ is a closed subset of $U$, so taking any closed topological disk $B\subset U$ containing $S'\sm W'$ we have that $W=S\sm B$ has the property that every connected component of $S\sm \bd U$ in $W$ is wandering, as required.

Finally, consider the case where $z_0\notin \bdIB(S)$. Then $z_0\in \bd_S U$, and $\bd_{S'}U \sm \{z_0\}$ contains no periodic points of $f'$. This implies that $\bdIB(S)\subset S'\sm \cl_{S'} U$. But every component of $S'\sm \cl_{S'} U$ is wandering (because it is contained in $W'$), so we conclude that $\bdIB(S)=\emptyset$. This means that $S=S'$ and $f'=f$, and the theorem holds.
This completes the proof of Theorem \ref{th:main-gen}
\qed

\section{Generalizations to non-simply connected sets}
\label{sec:general}

In the case that $U$ is not simply connected, we can adapt our previous results to the next somewhat technical, but potentially useful result. Recall the notation and definitions from Section \ref{sec:prelim}, \S\ref{sec:prime-general} and \S\ref{sec:bdnw}.

\begin{theorem}\label{th:general} Let $f\colon S\to S$ be an orientation-preserving homeomorphism of a closed surface $S$, and $U\subset S$ an open $f$-invariant connected set. Let $\mathfrak{p}$ be a regular fixed end of $U$. If $f$ is $\bd$-nonwandering at $\mathfrak{p}$, and $\rho(f,\mathfrak{p})$ is irrational, then one of the following holds:
\begin{itemize}
\item[(i)] $Z(\mathfrak{p})$ is aperiodic, or
\item[(ii)] $Z(\mathfrak{p})$ is contractible and has a unique fixed point and no other periodic point.
\end{itemize}
Furthemore, there is a neighborhood $W$ of $Z(\mathfrak{p})$, which is an annulus in case (i) and a disk in case (ii), such that every connected component of $S\sm Z(\mathfrak{p})$ contained in $W$ is wandering.
\end{theorem}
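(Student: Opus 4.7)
The plan is to reduce Theorem~\ref{th:general} to the already-proven Theorem~\ref{th:main-gen} by a surface-level surgery near the impression $Z(\mathfrak{p})$. All the hypotheses ($\bd$-nonwandering at $\mathfrak{p}$, irrationality of $\rho(f,\mathfrak{p})$) and the desired conclusions depend only on the germ of $f$ in an arbitrarily small $\mathfrak{p}$-collar, so we have the freedom to modify both $f$ and $S$ outside such a collar. The goal is to produce a closed surface $S'$, an orientation-preserving homeomorphism $f'\colon S'\to S'$, and a simply connected $f'$-invariant open set $U'\subset S'$ with $\bd_{S'} U'$ canonically identified with $Z(\mathfrak{p})$, such that $\rho(f',U',S')=\rho(f,\mathfrak{p})$ and $f'$ is $\bd$-nonwandering in $U'$; Theorem~\ref{th:main-gen} then applies directly.

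The surgery proceeds as follows. Choose nested $\mathfrak{p}$-collars $A_1\subsetneq A\subset U$ with $f^{\pm 1}(A_1)\subset A$ (possible because $f$ fixes $\mathfrak{p}$ and $A$ can be chosen arbitrarily small in $\cIB U$), and with $A$ containing no wandering $\mathfrak{p}$-cross-section with full forward or backward orbit inside (using $\bd$-nonwandering at $\mathfrak{p}$). Let $\alpha=\bd_U A$, a Jordan curve in $U$. Cut $S$ along $\alpha$, retain the component $S_1$ containing $A$ and $Z(\mathfrak{p})$, and cap off the resulting boundary circles with closed disks to obtain a closed surface $S'$; the ``$\mathfrak{p}$-side'' of $\alpha$ (containing $A$ and $Z(\mathfrak{p})$) is preserved as a subset of $S'$, and the ``non-$\mathfrak{p}$ side'' is replaced by a disk $D$. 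Then $Z(\mathfrak{p})$ corresponds to a continuum $Z'\subset S'$, and the complement component of $Z'$ in $S'$ containing $D$ is a simply connected open set $U'$ with $\bd_{S'} U'=Z'$. Define $f'$ to agree with $f$ on $A_1$ (so $f'|_{Z'}$ is identified with $f|_{Z(\mathfrak{p})}$), and extend $f'$ to $S'$ as a homeomorphism using the Extension Theorem~\ref{th:extension} on $D$ (arranging a single attracting fixed point there) and an Epstein-type isotopy argument (Theorem~\ref{th:epstein}) in the transition annulus $A\setminus A_1$, so that orbits that in $S$ would cross $\alpha$ from $A$ to $U\setminus A$ are instead redirected into $D$. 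By construction $f'$ is $\bd$-nonwandering in $U'$, and $\rho(f',U',S')=\rho(f,\mathfrak{p})$ by an argument in the spirit of Corollary~\ref{coro:prime-rot}, the prime-end circle on each side being determined by cross-cut data inside the common collar $A_1$. Applying Theorem~\ref{th:main-gen} to $(f',U',S')$ yields case (i) or case (ii) of that theorem (case (iii) is ruled out since $S'$ is closed). Translating back via $f'|_{Z'}=f|_{Z(\mathfrak{p})}$ gives the dichotomy: case (i) when $Z(\mathfrak{p})$ is aperiodic; case (ii) when there is a unique fixed point, in which case the case-(ii) conclusion of Theorem~\ref{th:main-gen} forces $S'$ to be a sphere, whence $S_1$ must have been a disk in $S$, so $Z(\mathfrak{p})$ lies in a disk neighborhood in $S$ and is therefore contractible. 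The neighborhood $W$ is the preimage of the corresponding neighborhood given by Theorem~\ref{th:main-gen}.

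The main obstacle is making the surgery rigorous. The impression $Z(\mathfrak{p})$ can be a wild continuum, so while $S'$ inherits a bona fide surface structure from $S_1$ and $D$, the set $Z'\subset S'$ may be topologically complicated; one must verify that $U'=S'\setminus Z'$ (or the relevant component) is simply connected with prime-end circle canonically identified with $\bdPE_{\mathfrak{p}} U$, which comes down to matching $\mathfrak{p}$-cross-cuts with cross-cuts of $U'$ through the common collar $A_1$. A secondary technical difficulty is the construction of $f'$ as a genuine homeomorphism of $S'$: because $f$ does not preserve $\alpha$ (or $A$), the extension to $S'$ must simultaneously agree with $f$ on $A_1$, close up via Theorem~\ref{th:extension} on $D$, and interpolate cleanly across $A\setminus A_1$ via Theorem~\ref{th:epstein}, while ensuring that the resulting $f'$ maps $U'$ into itself.
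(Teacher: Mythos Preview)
Your approach is essentially the paper's: reduce to the simply-connected case by surgery near $Z(\mathfrak{p})$ and invoke Theorem~\ref{th:main-gen}. The paper's execution is cleaner in one respect: rather than cutting first and then patching $f'$ together on $S'$ via an extension into the cap, the paper \emph{first} uses Epstein's theorem to modify $f$ to a map $f'$ that leaves the collar boundary $\gamma$ invariant, and only then cuts $S$ along $\gamma$ and takes the ends compactification (equivalent to your disk caps). This makes the definition of the homeomorphism on $S'$ automatic---just fix the added ends---and sidesteps the simultaneous extension you correctly flag as the main technical difficulty.

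One point you should tighten: when $\alpha$ is non-separating in $S$, cutting produces a single component with \emph{two} boundary circles, hence two caps, not one ``non-$\mathfrak{p}$-side disk $D$''. Your deduction of contractibility in case~(ii) (``whence $S_1$ must have been a disk in $S$'') then fails. The paper handles this by observing that in the non-separating case the second added end $\mathfrak{e}_0$ lies in an \emph{invariant} complementary component of $Z(\mathfrak{p})$ in $S'$, which is therefore non-wandering; this rules out case~(ii) of Theorem~\ref{th:main-gen} outright in the non-separating situation, so the contractibility argument is only needed when $\alpha$ separates, where your reasoning is valid. Also, the tool you want for extending into the cap is the Alexander trick (coning), not Theorem~\ref{th:extension}, which addresses a different extension problem.
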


Again, in the nonwandering setting we have a simpler statement:
\begin{corollary}\label{coro:general} Let $f\colon S\to S$ be an orientation-preserving nonwandering homeomorphism of a closed surface $S$, and $U\subset S$ an open $f$-invariant connected set. Let $\mathfrak{p}$ be a regular fixed end of $U$. If $\rho(f,\mathfrak{p})$ is irrational, then one of the following holds:
\begin{itemize}
\item $Z(\mathfrak{p})$ is an aperiodic annular continuum, or
\item $Z(\mathfrak{p})$ is a cellular continuum with a unique fixed point and no other periodic points. 
\end{itemize}
\end{corollary}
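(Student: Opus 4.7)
My plan is to deduce the corollary from Theorem \ref{th:general} by using the nonwandering hypothesis to upgrade the topological conclusions. The preliminary step is to observe that, since $f$ is nonwandering on the closed surface $S$, the argument of Proposition \ref{pro:bdnw-gen} carries over verbatim (replacing cross-sections by $\mathfrak{p}$-cross-sections) to show that $f$ is $\bd$-nonwandering at $\mathfrak{p}$ in the sense of Definition \ref{def:bdnw-2}. Invoking Theorem \ref{th:general} then splits the situation into its cases (i) or (ii), and in either case provides the neighborhood $W$ of $Z(\mathfrak{p})$ with the property that every connected component of $S\sm Z(\mathfrak{p})$ contained in $W$ is wandering.

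Case (i) will follow once I observe that $Z(\mathfrak{p})$ is itself a continuum: because $\mathfrak{p}$ is a regular fixed end, the impression is the decreasing intersection of closures of a neighborhood basis of connected $\mathfrak{p}$-collars, hence an $f$-invariant continuum in $S$. Since $S$ is closed, $f$ is nonwandering and orientation-preserving, and $Z(\mathfrak{p})$ is aperiodic, Theorem \ref{th:koro} yields directly that $Z(\mathfrak{p})$ is annular.

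For case (ii), I will argue that nonwandering together with the existence of the disk neighborhood $W$ forces $Z(\mathfrak{p})$ to be non-separating. Since no nonempty open set has pairwise disjoint iterates under a nonwandering homeomorphism, no component of $S\sm Z(\mathfrak{p})$ can be contained in $W$; every such component must meet $S\sm W$. I will then choose a smaller open disk $W'$ with $Z(\mathfrak{p})\subset W'$ and $\ol{W'}\subset W$ a closed topological disk. The complement $S\sm \ol{W'}$ is a nonempty connected open set (the complement of a closed topological disk in a closed connected surface is always connected), it is contained in $S\sm Z(\mathfrak{p})$, and it is met by every component of $S\sm Z(\mathfrak{p})$; by connectedness it must therefore be contained in every such component. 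This forces uniqueness of the component, so $S\sm Z(\mathfrak{p})$ is connected and $Z(\mathfrak{p})$ is non-separating. Combined with the contractibility supplied by Theorem \ref{th:general}, $Z(\mathfrak{p})$ is cellular, while the statement on periodic points is inherited verbatim from case (ii) of that theorem.

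The main (modest) obstacle lies in case (ii): one has to pass from the purely dynamical fact that no component of $S\sm Z(\mathfrak{p})$ lying inside $W$ can be wandering, to the topological conclusion that $S\sm Z(\mathfrak{p})$ is connected. The auxiliary smaller disk $W'$ with a well-behaved boundary is precisely what bridges the two, and everything else is a direct application of the earlier theorems.
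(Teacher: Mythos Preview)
Your proof is correct and follows essentially the same approach as the paper. In case (i) you invoke Theorem \ref{th:koro} directly, which the paper mentions as an alternative to a more hands-on argument using the annular neighborhood $W$; in case (ii) your argument via the auxiliary disk $W'$ is a slightly more detailed version of the paper's one-line claim that $D\sm Z(\mathfrak{p})$ has no relatively compact components, hence $Z(\mathfrak{p})$ is non-separating.
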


A version of Theorem \ref{th:arclemma} can also be stated in this setting:

\begin{theorem}\label{th:arc-general}  Let $f\colon S\to S$ be an orientation-preserving homeomorphism of an orientable surface $S$ of genus $g<\infty$, and $U\subset S$ an open $f$-invariant connected set. Let $\mathfrak{p}$ be a regular fixed end of $U$ such that $f$ is $\bd$-nonwandering at $\mathfrak{p}$, and $\rho(f,\mathfrak{p})=\alpha\neq 0$. Then there is a constant $N_{\alpha,g}\in \N$ and a $\mathfrak{p}$-collar $C$ such that any $N_{\alpha,g}$-translation arc that intersects $Z(\mathfrak{p})$ also intersects $U\sm C$. 
\end{theorem}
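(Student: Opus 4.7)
The strategy is to reduce Theorem \ref{th:arc-general} to Theorem \ref{th:arclemma} by a surgery that transforms the setting near the end $\mathfrak{p}$ into one in which the relevant invariant domain is simply connected. Set $N=N_{\alpha,g}$ as in Theorem \ref{th:arclemma}.

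Using that $\mathfrak{p}$ is a fixed regular end, I first choose nested $\mathfrak{p}$-collars $A_0\supsetneq A_1\supsetneq\cdots\supsetneq A_{N+1}$ such that $f^{\pm 1}(A_{k+1})\subset A_k$ for each $k\ge 0$, and let $\gamma=\bd_U A_0$, a simple closed loop in $U$. I then build a closed surface $\ol S$ of genus $\ol g\le g$, an orientation-preserving homeomorphism $\ol f\colon\ol S\to\ol S$, and a simply connected $\ol f$-invariant open set $\ol U\subset\ol S$ with the following features: there is a natural identification of $A_{N+1}\cup Z(\mathfrak{p})\subset S$ with a neighborhood of $\bd\ol U$ in $\ol S$ under which $\ol f$ and $f$ agree, $\bd\ol U$ corresponds to $Z(\mathfrak{p})$, $\rho(\ol f,\ol U)=\alpha$ via Corollary \ref{coro:prime-rot}, and $\ol f$ is $\bd$-nonwandering in $\ol U$ because $\mathfrak{p}$-cross-sections of $U$ inside $A_{N+1}$ correspond to cross-sections of $\ol U$ near $\bd\ol U$.

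Concretely, $\ol S$ is obtained by cutting $S$ along $\gamma$, keeping the connected component $R$ of $S\sm\gamma$ containing $A_0$, capping the resulting boundary with a closed disk $D$, and if necessary filling in extraneous components of $S\sm U$ inside $R$ (other than $Z(\mathfrak{p})$) by disks. An Euler characteristic computation gives $\ol g\le g$, and the open set $\ol U:=\ol S\sm Z(\mathfrak{p})$ is a topological disk, being the union of the half-open annulus $A_0$ with the capping disk $D$. The map $\ol f$ is extended from $f|_{A_{N+1}\cup Z(\mathfrak{p})}$ via the Extension Theorem \ref{th:extension}, and preserves $\ol U$ automatically since it fixes $Z(\mathfrak{p})$ setwise. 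Applying Theorem \ref{th:arclemma} to $(\ol S,\ol f,\ol U)$ yields a compact $\ol K\subset\ol U$ such that every $N_{\alpha,\ol g}$-translation arc of $\ol f$ in $\ol S\sm\ol K$ avoids $Z(\mathfrak{p})$; since $N_{\alpha,\ol g}\le N_{\alpha,g}$, taking $C\subset A_{N+1}$ to be a $\mathfrak{p}$-collar whose image in $\ol S$ lies in $\ol S\sm\ol K$ then delivers the conclusion.

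The principal technical obstacle lies in the surgery. One must arrange that after filling in extraneous holes and capping, $\ol S$ is a genuine closed surface of controlled genus; that $\ol U$ really is simply connected (requiring, for instance, that $\gamma$ separates the $A_0$-side from the bulk of $U$ cleanly in $S$, or that extraneous entries of $U\sm A_0$ into $R$ can be cut off); and that the identification near $Z(\mathfrak{p})$ faithfully transports $N$-translation arcs, $\mathfrak{p}$-cross-sections, and the $\bd$-nonwandering condition. Since the continuum $Z(\mathfrak{p})$ may be arbitrarily pathological (for instance indecomposable), all surgery must take place strictly within $R\sm(A_0\cup Z(\mathfrak{p}))$, which makes the construction delicate.
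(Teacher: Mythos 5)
Your high-level strategy (surgery near $\mathfrak{p}$ to reduce to Theorem \ref{th:arclemma} on a surface of genus at most $g$, nested collars to transfer translation arcs, monotonicity of $N_{\alpha,g}$ in $g$) is the same as the paper's, but your execution has a genuine gap exactly at the step you flag as ``delicate''. The problem is the region on which the new map agrees with $f$. A putative $N_{\alpha,g}$-translation arc $\beta$ meeting $Z(\mathfrak{p})$ and disjoint from $U\sm C$ is only constrained to lie in $(S\sm U)\cup C$: it and its first $N$ iterates may run through the component of $S\sm U$ containing $Z(\mathfrak{p})$ (which can be strictly larger than $Z(\mathfrak{p})$) and through other components of $S\sm U$ accumulating on $Z(\mathfrak{p})$. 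Your surgery alters precisely this region --- ``filling in extraneous components of $S\sm U$ by disks'' is not a well-defined operation when those components accumulate on $Z(\mathfrak{p})$ or carry genus, and $f$ does not respect it --- and you only guarantee $\ol{f}=f$ on $A_{N+1}\cup Z(\mathfrak{p})$, which is not a neighborhood of $Z(\mathfrak{p})$ in $\ol{S}$. Hence the crucial transfer (``an $N$-translation arc for $f$ meeting $Z(\mathfrak{p})$ becomes an $N$-translation arc for $\ol{f}$ meeting $\bd\ol{U}$ and avoiding $\ol{K}$'') does not follow. Two further points fail as stated: $\ol{U}:=\ol{S}\sm Z(\mathfrak{p})$ need not be a disk (for instance for the complement of a Denjoy continuum in $\T^2$, a collar of the other end of $U$ also accumulates on all of $Z(\mathfrak{p})$; one must take the connected component of the complement of $Z(\mathfrak{p})$ containing the collar), and Theorem \ref{th:extension} cannot be invoked to build $\ol{f}$, since it concerns a planar domain with a unique fixed point, whereas here $Z(\mathfrak{p})$ may contain many fixed points and the domain is not planar.

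The paper's proof avoids all of this by never capping with disks and never extending $f$ from a neighborhood of $Z(\mathfrak{p})$: as in the proof of Theorem \ref{th:general}, one uses Theorem \ref{th:epstein} to find $h$ supported in a small annulus $A\subset U$ around the collar boundary $\gamma$ with $hf(\gamma)=\gamma$, sets $g'=hf$, cuts along the now invariant loop $\gamma$, and passes to the ends compactification $S'=\cIB(S_0)$, so the cut circles are compactified by points. Then $g'$ coincides with $f$ on all of $S\sm A$, in particular on $(S\sm U)\cup(\textrm{inner collar})$; the component $U'$ of $S'\sm Z(\mathfrak{p})$ containing the collar is a topological disk with $\bd_{S'}U'=Z(\mathfrak{p})$; and by the collar nesting the whole orbit segment of $\beta$ avoids $\gamma$ and $A$, so it transfers verbatim and contradicts Theorem \ref{th:arclemma} applied on $S'$ with $N=\max_{0\le k\le g}N_{\alpha,k}$. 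To salvage your version you would have to keep all of $S\sm U$ on the collar side intact and make the modified map agree with $f$ there, which essentially forces you back to this construction.
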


\subsection{Proof of Theorem \ref{th:general}} 

The main idea to obtain theorem \ref{th:general} from the previous results is to use surgery to reduce the problem to one in which $U$ is simply connected. Assuming the hypotheses of the theorem, consider the ends compactification $\cIB(U)$ and the extension $f_*$ of $f|_{U}$ to $\cIB(U)$. Since $\mathfrak{p}$ is a regular end, $\mathfrak{p}$ is an isolated fixed point of $f_*$. Let $\gamma_0$ be a simple loop bounding a $\mathfrak{p}$-collar\footnote{recall from \S\ref{sec:prime-general}: this means that $D$ is a subset of $U$ bounded by a simple loop such that $D\cup \{\mathfrak{p}$\} is a closed topological disk in $\cIB(U)$} 
$D_0$ in $U$, and $\gamma$ another simple loop bounding a $\mathfrak{p}$-collar $D$ such that $D\cup f(D)\subset D_0$. Then $\gamma':=f(\gamma)$ is homotopic to $\gamma$ in $D_0$. Let $D_1$ be another $\mathfrak{p}$-collar such that $D_1\subset D\cap f(D)$. Then $\gamma$ and $\gamma'$ are homotopic simple essential loops in the annulus $A = \inter D_0\sm D_1$, and it follows from Theorem \ref{th:epstein} that there exists an orientation preserving homeomorphism $h\colon A\to A$ such that $h(\gamma')=\gamma$ and $h(x)=x$ for $x\in \bd A$. We may extend $h$ as the identity in $S\sm A$, and define $f' = hf$, so that $f'(\gamma)=\gamma$ and $f'$ coincides with $f$ outside $A$. In particular, the fact that $f'$ coincides with $f$ in the $\mathfrak{p}$-collar $D_1$ implies that $f'$ is $\bd$-nonwandering at $\mathfrak{p}$ and $\rho(f',\mathfrak{p})=\rho(f,\mathfrak{p})$ is irrational. 

Let us assume that $\gamma$ is non-separating in $S$ (the remaining case is similar and explained at the end of the proof). Let $S_0 =S\sm \gamma$ (for the separating case, one would choose the connected component of $S\sm \gamma$ containing $D_1$). Let $S'= \cIB S_0$, and denote by $g'\colon S'\to S'$ the extension of $f'|_{S_0}$ to $S'$. If $A_i$ denotes the connected component of $\ol{A}\sm \gamma$ bounded by $\gamma_i$, for $i\in \{0,1\}$, then each $A_i$ is a collar of one of a different end $\mathfrak{e}_i\in \bdIB(S_0)$, and $\bdIB(S_0)=\{\mathfrak{e}_0,\, \mathfrak{e}_1\}$.  So the sets $A_i'=A_i\cup \{\mathfrak{e}_i\}$ are topological disks which are neighborhoods of $\mathfrak{e}_i$, for $i\in \{0,1\}$. Observe that $g'$ coincides with $f$ on $S'\sm (A'_0\cup A'_1)$.

\begin{figure}[ht!]
\centering
\includegraphics[width=\textwidth]{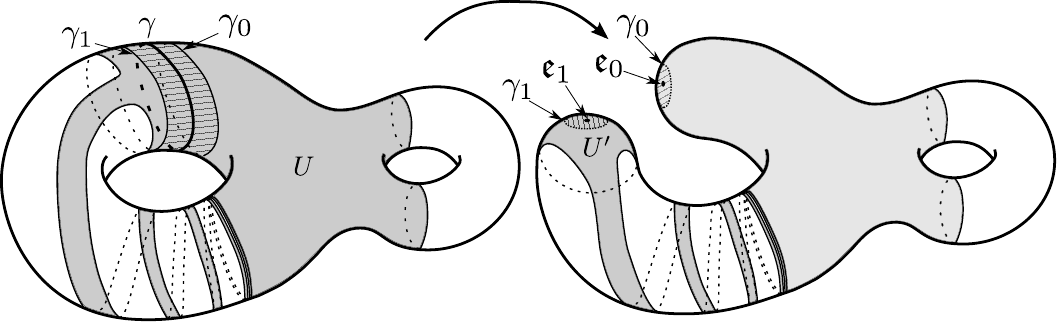}
\caption{$U'$ and the ends compactification of $S_0$}
\label{fig:surgery}
\end{figure}

Since the connected component of $S_0\sm Z(\mathfrak{p})$ containing $D_1$ is precisely $\inter D$, which is $f'$-invariant and contains a unique end of $S_0$ (namely, $\mathfrak{e}_1$), it follows that the connected component of $S'\sm Z(\mathfrak{p})$ containing $D_1$ is a topological disk $U' = D\cup \{\mathfrak{e}_1\}$ such that $\bd_{S'} U' = Z(\mathfrak{p})$ (the latter being true because $D_1$ is a $\mathfrak{p}$-collar). Clearly $\rho(g', U') = \rho(f, \mathfrak{p})$ and $U'$ is $\bd$-nonwandering for $g'$. 
We now apply Theorem \ref{th:main-gen} to $g'$ and $U'$, and translate the information to $f$ and $U$.

Suppose first that case (i) in Theorem \ref{th:main-gen} holds for $g'$ and $U'$. Then $\bd_{S'} U'=Z(\mathfrak{p})$ is aperiodic (so case (i) holds for $f$ and $U$), and there is an annular neighborhood $W'$ of $Z(\mathfrak{p})$ such that any connected component of $S'\sm Z(\mathfrak{p})$ contained in $W'$ is wandering. Note that the connected components of $S'\sm Z(\mathfrak{p})$ containing $\mathfrak{e}_1$ and $\mathfrak{e}_2$ cannot be contained in $W'$, so by reducing $W'$ we may assume that it does not contain $\mathfrak{e}_1$ or $\mathfrak{e}_2$. Thus $W'$ is also an annular neighborhood of $Z(\mathfrak{p})$ in $S_0$ (hence in $S$). Since $W'$ does not contain $U$, if $V$ is a connected component of $S\sm Z(\mathfrak{p})$ contained in $W'$, then $V$ is disjoint from $U$. Thus $V$ is also a connected component of $S'\sm Z(\mathfrak{p})$, and therefore $V$ is wandering for $f'$. Since $f=g'$ in $S\sm U$, it follows that $V$ is wandering for $f$, so the final claim of the theorem holds with $W=W'$.

Now suppose that case (ii) from Theorem \ref{th:main-gen} holds for $U'$ and $g'$. Then $S'$ is a sphere, and there is exactly one fixed point in $\bd_{S'}U'=Z(\mathfrak{p})$. Moreover, all connected components of $S'\sm \cl_{S'} U'$ are wandering. This is not possible in this case, since the connected component of $S'\sm \cl_{S'}U'$ containing $\mathfrak{e}_0$ is invariant; thus case (ii) cannot hold.

Finally, we need to consider the case where $\gamma$ is separating in $S$. We sketch the proof since it is similar to the previous case. For this case, we choose $S_0$ as the connected component of $S\sm \gamma$ that contains $D_1$ (see Figure \ref{fig:surgery-sep}). 
\begin{figure}[ht!]
\centering
\includegraphics[width=\textwidth]{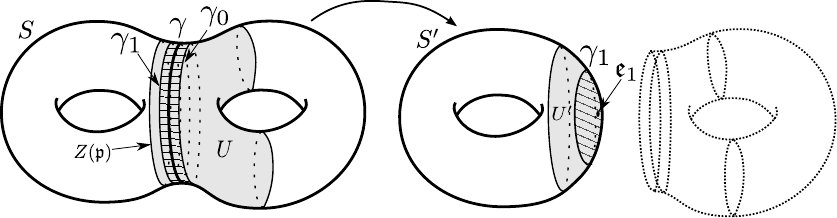}
\caption{Case with $\gamma$ separating $S$}
\label{fig:surgery-sep}
\end{figure}
In this case, $S_0$ has a unique end $\mathfrak{e}_1$, and if $S'=\cIB(S_0) = S_0\cup \{\mathfrak{e}_1\}$, then the connected component $U'$ of $S_0$ that contains $\mathfrak{e}_1$ is a topological disk with $\rho(g', U')=\rho(f,\mathfrak{p})$ and $\bd_{S'}U' = Z(\mathfrak{p})$ (where $g'$ is defined as before). Applying Theorem \ref{th:main-gen} again for $g'$ and $U'$ leaves us with two cases; if case (i) holds for $g'$, then virtually the same argument as before shows that case (i) holds for $f$. On the other hand, if case (ii) holds for $g'$ then one has that $S'$ is a sphere and all connected components of $S'\sm \ol{U}$ are wandering. This implies that $S'\sm \{\mathfrak{e}_1\} = S_0$ is a topological disk containing $Z(\mathfrak{p})$, so $Z(\mathfrak{p})$ is contractible in $S$, and all connected components of $S\sm Z(\mathfrak{p})$ contained in $S_0$ are wandering; thus case (ii) and the final claim of the theorem hold with $W=S_0$.
This completes the proof of the theorem.
\qed

\subsection{Proof of Corollary \ref{coro:general}}

The nonwandering condition implies that $f$ is $\bd$-nonwandering at $\mathfrak{p}$. If the first case from Theorem \ref{th:general} holds, then $Z(\mathfrak{p})$ is an aperiodic invariant continuum contained in an annulus $V$, and all connected components of $S\sm Z(\mathfrak{p})$ contained in $V$ are wandering. Since $f$ is nonwandering, it follows that no connected component of $S\sm Z(\mathfrak{p})$ is contained in $V$. This easily implies that $Z(\mathfrak{p})$ separates $V$ into exactly two annular connected components, so it is annular (alternatively, Theorem \ref{th:koro} implies directly that $Z(\mathfrak{p})$ is annular).

If the second case from \ref{th:general} holds, then $Z(\mathfrak{p})$ has a unique fixed point, no other periodic point, and has a neighborhod $D$ homeomorphic to a disk such that every component of $S\sm Z(\mathfrak{p})$ contained in $D$ is wandering. This means that $D\sm Z(\mathfrak{p})$ has no relatively compact (in $D$) connected components, and since $Z(\mathfrak{p})$ is compact we conclude that $Z(\mathfrak{p})$ is non-separating in $S$. This implies that $Z(\mathfrak{p})$ is cellular, completing the proof.
\qed

\subsection{Proof of Theorem \ref{th:arc-general}}

We only sketch the proof since it is very similar to the proof of Theorem \ref{th:general}. The surgery from the proof of Theorem \ref{th:general} allows us to obtain a simply connected set $U'$ on a surface $S'=\cIB(S_0)$, where $S_0$ is a connected component of $S\sm \gamma$ for some loop $\gamma$ bounding a $\mathfrak{p}$-collar. Moreover, $f$ coincides with $g'$ outside an annular neighborhood $A\subset U$ of $\gamma$, and $\rho(g',U')=\rho(f,\mathfrak{p})=\alpha$. Note that the genus of $S'$ is not greater than the genus of $S$; thus letting $N=\max_{0\leq k\leq g} N_{\alpha,k}$ (Where $N_{\alpha,k}$ is the number from Theorem \ref{th:arclemma}) we have that there is a compact set $K\subset U'$ such that any $N$-translation arc for $g'$ intersecting $\bd U$ must also intersect $K$. Let $C'$ be a $\mathfrak{p}$-collar disjoint from $A$ such that  $K\subset U\sm C'$, and let $C$ be a smaller $\mathfrak{p}$-collar so that $f^n(C)\subset C'$ for $0\leq n\leq N$. Suppose for contradiction that there is an $N$-translation arc $\gamma$ for $f$ such that $\gamma \cap (U\sm C)=\emptyset$. Then $f^k(\gamma)$ is disjoint from $U\sm C'$ for $0\leq k\leq N$, and since $U\sm C'\subset S\sm A$, this implies that $\gamma$ is also an $N$-translation arc for $g'$ intersecting $\bd U'$. But $\gamma$ is disjoint from $U'\sm C'$, and thus disjoint from $K$, contradicting Theorem \ref{th:arclemma} due to our choice of $N$.
\qed

\section{Applications to generic area-preserving diffeomorphisms}
\label{sec:mather}

The main theorem of this section is an application of our previous theorems that generalizes Mather's results from \cite{mather-area} in two ways: it works for any $r \geq 1$ (since it does not rely on Moser stability of elliptic points), and the conclusion is much stronger (since one gets that there is no periodic point, instead of just saying that the prime end rotation number is irrational). Before stating the theorem, we need a definition.

Given a closed surface $S$, denote by $\mathcal{G}_r$ the set of all area preserving $C^r$ diffeomorphisms of $S$ satisfying the following two properties:
\begin{itemize}
\item[(G1)] All periodic points of $f$ are either hyperbolic or elliptic, and there are no saddle connections;
\item[(G2)] If $p$ is an elliptic periodic point and $U$ is a neighborhood of $p$, then there is an open disk $D$ containing $p$, contained in $U$, and bounded by finitely many pieces of stable and unstable manifolds of some hyperbolic periodic orbit $q$ intersecting transversely (see Figure \ref{fig:zehnder}).
\end{itemize}

\begin{figure}[ht!]
\centering
\includegraphics[height=4cm]{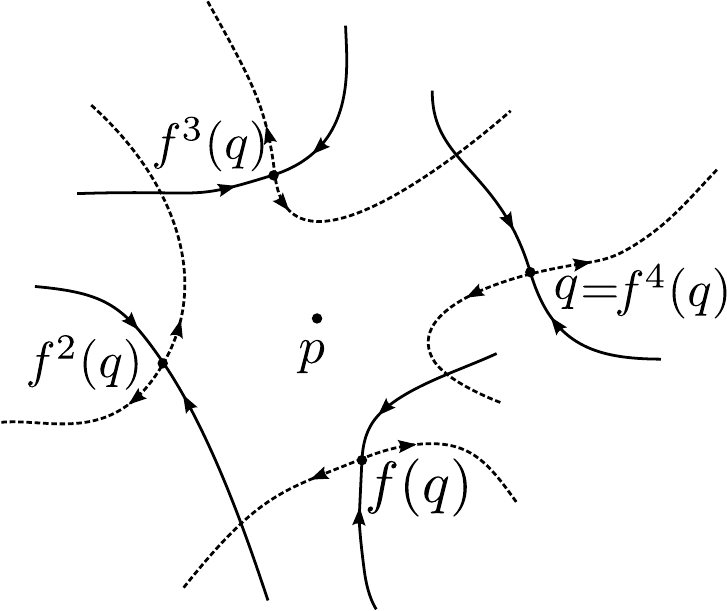}
\caption{Property (G2)}
\label{fig:zehnder}
\end{figure}

\begin{remark}  Robinson proved that, for any $r \geq 1$, property (G1) is $C^r$-generic \cite{robinson-ks}, and (G2) is $C^r$-generic due to Zehnder \cite{zehnder}. Thus $\mathcal{G}_r$ is residual in the space of area preserving $C^r$ diffeomorphisms of $S$.
\end{remark}

\begin{remark} The idea of replacing the Moser genericity condition from Mather's arguments by Condition (G2), thus removing the requirement that $r$ be large, was also used by Girard \cite{girard} to prove that $C^r$ generically ($r\geq 1$) for an area preserving diffeomorphism there is no invaraint curve of rational rotation number. 
\end{remark}

\begin{theorem}\label{th:mather-extended}
If $f$ is a $C^r$-generic area preserving diffeomorphism of a closed orientable surface $S$ (namely if $f\in \mathcal{G}_r$, $r\geq 1$), and $\mathfrak{p}$ is a regular periodic end of an open $f$-invariant connected set $U$, then $Z(\mathfrak{p})$ is an aperiodic annular continuum.
\end{theorem}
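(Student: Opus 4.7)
I would first replace $f$ by the iterate $f^n$ where $n$ is the period of $\mathfrak{p}$, so that $\mathfrak{p}$ becomes a fixed end; conditions (G1) and (G2) are preserved under iteration, so $f^n\in\mathcal{G}_r$. Area preservation on a closed surface makes $f$ nonwandering, and in particular $\bd$-nonwandering at $\mathfrak{p}$. The theorem then reduces, via Corollary \ref{coro:general}, to the key claim
\begin{quote}
$(\star)$ \quad $Z(\mathfrak{p})$ contains no periodic point of $f$.
\end{quote}
Indeed, $(\star)$ together with the Cartwright--Littlewood theorem (applied to $f^b$ whenever $\rho(f,\mathfrak{p})=a/b$ is rational) would force $\rho(f,\mathfrak{p})$ to be irrational, so Corollary \ref{coro:general} would apply; its cellular alternative, which possesses a fixed point, is then excluded by $(\star)$, leaving only the aperiodic annular conclusion.

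To prove $(\star)$, I would suppose for contradiction that $p\in Z(\mathfrak{p})$ is periodic and, after passing to a further iterate, that $p$ is fixed. By (G1), $p$ is hyperbolic or elliptic. In the elliptic case, condition (G2) yields, inside any prescribed neighborhood of $p$, a topological disk $D\ni p$ bounded by finitely many transversely intersecting arcs of stable and unstable manifolds of some hyperbolic periodic orbit $q$; a suitable iterate of $f$ preserves $D$. Choosing $D$ small enough that $Z(\mathfrak{p})\not\subset D$ (possible because $\mathfrak{p}$ is regular, so $Z(\mathfrak{p})$ has more than one point), the connectedness of $Z(\mathfrak{p})$ would force a point $z\in Z(\mathfrak{p})\cap\partial D$. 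Such $z$ lies on $W^s(q)\cup W^u(q)$ and its iterates accumulate on the orbit of $q$; closedness and invariance of $Z(\mathfrak{p})$ then give $q\in Z(\mathfrak{p})$, reducing the problem to the hyperbolic case with $q$ in place of $p$.

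In the hyperbolic case, I would combine (G1) (no saddle connections) with the invariance and connectedness of $U$ to argue that some branch $\Gamma$ of $W^u(p)$ (or of $W^s(p)$) is entirely contained in $\partial U$, and hence in the impression $Z(\mathfrak{p})$. Short sub-arcs of $\Gamma$ accumulating at $p$ then produce $N$-translation arcs for $f$ (or a suitable iterate) which intersect $Z(\mathfrak{p})$ and lie in $\partial U$, hence are disjoint from $U\setminus C$ for every $\mathfrak{p}$-collar $C$. Taking $N=N_{\rho(f,\mathfrak{p}),g}$ as in Theorem \ref{th:arc-general} would then directly contradict that theorem, completing the proof of $(\star)$.

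The principal obstacle will be to establish rigorously that a branch of $W^u(p)$ lies in $\partial U$: this is the Mather-type input, combining no-saddle-connection with a careful local analysis at the saddle using the invariance of $U$ and the $\bd$-nonwandering hypothesis. A secondary subtlety is choosing an iterate of $f$ that simultaneously fixes $p$ and $\mathfrak{p}$ while keeping the rotation number at $\mathfrak{p}$ nonzero, as needed to apply Theorem \ref{th:arc-general}; in the edge case where the period of $p$ is a multiple of the denominator of $\rho(f,\mathfrak{p})$, one has to supply a separate local argument (for instance via Proposition \ref{pro:main-acc} and the local normal form at the saddle).
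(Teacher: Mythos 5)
Your overall architecture (exclude periodic points from $Z(\mathfrak{p})$, handle elliptic points via (G2) by passing to the hyperbolic orbit $q$, handle hyperbolic points via translation arcs, then conclude with Corollary \ref{coro:general} and Theorem \ref{th:koro}) is close in spirit to the paper, but as written it has two genuine gaps. First, your hyperbolic case rests entirely on the claim that some branch of $W^s(p)$ or $W^u(p)$ is contained in $\bd U$, and ``hence'' in $Z(\mathfrak{p})$; you yourself flag this as the principal obstacle and do not close it, and even granting the branch lies in $\bd U$, the ``hence'' is unjustified, since $\bd U$ is in general strictly larger than the impression of the single end $\mathfrak{p}$ (what you would really need is the analogue of Mather's Corollary 8.3, i.e.\ the deferred step is the heart of the matter, not a technicality). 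Moreover this detour is unnecessary: the paper's hyperbolic case never puts a branch inside the boundary. After using Corollary \ref{coro:general} to get contractibility of $Z(\mathfrak{p})$, it lifts to the universal cover and one-point compactifies so that the complement of (the lift of) $Z(\mathfrak{p})$ becomes an invariant topological disk, and then applies Theorem \ref{th:no-saddle}: near a branch-preserving $C^1$ hyperbolic fixed point a punctured linearizing neighborhood is foliated by $N$-translation arcs, and since $\bd U$ accumulates on $p$, one of these arcs meets $\bd U$ outside the compact set of Theorem \ref{th:arclemma}, a contradiction.

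Second, the order of your argument creates a rotation-number problem you acknowledge but do not resolve. You want to prove $(\star)$ \emph{before} knowing $\rho(f,\mathfrak{p})$ is irrational, but Theorem \ref{th:arc-general} (and Theorem \ref{th:arclemma}) require a nonzero rotation number for the iterate that fixes both $\mathfrak{p}$ and $p$; precisely when $\rho(f,\mathfrak{p})$ is rational with denominator dividing the period of $p$ --- the case you most need to rule out in order to run your Cartwright--Littlewood step --- the tool is unavailable. Your proposed fallback via Proposition \ref{pro:main-acc} does not apply as stated, because that proposition concerns \emph{accessible} fixed points, and a periodic point of $Z(\mathfrak{p})$ need not be accessible from $U$. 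The paper avoids this circularity by reversing the order: it first proves $\rho(f,\mathfrak{p})$ is irrational following Mather's Theorem 5.1, with (G2) replacing Moser stability only in the step excluding elliptic points from $Z(\mathfrak{p})$ (there one shows the whole boundary of the (G2)-disk would lie in $Z(\mathfrak{p})$, forcing the connected set $U$ to cross $\bd U$), and only then invokes Corollary \ref{coro:general} and eliminates the cellular alternative with Theorem \ref{th:no-saddle}. Until you either prove the branch-containment statement you defer or restructure the proof along these lines, the argument is incomplete.
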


\begin{remark} For the case of the sphere, this theorem can be proved using a classic result of Pixton \cite{pixton} (combined with results of Mather \cite{mather-area}) which guarantees that under the generic hypotheses of the theorem, every hyperbolic periodic point has a homoclinic intersection. In fact, this is done in \cite{franks-lecalvez}. A similar argument can be done on the torus using a result of Oliveira \cite{oliveira}. However, for surfaces of higher genus, is it not known whether a $C^r$-generic diffeomorphism has a homoclinic intersection for each (or even for some) hyperbolic periodic point. Furthermore,  for maps in $\mathcal{G}_r$ (as in Theorem \ref{th:mather-extended}) it is actually not true that some hyperbolic periodic point has a homoclinic intersection: one may produce an example in the surface of genus $2$ which is the time-one map of a flow with exactly two hyperbolic saddles and no saddle connection or additional periodic points (where every orbit other than the saddles is dense).
\end{remark}

\subsection{Saddle hyperbolic fixed points}

We begin with a result about hyperbolic points only which is a consequence of Theorem \ref{th:arclemma}

If $p$ is a hyperbolic fixed point for a surface diffeomorphism, a \emph{branch} of $p$ is a connected component of $W^s(p)\sm{p}$ or $W^u(p)\sm{p}$. There are two stable branches and two unstable ones. If $f$ maps each branch to itself, then we say $f$ is \emph{branch-preserving} at $p$. 

\begin{theorem} \label{th:no-saddle}
Suppose that $f\colon S\to S$ is a homeomorphism of a closed orientable surface, and $U\subset S$ is an open $f$-invariant topological disk on which $f$ is $\bd$-nonwandering and $\rho(f,U)$ is nonzero. Assume further that $f$ is $C^1$ in a neighborhood of $\bd U$. Then there is no branch-preserving hyperbolic fixed point in $\bd U$.
\end{theorem}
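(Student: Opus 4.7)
The plan is to argue by contradiction using Corollary \ref{cor:arclemma}. Assume $p\in\bd U$ is a branch-preserving hyperbolic fixed point, and let $B_\pm^s$, $B_\pm^u$ denote its four local invariant branches.

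First, by the $C^1$ stable manifold theorem at the hyperbolic fixed point $p$, each of the four branches is an embedded $C^1$ arc accumulating at $p$. By the branch-preserving hypothesis, each is $f$-invariant, and $f$ (respectively $f^{-1}$) restricts to a topological contraction on each stable (respectively unstable) branch. For $x\in B_+^s$ close to $p$, the sub-arc $\gamma\subset B_+^s$ joining $x$ to $f(x)$ is an $\infty$-translation arc for $f$, because the iterates $\{f^n(\gamma)\}_{n\in\Z}$ are pairwise disjoint sub-arcs of the injectively immersed curve $W^s(p)$ whose union is $W^s(p)\setminus\{p\}$. Analogous arcs are produced on $B_-^s$ and, using $f^{-1}$ (which satisfies the same hypotheses with $\rho(f^{-1},U)=-\rho(f,U)\neq 0$), on the two unstable branches.

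Corollary \ref{cor:arclemma} then provides a compact $K\subset U$ such that every $\infty$-translation arc in $S\sm K$ is disjoint from $\bd U$. Shrinking each constructed arc to a small neighborhood of $p$ forces $\gamma\subset S\sm K$, hence $\gamma\cap\bd U=\emptyset$; by connectedness, each $\gamma$ lies entirely in $U$ or entirely in $S\sm\ol U$. Thus each of the four local branches of $p$ is contained locally in $U$ or in $S\sm\ol U$, but never in $\bd U$.

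If some branch lies in $U$, then concatenating iterates of $\gamma$ gives an end-cut of $U$ converging to $p$, making $p$ accessible from $U$. Moreover, because $f$ contracts this end-cut into itself, the associated accessible prime end $\tilde p\in\bdPE(U,S)$ is fixed by the prime-ends extension of $f$. Lifting to a simply connected cover identified with $\R^2$ (the universal cover of $S$ when $S$ has positive genus, or $S\sm\{q\}$ for a fixed point $q\in U$ produced by a Lefschetz-type argument when $S\cong\SS$), and using that $U$ is simply connected so the restriction of the covering map to the chosen lift $\hat U$ extends to a conjugation of prime ends compactifications, we obtain a fixed accessible prime end for $\hat f$ on $\hat U\subset\R^2$ with $\rho(\hat f,\hat U)=\rho(f,U)\neq 0$. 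Proposition \ref{pro:main-acc} then forces $\rho(\hat f,\hat U)=0$, yielding the desired contradiction.

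The main obstacle is to exclude the remaining subcase in which all four branches of $p$ lie in $S\sm\ol U$ near $p$. In linearizing coordinates around $p$, the four branches bound four open $f$-invariant quadrants, and since $p\in\bd U$ the set $U$ must accumulate at $p$ through at least one of them. A local analysis inside such an invariant quadrant, exploiting the $C^1$ hyperbolic product structure of $f$ near $p$, produces an end-cut of $U$ contained in the quadrant and converging to $p$; because the quadrant is individually $f$-invariant, the associated accessible prime end is again fixed by $f$, reducing this subcase to the previous one.
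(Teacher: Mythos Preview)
Your argument has a genuine gap in the final subcase. When all four local branches lie in $S\setminus\overline{U}$ and $U$ accumulates on $p$ through an invariant quadrant, you assert that ``a local analysis\ldots produces an end-cut of $U$ contained in the quadrant and converging to $p$.'' This is not justified: the mere fact that $U$ accumulates at $p$ does not make $p$ accessible from $U$; the boundary $\partial U$ can be extremely complicated (nowhere locally connected, say) near $p$, and there is no mechanism in your argument that forces an end-cut to land at $p$. Even granting such an end-cut, the further claim that the associated prime end is \emph{fixed} merely because the quadrant is $f$-invariant is also unjustified: several distinct prime ends can share the principal point $p$ from within the same quadrant, and $f$ could permute them.

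The paper avoids all of this by a much more direct use of Theorem~\ref{th:arclemma}. Instead of producing translation arcs only along the four branches, one observes (via Hartman--Grobman linearization, using the branch-preserving hypothesis so both eigenvalues are positive) that for each fixed $N$ there is a neighborhood $V_N$ of $p$ such that \emph{every} point of $V_N\setminus\{p\}$ lies on an $N$-translation arc contained in $S\setminus K$: in linearized coordinates $(x,y)\mapsto(\lambda x,\mu y)$ one takes the flow-segment $t\mapsto(\lambda^t x_0,\mu^t y_0)$, $t\in[0,1]$, and chooses $V_N$ small enough that its first $N$ iterates remain in the linearization chart. Taking $N=N_{\alpha,g}$, Theorem~\ref{th:arclemma} then forces $V_N\setminus\{p\}$ to miss $\partial U$ entirely, contradicting $p\in\partial U$. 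This single observation replaces your entire case analysis.

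As a side remark, your treatment of the case ``some branch lies in $U$'' is correct but unnecessarily heavy: once you exhibit a fixed prime end $\tilde p\in\bdPE(U,S)$ (which you do, since $f$ maps the stable-branch end-cut into itself), the rotation number $\rho(f,U)$ is immediately zero by definition; there is no need to lift to a universal cover or invoke Proposition~\ref{pro:main-acc}.
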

\begin{proof}
Suppose $p\in \bd{U}$ is a branch-preserving hyperbolic fixed point. Using a local linearizing neighobhrood near $p$, one sees that for each $N\in \N$ there is a neighborhood $V_N$ of $p$ such that $V_N\sm \{p\}$ is foliated by $N$-translation arcs, so by Theorem \ref{th:arclemma} we have that $V_N$ does not meet $\bd U$ if $N$ is large enough, contradicting the fact that $p\in \bd U$. 
\end{proof}

\subsection{Proof of Theorem \ref{th:mather-extended}}

%There are two main steps in the proof.
First, we show that the prime ends rotation number $\rho(f,\mathfrak{p})$ is irrational. 
This, in fact, is a theorem of Mather \cite[Theorem 5.1]{mather-area}   with the slightly different assumptions.
While Mather assumes the Moser stability of elliptic points, we instead assume Property (G2).
This changes only the part of Mather's proof in which it is shown that there are no elliptic periodic points in $Z(\mathfrak{p})$. 

To see that  $Z(\mathfrak{p})$ contains no elliptic periodic point, suppose for contradiction that $p\in Z(\mathfrak{p})$ is an elliptic periodic point. Since $Z(\mathfrak{p})$ consists of more than a single point, we may choose a neighborhood $V$ of $p$ that does not contain $Z(\mathfrak{p})$. By Property (G2), there is some hyperbolic periodic point $q$ and a disk $D$ bounded by finitely many arcs of the stable and unstable manifolds of $q$ such that $p \in D$ and $D \subset V$. We may write $\bd D=\bigcup_{i=1}^{n} W_i$ where each $W_i$ is an arc of stable or unstable manifold of an iterate of $p$, and we order them cyclically, \ie in a way that $W_i$ intersects $W_{i+1}$ for $1\leq i\leq n-1$.

By \cite[Corollary 8.3]{mather-area}, if a branch of the stable or unstable manifold of $q$ intersects $Z(\mathfrak{p})$, then the whole branch is contained in $Z(\mathfrak{p})$.  Since $Z(\mathfrak{p})$ is connected and contains a point in the complement of $D$, it follows that $\bd D$ intersects $Z(\mathfrak{p})$. Thus $W_i$ intersects $Z(\mathfrak{p})$ for some $i$. Assume without loss of generality $i=1$. Then as we mentioned \cite[Corollary 8.3]{mather-area} implies that $W_1\subset Z(\mathfrak{p})$. Since $W_2$ intersects $W_1\subset Z(\mathfrak{p})$, repeating this argument we have that $W_2\subset Z(\mathfrak{p})$, and by induction we conclude that $\bd D = \bigcup_{i=1}^n W_i\subset Z(\mathfrak{p})$. But $U$ is connected and intersects both $D$ and $S\sm D$ (because $Z(\mathfrak{p})$ is not entirely contained in $V$ and $D\subset V$). This means that $U$ intersecs $\bd D\subset Z(\mathfrak{p})$, contradicting the fact that $Z(\mathfrak{p})\subset \bd U$. Thus there are no elliptic periodic points in $Z(\mathfrak{p})$, and $\rho(f, \mathfrak{p})$ is irrational. 

The key part of the proof is Corollary \ref{coro:general}, which now implies that either $Z(\mathfrak{p})$ is an aperiodic annular continua, which is what we wanted to show, or $Z(\mathfrak{p})$ is a cellular continuum with empty interior and with a fixed point $p$. Assume the latter case. Since we showed that $p$ cannot be elliptic, condition (G1) implies that $p$ is hyperbolic.
The fact that $Z(\mathfrak{p})$ is contractible implies that there is an open topological disk $D\subset S$ such that $\bd D$ is a loop in $U$ and $Z(\mathfrak{p})\subset D$. 
Suppose that $S$ is not a sphere, so its universal covering $\hat{S}$ is homeomorphic to $\R^2$. If $\pi\colon \hat{S}\to S$ is the universal covering map, $\hat{p}\in \pi^{-1}(p)$ and $\hat{f}$ is a lift of $f$ such that $\hat{f}(\hat{p})=\hat{p}$, then there is a disk $\hat{D}\subset \hat{S}$ such that $\pi|_{\hat{D}}$ is a diffeomorphism onto a neighborhood of $Z(\mathfrak{p})$. Let $K$ be the connected component of $\pi^{-1}(Z(\mathfrak{p}))$ in $\hat{D}$. Then $K$ is $\hat{f}$-invariant because it contains a fixed point $\hat{p}$, and $K$ has empty interior.
Let $S'=\hat{S}\sqcup \{\infty\}$ be the one-point compactification of $\hat{S}$, which is a sphere, and $f'$ is the map induced by $\hat{f}$. Then $U'=S'\sm K$ is a topological disk, $\bd U'=K$, and $f'$ is $C^1$ in the neighborhood $\hat{D}$ of $K$. The fact that $\pi_{\hat{D}}\hat{f}=f'\pi_{\hat{D}}$ easily implies that $f'$ is $\bd$-nonwandering and $\rho(f', U')=\rho(f, \mathfrak{p})$. Since $f'$ has a hyperbolic fixed point $\hat{p}$ in $U'$, which we may assume branch-preserving by using $f'^2$ instead of $f$, this contradicts Theorem \ref{th:no-saddle}. 

Thus there are no periodic points in $Z(\mathfrak{p})$, and it follows from Theorem \ref{th:koro} that $Z(\mathfrak{p})$ is an annular continuum, completing the proof.
\qed

\begin{remark} After using Corollary \ref{coro:general}, which guarantees that $Z(\mathfrak{p})$ is contractible (if it contains a fixed point), it would also be possible to finish the proof using arguments such as Pixton's theorem \cite{pixton} to conclude that there is a homoclinic intersection of $p$, from which one easily obtains a contradiction (see also \cite{kn-erratum}). Note that this proof would not work without the aid of Corollary \ref{coro:general}, since an analogous of Pixton's theorem is not available on surfaces of genus greater than $1$.
\end{remark}

\subsection{Aperiodicity of the boundary of complementary domains}\label{sec:mather-complementary}

Recall that a complementary domain is any connected component of the complement of a nontrivial continuum. A complementary domain always has finitely many ideal boundary points (see, for instance, \cite[Lemma 2.3]{mather-area}), and none of its boundary components are single points. Therefore, Theorem \ref{t:mather} from the introduction follows immediately from the next result.

\begin{corollary} \label{cor:mather-extended}
Let $f$ be a $C^r$-generic area preserving diffeomorphism of a closed orientable surface $S$ (namely, let $f\in \mathcal{G}_r$, $r\geq 1$), and  $U$ a periodic open set with finitely many ends. Then the boundary of $U$ is a finite disjoint union of aperiodic annular continua and periodic orbits.
 \end{corollary}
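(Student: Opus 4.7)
The plan is to reduce to a per-end analysis and then invoke Theorem~\ref{th:mather-extended}. First, replace $f$ by a suitable iterate $f^k$: since $f$ permutes the periodic open set $U$ and its finitely many ideal boundary points $\mathfrak{p}_1,\ldots,\mathfrak{p}_n$, some $k$ makes $U$ $f^k$-invariant and fixes each $\mathfrak{p}_i$. The conditions (G1) and (G2) defining $\mathcal{G}_r$ depend only on the structure of periodic points, so $f^k\in\mathcal{G}_r$. Aperiodicity and the annular property are insensitive to replacing $f$ by $f^k$, and every set of fixed points of $f^k$ in $\bd U$ is a disjoint union of $f$-periodic orbits; thus the conclusion for $f^k$ yields the conclusion for $f$. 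I may therefore assume $f(U)=U$ and each $\mathfrak{p}_i$ is fixed by $f$.

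Second, I would prove the decomposition $\bd U = \bigcup_{i=1}^n Z(\mathfrak{p}_i)$. The inclusion $\supseteq$ is immediate from $Z(\mathfrak{p}_i)\subseteq\bd U$. For the reverse inclusion, given $x\in\bd U$ take a sequence $(x_m)\subset U$ with $x_m\to x$; since $x\notin U$, the sequence exits every compact subset of $U$, and since $\cIB(U)\setminus U=\{\mathfrak{p}_1,\ldots,\mathfrak{p}_n\}$ is finite, a subsequence converges in $\cIB(U)$ to some $\mathfrak{p}_i$, placing $x\in Z(\mathfrak{p}_i)$. Each $Z(\mathfrak{p}_i)$ is a closed connected subset of $\bd U$, being a nested intersection of closures of connected $\mathfrak{p}_i$-collars on the compact surface $S$. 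Now I apply the dichotomy: if $\mathfrak{p}_i$ is not regular, then by definition $Z(\mathfrak{p}_i)$ is a single point, which is fixed by $f$ (as $\mathfrak{p}_i$ is); if $\mathfrak{p}_i$ is regular, Theorem~\ref{th:mather-extended} declares $Z(\mathfrak{p}_i)$ to be an aperiodic annular continuum.

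The final step is to assemble the pieces. A singleton impression $\{p\}$ cannot lie in a connected component of $\bd U$ larger than itself: any other impression containing $p$ would contain a fixed point, hence fail to be aperiodic, and so by the previous dichotomy would itself equal $\{p\}$. Therefore each singleton impression forms its own connected component. Any component with more than one point is thus a union of aperiodic annular impressions, hence a closed, connected, $f^k$-invariant set free of $f^k$-periodic points; Theorem~\ref{th:koro}, applicable because area-preserving maps on closed surfaces are nonwandering, then forces this component to be an annular continuum. The collection of singleton components is a finite $f$-invariant subset of $\bd U$ and hence splits as a disjoint union of $f$-periodic orbits, giving the announced decomposition. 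The main obstacle in this outline is the potential for distinct regular-end impressions to overlap, producing a component strictly larger than any single impression; it is Theorem~\ref{th:koro} that rescues annularity of the combined component and permits the clean final statement.
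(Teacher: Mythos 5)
Your proposal is correct and follows the paper's own proof essentially step for step: pass to an iterate fixing the finitely many ends, write $\bd U=\bigcup_{\mathfrak{p}\in\bdIB(U)}Z(\mathfrak{p})$, apply Theorem~\ref{th:mather-extended} to the regular ends while the non-regular ends contribute single fixed points, and then invoke Theorem~\ref{th:koro} to recover annularity of the components formed by possibly overlapping impressions. The only deviation is expository (you spell out the overlap issue and the reduction to an iterate slightly more explicitly), not mathematical.
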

\begin{proof}
Since $\bdIB(U)$ consists of finitely many points which are periodic by the map induced by $f$ on $\cIB(U)$, one may choose $n\in\N$ such that $f^n$ fixes every ideal boundary point of $U$, and it follows from Theorem \ref{th:mather-extended} applied to $f^n$ that for each $\mathfrak{p}\in \bdIB(U)$, if $Z(\mathfrak{p})$ has more than one point then it is an aperiodic annular continuum invariant by $f^n$. Since there are finitely many ends, is easy to see from the definition of ideal boundary that $\bd U = \bigcup_{\mathfrak{p}\in \bdIB(U)} Z(\mathfrak{p})$. Thus $\bd U$ is a finite union of aperiodic (but possibly non-disjoint) $f^n$-invariant annular continua, together with a finite set of fixed points of $f^n$ (corresponding to the ends where $Z(\mathfrak{p})$ is a sinlg epoint). In particular, $\bd U$ has finitely many connected components, and the ones that are not periodic points are aperiodic $f^n$-invariant continua. By Theorem \ref{th:koro}, this implies that each connected component of $\bd U$ that is not a periodic point is in fact an aperiodic annular continuum, completing the proof.
\end{proof}

\begin{remark}\label{rem:erratum} This result (with a minor variation in the generic conditions) is mistakenly attributed to Mather in \cite[Theorem 12]{koro-nassiri}, leaving a gap in the article. In \cite{kn-erratum}, this mistake is corrected by avoiding the use of Theorem 12 from \cite{koro-nassiri}. Using Corollary \ref{cor:mather-extended}, there is no gap and the erratum \cite{kn-erratum} is no longer necessary.
\end{remark}

\subsection{Closure of branches of stable and unstable manifolds}
%XXX here we can add the following theorem: if $U$ is open and $U=\inter \ol{U}$ for $f$ generic then $\bd U$ is aperiodic. :)

The following results are proved in \cite{mather-area} under different assumptions (which are $C^r$ generic, only for large $r$).

\begin{corollary} \label{cor:mather-branch cl}
Let $f$ be a $C^r$-generic area preserving diffeomorphism of a closed surface $S$ (namely, let $f\in \mathcal{G}_r$, $r\geq 1$). Suppose $p$ is a hyperbolic periodic point. Then,
\begin{enumerate}
\item If $p$ belongs to some periodic continuum $K$, then then the stable and unstable manifolds of $p$ are contained in $K$.
\item All four stable/unstable branches of $p$ have the same closure.
\end{enumerate}
 \end{corollary}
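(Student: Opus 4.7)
The plan is to prove (1) by contradiction using Corollary \ref{cor:mather-extended}, and then deduce (2) as an immediate consequence of (1).

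For (1), assume some branch $B$ of $W^s(p) \cup W^u(p)$ is not contained in $K$. Replacing $f$ by a suitable iterate $g = f^m$, I will arrange that $g$ fixes $p$ and $K$, preserves each of the four branches at $p$, and fixes every connected component of the boundary of every complementary domain of $K$ that we shall encounter. Since $B$ is connected, contains $p \in K$, and exits $K$, it meets some complementary domain $U$ of $K$; because complementary domains have finitely many ends, Corollary \ref{cor:mather-extended} applies and gives a decomposition
$$\bd U \;=\; C_1 \sqcup \cdots \sqcup C_k \sqcup O_1 \sqcup \cdots \sqcup O_\ell,$$
with each $C_i$ an aperiodic annular continuum and each $O_j$ a periodic orbit of $f$.

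I will then split into two cases according to whether $p \in \overline{U}$. If $p \in \overline{U}$ then $p \in \bd U$; since $p$ is periodic it cannot lie in any aperiodic $C_i$, so its connected component of $\bd U$ must be the singleton $\{p\}$. Picking a small disk neighborhood $V$ of $p$ with $V \cap \bd U = \{p\}$, the punctured disk $V \setminus \{p\}$ is connected and contained in the disjoint open union $U \cup (S \setminus \overline{U})$; as $V$ meets $U$, the whole of $V \setminus \{p\}$ lies in $U$, forcing $V \cap K = \{p\}$. This makes $p$ isolated in the connected continuum $K$, contradicting the assumption that $K$ has more than one point. In the remaining case $p \notin \overline{U}$, I parametrize $B$ as an injective continuous curve $\gamma\colon [0,\infty) \to S$ with $\gamma(0)=p$; since $\gamma$ eventually enters $U$, there is a smallest parameter $t_* > 0$ with $q := \gamma(t_*) \in \bd U$, and $q \ne p$. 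As $q \in (W^s(p) \cup W^u(p)) \setminus \{p\}$, the $g$-orbit of $q$ converges to $p$ in forward or backward time, so $q$ is not $f$-periodic; hence $q$ lies in some aperiodic $C_i$. But $C_i$ is closed and $g$-invariant and $g^{\pm k}(q) \to p$, yielding $p \in C_i$ and contradicting the aperiodicity of $C_i$.

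For (2), observe that for each of the four branches $B_i$, the closure $\overline{B_i}$ is a connected continuum containing $p$ and invariant under a power of $f$. Applying part~(1) with $K = \overline{B_i}$ gives $W^s(p) \cup W^u(p) \subset \overline{B_i}$, so $B_j \subset \overline{B_i}$ and hence $\overline{B_j} \subset \overline{B_i}$ for every $j$; symmetry then yields $\overline{B_i} = \overline{B_j}$ for all $i, j$.

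The main obstacle I anticipate is the case $p \in \bd U$, where Corollary \ref{cor:mather-extended} must be paired with a careful local topological argument near $p$ to rule out $\{p\}$ being an isolated component of $\bd U$; the other case is largely mechanical once one recognizes that non-fixed points on $W^s(p)$ (resp. $W^u(p)$) have forward (resp. backward) orbits accumulating at $p$.
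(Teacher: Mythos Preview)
Your proof is correct and follows the same approach as the paper: for (1), derive a contradiction from Corollary~\ref{cor:mather-extended} by placing $p$ in the boundary of a periodic complementary domain of $K$, and for (2), apply (1) with $K=\overline{B_i}$. The paper's argument is shorter because, as noted just before Corollary~\ref{cor:mather-extended}, a complementary domain of a continuum never has singleton boundary components; hence your case split is unnecessary---once the branch $\gamma$ meets a complementary domain $V$, iterating any point of $\gamma\cap V$ along the (periodic) branch shows $p\in\bd V$, and $\bd V$ is entirely aperiodic.
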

\begin{proof}
To prove (1), let $\gamma$ be a branch of $p$.
Suppose for contradiction that $\gamma \not\subset K$. Then, $\gamma$ intersects a 
connected component $V$ of $S\sm K$.  This implies that $p\in \bd V$. But $V$ is a periodic complementary, so this contradicts Corollary \ref{cor:mather-extended}. 

To prove (2), let $\gamma_1$ and $\gamma_2$ be two branches of $p$. Then $\cl(\gamma_1)$ is a periodic continuum and $p\in K_1$, so it follows from (1) that $\gamma_2 \subset \cl(\gamma_1)$.  Similarly, $\gamma_1 \subset \cl(\gamma_2)$, completing the proof.
\end{proof}

\subsection{Density of stable manifolds}\label{sec:xia}
In \cite{franks-lecalvez} it is proved that {\it for a $C^r$ generic area-preserving diffeomorphism of the sphere, the union of stable manifolds of hyperbolic period points is dense}. One of the reasons for the interest in this type of results is that they aim in the direction of one of the oldest open problems in dynamics, i.e. the conjecture of Poincar\'e on the density of periodic points for generic area preserving diffeomorphisms. 
  
Corollary \ref{cor:mather-extended}  fills a gap in the proof of the generic density of stable manifolds for arbitrary surfaces given by Xia in \cite{xia}, (particularly, in the proof of Lemma 2.3) which is similar to the one mentioned in Remark \ref{rem:erratum}. Thus, we can state that the following theorem holds.

\begin{theorem}
For a $C^r$ generic area-preserving diffeomorphism of a closed surface, the union of stable manifolds of hyperbolic periodic points is dense.
\end{theorem}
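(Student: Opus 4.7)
The plan is to follow Xia's strategy from \cite{xia}, with Corollary \ref{cor:mather-extended} now supplying the missing input that validates his Lemma 2.3. I would argue by contradiction: suppose $f\in\mc{G}_r$ is generic but the union $W^s(f)$ of stable manifolds of hyperbolic periodic points of $f$ fails to be dense. Then $\Omega:=S\sm\ol{W^s(f)}$ is a nonempty open $f$-invariant set. Since $S$ is compact, $\Omega$ has at most countably many connected components, and the permutation of these components by $f$ admits periodic cycles; passing to a suitable iterate, I may assume some connected component $U$ of $\Omega$ is $f$-invariant. Note that $f|_U$ is still area preserving and hence $\bd$-nonwandering.

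Next I would analyze the boundary $\bd U$, which is nonempty, compact, and contained in $\ol{W^s(f)}$. After replacing $f$ by a further power if needed, I pick an $f$-invariant ideal boundary point $\mathfrak{p}$ of $U$ so that the impression $Z(\mathfrak{p})\subset\bd U$ is an $f$-invariant continuum; by the regularity of generic complementary domains one may assume $\mathfrak{p}$ is regular. Corollary \ref{cor:mather-extended} (equivalently Theorem \ref{th:mather-extended}) then applies and forces $Z(\mathfrak{p})$ to be an aperiodic annular continuum; in particular, $\rho(f,\mathfrak{p})$ is irrational by Theorem \ref{th:main-gen}.

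The final step is to exhibit a hyperbolic periodic point $p$ whose stable manifold enters $U$, contradicting $U\cap W^s(f)=\emptyset$. Since $Z(\mathfrak{p})\subset\ol{W^s(f)}$, there exists a sequence of hyperbolic periodic points $p_k\to Z(\mathfrak{p})$. Using the annular structure near $Z(\mathfrak{p})$ together with (G1), (G2), and Corollary \ref{cor:mather-branch cl}, a branch of the stable (or unstable) manifold of some $p_k$ sufficiently close to $Z(\mathfrak{p})$ must cross $Z(\mathfrak{p})$; the aperiodicity of $Z(\mathfrak{p})$ combined with the irrationality of $\rho(f,\mathfrak{p})$ rules out the possibility that the whole branch stays trapped in $\ol{W^s(f)}\sm U$, yielding the required intersection with $U$. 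This is essentially Xia's Lemmas 2.2--2.4 with Theorem \ref{th:mather-extended} replacing his (incorrect) invocation of Theorem 12 of \cite{koro-nassiri}.

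The hard part, and the reason the theorem was not rigorously established in \cite{xia}, is precisely the possibility of an isolated periodic point buried in $\bd U$ on a high-genus surface. On the sphere one invokes Pixton \cite{pixton}, on the torus Oliveira \cite{oliveira}, to generate a generic homoclinic intersection and conclude; but in genus $\geq 2$ no such density-of-homoclinics result is known, and it can genuinely fail within $\mc{G}_r$ (e.g. time-one maps of flows with two saddles and no saddle connection). Corollary \ref{cor:mather-extended}, which is built from the converse Cartwright--Littlewood theorem of the present paper (Theorem \ref{th:main-gen}), bypasses this obstacle by directly prohibiting periodic points in $\bd U$ under just the (G1)+(G2) genericity, so Xia's scheme closes in every genus and every $r\geq 1$.
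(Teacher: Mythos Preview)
Your approach is essentially identical to the paper's: both defer to Xia's argument in \cite{xia} and observe that Corollary \ref{cor:mather-extended} supplies the missing ingredient that validates his Lemma 2.3, thereby closing the gap in every genus and for every $r\geq 1$. The paper does not sketch Xia's argument at all, so your additional outline of steps 1--5 goes beyond what the paper writes; those details should ultimately be checked against \cite{xia} itself, but the overall strategy matches.
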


The second claim from the main theorem of \cite{xia} states that an open set without periodic points is necessarily contained in the closure of the stable manifold of a given periodic point. This seems to be more delicate, and we are unable to complete its proof.

\section{Examples}
\label{sec:examples}

\subsection{Necessity of the $\bd$-nonwandering hypothesis}
\begin{example}[Many fixed points]\label{ex:walker} The $\bd$-nonwandering hypothesis assumed in Theorem \ref{th:main} and related results is really necessary, as an example due to Walker shows. The example, found in \cite{walker}, is of a $C^1$-diffeomorphism $f$ of the sphere, which has an invariant open simply connected set $U$ such that $\rho(f,U)$ is irrational, but $\bd U$ contains a circle of fixed points. Naturally, the example is not $\bd$-nonwandering; in fact $\bd U$ is a basin boundary (so there is a closed disk $B\subset U$ such that $f(B)\subset \inter{B}$ and $\bd U = \bigcap \ol{U}\sm f^{-n}(B)$. This implies that the backward orbit of sufficiently small cross-sections remains arbitrarily close to $\bd U$, thus failing to be $\bd$-nowandering. 

This example also shows that the theorem fails if one weakens the definition of $\bd$-nonwandering to require only that for some compact $K\subset U$, the set $U\sm K$ does not contain the forward orbit of any wandering cross-section (\ie one really must require that forward \emph{and backward} orbit of a wandering cross-section leaves $U\sm K$).
Figure \ref{fig:walker} shows what the boundary of $U$ looks like. In this case, $U$ is considered to be the unbounded connected component of the complement of the continuum depicted (together with the point at $\infty$). The boundary of $U$ consists of the circle $C$ of fixed points (the smaller circle) together with a set of ``hairs'' accumulating on $C$. The endpoint of each ``hair'' lies on a larger circle (dotted circle) and forms a Cantor set which is invariant by $f$; the dynamics on that Cantor set corresponds to that of a Denjoy example. However any other point outside $\bd U$ is repelled towards $\infty$. 
 
\begin{figure}[h] 
\includegraphics[width=5cm]{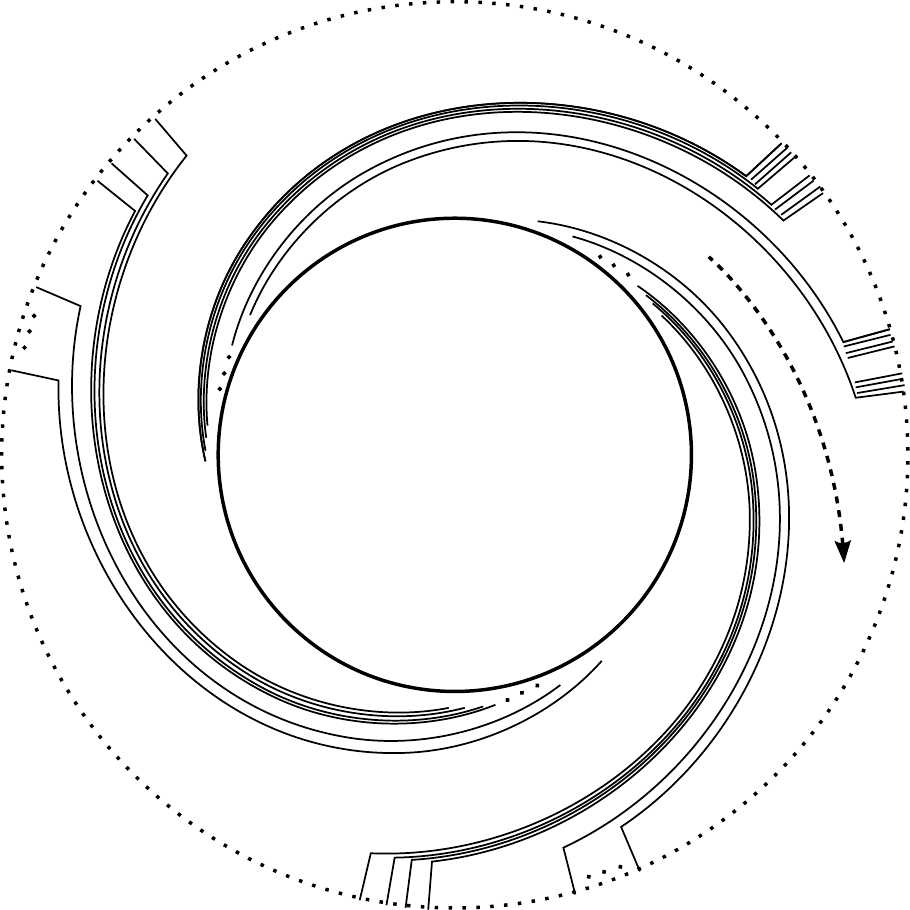}
 \caption{Walker's example}
\label{fig:walker}
\end{figure}
\end{example}

\begin{example}[Non-contractible boundary] \label{ex:walker2}
Let $f'$ be the map from Example \ref{ex:walker} and $U'$ the open set with $\rho(f',U')$ irrational. Note that $B=\ol{U}'$ is a closed topological disk bounded by a circle of fixed points. We may assume (by using an appropriate conjugation) that $B$ is the unit square $[0,1]^2$ on $\R^2$. Define a map $f\colon \T^2\to \T^2$ by $f(z) = \pi(f'(\hat{z}))$, where $\hat{z}$ is the element of $\pi^{-1}(z)$ in $[0,1)^2$. Then $f$ is a homeomorphism of $\T^2$, with two circles of fixed points, $C_1=\pi(\R\times \{0\})$ and $C_2=\pi(\{0\}\times \R)$.  Let $U = \pi(U')$. Then clearly $C_1\cup C_2\subset \bd U$, and $\rho(f,U)$ is irrational. Thus we have produced an example on $\T^2$ with an invariant open topological disk $U$ with irrational prime ends rotation number and a non-contractible boundary.
\end{example}

\subsection{On the hypotheses of Theorem \ref{th:arclemma}}
Here we give two simple examples that show that the number $N_{\alpha, g}$ from Theorem \ref{th:arclemma}) really depends on both the prime ends rotation number $\alpha$ and the genus of the surface $S$.

\begin{example}[dependence on $\alpha$] \label{exm1}
Given $N\in \N$, $N>1$, let $f$ be the rotation by $\alpha = 2\pi N$,  on  the sphere $\C \cup \{\infty\}$, and let $U$ be the unit disk.  Clearly, $U$ is $f$-invariant and its  prime ends rotation number is equal to $1/N$. Then the boundary of $U$ contains an $N$-translation arc, so the constant $N_{\alpha, 0}$ is necessarily greater than $N$.
\end{example}

\begin{example}[dependence on the genus] \label{exm2} 
Let $N$ be a positive integer. We introduce an area-preserving homeomorphism $f$ on a closed surface $S$ (of genus $n=2N$) that leaves invariant a simply connected open set $U$, and admits an $N$-translation arc outside any given compact set $K\subset U$ but intersecting both $U$ and its complement, while $\rho(f,U) = 1/2$.

Let $S_1$ be the sphere $\R^2\cup \{\infty\}$, and  %We consider a canonical finite volume on $S_1$.
let $R$ be the rotation of $180$ degrees centered at the origin $o$.
One can easily define a volume preserving diffeomomorphism $g$ such that $g$ commutes with $R$, and $o$ is a hyperbolic fixed point with a homoclinic loop. We denote by $U$, $V$ and $V'=R(V)$ the connected components of the complement of the homoclinic loop.
See Figure \ref{fig:arc}.
We further assume that there is a $g^n$-invariant smooth closed disk $\overline{D}$ in $V$, whose orbit is the union $n$ disjoint closed disks. Let $D':=R(D)$, where $D$ the interior of $\overline{D}$.
(One may define $g$ by using the time-one map of the Hamiltonian flow associated to a suitable height function.)

\begin{figure}[h] 
\includegraphics[width=0.8\textwidth]{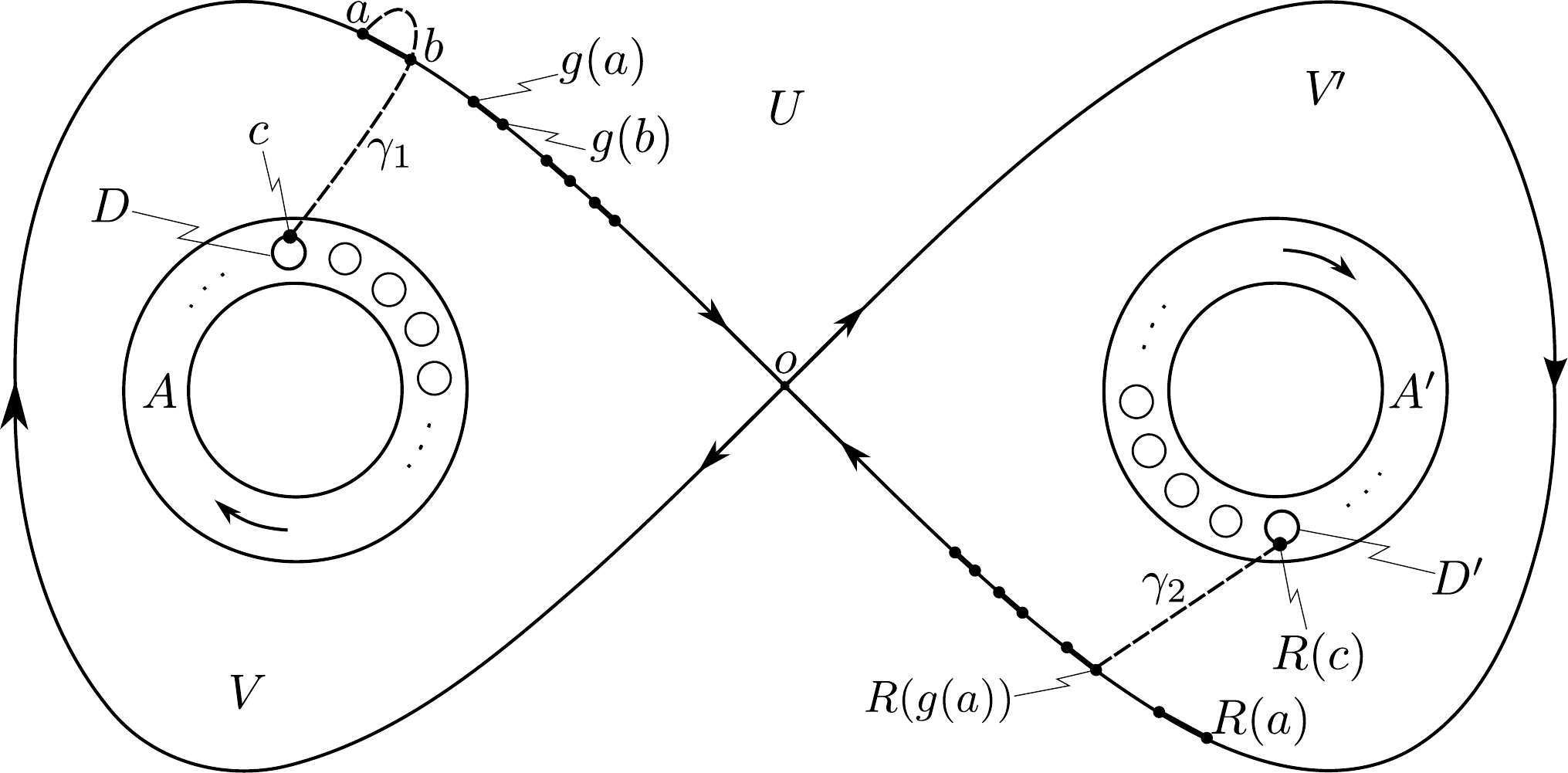}
 \caption{}
 %The intersection of the $N$-translation arc $\gamma$ with $S_0$ is equal to  $\gamma_1\cup\gamma_2$.}
\label{fig:arc}
\end{figure}

Now, let $\gamma_1$ and $\gamma_2$ be two arcs as in Figure \ref{fig:arc}. 
We assume that $\gamma_1$ intersects $U$ and its extremal points are $a\in \partial V$ and $c\in\partial D$, and the extremal points of $\gamma_2$ are $R(c)$ and $R(g(a))$. 
We also assume that $g^i(\gamma_j)$ are pairwise disjoint for $j=1,2$ and $i=0,1, \dots, n-1$.

We will make a surgery on $S_1$ as it follows. 
Let $W$ be the $g$-orbit of $D\cup  D'$ which is the union of $2n$ disjoint disks. Let $S_0 = S_1 \setminus W$, 
$R_0:= R|_{S_0}$, $g_0:= g|_{S_0}$ and $f_0:= (R\circ g)|_{S_0}$. Then $g_0$ commutes with $R_0$ and $f_0\circ R_0 = R_0 \circ f_0 =g_0$.
Observe that $\partial S_0$ is invariant for $g$ and for $R$.
By the assumptions, $\overline{D}$ is diffeomorphic to the closed unit disk $\overline{\D}\subset \R^2$ by a diffeomorphism $\tau$ such that $\tau(c)=(0,1)$. 
Let $\psi_1 : \partial D \to \partial D$,  $\psi_1 = \tau^{-1}\circ L \circ \tau$, where $L(x,y)=(-x,y)$ on $\R^2$.  
Then $\psi_1(c)=c$.
Let $\psi: \partial S_0 \to \partial S_0$ such that $\psi(p)=g^i \circ \psi_1 \circ g^{-i} (p)$ if $p \in g^i(\partial D)$ and 
$\psi(p)=R\circ g^i \circ \psi_1 \circ g^{-i} \circ R^{-1}(p)$ if $p \in g^i(\partial D')$.
It  easy to see that on  $\partial S_0$ one has $\psi\circ R = R\circ \psi$, $\psi^2=R^2=(\psi\circ R)^2=\id$ and $\psi\circ g = g\circ \psi$.

Now, let $S$ be the quotient $S_0/(R\circ \psi)$. Then, it follows that $S$ is an orientable surface of genus $n$ and the homeomorphisms $R_0$ and $g_0$ extend to the quotient surface $S$, and so does $f_0$.
We denote by $f$ the homeomorphism on $S$ obtained from $f_0$. 
It then follows that $\gamma_1\cup\gamma_2$ projects to an arc $\gamma$ (note that $c$ and $R(c)$ are identified), which is an $N$-translation arc for $f$ that intersects the $f$-invariant disk $U$ and its complement. Also one could have chosen $\gamma$ lying outside any previously given compact set $K\subset U$, and it is easy to verify that $\rho(f,U) = 1/2\, (\mathrm{mod}\, \Z)$. 
\end{example}

\subsection{Optimality of Theorem \ref{th:main-gen} and its corollaries.}
The next examples show that the second case in Theorem \ref{th:main-gen} and Corollary \ref{coro:main-gen} cannot be excluded, even for smooth area preserving maps.

\begin{example}[{Smooth example with a fixed point}]
 \label{exm:invers2} Let us show that, at least for some irrational values of $\alpha$, there are $C^\infty$ area-preserving examples where the second case holds in Theorem \ref{th:main}.

Let $f$ be a transitive area preserving $C^\infty$ diffeomorphism on the sphere $\mathbb{S}^2$ obtained by the Anosov-Katok method so that it fixes only two points, say the north and the south poles $p,q$, and moreover almost every point has an irrational rotation number $\alpha$ for $f|_A$, where $A=S\setminus \{p,q\}$. 
Such a diffeomorphism exists (it is done explicitly in \cite{crovisier-grenoble}; we refer the reader to \cite{fk} for a general survey on the Anosov-Katok method).

Let $D$ be an open disk containing $p$ and disjoint from a neighborhood $V$ of $q$.
Let $K$ be the connected component of $\bigcap_{n\in \Z} f^n(\ol{D})$ containing $p$. Then $K$ is compact, connected and invariant. Moreover, $K$ is non-separating, because the complement of $K$ contains the orbit of $V$, which is dense and connected. Thus $U = \mathbb{S}^2\sm K$ is an open invariant topological disk. 

It is possible to show that $K$ intersects $\bd D$. This follows from the results of \cite{LY-manuscript}, which imply that if $K$ does not intersect $\bd D$ then either $p$ is ``dissipative'' (\ie every neighborhood of $0$ contains a loop around $0$ that is disjoint from its image) or $p$ ``saddle-point'' (there exists an integer $m$ such that the Lefschetz index $i(f^m,p)$ is non-positive). Indeed, $p$ cannot be dissipative because $f$ preserves area, and it cannot be a saddle-point because $i(f^m,p) = 1 = i(f^m,q)$. The latter follows from the fact that the two indices must add up to $2$ by Lefschetz' formula, and none of the two indices is greater than $1$ by a theorem of Pelikan and Slaminka \cite{pelikan} (see also \cite{lecalvez-viz}). 

Finally the fact that $\rho(f,U) = \alpha$ follows from \cite[\S6]{lecalvez-viz}, so the second case from Theorem \ref{th:main-gen} and Corollary \ref{coro:main-gen} holds. 
\end{example} 

\begin{example}[Example with a fixed point and any irrational $\alpha$]\label{exm:invers}
The previous example shows that case (ii) in Theorem \ref{th:main} can hold for smooth maps, but it due to the technique used in the construction of the example, the number $\alpha$ will always be a Liouville number. It is possible that no such example exists if $\alpha$ is Diophantine and $f$ is smooth enough (due to a KAM-type phenomenon). However if one drops the smoothness condition, one may produce homeomorphisms with the same properties as the previous example, for any given irrational $\alpha\in \R/\Z$.

We sketch a way to obtain such example using the Denjoy-Rees technique from \cite{denjoy-rees}. As mentioned in section C2 of \cite{denjoy-rees}, the techniques of said article allow to obtain for any $\alpha\in \R\sm \Q$ a homeomorphism $f\colon \mathbb{S}^2 \to \mathbb{S}^2$ which is semi-conjugate to a rigid rotation by angle $\alpha$ of the sphere (fixing the north and south pole) via a surjection $h\colon \mathbb{S}^2\to \mathbb{S}^2$ which leaves the equator $C$ invariant, such that there is a vertical segment $\ell$ that touches the equator at its base and is mapped by $h$ to a single point on the equator, $\diam(f^k(\ell))\to 0$ as $k\to \pm \infty$ and $h$ is injective outside the orbit of $\ell$ (see Figure \ref{fig:denjoy}), so that $\Lambda =C \cup \bigcup_{k\in \Z} f^k(\ell)$ is an $f$-invariant connnected set. Let $U$ be the upper connected component of $\mathbb{S}^2\sm \Lambda$. Then $f|_U$ preserves a non-atomic Borel probability measure $\mu$ of full support (obtained by pull-back of the normalized Lebesgue measure on the upper half-sphere via $h$), and using the monotone semi-conjugation $h$ it is easy to verify that $\rho(f,U)=\alpha$.

\begin{figure}[h]
\centering
$\begin{array}{c@{\hspace{10pt}}c}
\includegraphics[width=0.5\linewidth]{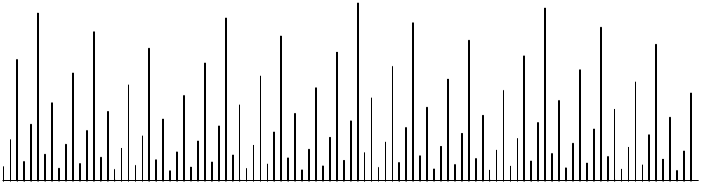} &
	\includegraphics[height=3.5cm]{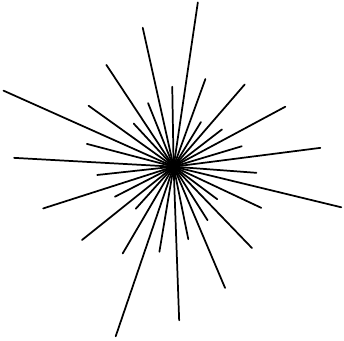} \\ %[0.4cm]
\end{array}$
\caption{$\Lambda$ before and after collapsing $C$}\label{fig:denjoy}
\end{figure}

If we collapse the circle $C$ to a single point $p_0$, we obtain a surface $S'\simeq \mathbb{S}^2$, a homeomorphism $f'\colon S'\to S'$ for which $p_0$ is a fixed point, and an $f'$-invariant set $\Lambda'$ which contains $p_0$ and is the boudnary of $U$ in $S'$. Furthermore, by Corollary \ref{coro:prime-rot}, we have that $$\rho(f', U, S) = \rho(f',U,S\sm \{p_0\}) = \rho(f,U, \mathbb{S}^2\sm C) = \rho(f,U,\mathbb{S}^2)=\alpha$$ 
The measure $\mu'$ induced by $\mu$ in $S$ is a non-atomic Borel probability measure of full support, and by the Oxtoby-Ulam theorem \cite{oxtoby-ulam} we may choose a homeomorphism $\phi\colon S\to S$ such that $f'' = \phi f \phi^{-1}$ preserves Lebesgue measure, so $f''$ has the required properties.
\end{example}

\section{Appendix: Proof of Theorem \ref{th:prime-equiv}}

\subsection{Proof of parts (1) and (3)}
We begin by proving (3). First observe that any chain $\mc{C}$ of $U$ as a subset of $S$ is equivalent to some chain $\mc{C}'=(D_i')_{i\in \N}$ such that $\bd_U D_i'$ is a cross-cut of $U$ with endpoints in $S_0$: this is easily done by perturbing the endpoints of the cross-cuts defining the given chain, using the fact that $S\sm S_0$ is totally disconnected (we leave the details for the reader).
Thus a chain of $U$ as a subset of $S$ is always equivalent to some chain of $U$ as a subset of $S_0$. If, in addition, $\mc{C}$ is a prime chain of $U$ as a subset of $S$, then it follows from the definition that $\mc{C}'$ is a prime chain of $U$ as a subset of $S_0$.  

There is a natural way to define an extension $i_*\colon \cPE(U, S)\to \cPE(U, S_0)$ of the inclusion $i\colon U \to \cPE(U,S_0)$, as follows: Let $p\in \bdPE(U,S)$ be a prime end, and let $\mc{C}$ be a representative prime chain. By the previous observation we may choose $\mc{C}$ so that it is also a prime chain of $U$ as a subset of $S_0$. If we let $i_*(p)$ be the prime end in $\bdPE(U, S_0)$ represented by $\mc{C}$, then $i_*(p)$ does not depend on the choice of $\mc{C}$, but only on the choice of $p$, so it is well defined. Thus, letting $i_*(z)=z$ for $z\in U$, we have defined a map $i_*\colon \cPE(U, S)\to \cPE(U, S_0)$. 

To show that $i_*$ is surjective, one needs to verify that if $\mc{C}$ is a prime chain of $U$ as a subset of $S_0$, then it is also a prime chain as a subset of $S$ (for that would imply that the prime end corresponding to $\mc{C}$ in $\bdPE(U, S)$ is mapped by $i_*$ to the prime end corresponding to $\mc{C}$ in $\bdPE(U, S_0)$). But this is true, because if $\mc{C}$ is divided by some chain $\mc{C}'$ of $U$ as a subset of $S$, then from the remark at the beginning of the proof, $\mc{C}'$ is equivalent to some chain $\mc{C}''$ of $U$ as a subset of $S_0$, and so $\mc{C}''$ divides $\mc{C}$. But the latter, together with the fact that $\mc{C}$ is a prime chain of $U$ as a subset of $S_0$, implies that $\mc{C}$ divides $\mc{C}''$. Since $\mc{C}''$ is equivalent to $\mc{C}'$, we conclude that $\mc{C}$ divides $\mc{C}'$, showing that $\mc{C}$ is indeed a prime chain of $U$ as a subset of $S$.  Therefore $i_*$ is surjective.

To see that $i_*$ is injective, note that if $p$ and $p'$ are elements of $\bdPE(U,S)$ such that $i_*(p)=i_*(p')$, the definition of $i_*$ implies that $p$ and $p'$ are represented in $\bdPE(U,S)$ by chains $\mc{C}$ and $\mc{C}'$ (respectively) which are also prime chains of $U$ as a subset of $S_0$ representing the same prime end $i_*(p)=i_*(p')$ in $\bdPE(U,S_0)$. This means that $\mc{C}$ and $\mc{C}'$ are equivalent, and so $p=p'$.

For the continuity of $i_*$, it suffices to show that if $D$ is a cross-section of $U$ as a subset of $S_0$ then $i_*^{-1}(D\cup \iPE_{U,S_0}(D)) = D\cup \iPE_{U, S}(D)$ (recalling that $\iPE_{U,S}(D)$ denotes the prime ends in $\bdPE(U,S)$ which are represented by some chain that divides $D$). But this is clearly true, since as we mentioned at the beginning of the proof, any prime chain $\mc{C}$ of $U$ in $S$ is equivalent to some prime chain $\mc{C}'$ of $U$ in $S_0$, which can be found by perturbing the cross-cuts defining the elements of $\mc{C}$, and if $\mc{C}$ divides $D$ then $\mc{C}'$ divides $D$ as well. 

To conclude that $i_*$ is a homeomorphism, let us show that $i_*^{-1}$ is continuous. The fact that every element of $\bdPE(U,S)$ is represented by some prime chain of $U$ as a subset of $S_0$ implies that if $D$ is a cross-section of $U$ as a subset of $S$ then $\iPE_{U,S}(D)$ is the union of all sets of the form $\iPE_{U,S}(D')$ where $D'\subset D$ is a cross-section of $U$ as a subset of $S_0$. Since we already showed that $i_*(\iPE_{U,S}(D')) = \iPE_{U,S_0}(D')$ for such cross-sections, we have that $i_*(D\cup \iPE(D,S))$ is a union of sets of the form $D'\cup \iPE_{U,S_0}(D')$ where $D'\subset D$ is a cross-section of $U$ in $S_0$. Thus $i_*$ maps open sets to open sets, and we conclude that $i_*$ is a homeomorphism, completing the proof of $(3)$.

Finally, we observe that for the case that $U$ is relatively compact in $S$, part (1) is already known; see for instance \cite{mather-caratheodory} or \cite{mather-area}. In particular, since $\cIB S = S\cup \bdIB S$ is compact and $\bdIB S$ is compact and totally disconnected, applying (3) to the closed surface $\cIB S$ in place of $S$ we conclude that $\cPE(U,S)$ is homeomorphic to $\cPE(U,\cIB S)$, from which (1) follows.
\qed

\subsection{Proof of part (2)}

Note that, since parts (1) and (3) are already proved, we may use Proposition \ref{pro:prime-criterion}, which only depends on those items.
We begin by observing that since $U\subsetneq S_0$, the boundary of $U$ in $S_0$ has more than one point (the only case where $\bd_{S_0}U$ can be a unique point is when $S_0$ is a sphere, which is impossible in our case). Thus $\cPE(U,S_0)$ is well defined. 

\begin{claim*} Any prime chain of $U$ as a subset of $S$ divides some chain which is a prime chain of $U$ as a subset of $S_0$.
\end{claim*}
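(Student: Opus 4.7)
The plan is to invoke Proposition \ref{pro:prime-criterion2} (which depends only on parts (1) and (3), already proved) to pass to an equivalent representative of the given prime chain for which the cross-cuts $\gamma_n := \bd_U D_n$ converge to a single point $x \in \cIB S$. Since $\gamma_n\subset U\subset S_0$ throughout and $S_0$ is open in $\cIB S$, this limit $x$ necessarily lies in $\cl_{\cIB S} S_0$. From here I would split into two cases according to whether $x$ belongs to $S_0$.

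In the easy case $x\in S_0$: by openness of $S_0$ in $\cIB S$, for $n$ large one has $\cl_S \gamma_n\subset S_0$, so the endpoints of $\gamma_n$ (which lie in $\bd_S U$) actually belong to $\bd_{S_0} U=\bd_S U\cap S_0$. Hence each such $\gamma_n$ is simultaneously a cross-cut of $U$ in $S_0$, the tail $(D_n)_{n\geq n_0}$ is a chain of $U$ in $S_0$ with $\gamma_n\to x\in\bd_{S_0} U$ in $\cIB S_0$, and Proposition \ref{pro:prime-criterion2} applied in $S_0$ makes this tail a prime chain of $U$ in $S_0$; it is trivially divided by the original $(D_n)$.

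In the delicate case $x\notin S_0$, the cross-cuts $\gamma_n$ have endpoints escaping $S_0$, so $(D_n)$ is not itself a chain in $S_0$. Instead I would construct a new chain $(D'_k)$ in $S_0$ by enclosing the tail from outside. Consider the decreasing sequence of compact connected sets $A_n:=\cl_{\cIB S_0} D_n\subset \cIB S_0$; since $(D_n)$ is prime one has $\bigcap_n D_n=\emptyset$ in $U$, hence $I^* := \bigcap_n A_n$ is a non-empty compact connected subset of $(\cIB S_0)\setminus U$. Pick $w\in I^*$ and a nested neighborhood base $(V_k)$ of $w$ in $\cIB S_0$; for each $k$ I would construct a cross-cut $\gamma'_k$ of $U$ in $S_0$ contained in $V_k$, whose cross-section on the $w$-side, $D'_k$, contains $D_{n_k}$ for some $n_k\to\infty$. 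The resulting chain $(D'_k)$ then has $\gamma'_k\to w$ in $\cIB S_0$, which by Proposition \ref{pro:prime-criterion2} makes it a prime chain in $S_0$, and it is divided by $(D_n)$ by construction.

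The main obstacle is the geometric construction of the enclosing cross-cuts $\gamma'_k$ in the delicate case: since $\cl_{S_0} D_{n_k}$ may fail to meet $\bd_{S_0} U$ at all (as happens when $w$ corresponds to an ideal boundary point of $S_0$, e.g.\ a puncture), $\gamma'_k$ cannot be drawn inside $D_{n_k}$ and must instead lie just outside it, reaching out to accessible points of $\bd_{S_0} U$ near $w$. This requires exploiting the simple connectedness of $U$, the topological-disk structure of $\cPE(U,S_0)$ from part (1), and the density of accessible boundary points of $U$ in $\bd_{S_0} U$ provided by Proposition \ref{pro:prime-all}(5).
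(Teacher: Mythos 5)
Your reduction via Proposition \ref{pro:prime-criterion2} and your easy case $x\in S_0$ are fine and agree with the paper. The delicate case, however, contains a genuine gap, located exactly at the step you defer as a technical obstacle: the choice of $w$. You pick an \emph{arbitrary} point $w$ of $I^*=\bigcap_n\cl_{\cIB S_0}D_n$, i.e.\ of the impression of the chain in $\cIB S_0$, and ask for cross-cuts $\gamma_k'\subset V_k$ whose $w$-side contains a tail cross-section $D_{n_k}$. If such a nested family existed, it would be a chain of $U$ in $S_0$ divided by $(D_n)$ with $\bd_U D_k'\to w$, hence (by Proposition \ref{pro:prime-criterion2}) a prime chain, and $w$ would be forced to be a \emph{principal} point of the induced prime end, i.e.\ a limit of cross-cut boundaries of a representing chain. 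But the impression is in general much larger than the principal set, and nothing in your argument prevents $w$ from being a non-principal point of $I^*$ lying in $\bd_{S_0}U$; for such $w$ the cross-cuts you need simply do not exist, no matter how one exploits accessibility or the disk structure of $\cPE(U,S_0)$. Worse, in this case the boundaries $\gamma_n=\bd_U D_n$ shrink to $x\notin S_0$ in $\cIB S$, so they eventually leave every compact subset of $S_0$ and accumulate only on $\bdIB(S_0)$: the ideal points, which you set aside as the pathological subcase, are in fact the only viable targets, while the points of $I^*\cap\bd_{S_0}U$ that your argument treats as the main case are unusable.

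What is missing is the identification of the correct target point and the reason it exists, which is the heart of the paper's proof: since $S$ (hence $S_0$) has finite genus, $\cIB(S_0)$ is a closed surface and $\bdIB(S_0)$ is totally disconnected; passing to a subsequence (an equivalent chain), $\cl_{\cIB S_0}\gamma_n$ converges in the Hausdorff topology to a continuum contained in $\bdIB(S_0)$, hence to a single ideal point $\mathfrak{p}$. Only then does the enclosing construction work: one takes nested closed disks $B_k$ around $\mathfrak{p}$ with $\bd B_k\subset S_0$, chooses for $\gamma_k'$ a connected component of $\bd B_k\cap U$ separating $\gamma_{i_k}$ (which lies in $\inter B_k$ for $i_k$ large) from $\gamma_{k-1}'$, and observes that the component of $U\sm\gamma_k'$ containing $\gamma_{i_k}$ contains $D_{i_k}$, while $\bd_U D_k'\subset\bd B_k\to\mathfrak{p}$ gives primality in $S_0$. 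Without the single-point convergence your shrinking neighborhoods $V_k$ of $w$ cannot be guaranteed to contain entire cross-cuts $\gamma_{n_k}$, which is what makes the nesting and the containment $D_{i_k}\subset D_k'$ possible. (A smaller point: you assert $\bigcap_n D_n=\emptyset$ ``since the chain is prime''; this is true for the representative with $\gamma_n\to x$, but it requires an argument --- e.g.\ that $\bigcap_n D_n\cap U$ is open and closed in the connected set $U$ and proper --- rather than following formally from primality.)
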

\begin{proof}
Observe that in view of parts (1) and (3) of Theorem \ref{th:prime-equiv} we may assume that $S$ is a closed surface, replacing $S$ by $\cIB S$ and $S_0$ by $S_0\sm \bdIB S$ (the hypotheses still hold because $\bdIB S$ is closed and totally disconnected in $\cIB S$). Thus, by Proposition \ref{pro:prime-criterion}, any prime end in $\bdPE(U,S)$ is represented by a prime chain $\mc{C} = (D_i)_{i\in \N}$ such that $\bd_U D_i \to x$ in $S$ for some $x\in \bd U$. Hence, to prove our claim it suffices to show that $\mc{C}$ divides some prime chain $\mc{C}'$ of $U$ as a subset of $S_0$.

Suppose first that $x\in S_0$. Then, since $\bd_U D_i\to x$, there is $i_0\in \N$ such that $\ol{\bd_U D_i}\subset S_0$ if $i\geq i_0$, and so $\mc{C} = (D_i)_{i\geq i_0}$ is a chain for $U$ as a subset of $S_0$ as well. The fact that $\mc{C}$ is a prime chain of $U$ as a subset of $S$ also implies that it is a prime chain of $U$ as a subset of $S_0$, and so our claim holds with $\mc{C}'=\mc{C}$.

Now suppose that $x\in S\sm S_0$, and write $S' = \cIB(S_0)$ and $\gamma_i = \bd_U D_i$. Since $\gamma_i\to x$ in $S$ as $i\to \infty$, it follows that for any neighborhood $W$ of $\bdIB(S_0)$ there is $i_0$ such that $\gamma_i\subset W$ if $i\geq i_0$. By compactness of $S'$, if we replace $(D_i)_{i\in \N}$ by a subsequence (which is a chain equivalent to $\mc{C}$), we may assume that $(\cl_{S'} \gamma_i)_{i\in \N}$ converges to some subset of $S'$ in the Hausdorff topology, which must be compact and connected, because so is $\cl_{S'}\gamma_i$ for each $i\in \N$. Moreover, by our previous observations, the Hausdorff limit of such a sequence must be a subset of $\bdIB(S_0)$, which is a totally disconnected set. Thus the limit is a single point; \ie there is $\mathfrak{p}\in \bdIB(S_0)$ such that $\gamma_{i}\to \mathfrak{p}$ in $S'$ as $i\to \infty$. 

Let $(B_i)_{i\in \N}$ be a sequence of closed topological disks in $S'$ such that $B_{i+1}\subset \inter B_i$ and $\bigcap_{i\in \N} B_i = \{\mathfrak{p}\}$. Since $\bdIB (S_0)$ is totally disconnected, one may choose each $B_i$ such that $\bd_{S'} B_i \cap \bdIB(S_0)=\emptyset$, so $\bd B_i = \bd_{S'} B_i \subset S_0$. Since $\bd_{S'} U$ is connected (because $S'$ is compact), it follows that $\bd B_i\cap \bd_{S'} U\neq \emptyset$ for each $i$ (if we assume that $U$ is not entirely contained in $B_1$, which we can). Choose a connected component $\gamma_1'$ of $\bd B_1 \cap U$, and note that $\gamma_1'$ is a cross-section of $U$ in $S_0$. Since $\mc{C}$ is a prime chain of $U$ in $S$, we can find $i_1\in \N$ such that $\gamma_1'\not\subset D_{i_1}$. Moreover, if $i_1$ is large enough then $\bd D_{i_1}=\gamma_{i_1}\subset B_1$, and so it is disjoint from $\gamma_1'$, implying that $\gamma_1' \subset U\sm D_{i_1}$. Thus the connected component $D_1'$ of $U\sm \gamma_1'$ containing $\gamma_{i_1}$ also contains $D_{i_1}$. 

Choose $i_2> i_1$ large enough so that $\gamma_{i_2}\subset B_2$. Since $\bd B_2\cap U$ separates $\gamma_{i_2}$ from $\gamma_1'$ in $U$, it follows easily that there is a connected component $\gamma_2'$ of $\bd B_2\cap U$ that separates $\gamma_{i_2}$ from $\gamma_1'$ in $U$. Since $D_{i_2}\subset D_1'$, one deduces that the connected component $D_2'$ of $U\sm \gamma_2'$ containing $\gamma_{i_2}$ satisfies $D_{i_2}\subset D_2'\subset D_1'$. Repeating this process recursively, we obtain a chain $\mc{C}'=(D_i')_{i\in \N}$ of $U$ in $S_0$ and an increasing sequence $(i_k)_{k\in \N}$ such that $\bd_U D_k' \subset \bd B_k$ and $D_{i_k}\subset D_k'$ for each $k\in \N$. This means that $\mc{C}'$ regarded as a chain of $U$ in $S$ is divided by $\mc{C}$. Moreover, since $\bd_U D_k'\subset B_k$, we have that $\bd_U D_k'\to \mathfrak{p}$ in $S'=\cIB(S_0)$ as $k\to \infty$, so Proposition \ref{pro:prime-criterion2} implies that $\mc{C}'$ is a prime chain of $U$ in $S_0$, completing the proof of the claim.
\end{proof}

Given $p\in \bdPE(U,S)$, let $p'$ be a prime end in $\bdPE(U,S_0)$ such that some prime chain $\mc{C}=(D_i)_{i\in \N}$ of $U$ in $S$ representing $p$ divides a prime chain $\mc{C}'=(D_i')_{i\in \N}$ of $U$ in $S_0$ representing $p'$. The previous claim guarantees that such $p'$ exists. We will show that it is uniquely determined by $p$. Suppose on the contrary that there is $p''\in \bdPE(U,S_0)$ represented by a prime chain $\mc{C}''=(D_i'')_{i\in \N}$, such that $\mc{C}$ divides $\mc{C}''$, and $p''\neq p'$.  Observe that $\cl_{\cPE(U,S_0)} D_i'$ is eventually contained in any neighborhood of $p'$ in $\cPE(U,S_0)\simeq \ol{\D}$, and the analogous fact for $p''$. This implies that $D_i'$ and $D_j''$ are disjoint if $i,j$ are chosen large enough. But this is a contradiction, because both $\mc{C}'$ and $\mc{C}''$ are divided by $\mc{C}$, so for any $i,j\in \N$ the set $D_i'\cap D_j''$ contains some element of $\mc{C}$.  This contradiction shows that $p'$ is uniquely determined by $p$, and so letting $i_*(p)=p'$ we have a well-defined map $i_*\colon \bdPE(U,S)\to \bdPE(U,S_0)$. 

Moreover, if $p'\in \bdPE(U,S_0)$ is represented by a prime chain $\mc{C}' = (D_i')_{i\in \N}$, then $i_*^{-1}(p')$ consists of all prime ends represented by a chain of $U$ in $S$ that divides $\mc{C}'$, which is precisely
$$i_*^{-1}(p') = \bigcap_{i\in \N} \iPE_{U,S}(D_i').$$
This is a nested intersection of open intervals, and since the endpoints of $\bd_U D_i'$ in $S$ are disjoint from the corresponding endpoints of $\bd_U D_{i+1}'$ (from the definition of chain), it follows (by the remarks after Proposition \ref{pro:prime-criterion2}) that the endpoints of $\bd_U D_i'$ in $\cPE(U,S)$ are disjoint from the corresponding endpoints of $\bd_U D_{i+1}$ in $\cPE(U,S)$. This means that the closure of $\iPE_{U,S}(D_{i+1}')$ in $\bdPE(U,S)$ is contained in $\iPE_{U,S}(D_i')$  for each $i\in \N$. Therefore the intersection definining $i_*^{-1}(p')$ is a nonempty closed interval, showing that $i_*$ is surjective and monotone.

Define $i_*(x)=x$ for $x\in U$, so that $i_*\colon \cPE(U,S)\to \cPE(U,S_0)$ is now a monotone surjection extending the inclusion. To see that $i_*$ is continuous, it suffices to show that $i_*^{-1}(D\cup \iPE_{U,S_0} (D)) = D\cup  \iPE_{U,S}(D)$ for each cross-section $D$ of $U$ in $S_0$. Since $i_*^{-1}(D)=D$, it suffices to show that $i_*^{-1}(\iPE_{U,S_0}(D)) = \iPE_{U,S}(D)$. Moreover, it is clear that $i_*^{-1}(\iPE_{U,S_0}(D))\subset \iPE_{U,S}(D)$, so we need to prove that $i_*(\iPE_{U,S}(D))\subset \iPE_{U,S_0}(D)$.
Let $p\in \iPE_{U,S}(D)$, so that $p$ is represented by some prime chain $\mc{C}=(D_i)_{i\in \N}$ of $U$ in $S$ that divides $D$. Let $\mc{C}'=(D_i')_{i\in \N}$ be a prime chain of $U$ in $S_0$ that represents $i_*(p)$ (so that it is divided by $\mc{C}$). We will show that $\mc{C}'$ also divides $D$, which is enough to complete the proof since it implies that $i_*(p)\in \iPE_{U,S_0}(D)$. 

By Proposition \ref{pro:prime-criterion2} we may assume that there is $x\in \cIB(S_0)$ such that $\bd_U D_i' \to x$ in $\cIB(S_0)$ as $i\to \infty$. If $x\in S_0$, then the same proposition implies that $\mc{C}'$ is a prime chain of $U$ as as a subset of $S$, and so it is equivalent to $\mc{C}$, proving that it divides $D$, which is what we wanted. Now suppose that $x\in \bdIB(S_0)$. Since $\cl_{S} \bd_U D \subset S_0$, the fact that $\bd_U D_i'\to x$ as $i\to \infty$ implies that $\bd_U D_i'$ is disjoint from $\bd_U D$ if $i$ is large enough. On the other hand, since $\mc{C}'$ is divided by $\mc{C}$ which in turn divides $D$, it follows that $D_i'\cap D\neq \emptyset$ for all $i$. From these two facts, we have that either $D_i'\subset D$ for some $i\in \N$, or $D \subset D_i'$ for all $i\in \N$. But in the latter case, the endpoints of $\bd_U D$ in $\cPE(U,S_0)$ are accessible prime ends that divide $D_i'$ for each $i\in \N$ (and so they divide $\mc{C}'$), contradicting the fact that $\mc{C}'$ is a prime chain of $U$ in $S_0$. Thus $D_i'\subset D$ for some $i\in \N$, showing that $\mc{C}'$ divides $D$. This completes the proof of Theorem \ref{th:prime-equiv} (2).
\qed

\bibliographystyle{koro} 
\bibliography{kln}

\end{document}